\title[Asymptotic analysis for a Vlasov-Fokker-Planck/Navier-Stokes system]{Asymptotic analysis for a Vlasov-Fokker-Planck/Navier-Stokes system in a bounded domain}
\author[Choi]{Young-Pil Choi}
\address[Young-Pil Choi]{\newline Department of Mathematics \newline Yonsei University, Seoul 03722, Korea (Republic of)}
\email{ypchoi@yonsei.ac.kr}
\author[Jung]{Jinwook Jung}
\address[Jinwook Jung]{\newline Research Institute of Basic Sciences \newline Seoul National University, Seoul  08826, Korea (Republic of)}
\email{warp100@snu.ac.kr}
\newtheorem{theorem}{Theorem}[section]
\newtheorem{lemma}{Lemma}[section]
\newtheorem{corollary}{Corollary}[section]
\newtheorem{proposition}{Proposition}[section]
\newtheorem{remark}{Remark}[section]
\newtheorem{definition}{Definition}[section]
\newcommand{\bbr}{\mathbb R}
\newcommand{\bbn} {\mathbb N}
\newcommand{\md}{\mathcal{D}}
\newcommand{\mf}{\mathcal{F}}
\newcommand{\bq}{\begin{equation}}
\newcommand{\eq}{\end{equation}}
\newcommand{\N}{\mathbb{N}}
\newcommand{\ms}{\mathcal{S}}
\newcommand{\e}{\varepsilon}
\newcommand{\R}{\mathbb R}
\newcommand{\lt}{\left}
\newcommand{\rt}{\right}
\newcommand{\om}{\Omega}
\newcommand{\pa}{\partial}
\def\moverlay{\mathpalette\mov@rlay}
\def\mov@rlay#1#2{\leavevmode\vtop{%
   \baselineskip\z@skip \lineskiplimit-\maxdimen
   \ialign{\hfil$\m@th#1##$\hfil\cr#2\crcr}}}
\newcommand{\charfusion}[3][\mathord]{
    #1{\ifx#1\mathop\vphantom{#2}\fi
        \mathpalette\mov@rlay{#2\cr#3}
      }
    \ifx#1\mathop\expandafter\displaylimits\fi}
\newcommand{\cupdot}{\charfusion[\mathbin]{\cup}{\cdot}}
\newcommand{\mc}{\mathcal{C}}
\begin{document}

\date{\today}

\subjclass{} \keywords{Vlasov/Navier-Stokes system, interactions between particles and fluid, hydrodynamic limit, relative entropy, kinematic boundary condition, two-phase fluid system}

%
\begin{abstract}
We study an asymptotic analysis of a coupled system of kinetic and fluid equations. More precisely, we deal with the nonlinear Vlasov-Fokker-Planck equation coupled with the compressible isentropic Navier-Stokes system through a drag force in a bounded domain with the specular reflection boundary condition for the kinetic equation and homogeneous Dirichlet boundary condition for the fluid system. We establish a rigorous hydrodynamic limit corresponding to strong noise and local alignment force. The limiting system is a type of two-phase fluid model consisting of the isothermal Euler system and the compressible Navier-Stokes system. Our main strategy relies on the relative entropy argument based on the weak-strong uniqueness principle. For this, we provide a global-in-time existence of weak solutions for the coupled kinetic-fluid system. We also show the existence and uniqueness of strong solutions to the limiting system in a bounded domain with the kinematic boundary condition for the Euler system and Dirichlet boundary condition for the Navier-Stokes system.
\end{abstract}
\maketitle \centerline{\date}

\allowdisplaybreaks
\tableofcontents
\section{Introduction}
\setcounter{equation}{0}

In this paper, we are interested in the asymptotic analysis of kinetic-fluid equations, consisting of the Vlasov-Fokker-Planck equation and the compressible isentropic Navier-Stokes equations where the coupling is through a drag force, also called a friction force, in a bounded domain with the specular reflection boundary condition. More specifically, let $f=f(x,\xi,t)$ be the number density of particles that are located at the position $x \in \om \subset \R^3$ with velocity $\xi \in \R^3$ at time $t > 0$, and $n = n(x,t)$  and $v = v(x,t)$ be the local density and velocity of the compressible viscous fluid, respectively. Then our main system reads
\begin{align}
\begin{aligned}\label{A-1}
&\partial_t f + \xi \cdot \nabla_x f + \nabla_\xi \cdot (F_d f) =  \nabla_\xi \cdot (\sigma\nabla_\xi f - \alpha(u - \xi)f), \quad (x,\xi,t) \in \om \times \R^3 \times \R_+,\\
&\partial_t n + \nabla_x \cdot (n v)=0, \quad (x,t) \in \om \times \R_+,\\
&\partial_t (n v) + \nabla_x \cdot (n v \otimes v) + \nabla_x p  -\mu\Delta_x v = -\int_{\bbr^3} m_pF_d f \,d\xi, 
\end{aligned}
\end{align}
subject to initial data:
\[
(f(x,\xi,t), n(x,t), v(x,t))|_{t=0} =: (f_0(x,\xi), n_0(x), v_0(x)), \quad (x,\xi) \in \om \times \R^3, 
\]
where $p=p(n)$ denotes the pressure given by $ p(n):= n^\gamma$ with $\gamma > 1$, $\mu > 0$ is the viscosity coefficient, and $\sigma > 0$ and $\alpha > 0$ represent the strengths of diffusive and local alignment forces, respectively. Throughout this paper, we set $\mu=1$ to simplify the presentation of computations.

The first two terms in the kinetic equation in \eqref{A-1} describe the free transport of dispersed particles in a fluid. The interactions between particles and fluid are taken into account through the drag force $F_d$ in the third term, which also appears as an external force in the Navier-Stokes momentum equations in \eqref{A-1}. Note that when the interaction between particles and fluid is ignored, i.e., $F_d \equiv 0$, then the kinetic equation in \eqref{A-1} becomes the nonlinear Vlasov-Fokker-Planck equation \cite{C2,K-M-T,Vil02}. Typically, the drag force $F_d$ depends on the relative velocity $v(x,t) - \xi$ and the viscosity coefficient $\mu$, see \cite{BGLM15, Oro81, RR52} for instance. To be more precise, one of the most typical formulae is given by
\bq\label{drag}
m_pF_d = c_d \mu (v(x,t) - \xi).
\eq
Here $r_p > 0$ is the radius of the particles and $m_p > 0$ is the mass of one single particle, i.e., $m_p = (4/3)\pi r_p^3  \rho_p$ where $\rho_p$ is the constant particle density. In the current work, we assumed that both $r_p$ and $m_p$ are constants. $c_d$ is the coefficient of drag force, which depends on $r_p$ and the Reynolds number. For the compressible fluid, it might be natural to consider the density-dependent viscosity, and as a consequence, the drag force is proportional to the fluid density, we refer to  \cite{BD06,CK15,CHJKpre2} for the case $F_d = n(v-\xi)$. However, we consider a simple case; the viscosity coefficient $\mu$ is constant, as mentioned above, and the drag force does not depend on the fluid density. For the sake of simplicity, we suppose that all the constants appeared in \eqref{drag} equal to 1. The terms on the right-hand side of the kinetic equation in \eqref{A-1} explains the diffusion and the interactions between particles trying to align their velocity with the local particle velocity $u = u(x,t)$ which is given by
\[
u(x,t) := \int_{\bbr^3} \xi f(x,\xi,t) \,d\xi \bigg/ \int_{\bbr^3} f(x,\xi,t) \,d\xi.
\]

This type of system describing the interactions between particle and fluid has received considerable attention due to the number of its applications; biotechnology \cite{BBKT03} and medicine \cite{BBJM05}, for instance. It can be also applied to the study of compressibility of droplets of spray, diesel engines, and sedimentation phenomena \cite{BBJM05,BBKT03,BGLM15,Oro81,Will58}. Among the various levels of possible descriptions, based on the volume fraction of the gas, our main system \eqref{A-1} considers the case of ``thin sprays'' in which the volume occupied by the particles is negligible compared to the volume occupied by the gas. We refer to \cite{Desv10,Oro81} for the classification for the modeling of interactions between particles and fluid.

\subsection{Boundary conditions} For the bounded domain $\om \subset \R^3$, we assume that the boundary $\pa \om$ is a smooth hypersurface. To state the boundary conditions for the density function $f$, which solves the kinetic equation in \eqref{A-1}, we introduce several notations:
\[
\{ (x,\xi) \in \pa\om\times \R^3\} =: \Sigma = \Sigma_+ \cupdot \Sigma_- \cupdot \Sigma_0,
\]
where $\Sigma_\pm$ and $\Sigma_0$ represent outgoing/incoming boundaries and grazing set given by
\[ 
\Sigma_{\pm} := \{ (x,\xi) \in \Sigma \, : \, \pm \,\xi \cdot r(x) >0\} \quad \mbox{and} \quad \Sigma_0:= \{ (x,\xi) \in \Sigma \, : \, \xi \cdot r(x) = 0\},
\]
respectively, where $r(x)$ is the outward unit normal vector to $x \in \partial\Omega$. We denote the traces of $f$ by $\gamma_{\pm} f(x,\xi,t) = f |_{\Sigma_{\pm}}$. We also introduce
\[ 
L^p(\Sigma_{\pm}) := \left\{ g(x, \xi) \, : \, \left( \int_{\Sigma_{\pm}} |g(x,\xi)|^p |\xi \cdot r(x) | \,d\sigma (x) d\xi \right)^{1/p} < \infty \right\},   
\]
where $d\sigma(x)$ denotes the Euclidean metric on $\partial \Omega$. For the hydrodynamic limit, we are mainly interested in the specular reflection boundary condition for $f$:
\bq\label{bdy_ref}
\gamma_- f(x,\xi,t) = \gamma_+ f (x, \mathcal{R}_x(\xi), t) \quad \mbox{for} \quad (x,\xi,t)\in\Sigma_- \times \R_+, 
\eq
where $\mathcal{R}_x : \R^3 \to \R^3,  \,\xi \mapsto \xi - 2(\xi\cdot r(x)) r(x)$ is the reflection operator, see Figure \ref{Fig_spfl}. Note that this operator preserves the magnitude, i.e., $|\mathcal{R}_x (\xi)| = |\xi|$. For the fluid system in \eqref{A-1}, we simply consider the following homogeneous Dirichlet boundary condition for the fluid velocity:
\bq\label{bdy_ho}
v(x,t) = 0 \quad \mbox{for} \quad (x,t) \in \pa \om \times \R_+.
\eq
\begin{figure}[!ht]
\centering
\includegraphics[width=.45\textwidth,clip]{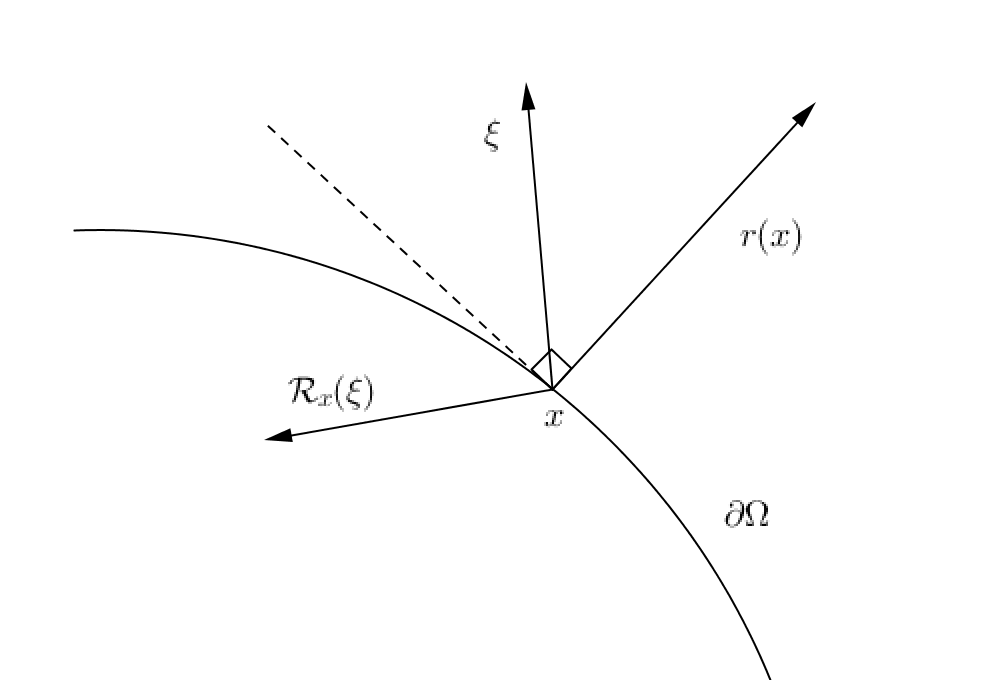}
\caption{
Specular reflection boundary condition
}
\label{Fig_spfl}
\end{figure}
\subsection{Formal hydrodynamic limit}
For the asymptotic analysis of the system \eqref{A-1}, we deal with the regime corresponding to strong noise and local alignment force, i.e., $\sigma = \alpha = 1/\e$ with $\e>0$ small enough: 
\begin{align}
\begin{aligned}\label{A-2}
&\partial_t f^\e + \xi \cdot \nabla_x f^\e + \nabla_\xi \cdot ((v^\e -\xi)f^\e) = \frac{1}{\e} \nabla_\xi \cdot (\nabla_\xi f^\e - (u^\e - \xi)f^\e),\\
&\partial_t n^\e + \nabla_x \cdot (n^\e v^\e)=0,\\
&\partial_t (n^\e v^\e) + \nabla_x \cdot (n^\e v^\e \otimes v^\e) + \nabla_x p(n^\e)  -\Delta_x v^\e = -\int_{\bbr^3} (v^\e -\xi)f^\e d\xi. 
\end{aligned}
\end{align}
It is worth noticing that the right hand side of the kinetic equation in \eqref{A-2} can be rewritten as
\[ 
\nabla_\xi \cdot [\nabla_\xi f^\varepsilon - (u^\varepsilon - \xi)f^\varepsilon] = \nabla_\xi \cdot \left(M_{f^\varepsilon} \nabla_\xi \left( \frac{f^\varepsilon}{M_{f^\varepsilon}} \right)\right),
\]
where $M_{f^\varepsilon} = M_{f^\varepsilon}(x,\xi,t)$ is the Maxwellian given by
\[
M_{f^\varepsilon}(x,\xi,t) := \frac{1}{(2\pi)^{3/2}} \exp\lt(-\frac{|\xi - u^\varepsilon(x,t)|^2}{2}\rt). 
\]
Let us briefly mention about the formal derivation of the limiting system of \eqref{A-2} as $\e \to 0$. By taking into account the local moments, we can derive a system of local balanced laws:
\begin{align}\label{eq_formal}
\begin{aligned}
&\pa_t \rho^\e + \nabla_x \cdot (\rho^\e u^\e) = 0,\cr
&\pa_t (\rho^\e u^\e) + \nabla_x \cdot (\rho^\e u^\e \otimes u^\e) +\nabla_x \cdot P^\e = \rho^\e(v^\e-u^\e),\cr
&\partial_t n^\e + \nabla_x \cdot (n^\e v^\e)=0,\\
&\partial_t (n^\e v^\e) + \nabla_x \cdot (n^\e v^\e \otimes v^\e) + \nabla_x p(n^\e)  -\Delta_x v^\e = - \rho^\e(v^\e -u^\e),
\end{aligned}
\end{align}
where $\rho^\e$, $u^\e$, and $P^\e$ denote the local particle density, local particle velocity, and pressure, respectively, given by
\[
\rho^\e(x,t) := \int_{\R^3} f^\e(x,\xi,t)\,d\xi, \quad u^\e(x,t) := \int_{\R^3} \xi f^\e(x,\xi,t)\,d\xi \bigg/  \int_{\R^3} f^\e(x,\xi,t)\,d\xi,
\]
and
\[
P^\e(x,t) := \int_{\R^3} (\xi - u^\e(x,t))\otimes (\xi -u^\e(x,t))f^\e(x,\xi,t)\,d\xi.
\]
Here $\cdot \otimes \cdot$ stands for $(a \otimes b)_{ij} = a_ib_j$ for $a = (a_1,a_2,a_3) \in \R^3, b= (b_1,b_2,b_3) \in \R^3$. Indeed, the continuity and momentum equations of Euler system in \eqref{eq_formal} can be obtained by multiplying the kinetic equation in \eqref{A-2} by $1$ and $\xi$, and integrating the resulting equations with respect to $\xi$. We notice that the system \eqref{eq_formal} is not closed in the sense that the Euler system in \eqref{eq_formal} can not be expressed in terms of $\rho^\e$ and $u^\e$; it still involves the dynamics of $f^\e$. We may further estimate a higher moment of $f^\e$ by introducing the local energy density: 
\[
\rho^\e\lt(e^\e + \frac{|u^\e|^2}{2}\rt) := \int_{\R^3} \frac{|\xi|^2}{2} f^\e\,d\xi.
\]
However, it is easy to check that the equation of local energy is still not closed. On the other hand, if we have
\[
\rho^\e  \to \rho \mbox{ and } u^\varepsilon \to u \quad \mbox{as} \quad \e \to 0,
\] 
then the kinetic density $f^\e$ satisfies
\bq\label{Max}
f^\varepsilon \to M_{\rho,u}:= \frac{\rho(x,t)}{(2\pi)^{3/2}} \exp\lt(-\frac{|\xi-u(x,t)|^2}{2}\rt) \quad \mbox{as} \quad \varepsilon \to 0. 
\eq
This implies
\[
P^\e \to \rho \mathbb{I}_{3 \times 3} \quad \mbox{as} \quad \e \to 0,
\]
where $\mathbb{I}_{3 \times 3}$ denotes the $3 \times 3$ identity matrix. As a consequence $\nabla_x \cdot P^\e$ converges toward $\nabla_x \rho$, and which gives the isothermal pressure law for the Euler system in \eqref{eq_formal}. Furthermore, if the corresponding the fluid density $n^\e$ and velocity $v^\e$ also converge toward $n$ and $v$ as $\e \to 0$, respectively, we infer the limiting functions $(\rho,u,n,v)$ are solutions to the following two-phase fluid system consisting of the isothermal Euler and the isentropic Navier-Stokes equations:
\begin{align}
\begin{aligned}\label{A-3}
&\partial_t \rho + \nabla_x \cdot (\rho u) = 0, \quad (x,t) \in \om \times \R_+,\\
&\partial_t (\rho u) + \nabla_x \cdot (\rho u \otimes u) + \nabla_x \rho = \rho(v-u),\\
&\partial_t n + \nabla_x \cdot (nv) = 0,\\
&\partial_t (nv) + \nabla_x \cdot (nv \otimes v) + \nabla_x p(n) - \Delta_x v  = -\rho(v-u). 
\end{aligned}
\end{align}
In order to derive the boundary condition for the limiting system, we multiply $(\xi \cdot r(x))$ to the above specular reflection boundary condition \eqref{bdy_ref}, and then integrate the resulting equation over the incoming velocities at $x \in \pa \om$. This gives the following equality:
\bq\label{est_00}
\int_{\xi \cdot r(x) < 0} \gamma_- f^\e(x,\xi,t) (\xi \cdot r(x))\,d\xi = \int_{\xi \cdot r(x) < 0} \gamma_+ f^\e(x,\mathcal{R}_x(\xi),t) (\xi \cdot r(x))\,d\xi.
\eq
We further use the change of variables $\xi_* = R_x(\xi)$ to have
\bq\label{est_01}
\int_{\xi \cdot r(x) < 0} \gamma_+ f^\e(x,\mathcal{R}_x(\xi),t) (\xi \cdot r(x))\,d\xi = -\int_{\xi_* \cdot r(x) > 0} \gamma_+ f^\e(x,\xi_*,t) (\xi_* \cdot r(x))\,d\xi_*.
\eq
We now combine \eqref{est_00} and \eqref{est_01} to get
\[
\int_{\xi \cdot r(x) < 0} \gamma_- f^\e(x,\xi,t) (\xi \cdot r(x))\,d\xi = -\int_{\xi_* \cdot r(x) > 0} \gamma_+ f^\e(x,\xi_*,t) (\xi_* \cdot r(x))\,d\xi_*,
\]
and this subsequently implies
\[
\int_{\R^3} \gamma f^\e(x,\xi,t) (\xi \cdot r(x))\,d\xi =0, \quad \mbox{i.e.,} \quad (\rho^\e u^\e)(x,t) \cdot r(x) = 0.
\]
Taking the limit $\e \to 0$ leads to the kinematic boundary condition for the Euler equations in \eqref{A-3} and homogeneous Dirichlet boundary condition for the Navier-Stokes equations in \eqref{A-3}:
\bq\label{A-3_bdy}
u \cdot r \equiv 0 \quad \mbox{and} \quad v \equiv 0 \quad \mbox{on} \quad \pa\om \times \R_+.
\eq

Before closing this subsection, we introduce several notations used throughout the paper. For functions $f(x,v)$ or $g(x)$, $\|f\|_{L^p}$ or $\|g\|_{L^p}$ denote the usual $L^p(\om \times \R^3)$-norm or $L^p(\om)$-norm. We also denote by $C$ a generic positive constant which may differ from line to line, and $C = C(\alpha,\beta,\dots)$ represents the positive constant depending on $\alpha,\beta,\dots$. For notational simplicity, we drop $x$-dependence of differential operators, i.e., $\nabla f := \nabla_x f$ and $\Delta f = \Delta_x f$. For any nonnegative integer $k$ and $p \in [1,\infty]$, $W^{k,p} := W^{k,p}(\Omega)$ stands for the $k$-th order $L^p$ Sobolev space. In particular, if $p=2$, we denote by $H^k := H^k(\Omega) = W^{k,2}(\Omega)$ for any $k \in \bbn$. Moreover, we let $\mathcal{C}^k(I;\mathcal{B})$ be the set of $k$-times continuously differentiable functions from an interval $I$ to a Banach space $\mathcal{B}$ and we introduce the following Banach spaces:
\[
\mathfrak{X}_r^s = \mathfrak{X}_r^s(T,\Omega) := \bigcap_{k=0}^{s-r}\mathcal{C}^k([0,T];H^{s-k}(\Omega)), \quad \mathfrak{X}_0^s =: \mathfrak{X}^s, \quad \mbox{and} \quad \|h(t)\|_{\mathfrak{X}_r^s}:=  \sum_{k=0}^{s-r} \left\| \frac{\partial^k f}{\partial t^k}(t)\right\|_{H^{s-k}}. 
\]

\subsection{Main results} In the present work, our main goal is to make the formal derivation discussed in the previous subsection rigorous. For this, we first need to develop some existence theories for the systems \eqref{A-1} (or \eqref{A-2}) and \eqref{A-3} with the boundary conditions mentioned above. More precisely, we need to investigate the existence of weak solutions to the Vlasov-Fokker-Planck/Navier-Stokes system \eqref{A-1} and the existence and uniqueness of strong solutions to the Euler/Navier-Stokes system \eqref{A-3} at least locally in time. 

We introduce the notion of weak solutions to the system \eqref{A-1} and state the result of global-in-time existence of weak solutions.

\begin{definition}\label{D2.1}
Let $T > 0$. We say a three-tuple $(f,n,v)$ is a weak solution to \eqref{A-1} with the boundary conditions \eqref{bdy_ref} and  \eqref{bdy_ho} on the time interval $[0,T]$ if it satisfies the following conditions:
\begin{itemize}
\item[(i)]
$f \in \mathcal{C}([0,T]; L_+^1(\Omega \times \bbr^3)) \cap L^\infty(0,T; (L^1\cap L^\infty)(\Omega \times \bbr^3))$, $|\xi|^2 f \in L^\infty(0,T; L^1(\Omega\times\bbr^3))$,
\item[(ii)]
$n \in \mathcal{C}([0,T];L_+^1(\Omega )) \cap L^\infty(0,T;L^\gamma(\Omega))$,
\item[(iii)]
$v \in L^2(0,T; H_0^1(\Omega))$, $n|v|^2 \in L^\infty(0,T;L^1(\Omega))$, $nv \in \mathcal{C}([0,T];L^{2\gamma/(\gamma+1)}_w (\Omega))\footnote{$L^p_w$ denotes the weak $L^p$ space, i.e., $L^p_w(\om) = \{ f: |\{x \in \om : |f(x)| > \lambda \}| \leq \lambda^{-p} \}$ for $p \in (0,\infty)$ and $L^\infty_w(\om) = L^\infty(\om)$.}$,
\item[(iv)] for any $\varphi \in \mathcal{C}_c^2(\bar\Omega \times \bbr^3 \times [0,T])$ with $\gamma_- \varphi(x,\xi,t) = \gamma_+ \varphi(x,\mathcal{R}_x(\xi),t)$ and $\varphi(\cdot,\cdot,T) =0$,
\begin{align*}\begin{aligned}
&\int_0^T  \int_{\Omega \times \bbr^3} f \lt( \partial_t \varphi + \xi \cdot \nabla \varphi + (v-\xi ) \cdot \nabla_\xi \varphi + \Delta_\xi \varphi + (u-\xi)\cdot \nabla_\xi \varphi \rt) dxd\xi dt\\
&\quad  +\int_{\Omega \times \bbr^3} f_0 \varphi(x,\xi,0) \,dxd\xi =0,
\end{aligned}\end{align*}
\item[(v)] for any $\phi \in \mathcal{C}_c^2(\bar\Omega \times [0,T])$ with $\phi(\cdot,T) =0$,
\begin{align*}\begin{aligned}
&\int_0^T \int_\om n (\pa_t \phi + v \cdot \nabla \phi)\,dxdt + \int_\om n_0 \phi(x,0)\,dx = 0,\cr
&\int_0^T \int_\om \lt(nv \pa_t \phi + n(v\otimes v) \nabla \phi + p \nabla \phi + v \Delta \phi - \phi\int_{\R^3} (v-\xi)f \,d\xi\rt)dxdt \cr
&\quad + \int_\om n_0 v_0 \phi(x,0)\,dx = 0.
\end{aligned}\end{align*}
\end{itemize}
\end{definition}
\begin{remark}Since we are not able to have $\gamma_\pm f \in L^1(0,T;L^1(\Sigma_\pm))$,  the additional condition for the test function $\varphi$ which appeared in Definition \ref{D2.1} (iv) is imposed.
\end{remark}

\begin{theorem}\label{T0.1}
Let $\gamma >3/2$, $T \in (0,\infty)$ and assume that the initial data $(f_0, n_0, v_0)$ satisfy
\begin{equation*}
f_0 \in (L_+^1\cap L^\infty)(\Omega \times \bbr^3), \quad n_0 \in L_+^1(\Omega), \quad \mbox{and} \quad \mathcal{F}(f_0, n_0, v_0) < \infty,
\end{equation*}
where $\mathcal{F}(f,n,v)$ is defined as
\[
\mathcal{F}(f, n, v) := \int_{\Omega \times \bbr^3 } f \left( \log f  +\frac{|\xi|^2}{2} \right) dxd\xi + \int_{\Omega} \frac{1}{2} n |v|^2 \,dx + \frac{1}{\gamma-1} \int_{\Omega} n^\gamma \,dx. 
\]
Then there exists at least one weak solution $(f,n,v)$ to the system \eqref{A-1} with the specular reflection boundary condition \eqref{bdy_ref} for $f$ and the homogeneous Dirichlet boundary condition \eqref{bdy_ho} for $v$ in the sense of Definition \ref{D2.1}. Moreover, the following entropy inequality holds:
$$\begin{aligned}
\mathcal{F}&(f,n,v)(t) + \int_0^t \md(f,v)(s)\,ds + \int_0^t \int_\Omega |\nabla v|^2\, dxds\\
&\le \mathcal{F}(f_0, n_0, v_0) + 3t \|f_0\|_{L^1(\Omega\times\bbr^3)},
\end{aligned}$$
where $\md(f,v)$ is given by
\[
\md(f,v) := \int_{\Omega \times \bbr^3} \frac{1}{f}|(u - \xi)f -\nabla_\xi f|^2 + |v-\xi|^2f \,dxd\xi.
\]
\end{theorem}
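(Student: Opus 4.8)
The plan is to construct a weak solution through a layered regularization scheme, to extract uniform bounds from the free energy $\mf$, and to pass to the limit using velocity averaging on the kinetic side and the Lions--Feireisl compactness theory on the compressible-fluid side; the entropy inequality will then follow by weak lower semicontinuity.

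First I would set up the regularized system. For the fluid part I would follow the Feireisl--Novotn\'y--Petzeltov\'a construction: add an artificial pressure $\delta n^\beta$ with $\beta$ large, a vanishing-viscosity term $\e\Delta n$ in the continuity equation, and solve the momentum balance by a Faedo--Galerkin truncation. For the kinetic part I would regularize the singular local velocity $u$ by replacing its denominator $\int_{\R^3} f\,d\xi$ with $\kappa + \int_{\R^3} f\,d\xi$ for $\kappa>0$, and mollify $v$ before it enters the transport and drag terms; at this level the kinetic equation is a linear Fokker--Planck equation in $\xi$ with smooth coefficients, for which global solvability under the specular reflection condition \eqref{bdy_ref} is classical (via characteristics together with parabolic regularity in $\xi$). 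A Schauder fixed-point argument on the solution map $(n,v)\mapsto f\mapsto(n,v)$ then produces a solution of the regularized system on $[0,T]$.

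Next I would derive uniform bounds. Mass conservation $\|f(t)\|_{L^1}=\|f_0\|_{L^1}$ and $\|n(t)\|_{L^1}=\|n_0\|_{L^1}$ hold since the kinetic boundary flux cancels under specular reflection. Rewriting the kinetic equation as $\partial_t f + \xi\cdot\nabla f = \Delta_\xi f + \nabla_\xi\cdot((2\xi - u - v)f)$, an $L^p$ estimate --- in which the drag and transport terms enter only through $\nabla_\xi\cdot(2\xi)=6$, the $x$-flux again cancelling by specular reflection --- gives $\|f(t)\|_{L^\infty}\le e^{6t}\|f_0\|_{L^\infty}$. The entropy inequality itself I would obtain by testing the kinetic equation against $\log f + |\xi|^2/2$, the momentum equation against $v$, and using the continuity equations for the pressure potential: every boundary term coming from $\xi\cdot\nabla f$ cancels by specular reflection, the drag cross-terms telescope, the dissipation assembles into $\md(f,v) + \|\nabla v\|_{L^2}^2$, and the only production term, $3t\|f_0\|_{L^1}$, comes from the $\nabla_\xi\cdot\xi$ contributions. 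From $\mf$ one reads off $f\log f,\ |\xi|^2 f\in L^\infty(0,T;L^1)$ and $n|v|^2,\ n^\gamma\in L^\infty(0,T;L^1)$, together with the dissipation bounds; interpolating the velocity-moment bound against the $L^\infty$ bound yields $\int_{\R^3} f\,d\xi\in L^\infty(0,T;L^{5/3})$ and $\int_{\R^3}\xi f\,d\xi\in L^\infty(0,T;L^{5/4})$, so the drag source $-\int_{\R^3}(v-\xi)f\,d\xi$ is uniformly bounded in a space slightly better than $L^1$.

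Finally I would pass to the limit, sending $\kappa$ and the mollifications, then the Galerkin index, then $\e$, then $\delta$ to their limits in that order. On the kinetic side, velocity averaging lemmas promote weak-$*$ convergence of $f$ to strong compactness of the moments $\rho = \int_{\R^3} f\,d\xi$, $\int_{\R^3}\xi f\,d\xi$ and $\int_{\R^3}(v-\xi)f\,d\xi$, and combined with the dissipation control of $|(u-\xi)f-\nabla_\xi f|^2/f$ this handles the nonlinear local-alignment term (written throughout in terms of $\rho u$, never $u$ alone). On the fluid side, an Aubin--Lions argument using a time-derivative bound for $nv$ gives strong convergence of $v$, and the effective viscous flux identity together with the renormalized continuity equation --- where the hypothesis $\gamma>3/2$ is needed to control the oscillation defect measure --- gives strong convergence of $n$. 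The compatibility condition on the test functions in Definition \ref{D2.1}(iv) is exactly what permits discarding the specular-reflection trace term without any $L^1(\Sigma_\pm)$-control on $\gamma_\pm f$, and the entropy inequality passes to the limit by convexity and lower semicontinuity. I expect the two main obstacles to be: (i) carrying the Lions--Feireisl strong-density-compactness argument through in the presence of the kinetic drag source and the specular-reflection coupling --- in particular verifying the effective viscous flux identity and controlling the oscillation defect measure when only $\gamma>3/2$ is available; and (ii) making the boundary-flux cancellation in the entropy estimate rigorous, since the traces $\gamma_\pm f$ need not be integrable against $|\xi\cdot r|$, so the cancellation must be performed at the regularized level where the traces are smooth --- which is precisely why the compatibility condition on test functions is built into Definition \ref{D2.1}.
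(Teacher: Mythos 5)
Your overall strategy---regularize, fixed point, entropy inequality, velocity averaging plus Lions--Feireisl compactness, pass to the limit---is the right one, but there are two points worth flagging.

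First, a genuine gap: you state the Schauder fixed-point map as $(n,v)\mapsto f\mapsto(n,v)$ and then claim the resulting kinetic problem is a \emph{linear} Fokker--Planck equation with smooth coefficients. It is not: even after replacing the denominator of $u$ by $\kappa+\int_{\R^3}f\,d\xi$, the local-alignment drift $\nabla_\xi\cdot\bigl((u_\kappa[f]-\xi)f\bigr)$ still depends nonlinearly on $f$. The paper resolves this by putting the (regularized) particle velocity itself into the fixed point: the operator is $\mathcal{T}(\tilde u,\tilde v)=(u_\e,v)$ with $u_\e=\rho u/(\rho+\e)$, and both $\chi_\lambda(\tilde u)$ and $\chi_\lambda(\tilde v)$ are frozen in the kinetic equation, which then really is linear in $f$. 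Without freezing $u$ (or a truncated proxy for it), the claimed solvability of the kinetic step and the subsequent compactness argument for $\mathcal{T}$ do not close. This is a fixable omission, but as written it is an inconsistency.

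Second, a different route on boundary conditions and the fluid approximation, which I will just record. The paper does not attack specular reflection directly at the regularized level: it proves existence for a \emph{general} reflection operator $\mathcal{B}$ (Theorem~\ref{T2.1}) by first solving the pure Dirichlet (absorbing) problem (Theorem~\ref{T3.1}), then iterating $\gamma_-f^{m+1}=(1-\delta)\mathcal{B}\gamma_+f^m$ and sending $m\to\infty$, $\delta\to0$; the damping factor $(1-\delta)$ is exactly what yields uniform trace and boundary-entropy bounds without any $L^1(\Sigma_\pm)$-integrability of $\gamma_\pm f$. Specular reflection is then a special case with scattering kernel $B=\delta_{\xi'-\mathcal{R}_x(\xi)}/|\xi'\cdot r|$. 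Your direct argument---cancel the boundary flux pointwise using $|\mathcal{R}_x\xi|=|\xi|$, $|\mathcal{R}_x\xi\cdot r|=|\xi\cdot r|$ at the regularized level and pass to the limit by lower semicontinuity---is conceptually reasonable and avoids the general-$\mathcal{B}$ machinery, but you then lose the paper's extra dividend (existence for a much wider class of kinematic boundary conditions), and you still have to justify uniform control of the trace terms without the $(1-\delta)$-contraction. On the fluid side, you propose the full Lions--Feireisl scheme (artificial pressure $\delta n^\beta$, vanishing viscosity $\e\Delta n$ in the continuity equation, Faedo--Galerkin); the paper instead follows Mellet--Vasseur, mollifying the density ($n_k=n\star h_k$) and the momentum flux $(nv)_k$ in the momentum equation and truncating the drag by $\mathds{1}_{\{|\tilde v|\le\lambda\}}$, and quotes the resulting existence theory directly. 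Both suffice for $\gamma>3/2$; the Mellet--Vasseur version keeps the kinetic--fluid coupling terms under better pointwise control during the limit passage.
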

The study of existence theory for a system of kinetic equation and fluid system is by now well established. Let us summarize some of the previous works by dividing them into two cases: the coupling with incompressible and compressible fluids. In the coupling with an incompressible fluid case, the  global-in-time existence of weak solutions for the Vlasov/Stokes system is obtained in \cite{Ham98}, and this result is extended to the Vlasov/Navier-Stokes equations in the spatial periodic domain \cite{B-D-G-M} and in the bounded domain with the specular reflection boundary condition \cite{Yu13}. Other particle interaction forces are also considered in the Vlasov equations, for instance, velocity-alignment force \cite{B-C-H-K1,B-C-H-K2,CHJK19}, BGK collision operator \cite{CLYpre,CYpre}, particle breakup operator \cite{YY18}, and the weak/strong solutions are discussed. Interactions between charged particles and incompressible fluid are also studied in \cite{AIK14, AKS10} by taking into account the Vlasov-Poisson/Navier-Stokes system in a bounded domain. In the presence of diffusion, the global-in-time weak solutions of the Vlasov-Fokker-Planck/Navier-Stokes system are obtained in \cite{C-C-K,CKL11}. For this system, the global-in-time classical solutions near equilibrium are also studied in \cite{C-D-M,G-H-M-Z}. For the coupling with a compressible fluid, the local-in-time existence and uniqueness of classical solutions for the Vlasov and Vlasov-Boltzmann equation coupled with compressible Euler equations are investigated in \cite{BD06} and \cite{Math10}, respectively. For the Vlasov-Fokker-Planck/Navier-Stokes system, the global-in-time existence of weak solutions is found in \cite{M-V} in a bounded domain with various types of boundary conditions. We also refer to \cite{C3} for the a priori large-time behavior estimate of solutions showing the particle velocity is aligned with the fluid velocity and \cite{C4} for the finite-time blow-up phenomena.

In order to establish the global-in-time existence of weak solutions to the coupled kinetic-fluid system \eqref{A-1}, we employ a similar idea of \cite{M-V} in which the weak solutions are obtained for the system \eqref{A-1} without the local velocity alignment force $\nabla_\xi \cdot ((\xi - u)f)$. On the other hand, due to the lack of regularity of the local alignment force, we need one more step to regularize this term, and as a result, more careful analysis is required when we estimate the uniform bounds of solutions with respect to the regularization parameters. Even though our result on the hydrodynamic limit only works for the system \eqref{A-1} with the specular reflection boundary condition for the kinetic equation, we can even obtain the global-in-time existence of weak solutions to the system \eqref{A-1} with more general types of boundary conditions as in \cite{M-V}, see Section \ref{sec:3} for more detailed discussion.

In Section \ref{sec:4}, we discuss the existence and uniqueness of strong solutions to the limiting system \eqref{A-3}. Before introducing the notion of solutions, we reformulate the system \eqref{A-3} with new functions:  
\[
g := \log M \rho \quad \mbox{with} \quad M = |\om| > 0 \quad \mbox{and}  \quad h := n - \int_\Omega n \,dx =: n - n_c.
\]
Without loss of generality, we may assume that $n_c(t) = 1$ for $t \geq 0$ due to the conservation of mass. Then by using these newly defined functions, at the formal level, we can reformulate the system \eqref{A-3} as follows:
\begin{align}\label{E-3}
\begin{aligned}
&\partial_t g + u \cdot \nabla g + \nabla \cdot u = 0, \quad (x,t) \in \om \times \R_+,\\
&\partial_t u + u \cdot \nabla u + \nabla g = (v-u),\\
&\partial_t h + \nabla \cdot (hv) + \nabla \cdot v = 0,\\
&\partial_t v + v \cdot \nabla  v + \nabla p(1+h) - \frac{1}{1+h}\Delta v  = -\frac{e^{g}}{M(1+h)}(v-u),
\end{aligned}
\end{align}
subject to initial data and boundary conditions:
\begin{align}\label{E-4}
\begin{aligned}
&(g(x,0), u(x,0), h(x,0), v(x,0)) = (g_0(x), u_0(x), h_0(x), v_0(x)), \quad x \in \Omega, \\
&u(x,t) \cdot r(x)  = 0, \quad v(x,t) = 0, \quad (x,t)\in \partial\Omega \times \R_+.
\end{aligned}
\end{align}
We provide the definition of strong solutions to the system \eqref{E-3}--\eqref{E-4}.
\begin{definition}\label{D2.2}
Let $T \in (0,\infty)$ and $s \ge 4$. We say a four-tuple $(g,u,h,v)$ is a strong solution to \eqref{E-3}--\eqref{E-4} on the time interval $[0,T]$ if it satisfies the following conditions:
\begin{itemize}
\item[(i)] $(g, u, h, v) \in \mathfrak{X}^s(T,\Omega) \times  \mathfrak{X}^s(T,\Omega) \times \mathfrak{X}^s(T,\Omega) \times \mathfrak{X}^s(T,\Omega) $. 
\item[(ii)] $(g,u,h,v)$ satisfies \eqref{E-3}--\eqref{E-4} in the distributional sense. 
\end{itemize}
\end{definition}

\begin{remark} Let $T>0$. We can easily deduce that $(\rho,u,n,v) \in \mc^2(\om \times [0,T])$ solves the system \eqref{A-3} and \eqref{A-3_bdy} with $\rho > 0$ and $n > 0$ if and only if $(g,u,h,v) \in \mc^2(\om \times [0,T])$ solves the system \eqref{E-3} and \eqref{E-4} with $e^g > 0$ and $1+h > 0$.
\end{remark}

\begin{theorem}\label{T2.3}
Let $T \in (0,\infty)$ and $s\ge 4$. Suppose that the initial data $(g_0, u_0, h_0, v_0)$ satisfy the following regularity and smallness assumptions:
\begin{itemize}
\item[(i)] $(g_0, u_0, h_0, v_0) \in H^s(\Omega)\times H^s(\Omega)\times H^s(\Omega)\times H^s(\Omega)$.
\item[(ii)] The initial data satisfies the smallness condition:
\[
\|g_0\|_{H^s} + \|u_0\|_{H^s} + \|h_0\|_{H^s} + \|v_0\|_{H^s} < \e, 
\]
where $(1>)\,\e>0$ is a sufficiently small constant. 
\item[(iii)]
The initial data satisfies the compatibility conditions up to order s:
\[
\partial_t^k u(x,t) \cdot r(x)|_{t=0} =0, \quad \partial_t^k v(x,t)|_{t=0} =0, \quad x\in\partial\Omega, \quad k=0,1,\dots,s-1.
\]
\end{itemize}
Then, the initial-boundary value problem \eqref{E-3}--\eqref{E-4} has a unique solution 
$(g, u, h, v)$ on the time interval $[0,T]$ in the sense of Definition \ref{D2.2}. Moreover, we have 
\[
\sup_{0 \leq t \leq T}\lt(\|g(t)\|_{\mathfrak{X}^s} + \|u(t)\|_{\mathfrak{X}^s} + \|h(t)\|_{\mathfrak{X}^s} + \|v(t)\|_{\mathfrak{X}^s}\rt)< \e^{1/2}.
\]

\begin{remark}
Since $s\ge 4$, the solution $(g,u,h,v)$ obtained in Theorem \ref{T2.3} belongs to $\mc^2(\om \times [0,T])$ and thanks to the smallness condition, $e^g > 0$ and $1+h > 0$ also hold. Thus, $(\rho,u,n,v) \in \mc^2(\om \times [0,T])$ solves the system \eqref{A-3} and \eqref{A-3_bdy} and moreover, we can also deduce that
\[
\sup_{0 \leq t \leq T}\lt(\|\rho(t)\|_{\mathfrak{X}^s} + \|u(t)\|_{\mathfrak{X}^s} + \|n(t)\|_{\mathfrak{X}^s} + \|v(t)\|_{\mathfrak{X}^s}\rt)< \infty.
\]
\end{remark}
\end{theorem}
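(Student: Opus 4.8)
The plan is to prove Theorem \ref{T2.3} by a standard iteration/energy scheme tailored to the hyperbolic--parabolic coupled structure of \eqref{E-3}. First I would linearize the system: given an iterate $(g^k,u^k,h^k,v^k)$ lying in a suitable ball of $\mathfrak{X}^s$, define the next iterate $(g^{k+1},u^{k+1},h^{k+1},v^{k+1})$ as the solution of the linear problem in which the transport coefficients and the coupling terms are frozen at the previous step. The $(g,u)$-block is then a symmetric hyperbolic system (the isothermal Euler part, whose principal symbol is symmetrizable in the variables $(g,u)$ because $\nabla g$ appears linearly and $\nabla\cdot u$ is its formal adjoint), with the characteristic (kinematic) boundary condition $u\cdot r=0$; the $(h,v)$-block is a transport equation for $h$ coupled to a parabolic equation for $v$ with the variable but strictly positive coefficient $1/(1+h^k)$ and homogeneous Dirichlet data. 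The friction terms $\pm(v-u)$ and $-\frac{e^g}{M(1+h)}(v-u)$ are treated as lower-order forcing. For each linear block one invokes the classical existence theory: for the symmetric hyperbolic system with characteristic boundary, the energy estimates of Friedrichs--Lax--Phillips / Rauch--Massey type in $H^s$ (using the compatibility conditions of item (iii) to start the time-differentiated estimates), and for the parabolic equation, standard $L^2(0,T;H^{s+1})\cap \mathcal{C}([0,T];H^s)$ theory with the variable coefficient frozen.

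Second I would derive the uniform-in-$k$ a priori bound. Applying $\partial_x^\beta$ for $|\beta|\le s$ to each equation, multiplying by the corresponding derivative, integrating over $\Omega$ and using the symmetrizer, the key point is that the boundary terms vanish or have a good sign: for the hyperbolic block the boundary integral is proportional to $\int_{\partial\Omega}(u\cdot r)(\cdots)$, which is zero thanks to $u\cdot r=0$ and its consequences for tangential/time derivatives (here one uses that $r$ is smooth since $\partial\Omega$ is a smooth hypersurface, and the compatibility conditions to control normal derivatives via the equation itself); for the parabolic block the Dirichlet condition kills the boundary term from $\Delta v$ and produces the coercive term $\int_\Omega |\nabla v|^2/(1+h)\,dx \gtrsim \|\nabla v\|_{L^2}^2$. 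Commutator estimates (Moser/Kato--Ponce type in $H^s$, $s\ge 4 > 3/2+1$ so that $H^{s-1}$ is an algebra) bound all the nonlinear terms $u\cdot\nabla g$, $u\cdot\nabla u$, $\nabla\cdot(hv)$, $v\cdot\nabla v$, $p(1+h)$, $\frac{1}{1+h}\Delta v$, $\frac{e^g}{M(1+h)}(v-u)$ by $C(\|(g^k,u^k,h^k,v^k)\|_{\mathfrak{X}^s})$ times $\|(g^{k+1},u^{k+1},h^{k+1},v^{k+1})\|_{\mathfrak{X}^s}$ up to integrable-in-time factors. Denoting $E^{k+1}(t)$ the natural $\mathfrak{X}^s$-energy, a Gr\"onwall argument gives $\sup_{[0,T]}E^{k+1}(t)\le \Phi(T,\sup_{[0,T]}E^k(t))\cdot\big(E^{k+1}(0)+ T\cdot(\text{stuff})\big)$; choosing the initial smallness $\e$ small enough (depending on $T$) and the ball radius appropriately, one closes the induction so that $\sup_k\sup_{[0,T]}E^k(t)<\e^{1/2}$, which in particular keeps $1+h^k$ and $e^{g^k}$ bounded away from $0$ and $\infty$ uniformly, as needed for the parabolic coefficient and the friction term.

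Third, for convergence of the iteration I would estimate the differences $(\delta g,\delta u,\delta h,\delta v):=(g^{k+1}-g^k,\dots)$ in the \emph{lower} norm $\mathfrak{X}^{s-1}$ (or even $L^2$), where the same energy method shows $\sup_{[0,T]}\|(\delta g,\delta u,\delta h,\delta v)\|_{s-1}\le \tfrac12\sup_{[0,T]}\|(\delta g^{\mathrm{prev}},\dots)\|_{s-1}$ after shrinking $\e$ (or $T$), giving a Cauchy sequence in $\mathcal{C}([0,T];H^{s-1})$; combined with the uniform $\mathfrak{X}^s$ bound and interpolation, the limit $(g,u,h,v)$ lies in $\mathfrak{X}^s$ and solves \eqref{E-3}--\eqref{E-4} in the distributional sense, which is Definition \ref{D2.2}. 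Uniqueness follows from the same difference estimate applied to two solutions. The final bound $\sup_{[0,T]}(\|g\|_{\mathfrak{X}^s}+\cdots)<\e^{1/2}$ is inherited from the uniform bound on the iterates by weak-$*$ lower semicontinuity (with the slight care that one should first close the uniform estimate with a strictly smaller constant, e.g. $\tfrac12\e^{1/2}$, so that passing to the limit stays below $\e^{1/2}$).

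The main obstacle I expect is the hyperbolic block with the \emph{characteristic} boundary condition $u\cdot r=0$: unlike the Dirichlet condition for $v$, this is a characteristic boundary, so one loses control of the normal derivative of $u$ in the naive energy estimate and must recover it from the equations and the compatibility conditions (this is precisely why the theorem demands compatibility up to order $s$ and why a curved smooth boundary, rather than a half-space, forces one to work with the geometry of $r(x)$ via a partition of unity and boundary normal coordinates). The coupling of this hyperbolic block to the parabolic $v$-equation through the friction terms is comparatively benign since those terms are of lower order, but one must make sure the bookkeeping of the two different functional settings ($\mathfrak{X}^s$ for $(g,u)$ versus $\mathcal{C}([0,T];H^s)\cap L^2(0,T;H^{s+1})$ for $v$) is compatible — in practice one records $v$ only in $\mathfrak{X}^s$ as stated, using the extra parabolic regularity merely to absorb the coupling terms in the estimates.
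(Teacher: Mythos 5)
Your proposal captures the overall architecture correctly (linearization, iteration, energy estimates for a hyperbolic block with a characteristic boundary coupled to a parabolic block, $L^2$-Cauchy estimates in a lower norm, limit passage), but the paper handles the characteristic boundary condition $u\cdot r=0$ by a genuinely different mechanism than the Friedrichs/Rauch--Massey energy method in boundary normal coordinates that you sketch. The paper never estimates $\partial^\beta_x(g,u)$ directly. Instead it (a) estimates only \emph{time} derivatives $\partial_t^\ell(g,u)$ in $L^2$, which is clean precisely because the boundary condition $\partial_t^\ell u\cdot r=0$ is preserved and the boundary integrals vanish without any normal/tangential bookkeeping; (b) estimates the vorticity $\omega=\nabla\times u$ in $\mathfrak{X}^{s-1}$, whose transport equation is free of $\nabla g$ and again produces no dangerous boundary terms; and (c) recovers the full spatial $H^s$ norm from these quantities via the classical div--curl estimate for tangential vector fields (cited from Zhang/Bourguignon--Brezis): for $\mathbf{u}\in H^s$ with $\mathbf{u}\cdot r=0$ on $\partial\Omega$,
\[
\|\mathbf{u}\|_{H^s}\le C\bigl(\|\nabla\times\mathbf{u}\|_{H^{s-1}}+\|\nabla\cdot\mathbf{u}\|_{H^{s-1}}+\|\mathbf{u}\|_{H^{s-1}}\bigr),
\]
with $\nabla\cdot u$ controlled from the continuity equation for $g$ and $\nabla g$ from the momentum equation. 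This sidesteps the well-known normal-derivative loss for characteristic boundaries — you correctly identify the danger, but ``partition of unity and boundary normal coordinates'' generically lands you in anisotropic/weighted Sobolev spaces, and it is not clear from your sketch how you would close the estimate in plain $H^s$ without something playing the role of the div--curl inequality. The paper's route is more elementary and is the reason the vorticity appears explicitly in the proof. For the $(h,v)$-block the paper also performs the energy estimate with the Friedrichs symmetrizer $A^0(\eta)$ rather than raw derivatives, which you might have in mind implicitly but is worth making explicit. The rest of your plan — smallness of $\varepsilon$ depending on $T$ to close the bootstrap, Cauchy estimates in $L^2$ rather than $H^{s-1}$, and uniqueness by the same difference estimate — aligns with what the paper actually does.
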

There are few studies on the Cauchy problem for the coupled fluid system through the drag force. The compressible Euler equations coupled with the incompressible/compressible Navier-Stokes equations are dealt with in \cite{C0,C,CK16}, and the global-in-time classical solutions and large-time behavior are provided under suitable assumptions on the initial data such as smallness and regularity. In these works, the smoothing effect from the viscosity in the Navier-Stokes system is crucially used via the drag force to prevent the possible singularity formation of solutions from the Euler equations. To the best of the authors' knowledge, the existence theory for the coupled Euler and Navier-Stokes equations in a bounded domain has never been studied before. In Section \ref{sec:4}, we study the global-in-time existence and uniqueness of strong solutions to the system \eqref{E-3} with the initial-boundary conditions \eqref{E-4}. One of the main difficulties in analyzing the existence of solutions arises from the fact that the kinematic boundary condition is given for the Euler equations in \eqref{E-3}. This makes it difficult to estimate the standard $H^s$-estimate of solutions. For that reason, we also estimate the vorticity, which is the curl of the fluid velocity, in the Euler solutions in \eqref{E-3}. We then use the estimate showing that the function in $H^s(\om)$ with the kinematic boundary condition can be bounded from above by the sum of $H^{s-1}(\om)$ norms of the divergence, curl of that function, and itself. See Section \ref{sec:4} for more details.

We now state our main result on the hydrodynamic limit showing that the weak solutions to the Vlasov-Fokker-Planck/Navier-Stokes system \eqref{A-2} converge to the strong solution to the Euler/Navier-Stokes system \eqref{A-3} as $\e$ goes to $0$. 
\begin{theorem}\label{T2.2}
Let $T>0$, $\gamma > 3/2$, and let $(f^\e, n^\e, v^\e)$ be a weak solution to the system \eqref{A-2} with the boundary conditions \eqref{bdy_ref} and \eqref{bdy_ho} on the time interval $[0,T]$ with the initial data $(f^\e_0, n^\e_0, v^\e_0)$ in the sense of Definition \ref{D2.1}. Let $(\rho,u,n,v)$ be a unique strong solution to the system \eqref{A-3} on the time interval $[0,T]$ with the initial data $(\rho_0,u_0,n_0,v_0)$ in the sense of Definition \ref{D2.2} satisfying
\[
\inf_{(x,t) \in \Omega \times [0,T]} \rho(x,t) > 0 \quad \mbox{and} \quad \inf_{(x,t) \in \Omega \times [0,T]}n(x,t) > 0.
\]
Suppose that the initial data $(f^\e_0, n^\e_0, v^\e_0)$ and $(\rho_0,u_0,n_0,v_0)$ are well-prepared so that they satisfy the following assumptions:
\begin{itemize}
\item[{\bf (H1)}] 
$$\begin{aligned}
&\quad \int_{\Omega}\left( \int_{\R^3} f_0^\varepsilon \left(1 +  \log f_0^\varepsilon  +\frac{|\xi|^2}{2}\right) d\xi + \frac{1}{2} n_0^\varepsilon |v_0^\varepsilon|^2 + \frac{1}{\gamma-1}(n_0^\e)^\gamma \right) dx \cr
&\qquad - \int_{\Omega} \left(\rho_0 \left(1 + \log \rho_0 + \frac{|u_0|^2}{2}\right) + \frac12 n_0|v_0|^2 + \frac{1}{\gamma-1}n_0^\gamma\right)dx \cr
&\qquad \quad = \mathcal{O}(\sqrt\e),
\end{aligned}$$
\item[{\bf (H2)}] 
$$\begin{aligned}
&\int_{\Omega} \rho_0^\e |u_0^\e - u_0|^2\,dx + \int_{\Omega} n_0^\e|v_0^\e - v_0|^2\,dx \cr
& \quad + \int_{\Omega}\int_{\rho_0}^{\rho_0^\e} \frac{\rho_0^\varepsilon - z}{z}\,dzdx + \int_{\Omega} \left(n_0^\e\int_{n_0}^{n_0^\e} \frac{p(z)}{z^2}\,dz - \frac{p(n_0)}{n_0}(n_0^\e - n_0)\right)dx \cr
&\qquad \quad = \mathcal{O}(\sqrt\e),
\end{aligned}$$
\end{itemize}
Then we have 
\begin{align*}\begin{aligned}
&\int_{\Omega} \rho^\e|u^\e - u|^2\,dx + \int_{\Omega} n^\e|v^\e - v|^2\,dx + \int_{\Omega}\int_{\rho}^{\rho^\e} \frac{\rho^\varepsilon - z}{z}\,dzdx \cr
&\quad + \int_{\Omega} \left(n^\e\int_n^{n^\e} \frac{p(z)}{z^2}\,dz - \frac{p(n)}{n}(n^\e - n)\right)dx\cr
&\quad + \int_0^t \int_{\Omega} |\nabla (v-v^\e)|^2\,dxds +\int_0^t \int_{\Omega} \rho^\varepsilon|( u^\varepsilon - v^\varepsilon) - (u-v)|^2\, dxds\\
& \qquad \le C\sqrt{\varepsilon},
\end{aligned}\end{align*}
where $C$ is a positive constant independent of $\e$.
\end{theorem}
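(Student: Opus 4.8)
The plan is to run a relative entropy (modulated energy) argument comparing the weak solution $(f^\e,n^\e,v^\e)$ of \eqref{A-2} against the strong solution $(\rho,u,n,v)$ of \eqref{A-3}, exploiting the weak-strong uniqueness structure. First I would introduce the relative entropy functional
\[
\mathcal{H}^\e(t) := \int_{\Omega\times\bbr^3} f^\e\Big(\log f^\e - \log M_{\rho,u} + \tfrac{|\xi-u|^2}{2}\Big)\,dxd\xi + \int_\Omega \tfrac12 n^\e|v^\e-v|^2\,dx + \int_\Omega E(n^\e|n)\,dx,
\]
where $E(n^\e|n)$ is the Bregman distance associated with the internal energy $n^\gamma/(\gamma-1)$, i.e. $E(n^\e|n)=\frac{1}{\gamma-1}\big((n^\e)^\gamma - n^\gamma\big) - \frac{\gamma}{\gamma-1}n^{\gamma-1}(n^\e-n)$. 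A key preliminary algebraic identity is that, modulo lower-order terms, the kinetic part of $\mathcal{H}^\e$ controls $\int_\Omega \rho^\e|u^\e-u|^2\,dx$ plus a relative internal energy term for the particle phase that reduces (as $\e\to0$, using that $f^\e$ is close to a Maxwellian) to $\int_\Omega\int_\rho^{\rho^\e}\frac{\rho^\e-z}{z}\,dz\,dx$; establishing this coercivity is the bridge between $\mathcal{H}^\e$ and the quantity in the conclusion. I would also need the free-energy estimate from Theorem \ref{T0.1} (with $\sigma=\alpha=1/\e$, giving $\frac1\e\int_0^t\md(f^\e,v^\e)\,ds$ on the left) and the assumptions {\bf (H1)}--{\bf (H2)} to bound $\mathcal{H}^\e(0)=\mathcal{O}(\sqrt\e)$.

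The core computation is to differentiate $\mathcal{H}^\e(t)$ in time and organize the result as
\[
\mathcal{H}^\e(t) + \int_0^t\int_\Omega|\nabla(v-v^\e)|^2\,dxds + \int_0^t\int_\Omega \rho^\e\big|(u^\e-v^\e)-(u-v)\big|^2\,dxds \le \mathcal{H}^\e(0) + \frac{C}{\e}\int_0^t\md(f^\e,v^\e)\,ds + \int_0^t \mathcal{R}^\e(s)\,ds,
\]
where the dissipation terms $\int|\nabla(v-v^\e)|^2$ and the drag-type term $\rho^\e|(u^\e-v^\e)-(u-v)|^2$ come out with good signs after using the homogeneous Dirichlet condition \eqref{bdy_ho} and the sign of $-\rho(v-u)$ in the momentum balances, and $\mathcal{R}^\e$ collects remainder terms. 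Producing this inequality requires: (a) plugging the weak formulation (iv)--(v) of Definition \ref{D2.1} against test functions built from the strong solution — specifically $\tfrac{|\xi-u|^2}{2} - \log M_{\rho,u}$ for the kinetic equation and $v$ for the momentum equation — which is legitimate because the strong solution is $\mathcal{C}^2$ and $u\cdot r=0$, $v=0$ on $\partial\Omega$ are exactly the admissibility conditions in (iv)--(v); (b) using the equations \eqref{A-3} satisfied by $(\rho,u,n,v)$ pointwise to cancel the leading-order terms; (c) handling the boundary: for the kinetic term the specular reflection \eqref{bdy_ref} together with $u\cdot r=0$ makes $\tfrac{|\xi-u|^2}{2}-\log M_{\rho,u}$ invariant under $\xi\mapsto\mathcal{R}_x\xi$, so the boundary flux vanishes; for the fluid term $v=0$ on $\partial\Omega$ kills the boundary contributions.

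After assembling, the remainder $\mathcal{R}^\e$ splits into: terms of the form $\int_\Omega \rho^\e(u^\e-u)\otimes(u^\e-u):\nabla u\,dx$ and analogues with $(n^\e,v^\e)$ — these are bounded by $C(1+\|\nabla u\|_{L^\infty}+\|\nabla v\|_{L^\infty})\mathcal{H}^\e(t)$ using the coercivity of $\mathcal{H}^\e$ and the lower bounds $\inf\rho>0$, $\inf n>0$ together with the uniform-in-time regularity of the strong solution from Theorem \ref{T2.3}; pressure remainders controlled by the relative-internal-energy parts of $\mathcal{H}^\e$ (the standard convexity estimates for $n^\gamma$, valid since $\gamma>3/2>1$, controlling $|n^\e-n|^2$ on bounded density regions and $|n^\e-n|^\gamma$ on large density regions); and — crucially — the terms carrying the factor $1/\e$, namely those produced by the stiff collision operator $\frac1\e\nabla_\xi\cdot(\nabla_\xi f^\e-(u^\e-\xi)f^\e)$ tested against the $u$-dependent test function. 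This last group should, after integration by parts in $\xi$, be expressible through $\md(f^\e,v^\e)$ (more precisely its $(u-\xi)f-\nabla_\xi f$ part) times smooth functions of $(\rho,u)$, so by Cauchy--Schwarz it is absorbed: a fraction into the $\frac1\e\int\md$ term on the left and the rest leaving an $\mathcal{O}(\e)\cdot\frac1\e = \mathcal{O}(1)$ — actually $\mathcal{O}(\sqrt\e)$ after optimizing — contribution; this is where the $\sqrt\e$ rate in the conclusion originates, matching the $\sqrt\e$ in {\bf (H1)}--{\bf (H2)}. I expect the main obstacle to be precisely this bookkeeping of the $1/\e$-order terms: one must show that every dangerous term is either a perfect dissipation (good sign, stays on the left) or pairs against $\sqrt{\md(f^\e,v^\e)}$ so that Young's inequality with a weight $\sim\sqrt\e$ yields $\frac{1}{2\e}\md$ absorbed on the left plus a $C\sqrt\e$ residual; controlling the difference between $\rho^\e u^\e$ (first moment of $f^\e$) and what appears naturally, i.e. estimating $\|f^\e - M_{\rho^\e,u^\e}\|$ and $\|M_{\rho^\e,u^\e}-M_{\rho,u}\|$ in the right weighted norms via $\md$, is the technical heart. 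Once the differential inequality $\mathcal{H}^\e(t) + (\text{dissipation}) \le C\sqrt\e + C\int_0^t(1+\|\nabla u(s)\|_{L^\infty}+\|\nabla v(s)\|_{L^\infty})\mathcal{H}^\e(s)\,ds$ is in hand, Grönwall's lemma together with the uniform bound on $\int_0^T(\|\nabla u\|_{L^\infty}+\|\nabla v\|_{L^\infty})\,ds$ (from $s\ge4$ and Sobolev embedding applied to the strong solution) closes the argument, and the coercivity of $\mathcal{H}^\e$ converts the bound on $\mathcal{H}^\e(t)$ into the stated estimate on $\int\rho^\e|u^\e-u|^2 + \int n^\e|v^\e-v|^2 + (\text{relative pressure terms})$.
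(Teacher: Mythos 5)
Your overall strategy (relative entropy built on weak--strong uniqueness, with the initial smallness coming from {\bf (H1)}--{\bf (H2)} and the rate $\sqrt\e$ coming from the stiff Fokker--Planck dissipation) is the same as the paper's, but you modulate at the kinetic level, comparing $f^\e$ with the local Maxwellian $M_{\rho,u}$ of the limit. The paper instead works entirely at the macroscopic level: it forms the moments $U^\e=(\rho^\e,\rho^\e u^\e,n^\e,n^\e v^\e)$, inserts them into the fluid relative entropy $\mathcal{H}(U^\e|U)$ associated with the conservation-law form of \eqref{A-3} (Lemma \ref{L4.3}), and isolates the only genuinely kinetic object --- the discrepancy $\int_{\R^3}(\xi\otimes\xi-u^\e\otimes u^\e-\mathbb{I}_{3\times 3})f^\e\,d\xi$ between the true momentum flux and its closure --- as a single error term ($K_3$), bounded by $\|\nabla u\|_{L^\infty}\bigl(\int|\xi|^2f^\e\bigr)^{1/2}\md_1(f^\e)^{1/2}$; a Cauchy--Schwarz in time together with $\frac1\e\int_0^T\md_1\,ds\le C$ then gives $C\sqrt\e$ directly. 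Your functional dominates the macroscopic one by sub-additivity, but it forces you to differentiate $\int f^\e\log f^\e$ and drag the full kinetic structure through every estimate, which the paper deliberately postpones to the proof of Corollary \ref{cor_main}.

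There are two concrete gaps. First, your master inequality places $+\frac{C}{\e}\int_0^t\md(f^\e,v^\e)\,ds$ on the \emph{right}-hand side with no matching dissipation on the left to absorb it into; since Lemmas \ref{L4.1}--\ref{L4.2} only give $\frac1\e\int_0^T\md_1\,ds\le C$, that term is $\mathcal{O}(1)$, not $\mathcal{O}(\sqrt\e)$, and the conclusion does not follow from the inequality as displayed. Your later prose describes the correct mechanism (pair each stiff term with $\sqrt{\md_1}$ and use Young's inequality with weight $\sqrt\e$), but the displayed inequality contradicts it. Second, and more seriously, differentiating the kinetic part of $\mathcal{H}^\e$ produces remainders that are \emph{linear}, not quadratic, in $u-u^\e$: for instance, pairing $\partial_t u$ with $(u-\xi)f^\e$ yields $\int_\Omega\rho^\e(u-u^\e)\cdot\bigl(u\cdot\nabla u-(v-u)\bigr)dx$. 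Unless every such term cancels exactly against the linear terms generated by $\partial_t\int f^\e\log\rho$ and the limit Euler equations --- a Bregman-type cancellation you must verify term by term --- the only available bound is $C\bigl(\int\rho^\e|u-u^\e|^2\bigr)^{1/2}\le C\,\mathcal{H}^\e(t)^{1/2}$, and Gr\"onwall applied to $y'\le Cy^{1/2}+Cy+C\sqrt\e$ gives only $y(t)=\mathcal{O}(1)$. This is precisely why the paper runs the computation of $\frac{d}{dt}\int P(f^\e|M_{\rho,u})$ only in the proof of Corollary \ref{cor_main}, \emph{after} Theorem \ref{T2.2} is available as an input, and even then obtains the weaker rate $\e^{1/4}$. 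To make your one-shot kinetic argument deliver $\sqrt\e$ you must either exhibit the exact cancellation of all linear remainders or fall back on the macroscopic modulation.
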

\begin{remark}The condition on $\gamma$ follows from the weak solvability of the compressible isentropic Navier-Stokes system in \eqref{A-1}, see Theorem \ref{T0.1}. In fact, the result of hydrodynamic limit in Theorem \ref{T2.2} ``a priori'' holds for $\gamma \geq 1$. Thus the result in Theorem \ref{T2.2}, as a consequence Corollary \ref{cor_main}, can be extended to $\gamma \ge 1$ once the existence of weak solutions to the system \eqref{A-1} is resolved for the case $\gamma \geq 1$.
\end{remark}
\begin{remark}The assumptions {\bf(H1)}-{\bf(H2)} can be replaced as follows:
\begin{itemize}
\item[{\bf (H1)$'$}] 
$$\begin{aligned}
&\int_{\Omega}\left( \int_{\R^3} f_0^\varepsilon \left( \log f_0^\varepsilon  +\frac{|\xi|^2}{2}\right) d\xi \right)dx - \int_{\Omega}\rho_0 \left( \log \rho_0 + \frac{|u_0|^2}{2}\right)dx  = \mathcal{O}(\sqrt\e).
\end{aligned}$$

\item[{\bf (H2)$'$}] 
$$\begin{aligned}
&\|\rho_0^\e - \rho_0\|_{L^2} = \mathcal{O}(\sqrt\e), \quad \|n_0^\e - n_0\|_{L^\gamma} =\mathcal{O}(\sqrt{\e}),\cr
&\|u_0^\e - u_0\|_{L^\infty} =\mathcal{O}(\sqrt{\e}), \quad \mbox{and} \quad \|v_0^\e - v_0\|_{L^\infty} =\mathcal{O}(\sqrt{\e}).
\end{aligned}$$
\end{itemize}
Note that the above assumptions are stronger than {\bf(H1)}-{\bf(H2)}, i.e., {\bf (H1)$'$}-{\bf (H2)$'$} imply {\bf(H1)}-{\bf(H2)}. Indeed, {\bf(H1)} can be deduced from {\bf(H1)$'$}-{\bf(H2)$'$} as follows:
\begin{align*}
\begin{aligned}
&\int_{\Omega}\left( \int_{\R^3} f_0^\varepsilon \left(1 +  \log f_0^\varepsilon  +\frac{|\xi|^2}{2}\right) d\xi + \frac{1}{2} n_0^\varepsilon |v_0^\varepsilon|^2 + \frac{1}{\gamma-1}(n_0^\e)^\gamma \right) dx \cr
&\quad - \int_{\Omega} \left(\rho_0 \left(1 + \log \rho_0 + \frac{|u_0|^2}{2}\right) + \frac12 n_0|v_0|^2 + \frac{1}{\gamma-1}n_0^\gamma\right)dx \cr
&\qquad \le \|\rho_0^\e -\rho_0\|_{L^1}  +\mathcal{O}(\sqrt{\e}) + \int_\Omega \left(\frac{1}{2}(n_0^\e |v_0^\e|^2 - n_0 |v_0|^2) + \frac{1}{\gamma-1} ((n_0^\e)^\gamma - (n_0)^\gamma)\right) dx   \\
&\qquad \le \mathcal{O}(\sqrt{\e}) +  \frac{1}{2}\int_\Omega |n_0^\e-n_0| |v_0|^2 + n_0^\e (v_0^\e - v_0)(v_0^\e + v_0) \,dx\\
&\qquad \quad + \frac{\gamma}{\gamma-1}\int_{\Omega }\max\{ (n_0^\e)^{\gamma-1}, (n_0)^{\gamma-1}\} |n_0^\e - n_0|\,dx \\
&\qquad \le   \mathcal{O}(\sqrt{\e}) + \|v_0\|_{L^\infty}^2 \|n_0^\e - n_0\|_{L^1}+ \|n_0^\e\|_{L^1}\max\{\|v_0^\e\|_{L^\infty}, \|v_0\|_{L^\infty}\} \|v_0^\e - v_0\|_{L^\infty}\\
&\qquad \quad + \frac{\gamma}{\gamma-1} \left(\int_\Omega \max\{ (n_0^\e)^\gamma, (n_0)^\gamma\}\,dx\right)^{1/\gamma^*}\|n_0^\e - n_0\|_{L^\gamma} \\
&\qquad \le \mathcal{O}(\sqrt{\e}),
\end{aligned}
\end{align*}
where we used 
\[
\|v_0^\e\|_{L^\infty} \leq \|v_0^\e - v_0\|_{L^\infty} + \|v_0\|_{L^\infty} \leq \mathcal{O}(\sqrt\e) + \|v_0\|_{L^\infty},
\] 
and $\gamma^*$ denotes the H\"older conjugate of $\gamma$. In order to deduce {\bf(H2)} from {\bf(H1)$'$}-{\bf(H2)$'$}, we first observe 
\bq\label{est_rho0}
\int_{\rho_0}^{\rho_0^\e} \frac{\rho_0^\varepsilon - z}{z}\,dz = (\rho_0^\e - \rho_0)^2 \int_0^1 \int_0^{\tau} h''(s\rho_0^\e + (1-s) \rho_0)\,ds d\tau,
\eq
where $h(\rho) = \rho\log \rho$. By employing a similar argument as in \cite[Lemma 2.4]{LT13}, see also \cite[Remark 2.5]{LT13}, we consider two cases; (i) $0 \leq \rho^\e_0 \leq R_0$ and (ii) $\rho^\e_0 > R_0$ for some $R_0 > 0$. For the first case (i), we consider the following positive function:
$$\begin{aligned}
B(\rho^\e_0, \rho_0) &:= \int_{\rho_0}^{\rho_0^\e} \frac{\rho_0^\varepsilon - z}{z}\,dz \bigg/(\rho_0^\e - \rho_0)^2\cr
&=\int_0^1 \int_0^{\tau} h''(s\rho_0^\e + (1-s) \rho_0)\,ds d\tau,
\end{aligned}$$
for $\rho^\e_0 \in [0,R_0]$, $\rho_0 \in K := [\inf_{x\in\om} \rho_0(x), \sup_{x\in\om}\rho_0(x)]$ with $0<\inf_{x\in\om} \rho_0(x) \leq \sup_{x\in\om} \rho_0(x) < \infty$, and $\rho^\e_0 \neq \rho_0$. On the other hand, we readily check 
\[
\lim_{\rho^\e_0 \to \rho_0} B(\rho^\e_0, \rho_0) = \frac{h''(\rho_0)}{2} = \frac{1}{2\rho_0} > 0,
\]
and this implies $B(\rho^\e_0, \rho_0) \in \mc([0,R_0] \times K)$. Thus there exists $C>0$, which is independent of $\e$, such that $B(\rho^\e_0, \rho_0) \leq C$ for $\rho^\e_0 \in [0,R_0]$ and $\rho_0 \in K$, that is, 
\[
\int_{\rho_0}^{\rho_0^\e} \frac{\rho_0^\varepsilon - z}{z}\,dz  \leq C(\rho_0^\e - \rho_0)^2.
\]
For the other case (ii), we estimate
\[
\int_0^1 \int_0^{\tau} h''(s\rho_0^\e + (1-s) \rho_0)\,ds d\tau \leq \frac12\max\lt\{\frac{1}{\rho^\e_0}, \frac{1}{\rho_0} \rt\} \leq \frac12 \max\lt\{\frac{1}{R_0}, \frac{1}{\inf_{x\in\om} \rho_0(x)} \rt\} \leq C,
\]
for some $C>0$. Thus for both cases (i) and (ii) we obtain from \eqref{est_rho0} that
\[
\int_{\rho_0}^{\rho_0^\e} \frac{\rho_0^\varepsilon - z}{z}\,dz \leq C (\rho_0^\e - \rho_0)^2,
\]
where $C>0$ is independent of $\e$. This together with the first-order Taylor approximation yields
\begin{align*}
\begin{aligned}
&\int_{\Omega} \rho_0^\e |u_0^\e - u_0|^2\,dx + \int_{\Omega} n_0^\e|v_0^\e - v_0|^2\,dx \cr
& \quad + \int_{\Omega}\int_{\rho_0}^{\rho_0^\e} \frac{\rho_0^\varepsilon - z}{z}\,dzdx + \int_{\Omega} \left(n_0^\e\int_{n_0}^{n_0^\e} \frac{p(z)}{z^2}\,dz - \frac{p(n_0)}{n_0}(n_0^\e - n_0)\right)dx \cr
&\qquad \le \|\rho_0^\e\|_{L^1}\|u_0^\e - u_0\|_{L^\infty}^2 + \|n_0^\e\|_{L^1}\|v_0^\e - v_0\|_{L^\infty}^2 \\
&\qquad \quad + C\|\rho_0^\e - \rho_0\|_{L^2}^2 + \frac{1}{\gamma-1}\int_\Omega \left((n_0^\e)^\gamma - (n_0)^\gamma - \gamma (n_0)^{\gamma-1}(n_0-n_0^\e) \right)dx\\
&\qquad \le \mathcal{O}(\sqrt{\e}) + \frac{2\gamma}{\gamma-1} \left(\int_\Omega \max\{ (n_0^\e)^\gamma, (n_0)^\gamma\}\,dx\right)^{1/\gamma^*} \|n_0^\e - n_0\|_{L^\gamma} \\
&\qquad \le \mathcal{O}(\sqrt{\e}).
\end{aligned}
\end{align*}

\end{remark}
\begin{corollary}\label{cor_main}Suppose that all the assumptions in Theorem \ref{T2.2} hold. Then the following strong convergences of weak solutions $(f^\varepsilon, n^\varepsilon, v^\varepsilon)$ to the system \eqref{A-2} towards the strong solutions $(\rho,u,n,v)$ to the system \eqref{A-3} can be obtained:
$$\begin{aligned}
&(\rho^\e, n^\e) \to (\rho, n) \mbox{ a.e. and in }L^\infty(0,T; L^1(\Omega)) \times L^\infty(0,T; L^p(\Omega)) \ \forall p \in [1,\gamma],\cr
&(\rho^\e u^\e, n^\e v^\e) \to (\rho u, nv) \mbox{ a.e. and in } L^\infty(0,T; L^1(\Omega)) \times L^\infty(0,T; L^1(\Omega)),\quad \cr
&(\rho^\e u^\e \otimes u^\e, n^\e v^\e \otimes v^\e) \to (\rho u\otimes u, nv \otimes v) \mbox{ a.e. and in }L^\infty(0,T; L^1(\Omega))\times L^\infty(0,T; L^1(\Omega)), \quad \mbox{and}\cr
&\int_{\R^3} f^\e \xi\otimes \xi\,d\xi \to \rho u\otimes u + \rho \mathbb{I}_{3 \times 3} \quad \mbox{a.e.} \quad \mbox{and} \quad L^p(0,T;L^1(\om)) \quad \mbox{for} \quad 1 \leq p \leq 2
\end{aligned}$$
as $\e \to 0$. Moreover, we have
\[
\int_{\om \times \R^3} \int_{M_{\rho,u}}^{f^\e} \frac{f^\e - z}{z}\,dzdx\xi \leq \int_{\om \times \R^3} \int_{M_{\rho_0,u_0}}^{f^\e_0} \frac{f^\e_0 - z}{z}\,dzdx\xi + C\e^{1/4},
\]
where $M_{\rho,u}$ appears in \eqref{Max}. In particular, this implies
\[
\|f^\e - M_{\rho,u}\|_{L^1} \leq  C\lt(\int_{\om \times \R^3} \int_{M_{\rho_0,u_0}}^{f^\e_0} \frac{f^\e_0 - z}{z}\,dzdx\xi\rt)^{1/2} + C\e^{1/8}.
\]
\end{corollary}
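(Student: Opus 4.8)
The plan is to obtain every convergence from the single relative-entropy bound of Theorem~\ref{T2.2}, together with the ($\e$-scaled) entropy inequality satisfied by $(f^\e,n^\e,v^\e)$ --- the analogue of the one in Theorem~\ref{T0.1}, in which the Fokker--Planck/alignment dissipation carries a factor $1/\e$ while the source $3t\|f_0^\e\|_{L^1}$, produced by the order-one drag term, is unchanged --- and a handful of elementary inequalities. First I would note that the two ``relative potential'' integrals occurring on the left in Theorem~\ref{T2.2}, namely $\int_\om\int_\rho^{\rho^\e}\frac{\rho^\e-z}{z}\,dz\,dx=\int_\om(\rho^\e\log(\rho^\e/\rho)-\rho^\e+\rho)\,dx$ and $\int_\om(n^\e\int_n^{n^\e}\frac{p(z)}{z^2}\,dz-\frac{p(n)}{n}(n^\e-n))\,dx$, are the isothermal and the $\gamma$-law relative entropies; since $\inf\rho>0$, $\inf n>0$ and $\om$ is bounded, a Csisz\'ar--Kullback--Pinsker inequality (respectively the standard splitting of $\om$ into $\{n^\e\le2\sup n\}$ and its complement) shows they dominate $\|\rho^\e-\rho\|_{L^1}^2$ and a positive power of $\|n^\e-n\|_{L^\gamma}$. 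As the right-hand side of Theorem~\ref{T2.2} is $C\sqrt\e$, and $\|\rho^\e\|_{L^1}=\|f_0^\e\|_{L^1}$, $\|n^\e\|_{L^1}=\|n_0^\e\|_{L^1}$ are bounded uniformly in $\e$ (mass conservation plus the uniform bound on $\mathcal{F}(f_0^\e,n_0^\e,v_0^\e)$ implied by \textbf{(H1)}), this gives $\rho^\e\to\rho$ in $L^\infty(0,T;L^1(\om))$ and $n^\e\to n$ in $L^\infty(0,T;L^\gamma(\om))$, hence in $L^\infty(0,T;L^p(\om))$ for every $p\in[1,\gamma]$ by H\"older on $\om$, with a.e. convergence along a subsequence.

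Next I would treat the fluxes by splitting and Cauchy--Schwarz. Writing $\rho^\e u^\e-\rho u=\rho^\e(u^\e-u)+(\rho^\e-\rho)u$, the first term is $\le\|\rho^\e\|_{L^1}^{1/2}(\int_\om\rho^\e|u^\e-u|^2)^{1/2}\le C\e^{1/4}$ by Theorem~\ref{T2.2}, while the second is $\le\|u\|_{L^\infty}\|\rho^\e-\rho\|_{L^1}$, using that $u,v\in L^\infty$ because $(\rho,u,n,v)$ is a strong solution with $s\ge4$ ($H^s\hookrightarrow L^\infty$); the same decomposition handles $n^\e v^\e\to nv$. For the quadratic fluxes, $\rho^\e u^\e\otimes u^\e-\rho u\otimes u=\rho^\e(u^\e-u)\otimes u^\e+\rho^\e u\otimes(u^\e-u)+(\rho^\e-\rho)u\otimes u$ together with $\int_\om\rho^\e|u^\e|^2\le2\int_\om\rho^\e|u^\e-u|^2+2\|u\|_{L^\infty}^2\|\rho^\e\|_{L^1}\le C$ gives $\rho^\e u^\e\otimes u^\e\to\rho u\otimes u$ (and likewise $n^\e v^\e\otimes v^\e$) in $L^\infty(0,T;L^1)$ with rate $\e^{1/4}$. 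Finally $\int_{\bbr^3}f^\e\xi\otimes\xi\,d\xi=\rho^\e u^\e\otimes u^\e+P^\e$; the entropy inequality forces $\tfrac1\e\int_0^t\!\int\tfrac1{f^\e}|(u^\e-\xi)f^\e-\nabla_\xi f^\e|^2\le\mathcal{F}(f_0^\e,n_0^\e,v_0^\e)+3t\|f_0^\e\|_{L^1}\le C$, so the Fokker--Planck/alignment dissipation is $O(\e)$, and a weighted Poincar\'e inequality for the Gaussian $M_{f^\e}=M_{\rho^\e,u^\e}$ converts this into $\int_0^t\|P^\e-\rho^\e\mathbb{I}_{3\times3}\|_{L^1}^2\le C\e$; with the first step this yields $\int_{\bbr^3}f^\e\xi\otimes\xi\,d\xi\to\rho u\otimes u+\rho\mathbb{I}_{3\times3}$ in $L^p(0,T;L^1(\om))$, $1\le p\le2$, and a.e. along a subsequence.

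For the Maxwellian relative entropy, setting $\mathcal{R}^\e(t):=\int_{\om\times\bbr^3}\int_{M_{\rho,u}}^{f^\e}\frac{f^\e-z}{z}\,dz\,dx\,d\xi$ and $\mathcal{K}^\e(t):=\int_{\om\times\bbr^3}(f^\e\log f^\e+\tfrac12|\xi|^2f^\e)\,dx\,d\xi$, I would expand $\log M_{\rho,u}=\log\rho-\tfrac32\log(2\pi)-\tfrac12|\xi-u|^2$ and use $\int f^\e\,d\xi=\rho^\e$, $\int M_{\rho,u}\,d\xi=\rho$ and the cancellation $\int(\xi-u^\e)f^\e\,d\xi=0$ to get the identity
\[
\mathcal{R}^\e(t)=\mathcal{K}^\e(t)-\int_\om\Big(\rho^\e u^\e\!\cdot\!u-\tfrac12\rho^\e|u|^2+\rho^\e\log\rho-\tfrac32\log(2\pi)\,\rho^\e-(\rho-\rho^\e)\Big)dx.
\]
Evaluating at $t$ and at $0$ and subtracting, the increment $\mathcal{K}^\e(t)-\mathcal{K}^\e(0)$ is controlled from above by the entropy inequality, after which --- using the exact Navier--Stokes energy balance to rewrite $3t\|f_0^\e\|_{L^1}$ modulo the drag dissipation $\int_0^t\!\int|v^\e-\xi|^2f^\e=\int_0^t\!\int(\rho^\e|v^\e-u^\e|^2+\mathrm{tr}\,P^\e)$ --- the source $3t\|f_0^\e\|_{L^1}$ is absorbed up to the remainder $\int_0^t\!\int(\mathrm{tr}\,P^\e-3\rho^\e)$, which is $O(\sqrt\e)$ by the previous step; the conserved-mass contributions cancel; and the remaining increments of $\int\rho^\e u^\e\cdot u$, $\int\rho^\e|u|^2$, $\int\rho^\e\log\rho$ are computed by testing the exact local balance $\partial_t\rho^\e+\nabla\cdot(\rho^\e u^\e)=0$, the exact momentum balance $\partial_t(\rho^\e u^\e)+\nabla\cdot(\rho^\e u^\e\otimes u^\e)+\nabla\cdot P^\e=\rho^\e(v^\e-u^\e)$ and the limiting Euler equations in \eqref{A-3} against $u$, $\log\rho$, etc.\ (using $\rho^\e u^\e\cdot r=0$ and $u\cdot r=0$ on $\partial\om$), and closed by Theorem~\ref{T2.2}, the previous two steps, \textbf{(H1)}--\textbf{(H2)} and a Gr\"onwall argument, producing $\mathcal{R}^\e(t)\le\mathcal{R}^\e(0)+C\e^{1/4}$. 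The stated $L^1$ bound then follows from the Csisz\'ar--Kullback--Pinsker inequality $\|f^\e-M_{\rho,u}\|_{L^1}\le C(\|f^\e\|_{L^1}+\|M_{\rho,u}\|_{L^1})^{1/2}\mathcal{R}^\e(t)^{1/2}$ and $\sqrt{a+b}\le\sqrt a+\sqrt b$.

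The main obstacle is this last step: arranging the identity so that the $O(1)$ entropy-production constant $3t\|f_0^\e\|_{L^1}$ cancels exactly, and estimating the residual ``flux of $\rho^\e$ against gradients of the limiting fields'' along with the pressure/defect terms at the correct $\sqrt\e$ rate (which becomes $\e^{1/4}$ after a Cauchy--Schwarz) using only the dissipation and modulated-energy information that Theorem~\ref{T2.2} supplies --- in effect one re-runs the core of its proof with $M_{\rho,u}$ in place of $(\rho,u)$. By comparison, the truncation needed to pass from $\|f^\e-M_{\rho,u}\|_{L^1}\to0$ and the uniform bound on $\int\!\int|\xi|^2f^\e$ (with equi-integrability in $\xi$ from the entropy bound) to honest convergence of $\int_{\bbr^3}f^\e\xi\otimes\xi\,d\xi$ is routine.
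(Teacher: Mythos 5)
Your outline of the moment convergences matches the paper's route: the relative entropy and modulated pressure terms in Theorem~\ref{T2.2} control $\|\rho^\e-\rho\|_{L^1}$ and $\|n^\e-n\|_{L^\gamma}$ (the latter via a case split on $\{n/2\le n^\e\le 2n\}$ and its complement, which is what the paper does through Lemma~\ref{ineq}), and the flux convergences follow from adding and subtracting together with Cauchy--Schwarz against $\int\rho^\e|u^\e-u|^2$ and $\int n^\e|v^\e-v|^2$. Two of your steps, however, rely on mechanisms the paper does not use and, as sketched, do not go through. First, for $\int_{\R^3}f^\e\xi\otimes\xi\,d\xi$ you invoke a weighted Poincar\'e inequality for the Gaussian $M_{f^\e}$. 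But the Fokker--Planck dissipation has the form $\int \frac{1}{f^\e}|\nabla_\xi f^\e-(u^\e-\xi)f^\e|^2=\int\frac{M_{f^\e}^2}{f^\e}\big|\nabla_\xi(f^\e/M_{f^\e})\big|^2$, which is not the Poincar\'e energy $\int M_{f^\e}|\nabla_\xi(f^\e/M_{f^\e})|^2$ unless $f^\e/M_{f^\e}$ is bounded below --- information you do not have. The paper sidesteps the functional inequality entirely by the exact algebraic identity
\[
P^\e-\rho^\e\mathbb{I}_{3\times 3}=\int_{\R^3}(\xi-u^\e)\otimes\Big((u^\e-\xi)f^\e-\nabla_\xi f^\e\Big)\,d\xi,
\]
obtained by integrating $\int(\xi-u^\e)\otimes\nabla_\xi f^\e\,d\xi=-\rho^\e\mathbb{I}_{3\times3}$ by parts, and then bounds the right-hand side by Cauchy--Schwarz against $\md_1(f^\e)^{1/2}$ and the second moment. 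That is the argument you need; the Poincar\'e route is not merely a detour, it does not apply.

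Second, and more seriously, your account of how the $O(1)$ source $3t\|f_0^\e\|_{L^1}$ is neutralized in the proof of $\mathcal{R}^\e(t)\le\mathcal{R}^\e(0)+C\e^{1/4}$ does not describe the cancellation that actually occurs. You propose to absorb $3t\|f_0^\e\|_{L^1}$ using the Navier--Stokes energy balance and the decomposition $\int|v^\e-\xi|^2f^\e=\rho^\e|v^\e-u^\e|^2+\mathrm{tr}\,P^\e$; but even with the sharpened entropy inequality of Lemma~\ref{L4.2} (error $O(\e)$ rather than $O(1)$) the fluid-energy increment $\big[\tfrac12\int n^\e|v^\e|^2+\tfrac1{\gamma-1}\int(n^\e)^\gamma\big]_0^t$ is itself $O(1)$ and is not small, so isolating $\mathcal{K}^\e(t)-\mathcal{K}^\e(0)$ this way does not close. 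The paper's computation is purely on the kinetic side: it keeps $\frac{d}{dt}\int f^\e\log f^\e$ in the \emph{unintegrated} form $\int\nabla_\xi f^\e\cdot(v^\e-\xi)\,dxd\xi$ (which is numerically $3\|f^\e\|_{L^1}$ but must not be replaced by that constant), and pairs it with the drag contribution $-\int(v^\e-\xi)\cdot(u-\xi)f^\e$ coming from $\frac{d}{dt}\int f^\e\tfrac12|\xi-u|^2$, to produce
\[
\int\big(\nabla_\xi f^\e-(u^\e-\xi)f^\e\big)\cdot(v^\e-\xi)\,dxd\xi+\int(v^\e-\xi)\cdot(u^\e-u)f^\e\,dxd\xi,
\]
each piece of which is then controlled by Cauchy--Schwarz against $\md_1$, the drag dissipation, and the modulated energy from Theorem~\ref{T2.2}. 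In addition, the calculation uses the specular condition for $f^\e$ together with $u\cdot r=0$ on $\partial\Omega$ to annihilate the boundary flux $\int_{\pa\Omega\times\R^3}(\xi\cdot r)|u-\xi|^2\gamma f^\e$; this is essential and your sketch does not establish it. As you yourself note, the final step ``re-runs the core'' of Theorem~\ref{T2.2}'s proof --- but the specific accounting you describe would not yield the claimed $\e^{1/4}$ rate without being replaced by the cancellation above.
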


There are several works on the asymptotic analysis for the kinetic equation coupled with the Navier-Stokes system. In \cite{C-G}, the asymptotic regime corresponding to the strong drag force and the strong diffusion in the system \eqref{A-1} without the local alignment force, i.e., $\alpha = 0$ is considered and the formal derivation of a two-phase fluid model from that system is studied. Later, this formal derivation is rigorously justified in \cite{M-V2}. See also \cite{G-J-V, G-J-V2} for other types of hydrodynamic limits. The kinetic equation in \eqref{A-1} coupled with the incompressible Navier-Stokes system through the drag force is discussed in \cite{C-C-K}, and the rigorous hydrodynamic limit to the isothermal Euler/incompressible Navier-Stokes system is obtained. Theorem \ref{T2.2} extends this result to the compressible fluid case as well as the initial-boundary value problem. Our main mathematical tool is the relative entropy, which is originally proposed to study the weak-strong uniqueness principle \cite{Da}, see also \cite{BNV16,BNV17, FJN12,Gou05,Y} for the applications to the kinetic and hydrodynamic equations. 

In order to emphasize one of our main results on the hydrodynamic limit, in Section \ref{sec:2}, we provide the details of the proof for Theorem \ref{T2.2} and Corollary \ref{cor_main}. As mentioned above, the standard relative entropy argument is employed, however, we need to be careful whenever we use the integration by parts and some Sobolev inequalities due to the kinematic boundary condition \eqref{A-3_bdy} for the Euler equations in \eqref{A-3}.

%
%
%
%
%

\section{Hydrodynamic limits: From kinetic-fluid to two-phase fluid system}\label{sec:2}
\setcounter{equation}{0}
In this section, we establish the rigorous derivation of system \eqref{A-3} from the system \eqref{A-2} with the specular reflection/homogeneous Dirichlet boundary conditions. Our main strategy relies on the relative entropy method. For this, we first estimate the entropy inequality for the Vlasov-Fokker-Planck/Navier-Stokes system \eqref{A-2}, and then show the relative entropy estimates. 

\subsection{Entropy estimates of the system \eqref{A-2}}
We first introduce the entropy $\mf$ and its dissipation rates $\md_1$ and $\md_2$ as follows:
\[
\mathcal{F}(f, n, v) := \int_{\Omega \times \bbr^3 } f \left( \log f  +\frac{|\xi|^2}{2} \right) dxd\xi + \int_{\Omega} \frac{1}{2} n |v|^2 \,dx + \frac{1}{\gamma-1} \int_{\Omega} n^\gamma \,dx,
\]
\[
\md_1(f) := \int_{\Omega \times \bbr^3} \frac{1}{f} |\nabla_\xi f - (u - \xi)f|^2 \,dxd\xi, 
\]
and
\[
\md_2(f, v) := \int_{\Omega\times\bbr^3} |v - \xi|^2 f \,dxd\xi + \int_\Omega |\nabla v|^2 \,dx.
\]
Before we move on to the entropy estimate of weak solutions to the initial-boundary value problem \eqref{A-2}, we provide the following lemma for the uniform estimates. For the proof, we refer to \cite{C-I-P}.
\begin{lemma}\label{LA.4}
Suppose that $f \ge 0$ and $|\xi|^2 f \in L^1(\Omega \times\bbr^3)$. Then for any $\delta>0$, there exists a constant $C=C(\delta, \Omega) > 0$ such that
\[ 
\int_{\Omega \times \bbr^3} f \log^- f \,dxd\xi \le C + \delta \int_{\Omega\times\bbr^3}|\xi|^2 f \,dxd\xi,
\]
where $\log^- f := \max\{0, -\log f\}$. Similarly, $|\xi|^2 \gamma f\in L^1(\Sigma \times (0,T))$ implies $\gamma f \log^- (\gamma f) \in L^1(\Sigma \times (0,T))$, and a similar estimate holds.
\end{lemma}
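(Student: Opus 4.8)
The plan is to reduce everything to a single elementary pointwise inequality and then integrate. First I would record that $f\log^- f = \max\{0,-f\log f\}$ since $f\ge 0$, so it suffices to control $-f\log f$ on the set $\{0<f\le 1\}$ (elsewhere $f\log^- f\equiv 0$). The tool I would use is the Young-type inequality attached to the convex function $\Phi(s)=s\log s$ (with $\Phi(0):=0$): its Legendre transform is $\Phi^*(t)=e^{t-1}$, so $st\le s\log s + e^{t-1}$ for all $s\ge 0$ and $t\in\bbr$. Taking $s=f$ and $t=-\delta|\xi|^2$ and rearranging gives the pointwise bound
\[
f\log^- f \le \delta|\xi|^2 f + e^{-\delta|\xi|^2-1}, \qquad f\ge 0,\ \xi\in\bbr^3,
\]
valid for every $\delta>0$; the right-hand side is nonnegative, hence it also dominates $f\log^- f$ on $\{f>1\}$, where the latter vanishes.

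Then I would simply integrate this inequality over $\Omega\times\bbr^3$. The first term produces $\delta\int_{\Omega\times\bbr^3}|\xi|^2 f\,dxd\xi$, and the second is a fixed Gaussian integral, $\int_{\Omega\times\bbr^3} e^{-\delta|\xi|^2-1}\,dxd\xi = e^{-1}|\Omega|(\pi/\delta)^{3/2}=:C(\delta,\Omega)$, which is finite and independent of $f$. This is precisely the claimed estimate.

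For the boundary statement I would integrate the same pointwise inequality, with $\gamma f$ in place of $f$, against the measure $|\xi\cdot r(x)|\,d\sigma(x)\,d\xi\,dt$ on $\Sigma\times(0,T)$. Using $|\xi\cdot r(x)|\le|\xi|$, the Gaussian term is bounded by $T|\partial\Omega|\,e^{-1}\int_{\bbr^3}|\xi|\,e^{-\delta|\xi|^2}\,d\xi<\infty$, while the moment term $\delta\int_{\Sigma\times(0,T)}|\xi|^2\gamma f\,d\sigma d\xi dt$ is finite by the hypothesis $|\xi|^2\gamma f\in L^1(\Sigma\times(0,T))$; together these give $\gamma f\log^-(\gamma f)\in L^1(\Sigma\times(0,T))$ and the analogous bound.

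Honestly, there is no serious obstacle here — this is a classical entropy-dissipation estimate (which is why the paper attributes it to \cite{C-I-P}). The only points deserving care are that the comparison term $e^{-\delta|\xi|^2-1}$ integrates to a constant depending only on $\delta$ and $\Omega$ (not on $f$), and that the extra weight $|\xi\cdot r(x)|$ appearing in the boundary integral is harmless since it is dominated by $|\xi|$. If one prefers to avoid the Legendre/Young step, an equally short alternative is to split $\{f\le 1\}$ into $\{f\le e^{-\delta|\xi|^2}\}$, on which $f\log^- f\le\sqrt f\le e^{-\delta|\xi|^2/2}$ is integrable with an $f$-independent constant, and $\{e^{-\delta|\xi|^2}<f\le 1\}$, on which $\log^- f<\delta|\xi|^2$ forces $f\log^- f<\delta|\xi|^2 f$.
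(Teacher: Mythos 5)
Your proof is correct. The paper does not actually supply an argument for this lemma — it delegates it to \cite{C-I-P} — and what you give is precisely the classical entropy estimate found there: the pointwise bound $f\log^- f \le \delta|\xi|^2 f + e^{-\delta|\xi|^2-1}$ (via Young's inequality for the pair $s\log s$, $e^{t-1}$, or equivalently your splitting of $\{f\le 1\}$ along the level $e^{-\delta|\xi|^2}$) integrates to exactly the stated inequality with $C(\delta,\Omega)=e^{-1}|\Omega|(\pi/\delta)^{3/2}$, and the boundary version follows by the same computation against the measure $|\xi\cdot r(x)|\,d\sigma(x)\,d\xi\,dt$, as you note.
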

Without loss of generality, we assume $\|f_0^\e\|_{L^1}= 1$ for $\e> 0$ from now on. 
\begin{lemma}\label{L4.1}
Let $T>0$, and suppose that $(f^\e, n^\e, v^\e)$ is a weak solution to the system \eqref{A-2} on $[0,T)$ with the initial data $(f_0^\e, n_0^\e, v_0^\e)$ in the sense of Definition \ref{D2.1}. Then, we have
\begin{align*}\begin{aligned}
&\int_{\Omega \times \bbr^3} \left( \frac{|\xi|^2}{4} + |\log f^\e| \right) f^\e \,dx d\xi + \int_\Omega \left(n^\e \frac{|v^\e|^2}{2}  + \frac{1}{\gamma-1}(n^\e)^\gamma \right)\,dx\\
& \quad+ \frac{1}{\e}\int_0^t \md_1 (f^\e)(s)\,ds + \int_0^t \md_2(f^\e, v^\e)(s)\,ds\\
& \qquad \le  \mathcal{F}(f_0^\e, n_0^\e, v_0^\e)  +C(T).\\
\end{aligned}\end{align*}
\end{lemma}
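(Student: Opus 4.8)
The plan is to deduce the stated bound from the entropy inequality satisfied by the weak solution, which is the $\e$-rescaled counterpart of the one in Theorem \ref{T0.1} (here $\sigma=\alpha=1/\e$, so the Fokker-Planck/alignment dissipation carries the factor $1/\e$): for a.e.\ $t\in[0,T)$,
\[
\mathcal{F}(f^\e,n^\e,v^\e)(t) + \frac1\e\int_0^t \md_1(f^\e)(s)\,ds + \int_0^t \md_2(f^\e,v^\e)(s)\,ds \;\le\; \mathcal{F}(f_0^\e,n_0^\e,v_0^\e) + 3t\,\|f_0^\e\|_{L^1}.
\]
Formally this is obtained by testing the kinetic equation in \eqref{A-2} with $1+\log f^\e+\tfrac{|\xi|^2}{2}$ and the momentum equation with $v^\e$, and then adding: the specular reflection condition \eqref{bdy_ref}, being an $|\xi|$-preserving, measure-preserving linear involution that reverses the sign of $\xi\cdot r(x)$, annihilates the spatial transport flux $\int_\Sigma(\xi\cdot r)\big(f^\e\log f^\e-f^\e+f^\e\tfrac{|\xi|^2}{2}\big)\,d\sigma d\xi$; the Dirichlet condition \eqref{bdy_ho} turns $-\int_\Omega v^\e\cdot\Delta v^\e$ into $\int_\Omega|\nabla v^\e|^2$ and, through the continuity equation, $\int_\Omega v^\e\cdot\nabla p(n^\e)$ into $\tfrac{d}{dt}\tfrac1{\gamma-1}\int_\Omega(n^\e)^\gamma$; the $1/\e$ term produces $-\tfrac1\e\md_1(f^\e)$ together with a cross term that vanishes because $\int_{\R^3}(\nabla_\xi f^\e-(u^\e-\xi)f^\e)\,d\xi=0$ (integration by parts in $\xi$ and the definition of $u^\e$); and the drag term in the kinetic equation combines with the coupling force $-\int_{\R^3}(v^\e-\xi)f^\e\,d\xi$ in the momentum equation to assemble the square $\int_{\Omega\times\R^3}|v^\e-\xi|^2f^\e$, while its remaining contribution $\int_{\Omega\times\R^3}\nabla_\xi\cdot((v^\e-\xi)f^\e)\,\log f^\e=-3\|f^\e\|_{L^1}$ accounts, after integrating in time and using conservation of mass $\|f^\e(t)\|_{L^1}=\|f_0^\e\|_{L^1}$, for the term $3t\|f_0^\e\|_{L^1}$. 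Since these test functions are inadmissible and the traces $\gamma_\pm f^\e$ do not belong to $L^1(\Sigma_\pm)$, a rigorous proof passes through the regularized approximation of Section \ref{sec:3} and lower semicontinuity of $\md_1$ and $\md_2$; here I take this inequality for granted.

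Granting it, recall the normalization $\|f_0^\e\|_{L^1}=1$, so that $3t\|f_0^\e\|_{L^1}\le 3T$; it then only remains to trade $\int f^\e\log f^\e$ — which need not be nonnegative — for $\int|\log f^\e|\,f^\e$. Writing $\log f^\e=\log^+f^\e-\log^-f^\e$ gives $\int_{\Omega\times\R^3}|\log f^\e|\,f^\e=\int_{\Omega\times\R^3}f^\e\log f^\e+2\int_{\Omega\times\R^3}f^\e\log^-f^\e$. Since $f^\e\ge0$ and $|\xi|^2f^\e(\cdot,\cdot,t)\in L^1(\Omega\times\R^3)$ for a.e.\ $t$ by Definition \ref{D2.1}, Lemma \ref{LA.4} applied with $\delta=\tfrac18$ yields $\int_{\Omega\times\R^3}f^\e\log^-f^\e\le C(\Omega)+\tfrac18\int_{\Omega\times\R^3}|\xi|^2f^\e$, whence
\[
\int_{\Omega\times\R^3}\Big(\frac{|\xi|^2}{4}+|\log f^\e|\Big)f^\e \;\le\; \int_{\Omega\times\R^3}\Big(\frac{|\xi|^2}{2}+\log f^\e\Big)f^\e + 2\,C(\Omega).
\]
The weight $\tfrac{|\xi|^2}{4}$ (rather than $\tfrac{|\xi|^2}{2}$) in the statement is exactly what leaves room to absorb the term $\tfrac14\int|\xi|^2f^\e=2\delta\int|\xi|^2f^\e$ coming out of Lemma \ref{LA.4}.

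To finish, add $\int_\Omega\big(n^\e\tfrac{|v^\e|^2}{2}+\tfrac1{\gamma-1}(n^\e)^\gamma\big)dx+\tfrac1\e\int_0^t\md_1(f^\e)(s)\,ds+\int_0^t\md_2(f^\e,v^\e)(s)\,ds$ to both sides of the last display: the left-hand side becomes precisely the quantity to be estimated, and the right-hand side equals $\mathcal{F}(f^\e,n^\e,v^\e)(t)+\tfrac1\e\int_0^t\md_1+\int_0^t\md_2+2C(\Omega)$, which by the entropy inequality above is $\le\mathcal{F}(f_0^\e,n_0^\e,v_0^\e)+3T+2C(\Omega)$. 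Taking $C(T):=3T+2C(\Omega)$ completes the argument. The only genuinely delicate ingredient is the entropy inequality itself; everything downstream is just the elementary splitting of $f\log f$ together with Lemma \ref{LA.4}, so the effort should be spent on the approximation argument that legitimizes it — in particular the cancellation of the boundary fluxes under \eqref{bdy_ref} and the lower semicontinuity of $\md_1$ and $\md_2$.
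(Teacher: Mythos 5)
Your proposal is correct and follows essentially the same route as the paper: invoke the $\e$-scaled entropy inequality coming from Theorem \ref{T2.1}, then use Lemma \ref{LA.4} to trade $\int f^\e\log^- f^\e$ for a fraction of $\int|\xi|^2 f^\e$ absorbed by the gap between $\tfrac{|\xi|^2}{2}$ and $\tfrac{|\xi|^2}{4}$. Your bookkeeping with $\delta=\tfrac18$ (so that $2\int f^\e\log^- f^\e\le\tfrac14\int|\xi|^2f^\e+C$) is in fact the precise choice needed, slightly more careful than the $\delta=\tfrac14$ written in the paper's proof.
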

\begin{proof}
We first notice from Theorem \ref{T2.1} that the following inequality holds:
\begin{align}
\begin{aligned}\label{D-2}
\mathcal{F}&(f^\e, n^\e, v^\e)(t) + \frac{1}{\e} \int_0^t \md_1(f^\e)(s)\,ds + \int_0^t \md_2(f^\e, v^\e)(s) \,ds \\
&\le \mathcal{F}(f_0^\e, n_0^\e, v_0^\e) + 3t \|f_0^\e\|_{L^1}.
\end{aligned}
\end{align}
On the other hand, by Lemma \ref{LA.4} we get
\begin{align*}\begin{aligned}
& \int_{\Omega \times \bbr^3} f^\e \log^- f^\e \,dxd\xi \le \frac{1}{4} \int_{\Omega \times \bbr^3} |\xi|^2 f^\e \,dxd\xi + C.
\end{aligned}\end{align*}
We then put the above inequality into \eqref{D-2} to conclude the desired result. 
\end{proof}

Next, we discuss the modified entropy inequality for later use.
\begin{lemma}\label{L4.2}
Let $T>0$, and suppose that $(f^\e, n^\e, v^\e)$ is a weak solution to the system \eqref{A-2} on $[0,T)$ with the initial data $(f_0^\e, n_0^\e, v_0^\e)$ in the sense of Definition \ref{D2.1}. Then, we have
\begin{align*}\begin{aligned}
&\mathcal{F}(f^\e, n^\e, v^\e)(t) + \frac{1}{2\e} \int_0^t \md_1(f^\e)(s)\,ds + \int_0^t \int_\Omega \rho^\e |u^\e - v^\e|^2 \,dxds + \int_0^t \int_\Omega |\nabla v^\e|^2 \,dxds\\
&\quad \le \mathcal{F}(f_0^\e, n_0^\e, v_0^\e) + C(T)\e.
\end{aligned}\end{align*}
\end{lemma}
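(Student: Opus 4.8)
The goal is to upgrade the entropy inequality of Lemma~\ref{L4.1} into the sharper form of Lemma~\ref{L4.2} in which the first dissipation term appears with coefficient $\tfrac{1}{2\e}$ rather than $\tfrac{1}{\e}$, with the ``missing half'' converted into the coercive term $\int_0^t\!\int_\Omega \rho^\e|u^\e-v^\e|^2\,dxds$ (plus the usual $\int_0^t\!\int_\Omega|\nabla v^\e|^2$), at the cost of an error of size $C(T)\e$. The idea is purely algebraic at the level of the dissipation: in $\md_2(f^\e,v^\e)$ we have the term $\int_{\Omega\times\bbr^3}|v^\e-\xi|^2 f^\e\,dxd\xi$, and I want to show this is bounded below, up to an $\mathcal O(\e)$ term controlled by the continuity/entropy bounds, by the combination $\int_\Omega \rho^\e|u^\e-v^\e|^2\,dx$ together with a piece that can be absorbed into $\tfrac{1}{2\e}\md_1(f^\e)$.

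First I would split $\xi - v^\e = (\xi - u^\e) + (u^\e - v^\e)$ inside $|v^\e-\xi|^2$ and integrate in $\xi$. Using the definition of $u^\e$ as the $\rho^\e$-weighted mean of $\xi$ against $f^\e$, the cross term $\int (\xi-u^\e)\cdot(u^\e-v^\e)f^\e\,d\xi$ vanishes, so
\[
\int_{\bbr^3}|v^\e-\xi|^2 f^\e\,d\xi = \int_{\bbr^3}|\xi-u^\e|^2 f^\e\,d\xi + \rho^\e|u^\e-v^\e|^2.
\]
After integrating in $x$ this already produces the desired $\int_\Omega\rho^\e|u^\e-v^\e|^2\,dx$ exactly, so the whole point reduces to showing that the leftover $\int_{\Omega\times\bbr^3}|\xi-u^\e|^2 f^\e\,dxd\xi$ costs us at most $\tfrac{1}{2\e}\md_1(f^\e)$ plus $\mathcal O(\e)$. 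The natural mechanism is the Bakry--\'Emery / log-Sobolev type identity for the Maxwellian $M_{f^\e}$: one has the pointwise (in $x,t$) integration-by-parts identity
\[
\int_{\bbr^3}|\xi-u^\e|^2 f^\e\,d\xi - 3\rho^\e
= \int_{\bbr^3}(\xi-u^\e)\cdot\bigl(\nabla_\xi f^\e-(u^\e-\xi)f^\e\bigr)\,d\xi,
\]
because $\nabla_\xi\cdot\bigl[(\xi-u^\e)f^\e\bigr]$ integrates against $(\xi-u^\e)$ to $-\int(|\xi-u^\e|^2-3)f^\e d\xi$ after one integration by parts in $\xi$ (no boundary terms since $f^\e$ has the required velocity decay from the $|\xi|^2 f^\e\in L^1$ and $\md_1$ bounds). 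Then Cauchy--Schwarz on the right-hand side gives, for any $\eta>0$,
\[
\int_{\bbr^3}|\xi-u^\e|^2 f^\e\,d\xi - 3\rho^\e
\le \eta \int_{\bbr^3}|\xi-u^\e|^2 f^\e\,d\xi + \frac{1}{4\eta}\int_{\bbr^3}\frac{1}{f^\e}\bigl|\nabla_\xi f^\e-(u^\e-\xi)f^\e\bigr|^2\,d\xi,
\]
and absorbing the $\eta$-term (take $\eta=\tfrac12$) yields $\int_{\Omega\times\bbr^3}|\xi-u^\e|^2 f^\e\,dxd\xi \le 6\rho^\e$-total mass plus $\tfrac12\md_1(f^\e)$-type quantity. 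Since $\md_1$ enters \eqref{D-2} with the large coefficient $\tfrac1\e$, multiplying this bound through by $\e$ shows $\e\cdot(\text{that leftover})\le \tfrac{\e}{2}\cdot(\text{stuff})+\cdots$; more precisely, I would not multiply by $\e$ — rather, I keep $\int_0^t\md_2$ as is, write $\md_2 = \int_\Omega\rho^\e|u^\e-v^\e|^2\,dx + \int_\Omega|\nabla v^\e|^2\,dx + \int_{\Omega\times\bbr^3}|\xi-u^\e|^2f^\e\,dxd\xi$, and then bound the last term by $\tfrac{1}{2\e}\md_1(f^\e) + C\e\|f^\e\|_{L^1}$ (the $3\rho^\e$ total mass term times the appropriate $\e$ factor), which is exactly where the $\e$-scaling between the $\tfrac1\e$ coefficient on $\md_1$ in \eqref{D-2} and the $\mathcal O(\e)$ error in the conclusion comes from: choosing the split of the Cauchy--Schwarz with a weight of order $\e$ gives $\int|\xi-u^\e|^2 f^\e\,d\xi \le 3\rho^\e + \tfrac{\e}{2}\,[\text{no}]$... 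I would set the Young's inequality parameter so that the $\md_1$-piece comes out with coefficient exactly matching $\tfrac{1}{\e}-\tfrac{1}{2\e}=\tfrac{1}{2\e}$ and the residual mass term is $C(T)\e$ after time integration on $[0,t]\subset[0,T]$. Plugging all of this back into \eqref{D-2} and using $\|f_0^\e\|_{L^1}=1$ and $3t\|f_0^\e\|_{L^1}\le 3T$ absorbed into $C(T)$ gives the claim.

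The main obstacle I anticipate is the justification of the velocity integrations by parts with no boundary-at-infinity contributions and, more delicately, making everything rigorous at the level of \emph{weak} solutions: $u^\e$, $\rho^\e$ and the pointwise-in-$(x,t)$ identity for $M_{f^\e}$ are only available a.e., and one must check that $\rho^\e>0$ a.e.\ (so $u^\e$ is defined), that $\int|\xi-u^\e|^2f^\e\,d\xi$ and $\md_1$ are finite for a.e.\ $(x,t)$ thanks to Lemma~\ref{L4.1}, and that the manipulations commute with the time integral. A secondary, purely bookkeeping, point is getting the constants in Young's inequality to land precisely on the coefficient $\tfrac{1}{2\e}$ rather than some other fraction of $\tfrac1\e$; this is why the statement has $\tfrac{1}{2\e}$ and not, say, $\tfrac{1}{4\e}$, and it forces the choice $\eta=\tfrac12$ above. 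Everything else — absorbing $\int_\Omega \rho^\e|u^\e-v^\e|^2$ and $\int_\Omega|\nabla v^\e|^2$ unchanged, and collecting the $3T$ and the residual-mass terms into $C(T)\e$ — is routine.
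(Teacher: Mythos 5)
Your overall route — Pythagorean decomposition of $\int|v^\e-\xi|^2 f^\e\,d\xi$ into $\rho^\e|u^\e-v^\e|^2 + \int|\xi-u^\e|^2f^\e\,d\xi$, followed by the integration-by-parts moment identity
\[
\int_{\bbr^3}|\xi-u^\e|^2 f^\e\,d\xi - 3\rho^\e
= \int_{\bbr^3}(\xi-u^\e)\cdot\bigl(\nabla_\xi f^\e-(u^\e-\xi)f^\e\bigr)\,d\xi
\]
and Cauchy--Schwarz against $\md_1$ — is valid and in fact more elementary than the paper's, which goes through a tensorized double integral over $\Omega^2\times\bbr^6$ and the symmetrization
$\tfrac12\int\!\!\int\rho^\e(x)\rho^\e(y)|u^\e(x)-u^\e(y)|^2\,dxdy$ before arriving at the same Cauchy--Schwarz step. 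However, your execution has two genuine gaps that prevent the argument from closing.

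First, you need a \emph{lower} bound on $\int_{\Omega\times\bbr^3}|\xi-u^\e|^2 f^\e\,dxd\xi$, not the upper bound you derive. This term sits with a positive sign on the left-hand side of the dissipation inequality \eqref{D-2}; the only way it helps is if it cancels the unfavorable $3t\|f_0^\e\|_{L^1}$ on the right-hand side. An upper bound merely lets you drop it, which returns you to Lemma \ref{L4.1} with its $\mathcal{O}(1)$ error, not the claimed $C(T)\e$. The correct step, using the Cauchy--Schwarz sign in the other direction and Young's inequality with weight $\eta=\e/2$ together with the a priori bound $\sup_t\int|\xi-u^\e|^2 f^\e\,dxd\xi \le 4\sup_t\int|\xi|^2 f^\e\,dxd\xi \le C(T)$ from Lemma \ref{L4.1}, is
\[
\int_{\Omega\times\bbr^3}|\xi-u^\e|^2 f^\e\,dxd\xi \ge 3\int_\Omega \rho^\e\,dx \;-\; \frac{1}{2\e}\,\md_1(f^\e) \;-\; C\e.
\]

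Second, and this is the decisive point, your final line treats the $3t\|f_0^\e\|_{L^1}$ error from \eqref{D-2} as something that can be ``absorbed into $C(T)$''. It cannot: $3T$ is $\e$-independent, while the lemma requires an error of size $C(T)\e$. The mechanism that turns $3T$ into $C(T)\e$ is precisely the $3\rho^\e$ term you produced in the moment identity. Since $\int_\Omega\rho^\e\,dx=\|f^\e\|_{L^1}=\|f_0^\e\|_{L^1}=1$, integrating in time gives $+3t$ on the left of \eqref{D-2}, which cancels $3t\|f_0^\e\|_{L^1}$ on the right exactly; what survives is $\frac{1}{2\e}\int_0^t\md_1$ being traded against $\frac{1}{\e}\int_0^t\md_1$, plus the residual $C\e\cdot t \le C(T)\e$. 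This cancellation is the entire content of the upgrade from Lemma \ref{L4.1} to Lemma \ref{L4.2}, and your write-up omits it. (The paper achieves the identical cancellation via the term $I_3 = -3\|f^\e\|_{L^1}^2$ in its double-integral decomposition.) With these two corrections your route works and is arguably the cleaner presentation.
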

\begin{proof}
A straightforward computation yields
\begin{align*}\begin{aligned}
&\frac{1}{2}\int_{\Omega^2 \times \bbr^6} f^\varepsilon(x,\xi)f^\varepsilon(y,\xi_*)|\xi-\xi_*|^2 \,dxd\xi dyd\xi_*  + \int_{\Omega} \rho^\varepsilon|u^\varepsilon -v^\varepsilon|^2 \,dx\\
&\quad =\frac{1}{2} \int_{\Omega^2} \rho^\varepsilon(x)\rho^\varepsilon (y)|u^\varepsilon(x) - u^\varepsilon(y)|^2 \,dydx  +  \int_{\Omega \times \bbr^3} |v^\varepsilon - \xi|^2 f^\varepsilon \,dxd\xi.
\end{aligned}\end{align*}
We then estimate the first term on the right-hand side of the above equality as in \cite{K-M-T3}:
\begin{align}\label{est_en1}
\begin{aligned}
\frac{1}{2}& \int_{\Omega^2} \rho^\varepsilon(x)\rho^\varepsilon (y)|u^\varepsilon(x) - u^\varepsilon(y)|^2 \,dydx\\
& = \int_{\Omega^2} \rho^\e (x) \rho^\e (y) (u^\e(x) - u^\e(y))\cdot u^\e(x) \,dxdy\\
&=\int_{\Omega^2 \times \bbr^6} f^\e(x,\xi)f^\e(y,\xi_*)(\xi-\xi_*)\cdot u^\e(x)\,dyd\xi_* dxd\xi\\
&=\int_{\Omega^2 \times \bbr^6} f^\e(y,\xi_*)(\xi-\xi_*)\cdot (f^\e(x,\xi)(u^\e(x)-\xi) - \nabla_\xi f(x,\xi))\,dyd\xi_*dxd\xi\\
& \quad + \int_{\Omega^2 \times \bbr^6}f^\e(y,\xi_*)f^\e(x,\xi)(\xi-\xi_*)\cdot  \xi \,dyd\xi_*dxd\xi\\
&\quad + \int_{\Omega^2 \times \bbr^6}f^\e(y,\xi_*)(\xi-\xi_*)\cdot \nabla_\xi f^\e(x,\xi)\,dyd\xi_*dxd\xi\\
& =: I_1 + I_2 + I_3,
\end{aligned}
\end{align}
where, by using the integration by parts, $I_3$ can be easily estimated as 
\[
I_3 = -3\|f^\e\|_{L^1(\Omega\times\bbr^3)}^2. 
\]
For $I_2$, we use $(x,\xi) \leftrightarrow (y,\xi_*)$ to get
\[
I_2 = \frac{1}{2}\int_{\Omega^2 \times \bbr^6}f^\e(y,\xi_*)f^\e(x,\xi)|\xi-\xi_*|^2 \,dyd\xi_*dxd\xi. 
\]
For the estimate of $I_1$, we set
\[
\widetilde{\md}^\e(x,\xi) := \frac{1}{\sqrt{f^\e(x,\xi)}}(f^\e(x,\xi)(u^\e(x)-\xi)-\nabla_\xi f^\e(x,\xi)). 
\]
Then we estimate
\begin{align*}\begin{aligned}
I_1&=\int_{\Omega^2 \times \bbr^6}\sqrt{f^\e(x,\xi)}f^\e(y,\xi_*)(\xi-\xi_*)\widetilde{\md}^\e(x,\xi)\,dyd\xi_* dxd\xi\\
&= \int_{\Omega^2 \times \bbr^3} \sqrt{f^\e(x,\xi)}\rho^\e(y)(\xi-u^\e(y))\widetilde{\md}^\e(x,\xi) \,dydxd\xi\\
& = \int_{\Omega} \left( \int_\Omega \rho^\e(y)\,dy \right) \left(\int_{\bbr^3} \xi \sqrt{f^\e(x,\xi)}\widetilde{\md}^\e(x,\xi) \,d\xi \right)dx\\
& \quad -  \int_{\Omega} \left( \int_\Omega \rho^\e(y) u^\e (y)\,dy \right) \left(\int_{\bbr^3}\sqrt{f^\e(x,\xi)}\widetilde{\md}^\e(x,\xi) \,d\xi \right)dx\\
& \le 2\|f^\e\|_{L^1(\Omega \times \bbr^3)}\left(\int_{\Omega \times \bbr^3}|\xi|^2 f^\e(x,\xi)\,dxd\xi \right)^{1/2}\left(\md_1(f^\e)\right)^{1/2}\\
& \le \frac{1}{2\e} \md_1(f^\e) + \e\|f^\e\|_{L^1(\Omega \times \bbr^3)}^2\left( \int_{\Omega\times\bbr^3} |\xi|^2 f^\e (x,\xi)\,dxd\xi\right).
\end{aligned}\end{align*}
We combine the estimates for $I_i, i=1,2,3$, and put it into \eqref{est_en1} to obtain the desired result.
\end{proof}

\subsection{Relative entropy estimates} In this subsection, we provide estimates regarding the relative entropy. For this purpose, we set
\[ 
U = \left( \begin{array}{c} \rho \\ m \\ n \\ w \end{array}\right), \quad A(U) := \left(\begin{array}{cccc} m & 0 & 0 & 0 
\\ (m \otimes m)/\rho & \rho\mathbb{I}_{3 \times 3} & 0 & 0  \\
0& 0 & w & 0 \\
0 & 0 & (w \otimes w)/n & n^\gamma \mathbb{I}_{3 \times 3} \end{array}\right), 
\]
and
\[
F(U) = \left( \begin{array}{c} 0 \\ \rho  (v-u) \\ 0 \\ -\rho  (v-u)  + \Delta  v \end{array} \right),
\]
where $\mathbb{I}_{3 \times 3}$ denotes the $3 \times 3$ identity matrix, $m := \rho u$, and $w := nv$. Note that the system \eqref{A-1} can be recast in the form of conservation of laws:
\[
U_t + \nabla \cdot A(U) = F(U). 
\]
Then, the corresponding macroscopic entropy $E(U)$ to the above system is given by
\[
E(U) :=  \frac{m^2}{2\rho} + \frac{w^2}{2n} + \rho \log \rho+ \frac{1}{\gamma-1}n^\gamma. 
\]
Here, $E(U)$ is indeed an entropy thanks to the existence of a flux $Q$ satisfying
\[D_jQ_i(U) = \sum_{k=1}^4 D_j A_{ki}(U) D_k E(U). \]
Then, this $Q$ satisfies
\[\nabla \cdot Q(U) = DE(U) \nabla \cdot A(U), \]
and thus
\[\partial_t E(U) + \nabla \cdot Q(U) = \rho |u-v|^2 + v \cdot \nabla v. \]
We also need the relative entropy functional $\mathcal{H}$ defined as
\[ 
\mathcal{H}(\bar U|U) := E(\bar U)-E(U)-DE(U)(\bar U-U), \quad \mbox{where} \quad \bar U = \left( \begin{array}{c} \bar\rho \\ \bar m \\ \bar n \\ \bar w \end{array}\right).
\]
Then after some computations, we find
\[
\mathcal{H}(\bar U|U) = \frac{\bar\rho}{2}|u-\bar{u}|^2 + \frac{\bar{n}}{2} |v-\bar{v}|^2 + P(\bar\rho| \rho) + \tilde{P}(\bar{n}|n), 
\]
where $P(x|y)$ and $\tilde{P}(x|y)$ are relative pressures given by
\[
P(x|y) := x\log x - y\log y + (y-x)(1 +\log y) 
\]
and
\[
\tilde{P}(x|y) := \frac{1}{\gamma -1} (x^\gamma - y^\gamma) + \frac{\gamma}{\gamma-1}(y-x)y^{\gamma-1},
\]
respectively.
Note that the relative pressures can be estimated as follows:
\[
P(x|y)= \int_{y}^{x} \frac{x - z}{z}\,dz\ge \frac{1}{2} \min \left\{ \frac{1}{x}, \frac{1}{y}\right\} (x-y)^2
\]
and
\[
\tilde{P}(x|y) \ge \gamma \min\left\{ x^{\gamma -2}, y^{\gamma-2}\right\} (x-y)^2.
\]
Focusing on these quantities, we derive a relation for the relative entropy functional.

\begin{lemma}\label{L4.3}
The relative entropy $\mathcal{H}$ satisfies the following equation:
\begin{align*}\begin{aligned}
& \int_\Omega \mathcal{H}(\bar U|U) \,dx + \int_0^t \int_\Omega |\nabla(v-\bar v)|^2 \,dxds + \int_0^t \int_\Omega\bar\rho|( \bar u - \bar v) - (u-v)|^2 \,dxds\\
&\quad =  \int_\Omega \mathcal{H}(\bar U_0|U_0) \,dx + \int_0^t \int_\Omega \partial_s E(\bar U) \,dxds + \int_0^t \int_\Omega  |\nabla \bar v|^2 \,dx ds + \int_0^t \int_\Omega \bar\rho | \bar u-  \bar v|^2 \,dxds \\
&\qquad - \int_0^t \int_\Omega DE(U)(\partial_s \bar U + \nabla \cdot A(\bar U) -F(\bar U)) \,dxds - \int_0^t \int_\Omega (\nabla DE(U)) : A(\bar U|U) \,dxds\\
& \qquad + \int_0^t\int_\Omega \left(\frac{ \bar n}{n} \rho - \bar\rho \right)(v- \bar v) (u-v) \,dxds + 2\int_0^t \int_\Omega \left(\frac{\bar n}{n} - 1\right)  \Delta v  \cdot (v - \bar v) \,dxds,
\end{aligned}\end{align*}
where $A:B= \sum_{i=1}^m \sum_{j=1}^n a_{ij} b_{ij}$ for $A = (a_{ij}), B= (b_{ij}) \in \bbr^{mn}$ and $A(\bar U|U)$ is the relative flux functional defined as
\[
A(\bar U|U) := A(\bar U) - A(U) - DA(U)(\bar U-U). 
\]
\end{lemma}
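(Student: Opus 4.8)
The plan is to derive the relative entropy identity by combining the entropy balance law for the strong solution $\bar U$ with the weak formulation of the conservation law $U_t+\nabla\cdot A(U)=F(U)$ satisfied by $U=(\rho^\e,m^\e,n^\e,w^\e)$ — though here, since we are proving an identity for the strong solution of the \emph{limiting} system, the roles are in a sense reversed and all computations for $\bar U$ are licit because $\bar U$ is smooth. First I would write down the time evolution of $\int_\Omega E(\bar U)\,dx$ using the entropy flux relation $\partial_t E(\bar U)+\nabla\cdot Q(\bar U)=\bar\rho|\bar u-\bar v|^2+\bar v\cdot\Delta\bar v$ (obtained from the structural identity $D_jQ_i=\sum_k D_jA_{ki}D_kE$ together with the source term $F(\bar U)$), integrate over $\Omega$, and use the boundary conditions $\bar u\cdot r=0$, $\bar v=0$ on $\partial\Omega$ so that the flux term $\int_{\partial\Omega}Q(\bar U)\cdot r$ vanishes and integration by parts on the viscous term produces $-\int_\Omega|\nabla\bar v|^2\,dx$. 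Separately I would differentiate the coupling terms $\int_\Omega DE(U)(\bar U-U)\,dx$ in time; this is where the equation $\partial_t\bar U+\nabla\cdot A(\bar U)-F(\bar U)=0$ is used to rewrite $\partial_t\bar U$, and where integration by parts in $x$ transfers derivatives onto $DE(U)$, generating the term $-\int_0^t\int_\Omega(\nabla DE(U)):A(\bar U|U)$ after one reorganizes using the relative flux $A(\bar U|U)=A(\bar U)-A(U)-DA(U)(\bar U-U)$.

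Next I would assemble the pieces. The definition $\mathcal{H}(\bar U|U)=E(\bar U)-E(U)-DE(U)(\bar U-U)$ means $\frac{d}{dt}\int_\Omega\mathcal{H}(\bar U|U)\,dx$ splits into (a) the $E(\bar U)$ evolution just computed, (b) the $E(U)$ evolution — which for the \emph{weak} solution $U$ is only an inequality, but here $U=\bar U$ of the limiting system is strong so this is the clean entropy identity — and (c) the $DE(U)(\bar U-U)$ terms. In the actual application $U$ will be the limit system's strong solution and $\bar U$ the kinetic moments, but for this lemma, which is purely a computation about the limiting system's structure, both can be taken smooth; the term $-\int_0^t\int_\Omega DE(U)(\partial_s\bar U+\nabla\cdot A(\bar U)-F(\bar U))\,dxds$ is kept symbolic precisely so the lemma can later be applied with $\bar U$ \emph{not} an exact solution. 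I would then isolate the dissipative contributions: the viscous terms combine, via the elementary algebra $|\nabla\bar v|^2-2\nabla\bar v:\nabla v+|\nabla v|^2=|\nabla(v-\bar v)|^2$ — wait, one must be careful: the coefficient $1/(1+h)$, equivalently the factor $\bar n/n$, appears in front of the Laplacian in the reformulated Navier-Stokes equation, which is exactly the origin of the correction term $2\int_0^t\int_\Omega(\bar n/n-1)\Delta v\cdot(v-\bar v)\,dxds$. Similarly the drag source terms $\bar\rho(\bar u-\bar v)$ and the quadratic expression $\bar\rho|(\bar u-\bar v)-(u-v)|^2$ are produced by completing the square in the friction contributions $\bar\rho(v-u)\cdot(\cdots)$, and the discrepancy between $\rho$ and $(\bar n/n)\rho$ in the friction coefficient of the fluid momentum equation yields the term $\int_0^t\int_\Omega((\bar n/n)\rho-\bar\rho)(v-\bar v)(u-v)\,dxds$.

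The main obstacle is bookkeeping the algebra of the coupling and viscous terms so that exactly these correction terms — and no others — survive. Concretely, three points need care: (i) tracking the $\bar n/n$ factor through the fluid momentum equation so the viscous cross term splits cleanly into the perfect square $\int_0^t\int_\Omega|\nabla(v-\bar v)|^2$ plus the single correction $2\int_0^t\int_\Omega(\bar n/n-1)\Delta v\cdot(v-\bar v)$; (ii) organizing the friction terms from both the Euler-momentum and Navier-Stokes-momentum equations, together with the contribution of $F(\bar U)$ through $DE(U)$, so that the symmetric dissipation $\bar\rho|(\bar u-\bar v)-(u-v)|^2$ emerges and the leftover asymmetry is absorbed into the $((\bar n/n)\rho-\bar\rho)$ term; and (iii) ensuring that every boundary integral arising from integration by parts — from $\nabla\cdot Q(\bar U)$, from the viscous Laplacians, and from $\nabla\cdot A$ paired against $DE(U)$ — vanishes thanks to $\bar u\cdot r=u\cdot r=0$ and $\bar v=v=0$ on $\partial\Omega$; in particular the transport/pressure flux $\rho u$, $n v$ have vanishing normal component on $\partial\Omega$, which kills the boundary contribution of the first and third components of $A$. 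Once these are handled, collecting all terms and integrating in time from $0$ to $t$ yields the stated identity, with $\int_\Omega\mathcal{H}(\bar U_0|U_0)\,dx$ coming from the initial value of $\int_\Omega\mathcal{H}(\bar U|U)\,dx$.
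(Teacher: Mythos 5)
Your outline follows the paper's route: expand $\frac{d}{dt}\int_\Omega\mathcal{H}(\bar U|U)\,dx$ by the chain rule into the four pieces
$J_1=\int\partial_tE(\bar U)$,
$J_2=-\int DE(U)\bigl(\partial_t\bar U+\nabla\cdot A(\bar U)-F(\bar U)\bigr)$,
$J_3=\int D^2E(U)\nabla\cdot A(U)(\bar U-U)+DE(U)\nabla\cdot A(\bar U)$,
$J_4=-\int D^2E(U)F(U)(\bar U-U)+DE(U)F(\bar U)$;
keep $J_1,J_2$ symbolic; reduce $J_3$ to $-\int(\nabla DE(U)):A(\bar U|U)$ via the entropy/entropy-flux identity and the vanishing of all boundary integrals under $u\cdot r=\bar u\cdot r=0$, $v=\bar v=0$; and complete squares in $J_4$ to produce the two dissipations and the two corrections. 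That is exactly the paper's structure.

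However, one piece of your explanation is wrong in a way that would derail the execution. The factor $\bar n/n$ in the viscous and friction corrections does \emph{not} come from the reformulated system \eqref{E-3} where $1/(1+h)$ multiplies the Laplacian. The lemma is stated and proved entirely in the conservative variables $(n,w)=(n,nv)$ of \eqref{A-3}, in which the source $F(U)$ carries the plain $\Delta v$ with coefficient one. The factor $\bar n/n$ comes instead from the Hessian $D^2E(U)$ restricted to the $(n,w)$-block, namely $\partial^2_{nw}E=-w/n^2$ and $\partial^2_{ww}E=n^{-1}\mathbb{I}$, combined with the algebraic identity $(\bar w-w)-v(\bar n-n)=\bar n(\bar v-v)$, which yields
$D^2E(U)F(U)(\bar U-U)=\bar\rho(v-u)\cdot(\bar u-u)+\tfrac{\bar n}{n}\bigl(-\rho(v-u)+\Delta v\bigr)\cdot(\bar v-v)$.
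Pairing this with $DE(U)\cdot F(\bar U)=\bar\rho(\bar v-\bar u)\cdot(u-v)+v\cdot\Delta\bar v$, integrating by parts on the Laplacians using $v=\bar v=0$ on $\partial\Omega$, and completing the square is what generates $J_4$. Likewise, the ``discrepancy between $\rho$ and $(\bar n/n)\rho$ in the friction coefficient of the fluid momentum equation'' you invoke is not present in the equation itself; it is the same $\bar n/n$ Hessian factor acting on the drag term. If you attempt the computation believing these factors live in the non-conservative equations, you will naturally switch to the variables $(h,v)$, in which $E$, $A$, and $F$ all change form and the bookkeeping ceases to close the way Lemma~\ref{L4.3} requires.
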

\begin{proof}
A direct calculation yields
$$\begin{aligned}
\frac{d}{dt} \int_{\Omega} \mathcal{H}(\bar U|U) \,dx & = \int_{\Omega} \partial_t E(\bar U) \,dx - \int_{\Omega} DE(U)(\pa_t \bar U + \nabla \cdot A(\bar U) -F(\bar U))\, dx\\
& \quad + \int_{\Omega} D^2 E(U) \nabla \cdot A(U)(\bar U-U) + DE(U)\nabla \cdot A(\bar U) \,dx\\
& \quad - \int_{\Omega} D^2 E(U) F(U) (\bar U-U) + DE(U)F(\bar U) \,dx\\
& =: \sum_{i=1}^4 J_i.
\end{aligned}$$
It suffices to estimate $J_3$ and $J_4$. For $J_3$, we recall from \cite{K-M-T3} that
\begin{align*}\begin{aligned}
D^2E(U)& \nabla \cdot A(U)(\bar U-U)\\
&= \nabla \cdot \Big(DQ(U)(\bar U-U)\Big) - \nabla \cdot \Big(DA(U)(\bar U-U)\Big)DE(U)\\
& = \nabla \cdot \Big(DQ(U)(\bar U-U)\Big) - \nabla \cdot \Big(A(\bar U) - A(U) - A(\bar U|U)\Big)DE(U).
\end{aligned}\end{align*}
Note that 
\begin{align*}\begin{aligned}
D&Q(U)(\bar U-U)\\
&=\sum_{k=1}^4 D_k Q(U)(\bar U_k - U_k)\\
&=\left( \begin{array}{c} -(u \otimes u) \cdot u \\ u \\ 0 \\ 0 \end{array}\right) ({\bar \rho} - \rho) + \left( \begin{array}{c} \left(-|u|^2/2 + \log \rho +1\right)\mathbb{I}_{3 \times 3} + 2u \otimes u \\ 0 \\ 0 \\ 0 \end{array}\right)(\bar{\rho} \bar{u} - \rho u)\\
& \quad + \left( \begin{array}{c} 0 \\[2mm] 0 \\[2mm] -(v \otimes v) \cdot v \\[2mm] \gamma n^{\gamma-1}v \end{array}\right) ({\bar n} - n) +  \left( \begin{array}{c} 0\\[2mm] 0 \\[2mm] \left(-|v|^2/2 + (\gamma/(\gamma-1))n^{\gamma-1}\right)\mathbb{I}_{3 \times 3}+ 2 v\otimes v  \\[2mm] 0  \end{array}\right)(\bar{n} \bar{v} - nv)\\
& = \left( \begin{array}{c} -({\bar \rho} - \rho)(u \otimes u) \cdot u + \left[\left(-|u|^2/2 + \log \rho +1\right)\mathbb{I}_{3 \times 3} + 2u \otimes u\right](\bar{\rho} \bar{u} - \rho u)\\[2mm] (\bar{\rho} - \rho)u \\[2mm]
 \left[ \left(-|v|^2/2 + (\gamma/(\gamma-1))n^{\gamma-1}\right)\mathbb{I}_{3 \times 3} + 2 v\otimes v\right](\bar{n} \bar{v} - nv) \\[2mm]
  \gamma n^{\gamma-1} v (\bar{n}- n) \end{array}\right).
\end{aligned}\end{align*}
Recall that $(\bar{v}, v)$ satisfy the homogeneous Dirichlet boundary condition and $(\bar{u}, u)$ satisfy the kinematic boundary condition, i.e. $u \cdot r \equiv 0 \equiv {\bar u} \cdot r$ on $\partial\Omega$. With the identity $(({\bf a} \otimes {\bf a})\cdot {\bf b})\cdot {\bf c} = ({\bf a}\cdot {\bf b})({\bf a}\cdot {\bf c})$ in mind, where ${\bf a},{\bf b},{\bf c} \in\R^3$, it can be checked that
\[
\int_\Omega \nabla \cdot \lt( DQ(U)(\bar U-U) \rt) dx = \int_{\partial\Omega} DQ(U)(\bar U-U)\cdot r(x) \,d\sigma(x) = 0.
\]
Moreover, we also find
\begin{align*}\begin{aligned}
D&E(U)\nabla\cdot A(U)\\
&= \frac{|u|^2}{2}\nabla\cdot(\rho u) + u \cdot (\rho u \cdot \nabla u) + \nabla \cdot ((\rho \log \rho) u) \\
&\quad + \frac{|v|^2}{2} \nabla\cdot(nv) + v \cdot (nv \cdot \nabla v) + \nabla \cdot \left( \frac{\gamma}{\gamma-1}n^\gamma v\right)\\
&= \frac{|u|^2}{2}\nabla\cdot(\rho u) + \frac{\rho u}{2} \cdot \nabla |u|^2 + \nabla \cdot ((\rho \log \rho) u) \\
&\quad + \frac{|v|^2}{2} \nabla\cdot(nv) + \frac{nv}{2} \cdot \nabla |v|^2 + \nabla \cdot \left( \frac{\gamma}{\gamma-1}n^\gamma v\right),
\end{aligned}\end{align*}
which implies
\begin{align*}\begin{aligned}
&\int_\Omega DE(U)\nabla \cdot A(U) \,dx \\
&\quad = \int_{\partial\Omega} \left(\frac{|u|^2}{2} \rho u  + (\rho \log \rho) u + \frac{|v|^2}{2}nv + \frac{\gamma}{\gamma-1}n^\gamma v\right)\cdot r(x) \,d\sigma(x) \cr
&\quad = 0.
\end{aligned}\end{align*}
Thus we have
\[
J_3 = \int_\Omega DE(U)  \nabla \cdot \lt( A(\bar U|U) \rt)dx.
\]
Here we notice that
\[
A(\bar U|U) = \left(\begin{array}{cccc}0 & 0 &0 &0 \\ \bar{\rho}( \bar{u} - u) \otimes (\bar{u}- u) & 0 & 0 & 0 \\ 0 & 0 &0 &0 \\
0 & 0 &\bar{n}(\bar{v} - v)\otimes (\bar{v} - v) & (\gamma-1)\tilde{P}(\bar{n} | n) \mathbb{I}_{3 \times 3} \end{array}\right),
\]
and this gives
\begin{align*}\begin{aligned}
J_3 &= \int_{\partial\Omega} \left(DE(U)^TA(\bar U|U)\right)\cdot r(x) \,d\sigma(x)-\int_\Omega \lt(\nabla DE(U)\rt) : A(\bar U|U)\,dx\\
 &=\int_{\partial\Omega} \left[\bar{\rho}( \bar{u} - u) \otimes (\bar{u}- u) \cdot u + \left(\bar{n}(\bar{v} - v)\otimes (\bar{v} - v)  + (\gamma-1)\tilde{P}(\bar{n} | n) \mathbb{I}_{3 \times 3}\right)\cdot v \right]\cdot r(x)\,d\sigma (x)\\
 &\quad -\int_\Omega \lt(\nabla DE(U)\rt) : A(\bar U|U)\,dx\\
 &=-\int_\Omega \lt(\nabla DE(U)\rt) : A(\bar U|U)\,dx.
 \end{aligned}\end{align*}
For the estimate of $J_4$, we can follow the proof of \cite[Lemma 2.3]{C-J} to get
\begin{align*}\begin{aligned}
J_4 &=\int_{\Omega}  |\nabla \bar v|^2 \,dx +  \int_{\Omega}\bar\rho | \bar u-  \bar v|^2 \,dx\\
& \quad +  \int_{\Omega} \left(\frac{ \bar n}{n} \rho - \bar\rho \right)(v- \bar v) (u-v) \,dx + 2\int_{\Omega}\left(\frac{\bar n}{n} - 1\right)  \Delta v  \cdot (v - \bar v) \,dx,
\end{aligned}\end{align*}
and this concludes the desired relation.
\end{proof}

\subsection{Hydrodynamic limit} In this subsection, we provide the details of proof for Theorem \ref{T2.2} and Corollary \ref{cor_main}, which show that weak solutions to the system \eqref{A-2} can be well approximated by the two-phase fluid system \eqref{A-3} when $\e>0$ is sufficiently small. Let 
\[
U := \left(\begin{array}{c} \rho \\ \rho u \\ n \\ nv \end{array}\right) \quad \mbox{and} \quad U^\varepsilon := \left(\begin{array}{c} \rho^\varepsilon \\ \rho^\varepsilon u^\varepsilon \\ n^\varepsilon \\ n^\varepsilon v^\varepsilon \end{array}\right) \quad \mbox{with} \quad \rho^\e = \int_{\R^3} f^\e\,d\xi \quad \mbox{and} \quad \rho^\e u^\e = \int_{\R^3} \xi f^\e\,d\xi,
\]
where $(f^\varepsilon, n^\varepsilon, v^\varepsilon)$ and $(\rho, u, n,v)$ are weak solutions to the system \eqref{A-2} and a unique strong solution to the system \eqref{A-3}, respectively.

Before proceeding the proof, we recall the following extension theorem whose proof can be found in \cite[Theorem 5.4.1]{Ev}. This enables us to use some Sobolev inequality in the bounded domain with the kinematic boundary condition in \eqref{A-3_bdy}.

\begin{theorem}\label{thm_exten} Suppose $\Omega \subset \R^3$ is a bounded domain with $\mathcal{C}^2$ boundary and choose a bounded open set $U$ where $\Omega$ is compactly embedded. Then for any $p \ge 1$, there exists a bounded linear operator $E : W^{2,p}(\Omega) \to W^{2,p}(\bbr^n)$ such that for any $u \in W^{2,p}(\Omega)$
\begin{itemize}
\item[(i)] $Eu = u$ a.e. in $\Omega$,
\item[(ii)] $Eu$ is supported within $U$, and
\item[(iii)] $\|Eu\|_{W^{2,p}(\R^n)} \le C \|u\|_{W^{2,p}(\Omega)}$,
\end{itemize}
where $C$ depends only on $p$, $\Omega$ and $U$. 
\end{theorem}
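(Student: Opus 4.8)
The plan is to follow the classical construction of a higher-order Sobolev extension operator: localise near the boundary, flatten it by a $C^2$ change of variables, extend by higher-order reflection, and glue the local extensions with a partition of unity. \emph{Step 1 (flattening).} Fix $x^0\in\partial\Omega$. Since $\partial\Omega$ is $C^2$, after a rotation and translation there are $r>0$ and a $C^2$ function $\eta:\R^{n-1}\to\R$ such that, writing $y=(y',y_n)$, $\Omega\cap B(x^0,r)=\{y\in B(x^0,r):y_n>\eta(y')\}$. The map $\Phi(y)=(y',\,y_n-\eta(y'))$ flattens $\partial\Omega\cap B(x^0,r)$ onto a piece of $\{y_n=0\}$ and has a $C^2$ inverse $\Psi$; because $\Phi,\Psi\in C^2$, composition with them sends $W^{2,p}$ functions to $W^{2,p}$ functions with comparable norms. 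This is exactly where $C^2$ regularity of $\partial\Omega$ (rather than merely Lipschitz or $C^1$) is needed, since the chain rule for $D^2(u\circ\Psi)$ produces terms containing $D^2\Psi$.

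\emph{Step 2 (reflection).} On the flattened side one works on an upper half-ball $B^+=B(0,s)\cap\{y_n>0\}$ and extends $u$ by
\[
\bar u(y):=\begin{cases} u(y), & y_n\ge 0,\\[1mm] -3\,u(y',-y_n)+4\,u(y',-y_n/2), & y_n<0,\end{cases}
\]
the coefficients $-3,4$ being chosen so that both $\bar u$ and $\partial_{y_n}\bar u$ have matching one-sided traces on $\{y_n=0\}$ (the tangential derivatives match automatically). The fact to invoke is that a function which lies in $W^{2,p}$ on each of the two half-balls $B(0,s)\cap\{\pm y_n>0\}$ and whose zeroth- and first-order traces from the two sides agree on the common face belongs to $W^{2,p}(B(0,s))$; together with the explicit formula for $\bar u$ this yields $\|\bar u\|_{W^{2,p}(B(0,s))}\le C\|u\|_{W^{2,p}(B^+)}$. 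Pulling back by $\Psi$ and multiplying by a cutoff supported in $\Psi(B(0,s))$ produces, for each $x^0$, an open neighborhood $W_{x^0}$ of $x^0$ and a bounded linear map $E_{x^0}:W^{2,p}(\Omega\cap W_{x^0})\to W^{2,p}(\R^n)$ that restricts to the identity on $\Omega\cap W_{x^0}$.

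\emph{Step 3 (globalization).} By compactness of $\partial\Omega$, finitely many such neighborhoods $W_1,\dots,W_N$ cover $\partial\Omega$, and after shrinking them we may assume $\bigcup_{i=1}^N W_i\subset U$. Choose an open set $W_0$ with $\bar W_0\subset\Omega$ so that $W_0,W_1,\dots,W_N$ cover $\bar\Omega$, and a $C^\infty$ partition of unity $\{\zeta_i\}_{i=0}^N$ subordinate to this cover. For $u\in W^{2,p}(\Omega)$ define
\[
Eu:=\zeta_0 u+\sum_{i=1}^N E_i(\zeta_i u),
\]
where each $\zeta_i u$ is supported in $W_i$, so that (after extension by zero) the summands are well-defined on $\R^n$; multiplying by a fixed $C^\infty$ cutoff that is $1$ on a neighborhood of $\bar\Omega$ and supported in $U$ gives property (ii). Property (i) follows from $\sum_i\zeta_i\equiv 1$ on $\Omega$ and the fact that each $E_i$ restricts to the identity on $\Omega\cap W_i$; $E$ is linear by construction; and (iii) follows by summing the finitely many local estimates, with a constant depending only on $p$, the chosen finite atlas and cutoffs, hence only on $p$, $\Omega$ and $U$. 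I expect the one delicate step to be Step 2: verifying that higher-order reflection genuinely lands in $W^{2,p}$ — that matching the traces of $u$ and of $\nabla u$ across the flat face is exactly what forces the glued function to have $L^p$ weak second derivatives — and that $C^2$ charts preserve $W^{2,p}$ (which would fail for merely $C^1$ charts); the partition-of-unity bookkeeping in Step 3 is routine.
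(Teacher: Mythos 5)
Your proof is correct and is precisely the classical construction (boundary flattening, higher-order reflection with coefficients $-3,4$, partition of unity) from Evans, \cite[Theorem 5.4.1]{Ev}, which is exactly the reference the paper gives in lieu of a proof. The one delicate point you flag — that matching the traces of $u$ and $\nabla u$ across the flat face is what makes the glued function globally $W^{2,p}$, and that $C^2$ charts are needed to preserve $W^{2,p}$ under composition — is handled correctly.
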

Moreover, we provide a technical lemma concerning the lower bound for the relative pressure.
\begin{lemma}\label{ineq}
Let $x, y>0$ and $\gamma>1$. If $0 < y_{min} \le y \le y_{max}<\infty$, then the following inequality holds:
\begin{align}\label{T1-1.1}
\begin{aligned}
\tilde{P}(x|y) &= \frac{1}{\gamma - 1}(x^\gamma - y^\gamma) + \frac{\gamma}{\gamma-1}(y-x)y^{\gamma-1} \cr
&\ge C  \left\{ \begin{array}{ll}
 (x-y)^2 & \textrm{if $y/2 \le x \le 2y$}\\[2mm]
\displaystyle (1+x^\gamma) & \textrm{otherwise}
  \end{array} \right.,
\end{aligned}
\end{align}
where $C = C(\gamma, y_{min}, y_{max})$ is a positive constant.
\end{lemma}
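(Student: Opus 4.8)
The plan is to reduce this two‑variable estimate to a one‑variable convexity estimate via the scaling $x = ty$. A direct computation gives
\[
\tilde{P}(x|y) = \frac{y^\gamma}{\gamma-1}\,\psi(t), \qquad \psi(t) := t^\gamma - \gamma t + \gamma - 1, \quad t := x/y > 0 .
\]
The auxiliary function $\psi$ is strictly convex on $(0,\infty)$ because $\psi''(t) = \gamma(\gamma-1)t^{\gamma-2} > 0$, it vanishes to second order at $t=1$ (as $\psi(1) = \psi'(1) = 0$), hence it is strictly positive for $t \ne 1$; moreover $\psi(0) = \gamma - 1 > 0$ and $\psi(t)/t^\gamma \to 1$ as $t \to \infty$. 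Since $y$ is confined to $[y_{min},y_{max}]$ with $0 < y_{min} \le y_{max} < \infty$, the prefactor $y^\gamma/(\gamma-1)$ is bounded above and below by positive constants depending only on $\gamma,y_{min},y_{max}$, so it suffices to bound $\psi(t)$ from below on the two regimes.

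On the region $y/2 \le x \le 2y$, equivalently $t \in [1/2,2]$, Taylor's theorem about $t=1$ gives $\psi(t) = \tfrac12 \psi''(\tau)(t-1)^2$ for some $\tau$ between $1$ and $t$; since $\tau \in [1/2,2]$ we have $\psi''(\tau) \ge \gamma(\gamma-1)\,2^{-|\gamma-2|} > 0$. Using $(t-1)^2 = (x-y)^2/y^2$ together with the bounds on $y$, this yields $\tilde{P}(x|y) = \frac{y^\gamma}{\gamma-1}\psi(t) \ge C\,y^{\gamma-2}(x-y)^2 \ge C(x-y)^2$, which is the first alternative in \eqref{T1-1.1}.

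On the complementary region $x < y/2$ or $x > 2y$, equivalently $t \in [0,1/2) \cup (2,\infty)$, I claim $\psi(t) \ge c\,(1+t^\gamma)$ for some $c = c(\gamma) > 0$. Indeed $\psi$ is continuous and strictly positive on the closed set $[0,1/2] \cup [2,\infty)$, so $\psi(t)/(1+t^\gamma)$ is continuous, strictly positive there, and tends to $1$ at infinity; a continuous positive function on a closed set with a positive limit at infinity attains a positive infimum, which proves the claim. Then $\tilde{P}(x|y) = \frac{y^\gamma}{\gamma-1}\psi(t) \ge \frac{c\,y_{min}^\gamma}{\gamma-1}(1+t^\gamma)$, and since $1 + x^\gamma = 1 + t^\gamma y^\gamma \le \max\{1,y_{max}^\gamma\}(1+t^\gamma)$, we get $\tilde{P}(x|y) \ge C(1+x^\gamma)$, the second alternative. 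Collecting the constants from the two cases completes the proof.

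The step requiring the most care is the unbounded regime $x > 2y$: a crude estimate such as $\psi(t) \ge \frac{t^\gamma}{\gamma-1} - \frac{\gamma}{\gamma-1}t$ is not comparable to $1+t^\gamma$ when $\gamma$ is close to $1$, which is why I route through the compactness‑plus‑asymptotics argument for $\psi(t)/(1+t^\gamma)$ rather than trying to exhibit explicit constants by hand. Everything else reduces to a routine Taylor expansion together with the elementary bounds on the scaling factor $y^\gamma$.
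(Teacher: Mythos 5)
Your proof is correct, and it takes a genuinely different route from the paper's. The paper first invokes the Taylor lower bound $\tilde P(x|y)\ge \gamma\min\{x^{\gamma-2},y^{\gamma-2}\}(x-y)^2$ and then runs a two-fold case split on $\gamma\in(1,2]$ versus $\gamma>2$, handling the three regions $x\in[y/2,2y]$, $x>2y$, $x<y/2$ by separate explicit manipulations (for $\gamma>2$ and $x$ far from $y$ it abandons the Taylor bound and estimates $\tilde P$ directly, using $xy^{\gamma-1}\le 2^{1-\gamma}x^\gamma$ to extract the coefficient $1-\gamma 2^{1-\gamma}>0$). You instead exploit the homogeneity $\tilde P(x|y)=\frac{y^\gamma}{\gamma-1}\psi(x/y)$ with $\psi(t)=t^\gamma-\gamma t+\gamma-1$, which collapses the problem to one variable: near $t=1$ a single Taylor expansion with $\psi''(\tau)=\gamma(\gamma-1)\tau^{\gamma-2}\ge\gamma(\gamma-1)2^{-|\gamma-2|}$ covers both ranges of $\gamma$ at once, and away from $t=1$ the positivity of $\psi(t)/(1+t^\gamma)$ on $[0,1/2]\cup[2,\infty)$ together with its limit $1$ at infinity gives the second alternative. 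What your approach buys is uniformity in $\gamma$ (no case split) and a cleaner structural explanation of why the estimate holds; what it gives up is explicitness, since the constant $c(\gamma)=\inf\psi/(1+t^\gamma)$ comes from a compactness-plus-asymptotics argument rather than a closed form (the infimum need not be attained on the unbounded piece, but its positivity is all you use, so this is only a cosmetic imprecision). Both arguments are complete; for the purposes of the hydrodynamic limit only the existence of $C(\gamma,y_{min},y_{max})>0$ matters, so either is acceptable.
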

\begin{proof}By Taylor's theorem, we easily find 
\[
\frac{1}{\gamma - 1}(x^\gamma - y^\gamma) + \frac{\gamma}{\gamma-1}(y-x)y^{\gamma-1} \ge \gamma \min\{x^{\gamma-2},y^{\gamma-2}\} (x-y)^2.
\]
For notational simplicity, we denote by $\tilde{P}_\ell(x|y)$ the right hand side of the above inequality, and we split two cases depending on the choice of $\gamma$.\\

\noindent $\diamond$ (Case A: $1 < \gamma \leq 2$) If $y/2 \le x \le 2y$, we easily get
\[
\tilde{P}_\ell(x|y) \ge \gamma (2y)^{\gamma-2} (x-y)^2 \ge \gamma (2y_{max})^{\gamma-2} (x-y)^2.
\]
If $x >2y > y \ (> y_{min})$, i.e., $y/x < 1/2$, we obtain
\begin{align*}\begin{aligned}
\tilde{P}_\ell(x|y) &= \gamma x^{\gamma-2} |x-y|^2 = \gamma x^\gamma \lt| 1-\frac{y}{x}\rt|^2 \cr
&\ge \frac{\gamma x^\gamma}{4} = \frac{\gamma}{4}(1+x^\gamma) \lt(1 - \frac{1}{1+x^\gamma} \rt) \cr
&\ge \frac{\gamma}{4}(1+x^\gamma)\lt(1 - \frac{1}{1+y_{min}^\gamma} \rt).
\end{aligned}\end{align*}
On the other hand, if $x< y/2$, i.e., $x/y < 1/2$, we estimate
\begin{align*}\begin{aligned}
\tilde{P}_\ell(x|y) &\ge \gamma y^{\gamma-2} |x-y|^2 = \gamma y^\gamma \lt| 1-\frac{x}{y}\rt|^2\\
& \ge \frac{\gamma y^\gamma}{4} = \frac{\gamma}{4}(1+y^\gamma) \lt(1 - \frac{1}{1+y^\gamma} \rt)\\
& \ge \frac{\gamma}{4}(1+x^\gamma)\lt(1 - \frac{1}{1+y_{min}^\gamma} \rt).
\end{aligned}\end{align*}
This asserts the inequality \eqref{T1-1.1} for the case $\gamma \in (1,2]$. \\

\noindent $\diamond$ (Case B: $\gamma >2$) If $y/2 \le x \le 2y$, we similarly get
\[
\tilde{P}_\ell(x|y)\ge \gamma (y/2)^{\gamma-2} |x-y|^2 \ge \gamma (y_{min}/2)^{\gamma-2} |x-y|^2.
\]
For the rest of cases, we directly estimate the relative pressure $\tilde P(x|y)$ instead of $\tilde P_\ell(x|y)$. If $x>2y$, i.e. $y<x/2$, we obtain
\begin{align*}\begin{aligned}
\tilde P(x|y) &=\frac{1}{\gamma-1} x^\gamma + y^\gamma - \frac{\gamma}{\gamma-1}xy^{\gamma-1}\\
&\ge \frac{\left(1-\gamma2^{1-\gamma}\right)}{\gamma-1} x^\gamma +y^\gamma\\
&\ge \min\left\{ \frac{1-\gamma2^{1-\gamma}}{\gamma-1}, y_{min}^\gamma \right\}( 1+ x^\gamma),
\end{aligned}\end{align*}
where we used $\gamma>2$ to get $ 1 -\gamma 2^{1-\gamma} >0$. When $x<y/2$, we estimate
\[
\tilde{P}_\ell(x|y)\ge \frac{1}{\gamma-1} x^\gamma + \left(1 - \frac{\gamma }{2(\gamma-1)}  \right)y^\gamma\ge \min\left\{ \frac{1}{\gamma-1}, \left(1 - \frac{\gamma }{2(\gamma-1)}  \right)y_{min}^\gamma \right\}( 1+ x^\gamma),
\]
where we used $\gamma>2$ to get $1 -\gamma/(2(\gamma-1)) >0$. This completes the proof.
\end{proof}

\subsubsection{Proof of Theorem \ref{T2.2}} Replacing $\bar U$ with $U^\e$ in Lemma \ref{L4.3}, we find
\begin{align*}\begin{aligned}
& \int_\Omega \mathcal{H}(U^\varepsilon|U)\,dx + \int_0^t \int_\Omega |\nabla(v-v^\varepsilon)|^2\,dxds + \int_0^t \int_\Omega \rho^\varepsilon|( u^\varepsilon - v^\varepsilon) - (u-v)|^2\,dxds\\
&\quad  =  \int_\Omega \mathcal{H}(U^\varepsilon_0|U_0)\,dx \cr
&\qquad + \int_0^t \int_\Omega \partial_s E(U^\varepsilon) \,dxds + \int_0^t \int_\Omega |\nabla v^\varepsilon|^2 \,dxds + \int_0^t \int_\Omega \rho^\varepsilon | u^\varepsilon -  v^\varepsilon|^2 \,dxds \\
&\qquad - \int_0^t \int_\Omega DE(U)(\partial_s U^\varepsilon + \nabla \cdot A(U^\varepsilon) -F(U^\varepsilon)) \,dxds\\
& \qquad - \int_0^t \int_\Omega (\nabla DE(U)) : A(U^\varepsilon|U) \,dxds\\
& \qquad + \int_0^t\int_\Omega \left(\frac{ n^\varepsilon}{n} \rho - \rho^\varepsilon \right)(v- v^\varepsilon) \cdot(u-v) \,dxds\\
& \qquad + 2\int_0^t \int_\Omega \left(\frac{n^\varepsilon - n}{n}\right) \Delta v \cdot (v - v^\varepsilon) \,dxds\\
& \quad =: \sum_{i=1}^6 K_i.
\end{aligned}\end{align*}
We separately estimate $K_i, i=1,\dots,6$ as follows. \newline

\noindent $\diamond$ (Estimates for $K_1$): It follows from our standing assumption {\bf (H2)} that
\[
K_1 \leq C\sqrt\e.
\]

\noindent $\diamond$ (Estimates for $K_2$): Similar to \cite[Proposition 5.2]{C-C-K}, we estimate 
$$\begin{aligned}
K_2 &=  \int_{\bbr^d} E(U^\varepsilon) \,dx - \mathcal{F}(f^\varepsilon, n^\varepsilon, v^\varepsilon) \\ 
& \quad + \mathcal{F}(f^\varepsilon, n^\varepsilon, v^\varepsilon) + \int_0^t  \int_\Omega |\nabla v^\varepsilon|^2 \,dxds + \int_0^t \int_\Omega \rho^\varepsilon |u^\varepsilon - v^\varepsilon|^2 \,dxds - \mathcal{F}(f^\e_0, n^\e_0, v^\e_0) \\
&\quad + \mathcal{F}(f^\e_0, n^\e_0, v^\e_0) - \int_\Omega E(U_0)\,dx \cr 
& \le C(T)\e + \mathcal{F}(f^\e_0, n^\e_0, v^\e_0)  - \int_\Omega E(U_0)\,dx,
\end{aligned}$$
where we used the entropy inequality in Remark 4.1 and the fact that
\[
\int_\Omega E(U^\varepsilon) \,dx \le \mathcal{F}(f^\varepsilon, n^\varepsilon, v^\varepsilon).
\]
Then, the standing assumption {\bf (H1)} on the initial data implies
\[
K_2 \leq C\sqrt\e
\]
for some $C>0$ independent of $\e$.\newline

\noindent $\diamond$ (Estimates for $K_3$): From \eqref{A-2}, we obtain
$$\begin{aligned}
&\partial_t \rho^\e + \nabla \cdot (\rho^\e u^\e) = 0,\\
&\partial_t (\rho^\e u^\e) + \nabla \cdot (\rho^\e u^\e \otimes u^\e) + \nabla \rho^\e - \rho^\e(v^\e-u^\e) = \nabla \cdot \lt(\int_{\R^3} (u^\e \otimes u^\e - \xi \otimes \xi + \mathbb{I}_{3\times 3}
  )f^\e\,d\xi \rt) ,\\
&\partial_t n^\e + \nabla \cdot (n^\e v^\e) = 0,\\
&\partial_t (n^\e v^\e) + \nabla \cdot (n^\e v^\e \otimes v^\e) + \nabla p(n^\e) - \Delta v^\e  + \rho^\e(v^\e-u^\e) = 0
\end{aligned}$$
in the sense of distributions. This implies
$$\begin{aligned}
&- \int_0^t \int_\Omega DE(U)(\partial_s U^\varepsilon + \nabla \cdot A(U^\varepsilon) -F(U^\varepsilon)) \,dxds\cr
&\quad = - \int_0^t \int_\Omega D_m E(U)\lt(\nabla \cdot \lt(\int_{\R^3} (u^\e \otimes u^\e - \xi \otimes \xi + \mathbb{I}_{3\times 3}
  )f^\e\,d\xi \rt) \rt) \,dxds\cr
&\quad = -\int_0^t \int_{\partial\Omega\times\R^3}\left(\left(u^\e \otimes u^\e - \xi\otimes\xi +\mathbb{I}_{3\times 3}
  \right) u\right)\cdot r(x) \gamma f^\e  \,d\sigma(x)d\xi ds  \\
& \qquad +\int_0^t \int_\Omega \nabla u : \lt(\int_{\R^3} (u^\e \otimes u^\e - \xi \otimes \xi + \mathbb{I}_{3\times 3}
  )f^\e\,d\xi \rt)  \,dxds
\end{aligned}$$
due to $D_m E(U) = u$. Here, we notice that
\begin{align*}\begin{aligned}
&\int_{\partial\Omega\times\R^3}\left(\left(u^\e \otimes u^\e - \xi\otimes\xi + \mathbb{I}_{3\times 3}
  \right)u\right)\cdot r(x) \gamma f  \,d\sigma(x)d\xi \\
&\quad = \int_{\partial\Omega\times\R^3}(\xi \cdot r)(\xi \cdot u)\gamma f^\e \,d\sigma(x)d\xi\\
&\quad = \int_{\Sigma_+}|\xi \cdot r|(\xi \cdot u)\gamma_+ f^\e \,d\sigma(x)d\xi - \int_{\Sigma_-}|\xi \cdot r|(\xi \cdot u)\gamma_- f^\e \,d\sigma(x)d\xi\\
&\quad = \int_{\Sigma_+}|\xi \cdot r|(\xi \cdot u)\gamma_+ f^\e \,d\sigma(x)d\xi- \int_{\Sigma_-}|\xi \cdot r|(\xi \cdot u)\gamma_+ f^\e(\mathcal{R}_x(\xi)) \,d\sigma(x)d\xi\\
&\quad = 0,
\end{aligned}\end{align*}
where $\mathcal{R}_x(\xi) = \xi - 2(\xi\cdot r(x))r(x)$, and we used the change of variable $\mathcal{R}_x(\xi) \leftrightarrow \xi$ on the second term. We then follow the proof of \cite[Lemma 4.4]{K-M-T3} to get
\[
K_3 \le C\sqrt{\varepsilon},
\]
where $C =  C(\|\nabla u\|_{L^\infty})$ is a positive constant independent of $\e$. Since we assumed that $\partial\Omega$ is smooth, we can put $p=2$, $n=3$ in Theorem \ref{thm_exten} to obtain
\[
\|\nabla u\|_{L^\infty(\om)} = \|E(\nabla u)\|_{L^\infty(\R^3)} \le \|E(\nabla u)\|_{H^2(\R^3)} \le C\|\nabla u\|_{H^2(\om)} \le C\|u\|_{H^3(\om)}. 
\]
$\diamond$ (Estimates for $K_4$): Recall that
\[
A(U^\e|U) = \left(\begin{array}{cccc}0 & 0 &0 &0 \\ \rho^\e( u^\e - u) \otimes (u^\e- u) & 0 & 0 & 0 \\ 0 & 0 &0 &0 \\
0 & 0 &n^\e(v^\e - v)\otimes (v^\e - v) & (\gamma-1)\tilde{P}(n^\e | n) \mathbb{I}_{3\times 3}   \end{array}\right).
\]
This implies
$$\begin{aligned}
\int_\Omega |A(U^\varepsilon|U)| \,dx &\le  \int_\Omega \rho^\varepsilon|u^\varepsilon - u|^2 + n^\varepsilon |v^\varepsilon-v|^2 + 3(\gamma-1)\tilde{P}(n^\varepsilon | n) \,dx\cr
& \le C \int_\Omega \mathcal{H}(U^\varepsilon|U) \,dx,
\end{aligned}$$
where $C>0$ only depends on $\gamma$. Thus we obtain
\[
K_4 \le C \int_0^t \int_\Omega \mathcal{H}(U^\varepsilon|U) \,dx ds.
\]
$\diamond$ (Estimates for $K_5$): We divide $K_5$ into two terms:
\begin{align*}\begin{aligned}
K_5 &= \int_0^t \int_\Omega (\rho-\rho^\varepsilon)(v- v^\varepsilon)\cdot(u-v) \,dxds + \int_0^t \int_\Omega \rho \left( \frac{ n^\varepsilon - n}{n}\right)(v- v^\varepsilon)\cdot (u-v) \,dx ds\\
& =: K_5^1 +K_5^2.
\end{aligned}\end{align*}
For the estimate of $K_5^1$, we use the following elementary inequality:
\begin{equation}\label{minmax}
1 = \min\left\{x^{-1}, y^{-1} \right\}\max\left\{x, y \right\} \le \min\left\{x^{-1}, y^{-1} \right\} (x+y) \quad \mbox{for} \quad x,y>0
\end{equation}
to get
\begin{align*}\begin{aligned}
& \lt|\int_\Omega (\rho-\rho^\varepsilon)(v- v^\varepsilon)\cdot(u-v) \,dx \rt| \\
& \quad \le \left( \int_\Omega \min\left\{\frac{1}{\rho^\varepsilon}, \frac{1}{\rho} \right\} (\rho- \rho^\varepsilon)^2\,dx \right)^{1/2} \left(\int_\Omega (\rho + \rho^\varepsilon)|v- v^\varepsilon|^2 |u-v|^2 \,dx\right)^{1/2}\\
& \quad \le C \left( \int_\Omega \mathcal{H}(U^\varepsilon|U) \,dx \right)^{1/2}\left(\int_\Omega (\rho + \rho^\varepsilon)|v- v^\varepsilon|^2 |u-v|^2 \,dx\right)^{1/2}.
\end{aligned}\end{align*}
On the other hand, the second term on the right hand side of the above inequality can be estimated as
\begin{align*}\begin{aligned}
&\int_\Omega (\rho + \rho^\varepsilon)|v-v^\varepsilon|^2 |u-v|^2 \,dx\\
& \le \|\rho\|_{L^\infty}\|v-v^\varepsilon\|_{L^6}^2 \|u-v\|_{L^3}^2 \cr
&\quad + 2\int_\Omega \Big(\rho^\varepsilon|(u-u^\varepsilon) - (v-v^\varepsilon)|^2 + \rho^\varepsilon|u-u^\varepsilon|^2 \Big)|u-v|^2 \,dx\\
&  \le C \|\nabla(v - v^\varepsilon)\|_{L^2}^2 \|u-v\|_{L^3}^2 \cr
&\quad + 2\|u-v\|_{L^\infty}^2 \left(\int_\Omega \rho^\varepsilon|(u-u^\varepsilon) - (v-v^\varepsilon)|^2 \,dx + \int_{\bbr^d} \rho^\varepsilon|u-u^\varepsilon|^2 \,dx\right),
\end{aligned}\end{align*}
where we used Sobolev inequality and Poincar\'e inequality. Here, we once again use Theorem \ref{thm_exten} to have
\begin{align*}\begin{aligned}
\|\rho\|_{L^\infty(\om)} &= \|E\rho\|_{L^\infty(\R^3)} \le \|E\rho\|_{H^2(\R^3)} \le C \|\rho\|_{H^2(\Omega)} \quad \mbox{and}\\
\|u-v\|_{L^\infty(\om)} &=\|E(u-v)\|_{L^\infty(\R^3)} \le \|E(u-v)\|_{H^2(\R^3)} \le C\|u-v\|_{H^2(\Omega)}.
\end{aligned}\end{align*}
Since $s > 7/2$, by means of Young's inequality, we estimate
$$\begin{aligned}
K_5^1 &\le C\int_0^t \int_\Omega \mathcal{H}(U^\varepsilon|U)\,dxds + \frac18\int_0^t \int_\Omega |\nabla (v - v^\e)|^2\,dxds  \cr
&\quad + \frac{1}{2}\int_0^t \int_\Omega \rho^\varepsilon|(u-u^\varepsilon) - (v-v^\varepsilon)|^2 \,dxds,
\end{aligned}$$
where $C = C(\|\rho\|_{L^\infty}, \|u-v\|_{L^\infty(0,T;L^\infty)})$ is a positive constant. For the term $K_5^2$, we let $n_* := \inf_{x \in \om} n(x) > 0$ and first consider the case $\gamma \in (3/2, 2]$. We use the inequality \eqref{minmax} to get
\begin{align*}\begin{aligned}
&\lt|\int_\Omega \rho \left( \frac{ n^\varepsilon - n}{n}\right)(v- v^\varepsilon)\cdot(u-v) \,dx\rt|\\
&\quad\le \frac{\|\rho\|_{L^\infty}}{n_*}\int_\Omega |n^\varepsilon-n| |v^\varepsilon-v| |u-v| \,dx \\
&\quad\le C \left( \int_\Omega \min\left\{(n^\varepsilon)^{\gamma-2}, n^{\gamma-2} \right\} (n- n^\varepsilon)^2\,dx \right)^{1/2}\cr
&\qquad \qquad \times  \left(\int_\Omega \lt(n^{2-\gamma} + (n^\varepsilon)^{2-\gamma}\rt)|v- v^\varepsilon|^2 |u-v|^2\,dx\right)^{1/2}\\
&\quad \le C \left( \int_\Omega \mathcal{H}(U^\varepsilon|U) \,dx \right)^{1/2} \left(\int_\Omega \lt(n^{2-\gamma} + (n^\varepsilon)^{2-\gamma}\rt)|v- v^\varepsilon|^2  |u-v|^2 \,dx\right)^{1/2},
\end{aligned}\end{align*}
where $C = C(\|\rho\|_{L^\infty}, n_*, \gamma)$ is a positive constant. We further estimate
\begin{align*}\begin{aligned}
\int_\Omega& \lt(n^{2-\gamma} + (n^\varepsilon)^{2-\gamma}\rt)|v- v^\varepsilon|^2 |u-v|^2 \,dx \\
&\le \|n\|_{L^\infty}^{2-\gamma} \|v-v^\varepsilon\|_{L^6}^2 \|u-v\|_{L^3}^2 + \int_\Omega (n^{\varepsilon})^{2-\gamma} |v-v^\varepsilon|^2 |u-v|^2 \,dx\\
&\le C\|n\|_{L^\infty}^{2-\gamma}\|u-v\|_{L^3}^2\|\nabla(v-v^\varepsilon)\|_{L^2}^2 + \int_\Omega (n^{\varepsilon})^{2-\gamma} |v-v^\varepsilon|^2 |u-v|^2 \,dx.
\end{aligned}\end{align*}
For $\gamma=2$, we easily get 
\begin{displaymath}
\int_\Omega (n^\varepsilon)^{2-\gamma}|v- v^\varepsilon|^2 |u-v|^2 \,dx \leq C\|u-v\|_{L^3}^2\|\nabla(v-v^\varepsilon)\|_{L^2}^2.
\end{displaymath}
For $\gamma \in (3/2,2)$, we first use Young's inequality to obtain
\begin{align*}\begin{aligned}
\int_\Omega& (n^\varepsilon)^{2-\gamma}|v- v^\varepsilon|^2 |u-v|^2 \,dx \\
&\le \int_{\bbr^d} (n^{\varepsilon})^{2-\gamma} |v-v^\varepsilon|^{(4-2\gamma) + (2\gamma -2)} |u-v|^2 \,dx\\
& \le(2-\gamma)\int_\Omega n^\varepsilon |v-v^\varepsilon|^2 \,dx + (\gamma-1) \int_\Omega|v-v^\varepsilon|^2 |u-v|^{2/(\gamma-1)} \,dx \\
& \le(2-\gamma)\int_\Omega n^\varepsilon|v-v^\varepsilon|^2 \,dx + (\gamma-1)\|v-v^\varepsilon\|_{L^6}^2 \|u-v\|_{L^{\frac{3}{\gamma-1}}}^{\frac{2}{\gamma-1}}\\
& \le C \left(\int_\Omega \mathcal{H}(U^\varepsilon|U) \,dx + \int_\Omega |\nabla (v - v^\e)|^2\,dx\right),
\end{aligned}\end{align*}
where $C = C(\gamma, \|u-v\|_{L^\infty(0,T;L^\infty)})$ is a positive constant. Using the similar argument as in the estimate of $K_5^1$, we find   
\begin{equation}\label{K-5-2} 
K_5^2 \le C \int_0^t \int_\Omega \mathcal{H}(U^\varepsilon|U)\,dxds + \frac18\int_0^t \int_\Omega |\nabla (v - v^\e)|^2\,dxds,
\end{equation}
for any $\gamma \in (3/2,2]$. 

For $\gamma >2$, we use Lemma \ref{ineq} with $x = n^\e$, $y=n$, $y_{min} = n_*$, and $y_{max} = \|n\|_{L^\infty}$. Then we estimate
\begin{align*}\begin{aligned}
&\lt|\int_\Omega \rho \left( \frac{ n^\varepsilon - n}{n}\right)(v- v^\varepsilon)\cdot(u-v) \,dx\rt|\cr
&\quad \le \frac{\|\rho(u-v)\|_{L^\infty}}{n_*}\int_\Omega |n^\varepsilon-n| |v^\varepsilon-v| \,dx \\
&\quad \le C \left(\int_{\Omega \cap \{n/2 \le n^\e \le 2n\}} + \int_{\Omega\cap\{n/2 \le n^\e  \le 2n\}^c}\right)|n^\varepsilon-n| |v^\varepsilon-v|\,dx\\
&\quad =: K_5^{21} + K_5^{22},
\end{aligned}\end{align*}
where $C = C(n_*, \|\rho\|_{L^\infty}, \|u-v\|_{L^\infty})$ is a positive constant. For $K_5^{21}$, by Cauchy-Schwarz inequality and the inequality \eqref{T1-1.1}, we get 
\begin{align*}
\begin{aligned}
K_5^{21} &\le \left(\int_{\Omega\cap\{n/2 \le n^\e \le 2n\}} \frac{|n^\e - n|^2}{n^\e} \,dx \right)^{1/2}\left(\int_{\Omega\cap\{n/2 \le n^\e \le 2n\}} n^\e |v^\e - v|^2 \,dx\right)^{1/2}\\
&\le \left(\int_{\Omega\cap\{n/2 \le n^\e \le 2n\}} \frac{|n^\e - n|^2}{n/2} \,dx \right)^{1/2} \left(\int_\Omega \mathcal{H}(U^\e|U)\,dx\right)^{1/2}\\
&  \le C\left(\int_{\Omega\cap\{n/2 \le n^\e \le 2n\}} |n^\e - n|^2 \,dx \right)^{1/2} \left(\int_\Omega \mathcal{H}(U^\e|U)\,dx\right)^{1/2}\\
&\le C\int_\Omega \mathcal{H}(U^\e|U)\,dx,
\end{aligned}
\end{align*}
where $C = C(n_*, \|n\|_{L^\infty})$ is a positive constant. For $K_5^{22}$, note that  $\Omega\cap\{n/2 \le n^\e \le 2n\}^c = \Omega\cap(\{ n^\e > 2n\} \cup \{n^\e <n/2\})$. On the region $\Omega\cap\{ n^\e > 2n\}$, we use Cauchy-Schwarz inequality, Sobolev inequality, and Young's inequality to obtain
\begin{align*}
&\int_{\Omega\cap\{ n^\e  > 2n\}}|n^\varepsilon-n| |v^\varepsilon-v|\,dx  \\
&\quad \le \left( \int_\Omega \min\left\{(n^\varepsilon)^{\gamma-2}, n^{\gamma-2} \right\} (n- n^\varepsilon)^2\,dx \right)^{1/2} \left(\int_{\Omega\cap\{ n^\e  > 2n\}} \lt(n^{2-\gamma} + (n^\varepsilon)^{2-\gamma}\rt)|v- v^\varepsilon|^2 \,dx\right)^{1/2}\cr
&\quad \le \left(\int_\Omega \mathcal{H}(U^\e|U)\,dx\right)^{1/2} \left(\int_{\Omega\cap\{ n^\e  > 2n\}} \lt(n^{2-\gamma} + (2n)^{2-\gamma}\rt)|v- v^\varepsilon|^2 \,dx\right)^{1/2}\cr
&\quad \le C \left(\int_\Omega \mathcal{H}(U^\e|U)\,dx\right)^{1/2}\|v-v^\e\|_{L^6}\\
&\quad \le C\left(\int_\Omega \mathcal{H}(U^\e|U)\,dx\right)^{1/2}\|\nabla(v-v^\e)\|_{L^2}\\
&\quad \le C\int_\Omega \mathcal{H}(U^\e|U)\,dx + \frac{1}{16}\|\nabla(v-v^\e)\|_{L^2}^2,
\end{align*}
where $C = C(\gamma, \Omega, n_*, \|n\|_{L^\infty})$ is a positive constant. On the region $\Omega\cap\{ n^\e < n/2\}$, we note that $|n^\e - n| = n-n^\e > n/2$. Thus, we use Cauchy-Schwarz inequality, Sobolev inequality, and \eqref{T1-1.1} to yield
\begin{align*}
\int_{\Omega\cap\{ n^\e  <n/2\}}|n^\varepsilon-n| |v-v^\e|\,dx&\le C\left(\int_{\Omega\cap\{ n^\e  <n/2\}}|n^\varepsilon-n|^2\, dx\right)^{1/2}\|v - v^\e\|_{L^6}\\
&\le C\|\nabla(v - v^\e)\|_{L^2}\left(\int_{\Omega\cap\{ n^\e  <n/2\}}\frac{|n^\varepsilon-n|^\gamma}{|n^\varepsilon-n|^{\gamma-2}}\, dx\right)^{1/2}\\
&\le C\|\nabla(v - v^\e)\|_{L^2}\left(\int_{\Omega\cap\{ n^\e  <n/2\}}\frac{|n^\varepsilon-n|^\gamma}{(n/2)^{\gamma-2}} \,dx\right)^{1/2}\\
&\le C\|\nabla(v - v^\e)\|_{L^2}\left(\int_{\Omega\cap\{ n^\e  <n/2\}} \frac{\|n\|_{L^\infty}^\gamma}{(n_{*}/2)^{\gamma-2}} \left|\frac{n^\varepsilon}{n}+1\right|^\gamma dx\right)^{1/2}\\
&\le  C\|\nabla(v - v^\e)\|_{L^2}\left(\int_{\Omega\cap\{ n^\e  <n/2\}} (1+(n^\e)^\gamma)\, dx\right)^{1/2}\\
&\le  C\|\nabla(v - v^\e)\|_{L^2}\left(\int_{\Omega\cap\{ n^\e  <n/2\}} \mathcal{H}(U^\e|U) \,dx\right)^{1/2}\\
&\le C\int_\Omega \mathcal{H}(U^\e|U)dx + \frac{1}{16}\|\nabla(v - v^\e)\|_{L^2}^2,
\end{align*}
where $C = C(n_*, \|n\|_{L^\infty}, \gamma, \Omega)$ is a positive constant. Hence, we can also obtain the inequality \eqref{K-5-2} when $\gamma>2$.  Now, we collect the estimates for $K_5^1$ and $K_5^2$ to yield
$$\begin{aligned}
K_5 &\le C\int_0^t \int_{\Omega} \mathcal{H}(U^\varepsilon|U) \,dxds + \frac14\int_0^t \int_{\Omega}|\nabla (v - v^\e)|^2\,dxds \cr
&\quad + \frac12\int_0^t \int_{\Omega} \rho^\varepsilon|(u-u^\varepsilon) - (v-v^\varepsilon)|^2 \,dxds,\\
\end{aligned}$$
where $C = C(\gamma, n_*, \|\rho\|_{L^\infty}, \|u-v\|_{L^\infty(0,T;L^\infty)})$ is a positive constant. \newline

\noindent $\diamond$ (Estimates for $K_6$): Similarly to the estimate of $K_5$, when $\gamma \in (3/2,2]$, we get 
\begin{align*}\begin{aligned}
&2\left|\int_{\Omega}\left(\frac{n^\varepsilon - n}{n}\right) \Delta v\cdot (v - v^\varepsilon) \,dx\right|\\
&\quad \le \frac{2}{n_*} \int_{\Omega} |n^\varepsilon-n| |v-v^\varepsilon| |\Delta v| \, dx\\
&\quad \le  C \left( \int_{\Omega} \min\left\{(n^\varepsilon)^{\gamma-2}, n^{\gamma-2} \right\} (n- n^\varepsilon)^2\,dx \right)^{1/2} \left(\int_{\Omega} (n^{2-\gamma} + (n^\varepsilon)^{2-\gamma})|v- v^\varepsilon|^2 | \Delta v|^2\,dx\right)^{1/2}\\
&\quad \le C \int_{\Omega} \mathcal{H}(U^\varepsilon|U) \,dx + \frac18\int_{\Omega}|\nabla (v - v^\e)|^2\,dx.
\end{aligned}\end{align*}
If $\gamma>2$, we use almost the same estimates with $K_5^2$ to obtain
\begin{align*}\begin{aligned}
2\left|\int_{\Omega}\left(\frac{n^\varepsilon - n}{n}\right) \Delta v\cdot (v - v^\varepsilon) \,dx\right| &\le \frac{2\|\Delta v\|_{L^\infty}}{n_*} \int_\Omega |n^\e - n| |v-v^\e|\,dx\\
&\le C \int_{\Omega} \mathcal{H}(U^\varepsilon|U) \,dx + \frac18\int_{\Omega}|\nabla (v - v^\e)|^2\,dx.
\end{aligned}\end{align*}
This asserts
\[
K_6 \le C \int_0^t \int_{\Omega} \mathcal{H}(U^\varepsilon|U) \,dxds + \frac18\int_0^t \int_{\Omega}|\nabla (v - v^\e)|^2\,dxds, 
\]
where $C = C(\gamma, n_*, \|n\|_{L^\infty}, \Omega, \| \Delta v\|_{L^\infty(0,T;L^\infty)})$ is a positive constant.

By combining all of the above estimates, we have
\begin{align*}\begin{aligned}
&\int_{\Omega} \mathcal{H}(U^\varepsilon|U)\,dx + \frac12 \int_0^t \int_{\Omega} |\nabla (v-v^\e)|^2\,dxds +\frac{1}{2} \int_0^t \int_{\Omega} \rho^\varepsilon|( u^\varepsilon - v^\varepsilon) - (u-v)|^2\,dxds\\
&\quad \le  C\lt(\int_0^t \int_{\Omega} \mathcal{H}(U^\varepsilon|U)\,dxds + \sqrt{\varepsilon} \rt),
\end{aligned}\end{align*}
where $C = C(\gamma, n_*, \|\rho\|_{L^\infty}, \|u-v\|_{L^\infty}, \|n\|_{L^\infty}, \|\nabla v\|_{L^\infty}, \|\Delta v\|_{L^\infty}, \|\nabla u\|_{L^\infty},\Omega)$ is a positive constant.
Finally, we apply Gr\"onwall's inequality to the above to conclude the desired result.

\subsubsection{Proof of Corollary \ref{cor_main}}
In this part, we provide the strong convergences appeared in Corollary \ref{cor_main} by using the relative entropy inequality obtained in Theorem \ref{T2.2}. Since the convergence of $\rho^\e$, $\rho^\e u^\e$, and $\rho^\e u^\e \otimes u^\e$ can be obtained by the similar argument in \cite[Section 4]{K-M-T3}, we only show the strong convergence of $n^\e$, $n^\e v^\e$ and $n^\e v^\e \otimes v^\e$ below. \newline

\noindent $\diamond$ (Convergence of $n^\e$ to $n$): We now use the inequality \eqref{T1-1.1} to show the convergence of $n^\e$ to $n$. First, we have
\begin{align*}\begin{aligned}
\int_\Omega |n^\e - n|^\gamma \,dx &= \int_{\Omega \cap \{n/2 \le n^\e \le 2n \}} |n^\e -n|^\gamma \,dx + \int_{\Omega \cap\{n/2 \le n^\e \le 2n \}^c} |n^\e - n|^\gamma \,dx\cr
& =: L^\e_1 + L^\e_2. 
\end{aligned}\end{align*}
For $L^\e_1$, we find if $\gamma \in (3/2, 2]$,
\begin{align*}\begin{aligned}
L^\e_1 &\le \left(\int_{\Omega \cap \{n/2 \le n^\e \le 2n \}} \min\{(n^\e)^{\gamma-2}, n^{\gamma-2}\} |n^\e - n|^2 \,dx\right)^{\frac{\gamma}{2}} \cr
&\qquad \qquad \times \left(\int_{\Omega \cap \{n/2 \le n^\e \le 2n \}} \max\{(n^\e)^\gamma, n^\gamma\} \,dx\right)^{\frac{2-\gamma}{2}}\\
&\le C\left(\int_{\Omega \cap \{n/2 \le n^\e \le 2n \}} \mathcal{H}(U^\e|U) \,dx\right)^{\frac{\gamma}{2}} \lt( \Big(2\|n\|_{L^\infty}\Big)^\gamma|\Omega|\rt)^{\frac{2-\gamma}{2}} \to 0
\end{aligned}\end{align*}
as $\e \to 0$, where $C = C(\gamma)$ is a positive constant independent of $\e$. For $\gamma>2$, we use \eqref{T1-1.1} to obtain
\begin{align*}
L_1^\e &= \int_{\Omega \cap \{n/2 \le n^\e \le 2n \}} |n^\e -n|^2 |n^\e-n|^{\gamma-2} \,dx\\
&\le\int_{\Omega \cap \{n/2 \le n^\e \le 2n \}} |n^\e -n|^2 (3n)^{\gamma-2} \,dx\\
&\le C \int_{\Omega \cap \{n/2 \le n^\e \le 2n \}} |n^\e -n|^2\,dx\\
&\le C \int_\Omega \mathcal{H}(U^\e|U)dx \to 0
\end{align*}
as $\e \to 0$, where $C = C(\gamma, n_*, \|n\|_{L^\infty})$ is a positive constant. For the estimate of $L^\e_2$, we use \eqref{T1-1.1} to get
\begin{align*}\begin{aligned}
L^\e_2 &\le \int_{\Omega \cap\{n/2 \le n^\e \le 2n \}^c} \|n\|_{L^\infty}^\gamma \lt|\frac{n^\e}{n} + 1\rt|^\gamma dx\\
&\le \int_{\Omega \cap\{n/2 \le n^\e \le 2n \}^c} (2\|n\|_{L^\infty})^\gamma \left(\left(\frac{n^\e}{n}\right)^\gamma + 1 \right) dx\\
&\le \int_{\Omega \cap\{n/2 \le n^\e \le 2n \}^c} (2\|n\|_{L^\infty})^\gamma \left(\left(\frac{n^\e}{n_*}\right)^\gamma + 1 \right) dx\\
&\le C\int_{\Omega \cap\{n/2 \le n^\e \le 2n \}^c} (1+(n^\e)^\gamma) \,dx\\
&\le C \int_{\Omega \cap\{n/2 \le n^\e \le 2n \}^c} \mathcal{H}(U^\e | U)\,dx \to 0
\end{aligned}\end{align*}
as $\e \to 0$, where $C = C(\|n\|_{L^\infty}, n_*, \gamma)$ is a positive constant independent of $\e$. Thus we have the convergence $n^\e \to n$ in $L_{loc}^1(0,T;L^\gamma(\Omega))$ as $\e \to 0$, and this together with the integrability condition yields that it also holds in $L_{loc}^1(0,T;L^p(\Omega))$ with $p \in [1,\gamma]$.\newline

\noindent $\diamond$ (Convergence of $n^\e v^\e$ to $nv$): Similarly as before, we estimate
\begin{align*}\begin{aligned}
\int_\Omega |n^\e v^\e - n v| \,dx & \le \int_\Omega (n^\e| v^\e - v| + |n^\e - n| |v|) \,dx\\
&=:L^\e_3 + L^\e_4,
\end{aligned}\end{align*}
where $L^\e_3$ can be bounded by 
\[
L^\e_3 \le \left( \int_\Omega n^\e |v^\e - v|^2 \,dx\right)^{1/2} \left( \int_\Omega n^\e \,dx \right)^{1/2} \to 0,
\]
since $n^\e$ is integrable in $\Omega$. For the estimate of $L^\e_4$, we obtain
\[
L^\e_4 \le \|v\|_{L^\infty}|\Omega|^{\frac{\gamma-1}{\gamma}} \lt(\int_\Omega |n^\e - n|^\gamma \,dx\rt)^{1/\gamma} \to 0,
\]
as $\e \to 0$. This gives the desired result for the convergence of $n^\e v^\e$.\newline

\noindent $\diamond$ (Convergence of $n^\e v^\e \otimes v^\e$ to $nv \otimes v$): Note that the following identity holds:
\[
n^\e v^\e \otimes v^\e -nv \otimes v = n^\e v^\e \otimes (v^\e - v) + (n^\e v^\e -nv)\otimes nv  
\]
Thus, we get
\[
\begin{split}
&\int_\Omega \left| n^\e v^\e \otimes v^\e -nv \otimes v\right|\,dx \\
&\quad\le \int_\Omega n^\e |v^\e| |v^\e - v| + |n^\e v^\e -nv| |nv| \,dx\\
&\quad\le \left( \int_\Omega n^\e |v^\e|^2\,dx\right)^{1/2}\left(\int_\Omega n^\e |v^\e - v|^2 \,dx\right)^{1/2} + \|n^\e v^\e - nv\|_{L^1}\|nv\|_{L^\infty} \to 0,
\end{split}
\]
as $\e \to 0$. This the desired strong convergence of $n^\e v^\e \otimes v^\e$. \newline

\noindent $\diamond$ (Convergence of $\int_{\R^3} f^\e \xi\otimes \xi\,d\xi$ to $\rho u\otimes u + \rho \mathbb{I}_{3 \times 3}$): Adding and subtracting, we get
\[
\begin{split}
\int_{\R^3}&f^\e \xi \otimes \xi \,d\xi - (\rho u \otimes u + \rho \mathbb{I}_{3\times 3})\\
&= \lt(\int_{\R^3} f^\e \xi \otimes \xi \,d\xi- (\rho^\e u^\e \otimes u^\e + \rho^\e \mathbb{I}_{3\times3})\rt) + (\rho^\e u^\e \otimes u^\e - \rho u \otimes u) + (\rho^\e - \rho)\mathbb{I}_{3\times 3},
\end{split}
\]
where the first term on the right hand side can be rewritten as 
\[
\begin{split}
\int_{\R^3}& f^\e \xi \otimes \xi \,d\xi- (\rho^\e u^\e \otimes u^\e + \rho^\e \mathbb{I}_{3\times3})\\
&= \int_{\R^3} \Big[u^\e \sqrt{f^\e} \otimes ((u^\e -\xi)\sqrt{f^\e} -2\nabla_\xi \sqrt{f^\e}) + ((u^\e -\xi)\sqrt{f^\e} -2\nabla_\xi \sqrt{f^\e} \otimes \xi\sqrt{f^\e} \,\Big]\,d\xi
\end{split}
\]
due to \cite[Lemma 4.4]{K-M-T3}. This implies
$$\begin{aligned}
&\lt\|\int_{\R^3} f^\e \xi\otimes \xi\,d\xi - (\rho^\e u^\e\otimes u^\e + \rho^\e \mathbb{I}_{3 \times 3} )\rt\|_{L^1}\cr
&\quad \leq \lt(\int_{\Omega\times\R^3} f^\e |u^\e|^2 + f^\e |\xi|^2\,dxd\xi \rt)^{1/2}\lt(\int_{\Omega\times\R^3} \frac{1}{f^\e}|\nabla_\xi f^\e - (u^\e - \xi)f^\e|^2\,dxd\xi \rt)^{1/2}\cr
&\quad \leq C\sqrt\e \sup_{0 \leq t \leq T} \lt(\int_{\Omega\times\R^3}  f^\e |\xi|^2\,dxd\xi \rt)^{1/2}\lt(\frac{1}{2\e} \int_{\Omega\times\R^3} \frac{1}{f^\e}|\nabla_\xi f^\e - (u^\e - \xi)f^\e|^2\,dxd\xi  \rt)^{1/2}\cr
&\quad \leq C\sqrt\e\lt(\frac{1}{2\e} \int_{\Omega\times\R^3} \frac{1}{f^\e}|\nabla_\xi f^\e - (u^\e - \xi)f^\e|^2\,dxd\xi  \rt)^{1/2}.
\end{aligned}$$
Thus, we use the uniform bound estiamte in Lemma \ref{L4.1} that
$$\begin{aligned}
&\lt\|\int_{\R^3} f^\e \xi\otimes \xi\,d\xi - (\rho u\otimes u + \rho \mathbb{I}_{3 \times 3})\rt\|_{L^2(0,T;L^1(\om))}\cr
&\quad \leq \lt\|\int_{\R^3} f^\e \xi\otimes \xi\,d\xi - (\rho^\e u^\e\otimes u^\e + \rho^\e \mathbb{I}_{3 \times 3} )\rt\|_{L^2(0,T^;L^1(\om))} + \|\rho^\e u^\e \otimes u^\e - \rho u\otimes u\|_{L^2(0,T^;L^1(\om))}\cr
&\qquad  + \|\rho^\e - \rho\|_{L^2(0,T^;L^1(\om))}\cr
&\quad \leq C\sqrt{\e} + \|\rho^\e u^\e \otimes u^\e - \rho u\otimes u\|_{L^2(0,T^;L^1(\om))}+ \|\rho^\e - \rho\|_{L^2(0,T^;L^1(\om))} \to 0
\end{aligned}$$
as $\e \to 0$. \newline

\noindent $\diamond$ (Convergence of $f^\e$ to $M_{\rho,u}$):
Let us first recall the relative pressure $P$ functional:
\[
P(x|y) = x\log x - y\log y + (y-x)(1 + \log y) = x(\log x - \log y) + (y-x).
\]
This yields
\[
\int_{\om \times \R^3} P(f^\e | M_{\rho,u})\,dxd\xi = \int_{\om \times \R^3} f^\e (\log f^\e - \log \rho)\,dxd\xi + \int_{\om \times \R^3} f^\e \frac{|u - \xi|^2}{2}\,dxd\xi + \frac32\log(2\pi).
\]
Note that
\begin{align}\label{est_m0}
\begin{aligned}
\frac{d}{dt}\int_{\om \times \R^3} f^\e \log f^\e \,dxd\xi &= \int_{\om \times \R^3} \pa_t f^\e \log f^\e\,dxd\xi\cr
&= \int_{\om \times \R^3} \nabla_\xi f^\e \cdot (v^\e - \xi)\,dxd\xi - \frac1\e \int_{\om \times \R^3} \frac{\nabla_\xi f^\e}{f^\e} \cdot \lt(\nabla_\xi f^\e - (u^\e -\xi)f^\e \rt) dxd\xi
\end{aligned}
\end{align}
and
\begin{align}\label{est_m1}
\begin{aligned}
\frac{d}{dt}\int_{\om \times \R^3} f^\e \log \rho\,dxd\xi &= \frac{d}{dt}\int_\om \rho^\e \log \rho\,dx\cr
&= \int_\om \pa_t \rho^\e \log \rho\,dx + \int_\om \rho^\e \frac{\pa_t \rho}{\rho}\,dx\cr
&= \int_\om \rho^\e u^\e \cdot \frac{\nabla \rho}{\rho}\,dx + \int_\om \rho u \cdot \nabla \lt(\frac{\rho^\e}{\rho} \rt) dx \cr
&= \int_\om \rho^\e (u^\e - u) \cdot \frac{\nabla \rho}{\rho}\,dx + \int_\om u \cdot \nabla \rho^\e\,dx.
\end{aligned}
\end{align}
We next estimate
$$\begin{aligned}
\frac{d}{dt}\int_{\om \times \R^3} f^\e \frac{|u - \xi|^2}{2}\,dxd\xi 
&= \int_{\om \times \R^3} (u - \xi) \cdot (\pa_t u) f^\e\,dxd\xi + \int_{\om \times \R^3} (\pa_t f^\e) \frac{|u - \xi|^2}{2}\,dxd\xi\cr
&=: M_1+ M_2,
\end{aligned}$$
where we use the smoothness of the limiting system \eqref{A-3} to obtain
$$\begin{aligned}
M_1 &= \int_\om \rho^\e (u - u^\e) \cdot \pa_t u\,dx \cr
&= \int_\om \rho^\e (u - u^\e) \cdot (- u \cdot \nabla u - \rho^{-1}\nabla \rho + (v-u))\,dx \cr
&\leq C\int_\om \rho^\e |u - u^\e|\,dx - \int_\om \rho^\e (u - u^\e) \cdot \frac{\nabla \rho}{\rho}\,dx.
\end{aligned}$$
We notice that the last term on the right hand side of the above inequality also appears in \eqref{est_m1}. For the estimate of $M_2$, we note that
$$\begin{aligned}
M_2 &= -\int_{\pa\om\times\R^3}(\xi \cdot r(x)) |u-\xi|^2 \gamma f^\e\,d\sigma(x)d\xi +\int_{\om \times \R^3} \xi f^\e \otimes (u - \xi) :\nabla u\,dxd\xi \\
&\quad- \int_{\om \times \R^3} (v^\e - \xi) \cdot (u-\xi)f^\e\,dxd\xi + \frac1\e \int_{\om \times \R^3} (u - \xi) \cdot \lt( \nabla_\xi f^\e - (u^\e - \xi)f^\e\rt) dxd\xi.
\end{aligned}$$
Here, the integral on boundary becomes zero, since
$$\begin{aligned}
&\int_{\pa\om\times\R^3}(\xi \cdot r(x)) |u-\xi|^2\gamma f^\e \,d\sigma(x)d\xi\\
&\quad= \int_{\Sigma_+} |\xi\cdot r(x)| |u-\xi|^2\gamma_+ f^\e\,d\sigma(x)d\xi - \int_{\Sigma_-} |\xi\cdot r(x)| |u-\xi|^2 \gamma_-f^\e\,d\sigma(x)d\xi\\
&\quad= \int_{\Sigma_+} |\xi\cdot r(x)| |u-\xi|^2\gamma_+f^\e\,d\sigma(x)d\xi - \int_{\Sigma_+} \left|\Big(\xi-2(\xi\cdot r)r\Big)\cdot r\right|\lt|u-\Big(\xi-2(\xi\cdot r) r\Big)\rt|^2 \gamma^+f^\e \,d\sigma(x)d\xi\\
&\quad =0,
\end{aligned}$$
where we used $u \cdot r= 0$ to get 
\[
\lt|u-\lt(\xi-2\lt(\xi\cdot r\rt) r\rt)\rt| = |u-\xi|. 
\]
This yields
$$\begin{aligned}
M_2 &= \int_{\om \times \R^3} \xi f^\e \otimes (u - \xi) :\nabla u\,dxd\xi - \int_{\om \times \R^3} (v^\e - \xi) \cdot (u-\xi)f^\e\,dxd\xi \\
&\quad + \frac1\e \int_{\om \times \R^3} (u - \xi) \cdot \lt( \nabla_\xi f^\e - (u^\e - \xi)f^\e\rt) dxd\xi.
\end{aligned}$$
Now, we estimate the first term as
$$\begin{aligned}
&\int_{\om \times \R^3} \xi f^\e \otimes (u - \xi) :\nabla u\,dxd\xi\cr
&\quad = -\int_{\om \times \R^3}(u- \xi) f^\e \otimes (u - \xi) :\nabla u\,dxd\xi +\int_{\om\times\R^3} u\otimes (u-\xi)f^\e : \nabla u\,dxd\xi\\
 &\quad =- \int_{\om \times \R^3} \lt( (u - u^\e)\otimes (u - u^\e) +   (u^\e - \xi) \otimes (u^\e - \xi) \rt) f^\e  : \nabla u\,dxd\xi +\int_{\om\times\R^3} u\otimes (u-\xi)f^\e : \nabla u\,dxd\xi \cr
&\quad \leq \|\nabla u\|_{L^\infty} \int_\om \rho^\e |u - u^\e|^2\,dx + \|u\|_{L^\infty}\|\nabla u\|_{L^\infty}\int_\om \rho^\e |u-u^\e|\,dx \\
&\qquad- \int_{\om \times \R^3} \lt((u^\e - \xi)\sqrt{f^\e} - 2\nabla_\xi \sqrt{f^\e} \rt) \otimes (u^\e - \xi) \sqrt{f^\e} : \nabla u\,dxd\xi\cr
&\qquad - 2\int_{\om \times \R^3} \nabla_\xi \sqrt{f^\e} \otimes (u^\e - \xi) \sqrt{f^\e} : \nabla u\,dxd\xi\cr
&\quad \leq C\int_\om \rho^\e |u- u^\e|^2\,dx  + C\int_\om \rho^\e|u-u^\e|\,dx- \int_{\om \times \R^3} \nabla_\xi f^\e \otimes (u^\e - \xi) : \nabla u\,dxd\xi\cr
&\qquad + C\lt(\int_{\om \times \R^3} |u^\e - \xi|^2 f^\e\,dxd\xi \rt)^{1/2}\lt(\int_{\om \times \R^3} \frac{1}{f^\e} |\nabla_\xi f^\e - (u^\e - \xi)f^\e|^2\,dxd\xi \rt)^{1/2}\cr
&\quad = C\int_\om \rho^\e |u- u^\e|^2\,dx + \int_\om \nabla \rho^\e \cdot u\,dx \cr
&\qquad + C\lt(\int_{\om \times \R^3} |u^\e - \xi|^2 f^\e\,dxd\xi \rt)^{1/2}\lt(\int_{\om \times \R^3} \frac{1}{f^\e} |\nabla_\xi f^\e - (u^\e - \xi)f^\e|^2\,dxd\xi \rt)^{1/2}.
\end{aligned}$$
Thus we get
$$\begin{aligned}
&\frac{d}{dt}\int_{\om \times \R^3} f^\e \frac{|u - \xi|^2}{2}\,dxd\xi  \cr
&\quad \leq C\int_\om \rho^\e |u - u^\e|\,dx - \int_\om \rho^\e (u - u^\e) \cdot \frac{\nabla \rho}{\rho}\,dx + C\int_\om \rho^\e |u- u^\e|^2\,dx + \int_\om \nabla \rho^\e \cdot u\,dx\cr
&\quad + C\lt(\int_{\om \times \R^3} |u^\e - \xi|^2 f^\e\,dxd\xi \rt)^{1/2}\lt(\int_{\om \times \R^3} \frac{1}{f^\e} |\nabla_\xi f^\e - (u^\e - \xi)f^\e|^2\,dxd\xi \rt)^{1/2} \cr
&\quad - \int_{\om \times \R^3} (v^\e - \xi) \cdot (u-\xi)f^\e\,dxd\xi + \frac1\e \int_{\om \times \R^3} (u - \xi) \cdot \lt( \nabla_\xi f^\e - (u^\e - \xi)f^\e\rt) dxd\xi.
\end{aligned}$$
This combined with \eqref{est_m0} and \eqref{est_m1} gives
$$\begin{aligned}
&\frac{d}{dt}\int_{\om \times \R^3} P(f^\e | M_{\rho,u})\,dxd\xi \cr
&\quad \leq C\int_\om \rho^\e |u - u^\e|\,dx + C\int_\om \rho^\e |u- u^\e|^2\,dx   \cr
&\qquad + C\lt(\int_{\om \times \R^3} |u^\e - \xi|^2 f^\e\,dxd\xi \rt)^{1/2}\lt(\int_{\om \times \R^3} \frac{1}{f^\e} |\nabla_\xi f^\e - (u^\e - \xi)f^\e|^2\,dxd\xi \rt)^{1/2}\cr
&\qquad + \int_{\om \times \R^3} \nabla_\xi f^\e \cdot (v^\e - \xi)\,dxd\xi - \int_{\om \times \R^3} (v^\e - \xi) \cdot (u-\xi)f^\e\,dxd\xi\cr
&\qquad - \frac1\e \int_{\om \times \R^3} \frac{1}{f^\e} \lt(\nabla_\xi f^\e - (u - \xi) f^\e\rt) \cdot\lt( \nabla_\xi f^\e - (u^\e - \xi)f^\e \rt)dxd\xi\cr
&\quad \leq C \e^{1/4} + C\e \int_{\om \times \R^3} |\xi|^2 f^\e\,dxd\xi + \frac1{4\e}\int_{\om \times \R^3} \frac{1}{f^\e} |\nabla_\xi f^\e - (u^\e - \xi)f^\e|^2\,dxd\xi \cr
&\qquad + \int_{\om \times \R^3} \nabla_\xi f^\e \cdot (v^\e - \xi)\,dxd\xi - \int_{\om \times \R^3} (v^\e - \xi) \cdot (u-\xi)f^\e\,dxd\xi\cr
&\qquad - \frac1\e \int_{\om \times \R^3} \frac{1}{f^\e} \lt(\nabla_\xi f^\e - (u - \xi) f^\e\rt) \cdot\lt( \nabla_\xi f^\e - (u^\e - \xi)f^\e \rt)dxd\xi
\end{aligned}$$
due to Theorem \ref{T2.2} and
\[
\int_{\om \times \R^3} |u^\e - \xi|^2 f^\e\,dxd\xi \leq 2\int_{\om \times \R^3} \lt(|u^\e|^2 + |\xi|^2\rt) f^\e\,dxd\xi \leq 4\int_{\om \times \R^3} |\xi|^2 f^\e\,dxd\xi.
\]
We next estimate the last three terms on the right hand side of the above inequality. Note that
$$\begin{aligned}
&\int_{\om \times \R^3} \nabla_\xi f^\e \cdot (v^\e - \xi)\,dxd\xi - \int_{\om \times \R^3} (v^\e - \xi) \cdot (u-\xi)f^\e\,dxd\xi\cr
&\quad  = \int_{\om \times \R^3} \lt(\nabla_\xi f^\e - (u^\e - \xi)f^\e\rt) \cdot (v^\e - \xi)\,dxd\xi + \int_{\om \times \R^3} (v^\e - \xi) \cdot (u^\e - u)f^\e\,dxd\xi\cr
&\quad \leq \lt(\int_{\om \times \R^3} \frac{1}{f^\e} |\nabla_\xi f^\e - (u^\e - \xi)f^\e|^2 \,dxd\xi\rt)^{1/2}\lt(\int_{\om \times \R^3} |v^\e - \xi|^2 f^\e\,dxd\xi \rt)^{1/2} \cr
&\qquad + \lt(\int_\om |v^\e - u^\e|^2 \rho^\e\,dx \rt)^{1/2}\lt(\int_\om |u^\e - u|^2 \rho^\e\,dx \rt)^{1/2} \cr
&\quad \leq C\e \int_{\om \times \R^3} |v^\e - \xi|^2 f^\e\,dxd\xi  + \frac1{4\e}\int_{\om \times \R^3} \frac{1}{f^\e} |\nabla_\xi f^\e - (u^\e - \xi)f^\e|^2 \,dxd\xi\cr
&\qquad + C\e^{1/4} \lt(\int_{\om\times\R^3} |v^\e - \xi|^2 f^\e\,dxd\xi \rt)^{1/2}.
\end{aligned}$$
For the last term, by adding and subtracting, we estimate
$$\begin{aligned}
&- \frac1\e \int_{\om \times \R^3} \frac{1}{f^\e} \lt(\nabla_\xi f^\e - (u - \xi) f^\e\rt) \cdot\lt( \nabla_\xi f^\e - (u^\e - \xi)f^\e \rt)dxd\xi\cr
&\quad = - \frac1\e \int_{\om \times \R^3} \frac{1}{f^\e} | \nabla_\xi f^\e - (u^\e - \xi)f^\e |^2dxd\xi - \frac1\e \int_{\om \times \R^3} (\nabla_\xi f^\e - (u^\e - \xi)f^\e) \cdot (u^\e - u)\,dxd\xi\cr
&\quad = - \frac1\e \int_{\om \times \R^3} \frac{1}{f^\e} | \nabla_\xi f^\e - (u^\e - \xi)f^\e |^2dxd\xi. 
\end{aligned}$$
Combining the previous estimates and integrating the resulting inequality over the time interval $[0,t]$, we obtain
$$\begin{aligned}
&\int_{\om \times \R^3} P(f^\e | M_{\rho,u})\,dxd\xi + \frac1{2\e} \int_0^t\int_{\om \times \R^3} \frac{1}{f^\e} | \nabla_\xi f^\e - (u^\e - \xi)f^\e |^2dxd\xi ds \cr
&\quad \leq \int_{\om \times \R^3} P(f^\e_0 | M_{\rho_0,u_0})\,dxd\xi +  C \e^{1/4} + C\e \int_0^t\int_{\om \times \R^3} |\xi|^2 f^\e\,dxd\xi ds\cr
&\qquad +  C\e \int_0^t\int_{\om \times \R^3} |v^\e - \xi|^2 f^\e\,dxd\xi ds + C\e^{1/4} \lt(\int_0^t\int_{\om\times\R^3} |v^\e - \xi|^2 f^\e\,dx d\xi ds\rt)^{1/2}.
\end{aligned}$$
On the other hand, Theorem \ref{T0.1} and Lemma \ref{L4.2} imply that all of the integrals in time are uniformly bounded in $\e$, and thus we have
\bq\label{est_m3}
\int_{\om \times \R^3} P(f^\e | M_{\rho,u})\,dxd\xi \leq \int_{\om \times \R^3} P(f^\e_0 | M_{\rho_0,u_0})\,dxd\xi + C\e^{1/4}.
\eq
Since
$$\begin{aligned}
&\lt(\int_{\om \times \R^3} |f^\e - M_{\rho,u}|\,dxd\xi\rt)^2 \cr
&\quad \leq \lt(\int_{\om \times \R^3} (f^\e + M_{\rho,u})\,dxd\xi \rt)\lt(\int_{\om \times \R^3} \min\lt\{\frac{1}{f^\e}, \frac{1}{M_{\rho,u}}\rt \} |f^\e - M_{\rho,u}|^2\,dxd\xi \rt)\cr
&\quad \leq 4\int_{\om \times \R^3} P(f^\e | M_{\rho,u})\,dxd\xi,
\end{aligned}$$
we also easily get from \eqref{est_m3} the quantitative error estimate between $f^\e$ and $M_{\rho,u}$ in $L^1(\om \times \R^3)$. This completes the proof.

%
%
%
%
%
%
%
%

\section{Global existence of weak solutions to kinetic-fluid system}\label{sec:3}
\setcounter{equation}{0}
In this section, we prove the existence of weak solutions to the system \eqref{A-1} with general reflection boundary conditions. In order to state the boundary condition, inspired by \cite{M-V}, we write a reflection operator $\mathcal{B}$ as
\[ 
\mathcal{B}(f)(x,\xi) := \int_{\xi' \cdot r >0} B(t,x,\xi,\xi')f(x,\xi') |\xi' \cdot r(x)| \,d\xi', 
\]
where $B: \R_+ \times \om \times \R^3 \times \R^3 \to \R$ is called the scattering kernel which describes the probability that a particle with velocity $\xi'$ at time $t >0$ striking the boundary at $x \in \pa \om$ back-scatters to the domain with velocity $\xi$ at the same location $x$ and time $t$. 

In the current work, the following assumptions are imposed:
\begin{itemize}
\item[(i)] The reflection operator $\mathcal{B}$ is non-negative. 
\item[(ii)] For any $\xi' \in \bbr^3$ satisfying $\xi' \cdot r(x) >0$, we have 
\[\int_{\xi \cdot r(x) <0 } B(t,x,\xi,\xi')|\xi\cdot r(x)| \,d\xi =1. \]
\item[(iii)] For the Maxwellian distribution $M(\xi) := (2\pi)^{-3/2}\exp(-|\xi|^2)$, we have
\[\int_{\xi' \cdot r >0} B(t,x,\xi,\xi')M(\xi')|\xi'\cdot r(x)|\,d\xi' = M(\xi). \]
\item[(iv)] The operator $\mathcal{B}$ is a bounded operator from $L^p(\Sigma_+)$ into $L^p(\Sigma_-)$ for all $p \in [1,\infty]$, that is,
\[
\|\mathcal{B}\|_{\mathcal{L}(L^p(\Sigma_+), L^p(\Sigma_-))} \le 1. 
\]
\end{itemize}

As mentioned in Introduction, there are few previous works on the initial-boundary value problem for the kinetic equation coupled with the fluid equations. In particular, up to the authors' limited knowledge, there is only one work \cite{M-V} on the coupling with compressible fluids in a bounded domain with reflection type, also absorbing type, boundary conditions. Apart from the coupling fluid equations, considering the physically relevant boundary conditions for kinetic equations is a very hard problem, due to the lack of regularity of the trace of $f$ along the boundary. We refer to \cite{CKL19,GKTT17} for recent progress on collisional kinetic equations in a bounded domain. 

We now state our main result on the global-in-time existence of weak solutions for the system \eqref{A-1}.
\begin{theorem}\label{T2.1}
Let $\gamma >3/2$, $T \in (0,\infty)$ and assume that the initial data $(f_0, n_0, v_0)$ satisfy
\begin{equation}\label{init-1}
f_0 \in (L_+^1\cap L^\infty)(\Omega \times \bbr^3), \quad n_0 \in L_+^1(\Omega), \quad \mbox{and} \quad \mathcal{F}(f_0, n_0, v_0) < \infty,
\end{equation}
where $\mathcal{F}(f,n,v)$ is defined as
\[
\mathcal{F}(f, n, v) := \int_{\Omega \times \bbr^3 } f \left( \log f  +\frac{|\xi|^2}{2} \right) dxd\xi + \int_{\Omega} \frac{1}{2} n |v|^2 \,dx + \frac{1}{\gamma-1} \int_{\Omega} n^\gamma \,dx. 
\]
Then there exists at least one weak solution $(f,n,v)$ to the system \eqref{A-1} with the homogeneous Dirichlet boundary condition \eqref{bdy_ho} for $v$ and the following reflection boundary condition for $f$:
\[
\gamma_-f(x,\xi,t) = \mathcal{B}(\gamma_+ f)(x,\xi,t) \quad \forall (x,\xi,t) \in \Sigma_- \times \R_+
\]
in the sense of Definition \ref{D2.1}\footnote{The condition (iv) should be replaced by the following: for any $\varphi \in \mathcal{C}_c^2(\bar\Omega \times \bbr^3 \times [0,T])$ satisfying $\varphi(\cdot, \cdot,T) =0$ and $\gamma_+\varphi = \mathcal{B}^*\gamma_-\varphi$ on $\Sigma_+ \times [0,T]$,
$$\begin{aligned}
\int_0^T & \int_{\Omega \times \bbr^3} f \lt( \partial_t \varphi + \xi \cdot \nabla \varphi + (v-\xi ) \cdot \nabla_\xi \varphi + \Delta_\xi \varphi + (u-\xi)\cdot \nabla_\xi \varphi \rt)dxd\xi dt\\
&  +\int_{\Omega \times \bbr^3} f_0 \varphi(x,\xi,0) \,dxd\xi =0.
\end{aligned}$$}. Moreover, the following entropy inequality holds:
\[
\mathcal{F}(f,n,v)(t) + \int_0^t \md(f,v)(s)\,ds + \int_0^t \int_\Omega |\nabla v|^2\, dxds \le \mathcal{F}(f_0, n_0, v_0) + 3t \|f_0\|_{L^1(\Omega\times\bbr^3)},
\]
where $\md(f,v)$ is given by
\[
\md(f,v) := \int_{\Omega \times \bbr^3} \frac{1}{f}|(u - \xi)f -\nabla_\xi f|^2 + |v-\xi|^2f \,dxd\xi.
\]
\end{theorem}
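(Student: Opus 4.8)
The plan is to obtain $(f,n,v)$ as the limit of solutions to a regularized system, adapting the construction of \cite{M-V} for the Vlasov--Fokker--Planck/compressible Navier--Stokes system to the presence of the local alignment force. Fix small parameters $\delta,\kappa>0$ and replace, in the kinetic equation, the singular local velocity $u=\int_{\bbr^3}\xi f\,d\xi\big/\int_{\bbr^3}f\,d\xi$ by a mollified, non-degenerate approximation $u_\delta$ (for instance $u_\delta:=j_\delta/(\rho_\delta+\delta)$, with $\rho_\delta,j_\delta$ the $x$-mollifications of $\int f\,d\xi$ and $\int\xi f\,d\xi$), truncate the large velocities in the transport and drag terms, add an artificial pressure $\kappa n^\beta$ with $\beta$ large and a vanishing viscosity to the momentum equation so that the compressible Navier--Stokes subsystem falls within the Lions--Feireisl existence framework, and mollify the initial data so that \eqref{init-1} is preserved. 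All regularizations are chosen so that properties (i)--(iv) of the reflection operator $\mathcal{B}$ survive.

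\emph{Construction of approximate solutions.} For $v$ ranging over a ball of $L^2(0,T;H_0^1(\Omega))$, the kinetic equation is a linear (velocity-)parabolic Fokker--Planck equation once the drift $u_\delta[f]$ is frozen; solvability of the initial--boundary value problem with $\gamma_-f=\mathcal{B}(\gamma_+f)$ and the bounds $f\ge 0$, $\|f(t)\|_{L^\infty}\le e^{Ct}\|f_0\|_{L^\infty}$, mass conservation and finite energy follow from the DiPerna--Lions renormalization theory together with the trace theory for the free transport operator, as developed for such boundary problems in \cite{M-V}; an inner fixed point on $f$ removes the freezing of $u_\delta$. The resulting kinetic moments feed the compressible Navier--Stokes system with the drag source $-\int_{\bbr^3}(v-\xi)f\,d\xi$, which is solved by the Lions--Feireisl theorem. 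A Schauder fixed point on $v$ --- with compactness coming from parabolic regularity of $v^\delta$ and from velocity averaging for the kinetic moments --- yields an approximate solution $(f^\delta,n^\delta,v^\delta)$.

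\emph{Uniform estimates.} Testing the kinetic equation against $1+\log f^\delta+|\xi|^2/2$ and the momentum equation against $v^\delta$, using the continuity equation and adding, the drag contributions $\int(v^\delta-\xi)f^\delta\,d\xi$ cancel between the two balances; the Fokker--Planck terms produce the good dissipation $\mathcal{D}_1$; the divergence structure of the friction operators produces only the harmless source $3\|f_0\|_{L^1}$; and the free-transport boundary integral has the dissipative sign by a Darroz\`es--Guiraud type inequality, which is precisely what hypotheses (i)--(iii) on $\mathcal{B}$ encode. Lemma \ref{LA.4} controls $\int f^\delta\log^- f^\delta$ by the kinetic energy. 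This yields bounds uniform in $\delta,\kappa$ for $f^\delta$ in $L^\infty_t(L^1\cap L^\infty)$, for $|\xi|^2f^\delta$ in $L^\infty_tL^1$, for $v^\delta$ in $L^2_tH^1_0$, for $n^\delta$ in $L^\infty_tL^\gamma$, and for $\int_0^T\mathcal{D}(f^\delta,v^\delta)+\int_0^T\int_\Omega|\nabla v^\delta|^2$. The delicate point --- the genuinely new feature compared with \cite{M-V} --- is that $\delta$-independence of the entropy and of $\mathcal{D}_1(f^\delta)$ is threatened because mollification destroys the identity $\int_{\bbr^3}(u-\xi)f\,d\xi=0$; I would recover it by splitting $\Omega$ into $\{\rho^\delta>\sqrt\delta\}$, where $u_\delta$ is $\delta$-uniformly close to $u$, and its complement, where the offending quantities are directly absorbed by the energy.

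\emph{Passage to the limit and main obstacle.} Removing the regularizations in the appropriate order (vanishing artificial viscosity, then artificial pressure $\kappa\to0$, then $\delta\to0$): strong compactness of the kinetic moments $\rho^\delta,\rho^\delta u^\delta$, hence of $u_\delta$ on $\{\rho>0\}$, comes from velocity-averaging lemmas; strong convergence of the fluid density $n^\delta$ --- needed to pass to the limit in $n^\delta v^\delta\otimes v^\delta$ and in $p(n^\delta)$ --- is obtained by the Lions--Feireisl machinery (effective viscous flux identity, renormalized continuity equation, oscillation defect measure), which is exactly where $\gamma>3/2$ enters; the reflection boundary condition passes to the limit by phrasing the kinetic weak formulation against test functions $\varphi$ with $\gamma_+\varphi=\mathcal{B}^*\gamma_-\varphi$, so that the uncontrolled trace $\gamma f^\delta$ never appears; the required time-continuity of $n$, $nv$ follows from Aubin--Lions arguments applied to the fluid equations; and the entropy inequality survives by weak lower semicontinuity of the convex functionals $\mathcal{D}$ and $\int|\nabla v|^2$. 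I expect the principal difficulties to be, first, establishing the $\delta$-uniform entropy and dissipation bounds despite the regularization of the alignment velocity, and second, running the compressible density compactness argument for the Navier--Stokes component in the presence of the kinetic drag source and of the boundary trace of $f$.
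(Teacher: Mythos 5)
Your proposal captures the right high-level strategy: regularize the alignment velocity so that it is non-degenerate (you use a mollification with a $\delta$ in the denominator; the paper uses $u_\e=\rho u/(\rho+\e)$ directly, which avoids having to discuss the commutation errors you flag), solve the resulting coupled system by a Schauder fixed point, obtain uniform entropy and dissipation bounds in which the drag contributions cancel and the boundary flux has the good sign by Darroz\`es--Guiraud, and pass to the limit by velocity averaging for the kinetic moments and by the compressible Navier--Stokes compactness theory for $n^\delta$ (this is where $\gamma>3/2$ enters). The $\{\rho^\delta>\sqrt\delta\}$ decomposition you sketch for identifying the weak limit of $u_\delta f^\delta$ is indeed essentially the argument the paper uses, via Egorov's theorem on a set where the limiting density is bounded below.

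The one genuine gap is that you treat solvability of the kinetic equation \emph{with the reflection boundary condition} as something that follows directly from DiPerna--Lions renormalization plus trace theory once the drift is frozen. That is not how the cited \cite{M-V} (or the present paper) actually handles it, and the direct route would stall: for the reflection problem $\gamma_-f=\mathcal{B}(\gamma_+f)$ one does not have, at the level of the approximate problem, any a priori control of the outgoing trace $\gamma_+f$ in $L^1$ or $L^p(\Sigma_+)$, so neither the trace theory nor the boundary term in the entropy inequality closes. The device the paper uses is a second approximation layer: one first solves the absorbing (nonhomogeneous Dirichlet) problem $\gamma_-f=g$ with prescribed incoming data, for which the trace $\gamma_+f$ \emph{is} controlled (estimate \eqref{C-2}), and then iterates $\gamma_-f^{m+1}=(1-\delta)\mathcal{B}\gamma_+f^m$. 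The damping factor $(1-\delta)$ makes the map on outgoing traces a contraction in every $L^p(\Sigma_+)$, which gives $m$-uniform bounds on $\gamma_+f^m$ and a geometric convergence of the iteration; and it produces, in the entropy balance, a residual boundary term multiplied by $\delta$ (inequality \eqref{C-12}) that vanishes only in the final limit $\delta\to0$, which is also where the cancellation $\int_{\bbr^3}(\xi\cdot r)\gamma f\,d\xi=0$ and the Darroz\`es--Guiraud inequality are actually invoked. Because you solve the reflection problem in one shot, you have no mechanism to obtain a uniform trace bound, and the boundary term in your entropy estimate is not a priori finite. A secondary (and harmless) difference is that you add artificial pressure and vanishing viscosity to the momentum equation; the paper instead mollifies $n$ and $nv$ in the time derivative and convection term, which keeps the pressure law fixed and makes $\inf_x n_k>0$ available directly — either device works once $\gamma>3/2$, but the artificial-pressure route introduces an extra limit you would then have to perform.
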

\begin{remark} We recall from \cite{M-V} that the condition (ii) implies if $\gamma \varphi(x,\xi) $ is independent of $\xi$, then $\gamma_+ \varphi = \mathcal{B}^*\gamma_-\varphi$, where $\mathcal{B}^*$ is the adjoint operator. This implies that the weak formulation for $f$ holds for test function $\varphi$, which is independent of $\xi$.
\end{remark}
We also notice from \cite{C-I-P} that the above three conditions (i)-(iii) give the following lemma.
\begin{lemma}
Assume that $f$ satisfies $\gamma f \ge 0$, $\gamma_-f = \mathcal{B}\gamma_+f$ and $(1+|\xi|^2 +|\log \gamma f |)\gamma f \in L^1(\Sigma^\pm)$. Then we have
\[
\int_{\bbr^3} (\xi \cdot r(x)) \gamma f \,d\xi = 0 \quad \mbox{and} \quad \int_{\bbr^3} (\xi \cdot r(x)) \left( \frac{|\xi|^2}{2} + \log(\gamma f) \right) \gamma f \,d\xi \ge 0. 
\]
\end{lemma}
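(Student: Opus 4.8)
The plan is to argue pointwise for a fixed $x\in\partial\Omega$ and to reduce both assertions to the structural properties (i)--(iii) of the scattering kernel $B$ together with Fubini's theorem and Jensen's inequality; this is exactly the boundary $H$-theorem of Darroz\`es--Guiraud, and the bookkeeping is that of \cite{C-I-P}. Throughout, $\gamma_\pm f$ denote the restrictions of $\gamma f$ to $\{\pm\,\xi\cdot r(x)>0\}$, and $M$ the (normalized) Maxwellian of condition (iii), for which $\int_{\xi'\cdot r>0}B(t,x,\xi,\xi')M(\xi')|\xi'\cdot r|\,d\xi'=M(\xi)$ and $\log M(\xi)=-\tfrac{|\xi|^2}{2}+\mathrm{const}$.

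\textbf{The flux identity.} I would split according to the sign of $\xi\cdot r$,
\[
\int_{\bbr^3}(\xi\cdot r)\,\gamma f\,d\xi=\int_{\xi\cdot r>0}(\xi\cdot r)\,\gamma_+ f\,d\xi+\int_{\xi\cdot r<0}(\xi\cdot r)\,\gamma_- f\,d\xi,
\]
insert $\gamma_- f=\mathcal B(\gamma_+ f)$ into the second term, and exchange the order of integration — legitimate since $B\ge0$ and $(1+|\xi|^2+|\log\gamma f|)\gamma f\in L^1(\Sigma^\pm)$. By condition (ii) the inner integral $\int_{\xi\cdot r<0}|\xi\cdot r|\,B(t,x,\xi,\xi')\,d\xi$ equals $1$, so the incoming flux collapses to $-\int_{\xi'\cdot r>0}(\xi'\cdot r)\,\gamma_+ f\,d\xi'$, which cancels the outgoing flux.

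\textbf{The entropy inequality.} Using $M$ one writes $\tfrac{|\xi|^2}{2}+\log(\gamma f)=\log(\gamma f/M)$ up to an additive constant; that constant integrates to zero against $(\xi\cdot r)\,\gamma f$ by the flux identity, so it remains to prove the Darroz\`es--Guiraud inequality $\int_{\bbr^3}(\xi\cdot r)\,\gamma f\,\log(\gamma f/M)\,d\xi\ge0$. Put $h:=\gamma f/M\ge0$. Condition (iii) says precisely that, for each $\xi$ with $\xi\cdot r<0$, the measure $d\mu_\xi(\xi'):=M(\xi)^{-1}B(t,x,\xi,\xi')M(\xi')|\xi'\cdot r|\,d\xi'$ is a probability measure on $\{\xi'\cdot r>0\}$, and the boundary relation $\gamma_- f=\mathcal B(\gamma_+ f)$ becomes $h(\xi)=\int h(\xi')\,d\mu_\xi(\xi')$. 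Applying Jensen's inequality to the convex function $\Phi(s)=s\log s$ gives $\Phi(h(\xi))\le\int\Phi(h(\xi'))\,d\mu_\xi(\xi')$; multiplying by $|\xi\cdot r|\,M(\xi)$, integrating over $\{\xi\cdot r<0\}$, exchanging the order of integration, and invoking condition (ii) once more yields
\[
\int_{\xi\cdot r<0}(\xi\cdot r)\,\gamma f\,\log h\,d\xi\ \ge\ -\int_{\xi'\cdot r>0}(\xi'\cdot r)\,\gamma_+ f\,\log h\,d\xi',
\]
and adding the outgoing contribution $\int_{\xi\cdot r>0}(\xi\cdot r)\,\gamma_+ f\,\log h\,d\xi$ gives $\ge0$, as claimed.

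\textbf{Main obstacle.} The difficulty is entirely in the integrability justifications, not in the algebra. One must verify that each application of Fubini is valid and that $\Phi(h)=h\log h$ is integrable against $|\xi\cdot r|\,M\,d\xi$ and against $d\mu_\xi$, so that Jensen's inequality may be applied and the swaps of order performed; this rests on controlling the negative part $\gamma f\,\log^-(\gamma f)$ by the kinetic-energy flux $|\xi|^2\gamma f$, i.e.\ the boundary version of Lemma~\ref{LA.4}. Granting those facts, the cancellations above are automatic, and the two conclusions follow.
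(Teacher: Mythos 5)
Your proof is correct and is exactly the standard Darroz\`es--Guiraud argument that the paper delegates to \cite{C-I-P}: the flux identity follows from condition (ii) via Fubini, and the entropy inequality from Jensen's inequality for $s\log s$ applied to the probability measures $d\mu_\xi$ built from condition (iii), with the bounded-below convexity of $s\log s$ making Jensen legitimate against a probability measure. Note only that you (rightly) read the Maxwellian in condition (iii) as $(2\pi)^{-3/2}e^{-|\xi|^2/2}$; the exponent printed in the paper is a typo, and your normalization is the one consistent with the $|\xi|^2/2$ appearing in the lemma.
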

For the proof of Theorem \ref{T2.1}, motivated from \cite{C,K-M-T,M-V}, we first regularize the local alignment force term in the kinetic equation in \eqref{A-1} and approximate the system \eqref{A-1} as a nonhomogeneous Dirichlet boundary, absorbing-type boundary,  value problem by fixing the trace. We then show some uniform bound estimate and use the compactness arguments \cite{F,L} to pass to the limit of the regularization parameters. This asserts that weak solutions to the system \eqref{A-1} exist globally in time, and they satisfy the entropy inequality.

%
%
%
%
%
\subsection{Regularized and approximated system}
As mentioned above, we regularize system \eqref{A-1} as follows:
\begin{align}
\begin{aligned}\label{C-1}
&\partial_t f + \xi \cdot \nabla f + \nabla_\xi \cdot ((v -\xi)f) =\Delta_\xi f - \nabla_\xi \cdot \left( \left(u_\e - \xi \right) f \right), \quad (x,\xi,t) \in \om \times \R^3 \times \R_+,\\
&\partial_t n + \nabla \cdot (n v)=0, \quad (x,t) \in \om \times \R_+,\\
&\partial_t (n v) + \nabla \cdot (n v \otimes v) + \nabla p  -\Delta v = -\rho(u-v), 
\end{aligned}
\end{align}
where the regularized local particle velocity $u_\e = u_\e(x,t)$ is given by
\[
u_\e := \frac{\rho u}{\rho +\e}. 
\]
Here the solutions $(f,u,v)$ depend on the regularization parameter $\e$, however, we do not specify it for notational simplicity. 

We then establish the global-in-time existence of weak solutions to the regularized system \eqref{C-1} when the nonhomogeneous boundary condition is taken into account.
\begin{theorem}\label{T3.1}
Let $\gamma >3/2$, $T \in (0,\infty)$ and assume that the initial data $(f_0, n_0, v_0)$ satisfy \eqref{init-1} and $g$ satisfies
\begin{align}
\begin{aligned}\label{DC}
&g(x,\xi,t) \ge 0, \quad g \in (L^1\cap L^\infty)(\Sigma_- \times (0,T)), \quad \mbox{and}\\
&\int_0^T \int_{\Sigma_-} |\xi|^2 g(x,\xi) |\xi \cdot r(x)| \,d\sigma(x) d\xi dt <\infty.
\end{aligned}
\end{align}
Then, there exists at least one weak solution $(f,n,v)$ to the system \eqref{C-1} in the sense of Definition \ref{D2.1}\footnote{In the Dirichlet boundary case, we consider the following weak formulation for $f$ instead of (iv): for any $\varphi \in \mathcal{C}_c^2(\bar\Omega \times \bbr^3 \times [0,T])$ with $\varphi(\cdot,\cdot,T) = 0$,
$$\begin{aligned}
\int_0^T & \int_{\Omega \times \bbr^3} f \lt( \partial_t \varphi + \xi \cdot \nabla \varphi + (v-\xi ) \cdot \nabla_\xi \varphi + \Delta_\xi \varphi + (u-\xi)\cdot \nabla_\xi \varphi \rt) dxd\xi dt\\
&  +\int_{\Omega \times \bbr^3} f_0 \varphi(x,\xi,0) \,dxd\xi + \int_0^T \int_{\Sigma} (\xi \cdot r(x)) \gamma f \varphi \,d\sigma(x)d\xi dt =0.
\end{aligned}$$} where $f$ satisfies the Dirichlet boundary condition:
\bq\label{diri_bdy}
 \gamma_- f(x,\xi,t) = g(x,\xi), \quad \forall (x, \xi) \in \Sigma_-.
\eq
Moreover, $f$ satisfies the following additional bounds: 
\begin{align}
\begin{aligned}\label{C-2}
& \|f\|_{L^\infty(0,T;L^p(\Omega \times \bbr^3))} +  \|\nabla_\xi f^{\frac{p}{2}}\|_{L^2(\Omega \times \bbr^3 \times (0,T))}^{\frac{2}{p}} \cr
&\quad \le e^{\frac{C}{p'}T} \lt( \|f_0\|_{L^p(\Omega \times \bbr^3)}  +  \|g\|_{L^p(\Sigma_- \times (0,T))} \rt),\\
& \|\gamma_+ f\|_{L^p(\Sigma_+ \times (0,T))} \le \|f_0\|_{L^p(\Omega \times \bbr^3)} + \|g\|_{L^p(\Sigma_- \times (0,T))},
\end{aligned}
\end{align}
and the following entropy inequality:
\begin{align}
\begin{aligned}\label{T3-1}
&\mathcal{F}(f,n,v)(t) + \int_0^t \md_\e (f,v)(s)\,ds + \int_0^t \int_\Omega |\nabla v|^2 \,dxds\\
& \quad + \int_0^t \int_\Sigma (\xi \cdot r(x)) \left( \frac{|\xi|^2}{2} + \log \gamma f + 1 \right) \gamma f \,d\sigma(x) d\xi ds\\
&\qquad \le \mathcal{F}(f_0, n_0, v_0) + 3 \int_0^t \|f(\cdot,\cdot,s)\|_{L^1(\Omega \times \bbr^3)}\,ds,
\end{aligned}
\end{align}
where $\md_\e(f,v)$ is given by
\[
\md_\e(f,v) := \int_{\Omega \times \bbr^3} \frac{1}{f}|(u_\e - \xi)f -\nabla_\xi f|^2 + |v-\xi|^2f \,dxd\xi.
\]
\end{theorem}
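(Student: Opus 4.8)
The plan is to follow the multi-level approximation scheme of \cite{M-V}, incorporating the extra (already regularized) local alignment drift $\nabla_\xi\cdot((u_\e-\xi)f)$ into the kinetic part. I would decouple \eqref{C-1} into a linear Vlasov--Fokker--Planck equation for $f$, in which the fluid velocity $v$ and the regularized alignment velocity $u_\e$ enter only as coefficients, and a compressible isentropic Navier--Stokes system for $(n,v)$, in which the drag appears as a linear damping plus a lower-order source; a weak solution is then produced by an iteration argument passing an iterate $(f^k,n^k,v^k)$ to the next and a compactness argument in the iteration index.

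\textbf{Linear kinetic step.} For bounded coefficients $v$ and $u_\e$, the equation $\partial_t f+\xi\cdot\nabla_x f+\nabla_\xi\cdot((v+u_\e-2\xi)f)=\Delta_\xi f$ with datum $f_0$ and $\gamma_-f=g$ on $\Sigma_-$ is linear and degenerate parabolic (elliptic in $\xi$, transport in $x$). Existence, uniqueness, the maximum principle (hence $f\ge 0$), and existence of the outgoing trace $\gamma_+f\in L^p(\Sigma_+\times(0,T))$ follow from the trace theory for kinetic transport equations, cf.\ \cite{C,K-M-T,M-V}. Multiplying by $pf^{p-1}$ and integrating over $\Omega\times\R^3$: the $\xi$-divergence of the drift $v+u_\e-2\xi$ is the constant $-6$, producing the exponential factor $e^{CT/p'}$; the $\xi$-Laplacian produces the dissipation $\tfrac{4(p-1)}{p}\|\nabla_\xi f^{p/2}\|_{L^2}^2$; and the transport term $\xi\cdot\nabla_x f$ is an exact $x$-divergence whose boundary contribution is exactly $\|\gamma_+f\|_{L^p(\Sigma_+)}^p-\|g\|_{L^p(\Sigma_-)}^p$. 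A Gr\"onwall argument then gives \eqref{C-2}, and sending $p\to\infty$ gives the $L^\infty$ bound.

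\textbf{Fluid step and entropy identity.} For a fixed kinetic iterate the compressible isentropic Navier--Stokes system with homogeneous Dirichlet condition admits a weak solution by the Lions--Feireisl theory (cf.\ \cite{F,L,M-V}), for which $\gamma>3/2$ in three dimensions is precisely the admissible range; the drag $-\rho(u_\e-v)$ is handled by keeping the damping $-\rho v$ on the left and viewing $\rho u_\e=\tfrac{\rho}{\rho+\e}\,\rho u$ as a source controlled in $L^\infty_t(L^1\cap L^p)_x$ by the $L^p$ bound on $f$. Testing the kinetic equation against $1+\log f+\tfrac{|\xi|^2}{2}$, testing the momentum equation against $v$, and adding the resulting identities produces, after the usual cancellations, the dissipation $\md_\e(f,v)$ and $\int_\Omega|\nabla v|^2$ on the left, the boundary integral $\int_\Sigma(\xi\cdot r)(\tfrac{|\xi|^2}{2}+\log\gamma f+1)\gamma f$, and the residual $3\int_0^t\|f\|_{L^1}\,ds$ coming from the $\int\xi\cdot\nabla_\xi f$ contributions; this is \eqref{T3-1} at the approximate level.

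\textbf{Compactness and passage to the limit} --- the step I expect to be the main obstacle. Here one combines: velocity averaging (Golse--Lions--Perthame--Sentis) with the uniform $L^\infty_t(L^1\cap L^p)_x$ bound on $f^k$ to obtain strong $L^q$-compactness of the moments $\rho^k=\int f^k\,d\xi$ and $\rho^ku^k=\int\xi f^k\,d\xi$, hence of $u_\e^k=\tfrac{\rho^k}{\rho^k+\e}u^k$ --- the point of the $\e$-regularization being precisely that it removes the vacuum singularity of $u^k$; the Feireisl--Lions machinery (renormalized continuity equation, effective viscous flux identity, boundedness of $n^k$ in $L^\infty_tL^\gamma_x$ with $\gamma>3/2$) for strong convergence of $n^k$ and for passing to the limit in the pressure $(n^k)^\gamma$ and the convective term $n^kv^k\otimes v^k$; and the uniform $L^2_tH^1_x$ bound on $v^k$ with an Aubin--Lions argument for its strong $L^2$ convergence. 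One then passes to the limit in the drag couplings and in the kinetic weak formulation, including the boundary integral $\int_\Sigma(\xi\cdot r)\gamma f\,\varphi$ for which only $\gamma_+f\in L^p(\Sigma_+)$ is available, obtaining a weak solution of \eqref{C-1} in the sense of Definition \ref{D2.1}; the bounds \eqref{C-2} and the inequality \eqref{T3-1} survive the limit by weak lower semicontinuity and Fatou's lemma, the sign-indefinite boundary term in \eqref{T3-1} being kept as it stands since for the Dirichlet problem it need not be nonnegative. The recurring technical difficulties are the rigorous justification of all boundary integrals under the available trace regularity and the compatibility of the Feireisl vacuum/artificial-pressure approximation with the kinetic coupling.
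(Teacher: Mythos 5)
Your plan correctly follows the Mellet--Vasseur framework that the paper uses in Appendix~A: decouple into a linear Vlasov--Fokker--Planck step and a compressible Navier--Stokes step, obtain the $L^p$ bounds \eqref{C-2} from the $pf^{p-1}$ estimate with $\nabla_\xi\cdot(v+u_\e-2\xi)=-6$, derive the entropy identity by pairing against $\log f+|\xi|^2/2$ and $v$, and pass to the limit via velocity averaging and Lions--Feireisl compactness (with $\gamma>3/2$ as the admissible range in three dimensions). The gap is that you skip the concrete regularizations that make the fixed-point step well-defined. Writing ``for bounded coefficients $v$ and $u_\e$'' assumes something your scheme has not arranged: $v\in L^2_tH^1_0$ is not $L^\infty$, and $u_\e=\rho u/(\rho+\e)$ removes the vacuum singularity but is still not bounded. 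The paper therefore truncates to $\chi_\lambda(v)$ and $\chi_\lambda(u_\e)$ (and the drag to $-\rho(u-v)\mathds{1}_{\{|v|\le\lambda\}}$) before invoking linear VFP theory, and the passage $\lambda\to\infty$ is an explicit extra limit, carried out with the sign observation $\int\chi_\lambda(u_\e)\cdot(\chi_\lambda(u_\e)-\xi)f\,dxd\xi\le 0$ so that the alignment remainder can be dropped from the regularized entropy inequality.

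A second omission is equally load-bearing. The paper mollifies the density in the inertial terms, replacing $\partial_t(nv)$ and $\nabla\cdot(nv\otimes v)$ by $\partial_t(n_kv)$ and $\nabla\cdot((nv)_k\otimes v)$ with $n_k=n\star h_k$. On the bounded domain, conservation of mass gives $n_k\ge c_k>0$, which upgrades the natural energy bound on $\sqrt{n}\,v$ to an honest $L^\infty_tL^2_x$ bound on $v$ itself. Without this, the Schauder (or iteration-plus-compactness) map $(\tilde u,\tilde v)\mapsto(u_\e,v)$ does not land in $L^2\times L^2$, since $v$ is uncontrolled on the vacuum set of $n$, and the compactness argument you sketch cannot close. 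The paper runs Schauder on the fully $(\lambda,k)$-regularized system (Theorem~A.1), proves the regularized entropy inequality (Lemma~A.3), and only afterwards sends $\lambda=m\to\infty$ and then $k\to0$, while also temporarily imposing higher $|\xi|$-moments on $f_0$ and $g$ (assumptions \eqref{init-2}--\eqref{DC-2}) to control $\rho$ and $\rho u$ in $L^p$ via Proposition~A.2 and removing this at the end by an approximation of the data. Making these two regularization layers and the order of their removal explicit is not cosmetic; it is what makes the scheme well-posed.
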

\begin{proof}
Since the proof is rather lengthy and technical, we leave the proof in Appendix A.
\end{proof}

\begin{remark}Independently from Theorem \ref{T2.1}, we can also obtain the global-in-time existence of weak solutions to the system \eqref{A-1} with the homogeneous Dirichlet boundary condition for $v$ \eqref{bdy_ho} and the nonhomogeneous Dirichlet  boundary condition for $f$ \eqref{diri_bdy} by using the result of Theorem \ref{T3.1} and the compactness argument in Section \ref{sec_T2.1} below.
\end{remark}

By using Theorem \ref{T3.1}, we approximate the regularized system \eqref{C-1} in the following way: let $\delta \in (0,1)$ and construct a sequence $(f^{m+1}, n^{m+1}, v^{m+1})$ of solution to \eqref{C-1} with the Dirichlet boundary condition:
\[
\gamma_- f^{m+1} = (1-\delta)\mathcal{B}\gamma_+ f^m \quad \mbox{on} \quad \Sigma_- \times (0,T)
\]
for $m \in \N \cup \{0\}$ with $\gamma_+ f^0 = 0$. Then, the estimate \eqref{C-2} gives 
\begin{align*}\begin{aligned}
 &\sup_{t \in [0,T]}\|f^{m+1}(t)\|_{L^1(\Omega \times \bbr^3)} \le e^{\frac{CT}{p'}}\left(\|f_0\|_{L^1(\Omega \times \bbr^3)} + (1-\delta)\|\gamma_+ f^m\|_{L^1(\Sigma_+ \times (0,T))}\right) \quad \mbox{and}\\
& \|\gamma_+ f^{m+1} \|_{L^p(\Sigma_+ \times (0,T))} \le \|f_0\|_{L^p(\Omega \times \bbr^3)} + (1-\delta) \|\gamma_+ f^m\|_{L^p(\Sigma_+ \times (0,T))}, \quad \forall p \in [1, \infty]
\end{aligned}\end{align*}
due to the assumption (iv) of the reflection operator $\mathcal{B}$; $\|\mathcal{B}\|_{\mathcal{L}(L^p(\Sigma_+), L^p(\Sigma_-))} \le 1$.
By iterating the above estimates, we obtain
\begin{align*}\begin{aligned}
& \sup_{t \in [0,T]}\|f^{m+1}(t)\|_{L^1(\Omega \times \bbr^3)} \le e^{\frac{CT}{p'}}\left(\frac{1}{\delta}\|f_0\|_{L^1(\Omega \times \bbr^3)} + (1-\delta)^m\|\gamma_+ f^1\|_{L^1(\Sigma_+ \times (0,T))}\right) \quad \mbox{and}  \\
 &\|\gamma_+ f^{m+1}\|_{L^p(\Sigma_+ \times (0,T))} \le \frac{1}{\delta} \|f_0\|_{L^p} + (1-\delta)^m \|\gamma_+ f^1\|_{L^p(\Sigma_+ \times (0,T))}, \quad \forall p \in [1, \infty],
\end{aligned}\end{align*}
which gives $f^m$ and $\gamma_+ f^m$ are uniformly bounded in $m$. On the other hand, from the property of $\mathcal{B}$ we find
\begin{align*}\begin{aligned}
\int_0^T & \int_{\Sigma_-} |\xi \cdot r(x)| \left( \frac{|\xi|^2}{2} + \log (\gamma_- f^{m+1}) + 1 \right) \gamma_- f^{m+1} \,d\sigma(x) d\xi dt\\
& \le (1-\delta) \int_0^T \int_{\Sigma_+} |\xi \cdot r(x)| \left( \frac{|\xi|^2}{2} + \log(\gamma_+ f^m) +1 \right) \gamma_+ f^m \,d\sigma(x) d\xi dt,
\end{aligned}\end{align*}
and this yields
$$\begin{aligned}
&\mathcal{F}(f^{m+1},n^{m+1},v^{m+1})(t) + \int_0^t \md_\e(f^{m+1},v^{m+1})\,ds + \int_0^t \int_\Omega |\nabla v^{m+1}|^2 \,dxds\\
& \quad + \int_0^t \int_{\Sigma_+} (\xi \cdot r(x)) \left( \frac{|\xi|^2}{2} + \log( \gamma_+ f^{m+1}) + 1 \right) \gamma_+ f^{m+1} \,d\sigma(x) d\xi ds\\
&\qquad \le \mathcal{F}(f_0, n_0, v_0) + 3 \int_0^t \|f^{m+1}(s)\|_{L^1(\Omega \times \bbr^3)} \,ds\\
&\qquad \quad +(1-\delta)\int_0^t \int_{\Sigma_+} (\xi \cdot r(x)) \left( \frac{|\xi|^2}{2} + \log (\gamma_+ f^m) + 1 \right) \gamma_+ f^m \,d\sigma(x) d\xi ds.
\end{aligned}$$
Here, we use the uniform bound for $f^m$ and Lemma \ref{LA.4} to get
\begin{align*}\begin{aligned}
&\int_{\Omega \times \bbr^3} \left( \frac{|\xi|^2}{4} + |\log f^{m+1}| \right) f^{m+1} \,dx d\xi + \int_\Omega n^{m+1} \frac{|v^{m+1}|^2}{2}  + \frac{1}{\gamma-1}(n^{m+1})^\gamma \,dx\\
&\quad +\frac{1}{2} \int_0^t \int_{\Sigma_+} |\xi \cdot r(x)| \left( \frac{|\xi|^2}{2} + \log(\gamma_+f^{m+1}) +1 \right) \gamma_+f^{m+1} \,d\sigma(x)d\xi ds\\
& \quad+ \int_0^t \md_\e (f^{m+1},v^{m+1})\,ds + \int_0^t \int_\Omega |\nabla v^{m+1}|^2 \,dxds\\
& \qquad \le \mathcal{F}(f_0, n_0, v_0)  +\frac{3t}{\delta}\|f_0\|_{L^1(\Omega \times \bbr^3)} + 3t\|\gamma_+ f^1\|_{L^1(\Sigma_+ \times (0,T))} + C\\
& \qquad \quad + (1-\delta)\int_0^t \int_{\Sigma_+} |\xi \cdot r(x)| \left(\frac{|\xi|^2}{2} + \log (\gamma_+ f^m) +1 \right) \gamma_+ f^m \,d\sigma(x)d\xi ds,
\end{aligned}\end{align*}
where $C$ is a constant independent of $m$ and $\delta$ and this leads us to
\begin{align}
\begin{aligned}\label{C-11}
\frac{1}{2}& \int_0^t \int_{\Sigma_+} |\xi \cdot r(x)| \left( \frac{|\xi|^2}{2} + \log(\gamma_+f^{m+1}) +1 \right) \gamma_+f^{m+1} \,d\sigma(x)d\xi ds\\
& \le \mathcal{F}(f_0, n_0, v_0)  +\frac{3t}{\delta}\|f_0\|_{L^1(\Omega \times \bbr^3)} + 3t\|\gamma_+ f^1\|_{L^1(\Sigma_+ \times (0,T))} + C\\
&\quad+ (1-\delta)\int_0^t \int_{\Sigma_+} |\xi \cdot r(x)| \left(\frac{|\xi|^2}{2} + \log (\gamma_+ f^m) +1 \right) \gamma_+ f^m \,d\sigma(x)d\xi ds.
\end{aligned}
\end{align}
By iterating \eqref{C-11}, we get
\begin{align*}\begin{aligned}
\frac{1}{2}& \int_0^t \int_{\Sigma_+} |\xi \cdot r(x)| \left( \frac{|\xi|^2}{2} + \log(\gamma_+f^{m+1}) +1 \right) \gamma_+f^{m+1} \,d\sigma(x)d\xi ds\\
& \le \frac{1}{\delta}\left(\mathcal{F}(f_0, n_0, v_0)  +\frac{3t}{\delta}\|f_0\|_{L^1(\Omega \times \bbr^3)} + 3t\|\gamma_+ f^1\|_{L^1(\Sigma_+ \times (0,T))} + C\right)\\
&\quad+ (1-\delta)^m\int_0^t \int_{\Sigma_+} |\xi \cdot r(x)| \left(\frac{|\xi|^2}{2} + \log (\gamma_+ f^1) +1 \right) \gamma_+ f^1 \,d\sigma(x)d\xi ds,
\end{aligned}\end{align*}
and we again use Lemma \ref{LA.4} to obtain
\begin{align*}\begin{aligned}
\frac{1}{2}& \int_0^t \int_{\Sigma_+} |\xi \cdot r(x)| \left( \frac{|\xi|^2}{4} + |\log(\gamma_+f^{m+1})| +1 \right) \gamma_+f^{m+1} \,d\sigma(x)d\xi ds\\
& \le C+ \frac{1}{\delta}\left(\mathcal{F}(f_0, n_0, v_0)  +\frac{3t}{\delta}\|f_0\|_{L^1(\Omega \times \bbr^3)} + 3t\|\gamma_+ f^1\|_{L^1(\Sigma_+ \times (0,T))} + C\right)\\
&\quad + (1-\delta)^m\int_0^t \int_{\Sigma_+} |\xi \cdot r(x)| \left(\frac{|\xi|^2}{2} + \log (\gamma_+ f^1) +1 \right) \gamma_+ f^1 \,d\sigma(x)d\xi ds.
\end{aligned}\end{align*}
Since we have all the uniform estimates that enable us to use the arguments in Appendix \ref{app_3}, we can pass to the limit $m\to \infty$ and obtain the following  lemma. 
\begin{lemma}
Assume that the initial data $(f_0, n_0, v_0)$ satisfies \eqref{init-1}. Then, for every $\delta \in (0,1)$, there exists at least one weak solution to \eqref{A-1} with boundary value $\gamma f^\delta \in L^p(\Sigma_+ \times (0,T))$ such that
 \[ \gamma_- f^\delta = (1-\delta)\mathcal{B}\gamma_+ f^\delta \qquad \mbox{on } \  \Sigma_- \times (0,T).  \]
Moreover, the following entropy inequaliy holds:
\begin{align}
\begin{aligned}\label{C-12}
\mathcal{F}&(f^\delta,n^\delta,v^\delta)(t) + \int_0^t D_\e(f^\delta,v^\delta)(s)\,ds + \int_0^t \int_\Omega |\nabla v^\delta|^2 \,dxds\\
& \quad + \delta \int_0^t \int_{\Sigma_+} (\xi \cdot r(x)) \left( \frac{|\xi|^2}{2} + \log( \gamma_+ f^\delta) + 1 \right) \gamma_+ f^\delta \,d\sigma(x) d\xi ds\\
&\le \mathcal{F}(f_0, n_0, v_0) + 3 \int_0^t \|f^\delta (\cdot,\cdot,s)\|_{L^1(\Omega \times \bbr^3)} \,ds.
\end{aligned}
\end{align}
\end{lemma}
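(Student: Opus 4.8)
The plan is to exhibit the iteration $(f^{m+1},n^{m+1},v^{m+1})$ via Theorem \ref{T3.1}, to use the uniform-in-$m$ bounds already derived above, and then to pass to the limit $m\to\infty$ by compactness. At each step one first checks that the incoming datum $g^m:=(1-\delta)\mathcal{B}\gamma_+ f^m$ is admissible for Theorem \ref{T3.1}, i.e.\ satisfies \eqref{DC}: nonnegativity is property (i) of $\mathcal{B}$; membership in $(L^1\cap L^\infty)(\Sigma_-\times(0,T))$ follows from the trace bound in \eqref{C-2} at step $m$ and $\|\mathcal{B}\|_{\mathcal{L}(L^p(\Sigma_+),L^p(\Sigma_-))}\le 1$; and finiteness of the $|\xi|^2$-moment of $g^m$ follows from the boundary entropy bound inherited from \eqref{T3-1} at step $m$ together with the boundary version of Lemma \ref{LA.4}. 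Thus Theorem \ref{T3.1} yields $(f^{m+1},n^{m+1},v^{m+1})$ satisfying \eqref{C-2} and \eqref{T3-1}, the induction starting from $\gamma_+ f^0=0$.

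The uniform-in-$m$ estimates are exactly the ones recorded above: iterating the geometric recursion $\|\gamma_+ f^{m+1}\|_{L^p(\Sigma_+\times(0,T))}\le\|f_0\|_{L^p}+(1-\delta)\|\gamma_+ f^m\|_{L^p(\Sigma_+\times(0,T))}$ (valid for every $p\in[1,\infty]$ by \eqref{C-2} and $\|\mathcal{B}\|\le1$) bounds $\sup_m\|\gamma_+ f^m\|_{L^p}$, $\sup_m\|f^m\|_{L^\infty(0,T;L^p)}$ and $\sup_m\|\nabla_\xi (f^m)^{p/2}\|_{L^2}$ by $\delta^{-1}\|f_0\|_{L^p}$ up to a term vanishing as $m\to\infty$; and iterating \eqref{C-11}, which uses properties (ii)--(iii) of $\mathcal{B}$ to dominate the incoming boundary entropy production by $(1-\delta)$ times the outgoing one, together with Lemma \ref{LA.4}, bounds $\int_0^T\int_{\Sigma_+}|\xi\cdot r|(|\xi|^2+|\log\gamma_+ f^{m+1}|+1)\gamma_+ f^{m+1}$, hence $\mathcal{F}(f^m,n^m,v^m)$, the dissipations $\int_0^t\md_\e(f^m,v^m)$ and $\int_0^t\int_\Omega|\nabla v^m|^2$, and the weighted traces on $\Sigma_\pm\times(0,T)$, all uniformly in $m$. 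The strict inequality $1-\delta<1$ is essential here, which is why the lemma requires $\delta>0$.

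With these bounds I would run the compactness machinery of Appendix \ref{app_3}: velocity averaging for the kinetic transport operator gives $f^m\to f^\delta$ strongly in $L^1_{\mathrm{loc}}(\Omega\times\R^3\times(0,T))$ and weak-$*$ in $L^\infty$, together with convergence of the moments $\int f^m\,d\xi$ and $\int\xi f^m\,d\xi$; the Aubin--Lions lemma gives $n^m\to n^\delta$, $v^m\to v^\delta$ and $n^m v^m\to n^\delta v^\delta$ in the spaces of Definition \ref{D2.1}; and a Mischler-type trace theorem for the free-transport operator gives $\gamma_\pm f^m\to\gamma_\pm f^\delta$ strongly in $L^1(\Sigma_\pm\times(0,T))$. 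These convergences suffice to pass to the limit in the weak formulation of \eqref{C-1}---the nonlinear terms $u_\e^m f^m$, $n^m v^m\otimes v^m$, $p(n^m)$ and the drag $\int(v^m-\xi)f^m\,d\xi$ converge via the strong/a.e.\ convergences---so $(f^\delta,n^\delta,v^\delta)$ is a weak solution; and since $\mathcal{B}$ is a bounded, hence weakly continuous, linear operator, $\gamma_- f^{m+1}=(1-\delta)\mathcal{B}\gamma_+ f^m$ passes to $\gamma_- f^\delta=(1-\delta)\mathcal{B}\gamma_+ f^\delta$. For the entropy inequality one passes to the limit in \eqref{T3-1} written for $f^{m+1}$: the interior functionals $\mathcal{F}$, $\md_\e$ and $\int_\Omega|\nabla v|^2$ are weakly lower semicontinuous, while the outgoing boundary entropy integral $\int_0^t\int_{\Sigma_+}(\xi\cdot r)(|\xi|^2/2+\log\gamma_+ f^m+1)\gamma_+ f^m$ \emph{converges} thanks to the strong $L^1$ trace convergence and the uniform weighted-moment bounds; bounding the incoming contribution at step $m+1$ below by $-(1-\delta)$ times the outgoing one at step $m$ (via properties (ii)--(iii) of $\mathcal{B}$) and combining the $\Sigma_+$ and $\Sigma_-$ contributions leaves the net coefficient $1-(1-\delta)=\delta$ in front of $\int_0^t\int_{\Sigma_+}(\xi\cdot r)(|\xi|^2/2+\log\gamma_+ f^\delta+1)\gamma_+ f^\delta$, which is precisely \eqref{C-12}.

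I expect the main obstacle to be the trace analysis: proving that $\gamma_\pm f^m$ converges strongly in $L^1(\Sigma_\pm\times(0,T))$ and that its limit is genuinely $\gamma_\pm f^\delta$, which is needed both to identify the reflection boundary condition in the limit and to pass the boundary entropy term. This rests on the DiPerna--Lions/Mischler renormalization and trace theory for kinetic transport equations (with the regularizing $\Delta_\xi$ term as an aid), and one must use the uniform weighted-entropy bounds on $\gamma_\pm f^m$ to exclude concentration or oscillation of the traces at $\partial\Omega$; controlling the nonlinear boundary flux $\int_\Sigma(\xi\cdot r)\gamma f\,\varphi$ in the weak formulation under these convergences is the delicate point. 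Everything else is routine given Theorem \ref{T3.1} and the estimates already displayed.
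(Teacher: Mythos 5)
Your proposal is correct and follows essentially the same route as the paper: construct the iteration via Theorem \ref{T3.1} with Dirichlet data $(1-\delta)\mathcal{B}\gamma_+ f^m$, establish the uniform-in-$m$ bounds via the geometric recursions in the $L^p$ and boundary-entropy estimates (which exploit $1-\delta<1$), and pass to the limit with the compactness machinery of Appendix \ref{app_3}, with the telescoping of the $\Sigma_-$ and $\Sigma_+$ boundary entropy terms producing the coefficient $\delta$ in \eqref{C-12}. You fill in a few steps that the paper leaves implicit — in particular the admissibility check on the incoming datum at each iteration, the explicit weak lower semicontinuity and trace-convergence arguments for the limit passage, and the identification of the reflection boundary condition via weak continuity of $\mathcal{B}$ — and you correctly flag the trace analysis on $\Sigma_\pm$ as the delicate point.
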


\subsection{Proof of Theorem \ref{T2.1}}\label{sec_T2.1} Now, we let the regularization parameter $\e$ in the system \eqref{C-1} be $\e = \delta$, and tend $\delta$ to 0 to get the desired result. Note that the rest of the proof is almost the same as in \cite{M-V}, except for two things; one is the $L^1$-norm of $f$ on the right hand side of \eqref{C-12}, and the other one is the convergence of $(u_\delta^\delta-\xi)f^\delta$ toward $(u-\xi)f$ in distributional sense. Thus in the rest of this subsection, we only focus on the convergence $(u_\delta^\delta-\xi)f^\delta \to (u-\xi)f$ as $\delta \to 0$ in the sense of distribution. 

It follows from Proposition \ref{PA.1} and the boundary condition that
\[
\begin{split}
\int_{\Omega\times\bbr^3}& (f^\delta)^p \,dxd\xi + (1-(1-\delta)^p)\int_0^t \int_{\Sigma_+} |\xi \cdot r(x)| (\gamma_+ f^\delta)^p \,d\sigma(x)d\xi ds\\
&\le \|f_0\|_{L^p(\Omega\times\bbr^3)}^p  + 3(p-1)\int_0^t \int_{\Omega\times\bbr^3} (f^\delta)^p \,dxd\xi ds,
\end{split} 
\]
which implies the uniform boundedness of $f^\delta$ in $L^p((0,T)\times\Omega)$, $p \in [1,\infty)$. Especially, when $p=1$, we directly get
\[
\int_{\Omega\times\bbr^3} f^\delta \,dxd\xi \le \|f_0\|_{L^1(\Omega\times\bbr^3)}. 
\]
Together with Lemma \ref{LA.4}, we deduce from \eqref{C-12} that
$$\begin{aligned}
\int_{\Omega \times \bbr^3}& \left( \frac{|\xi|^2}{4} + |\log f^\delta| \right) f^\delta \,dx d\xi + \int_\Omega n^\delta \frac{|v^\delta|^2}{2}  + \frac{1}{\gamma-1}(n^\delta)^\gamma \,dx\\
&\quad +\delta\int_0^t \int_{\Sigma_+} |\xi \cdot r(x)| \left( \frac{|\xi|^2}{4} + \log(\gamma_+f^\delta) +1 \right) \gamma_+f^\delta \,d\sigma(x)d\xi ds\\
& \quad+ \int_0^t D_\e (f^\delta,v^\delta)(s)\,ds + \int_0^t \int_\Omega |\nabla v^\delta|^2 \,dxds\\
& \le \mathcal{F}(f_0, n_0, v_0)  +3t\|f_0\|_{L^1(\Omega \times \bbr^3)} + C,
\end{aligned}$$
and this uniform bound enables us to exploit the velocity averaging lemma, Lemma \ref{LA.2} to get, up to a subsequence,
\begin{align*}\begin{aligned}
&\rho^\delta \to \rho \quad \quad \mbox{in} \quad L^p(\Omega \times (0,T)) \quad \mbox{and} \quad \mbox{a.e.},\\
&\rho^\delta u^\delta \to \rho u \quad \mbox{in} \quad L^p(\Omega \times (0,T)), \quad \forall p \in (1,5/4).
\end{aligned}\end{align*}
With these strong convergences, the convergence 
\[
\xi f^\delta \to \xi f \quad \mbox{in} \quad \mathcal{D}'(\om \times (0,T))
\] 
is clear. We now show the following convergence:
\[
u_\delta^\delta f^\delta \to u f \quad \mbox{in} \quad \mathcal{D}'(\om \times (0,T)).
\]
Although the proof is almost the same as that of \cite[Lemma 4.4]{K-M-T}, for the completeness of our work, we sketch it here. For $\varphi = \varphi(\xi) \in \mc_c^\infty(\bbr^3)$, let 
\[
\rho_\varphi^\delta := \int_{\bbr^3} f^\delta (x,\xi)\varphi(\xi)\,d\xi.
\] 
Then for a test function $\psi(x,\xi,t) := \phi(x,t)\varphi(\xi)$ with $\phi \in \mc^\infty_c(\om \times (0,T))$ we have
\[
\int_0^T \int_{\om \times \R^3} f^\delta u_\delta^\delta \psi\,dxd\xi dt = \int_0^T \int_\om   u_\delta^\delta \rho_\varphi^\delta \phi\,dx dt.
\]
Note that 
\[
\|u_\delta^\delta \rho_\varphi^\delta \|_{L^p} \leq \|\varphi\|_{L^\infty}\|\rho^\delta\|_{L^{p/(2-p)}}^{1/2}\|\sqrt{\rho^\delta} u_\delta^\delta\|_{L^2} < \infty,
\]
where we were able to use the uniform bound estimates of $\rho^\delta$ and the kinetic energy of $f^\delta$ since $p/(2-p) \in (1,5/3)$ for $p \in (1,5/4)$. This yields that there exists a limiting function $m \in L^\infty(0,T;L^p(\om))$ such that
\[
u_\delta^\delta \rho_\varphi^\delta \rightharpoonup m \quad \mbox{in} \quad L^\infty(0,T;L^p(\om))
\]
as $\delta \to 0$, up to a subsequence, for all $p \in (1,5/4)$. We now claim that $m = u\rho_\varphi$, where
\[
\rho_\varphi = \int_{\R^3} f \varphi\,d\xi \quad \mbox{and} \quad u = \int_{\R^3} \xi f\,d\xi \bigg/ \int_{\R^3} f\,d\xi.
\]
Let $\zeta>0$, and define a set 
\[
E_R^\zeta := \{(x,t) \in (B(0,R)\cap \om) \times (0,T) : \rho(x,t) > \zeta \}.
\] 
Then by the compactness of $\rho^\delta$ together with Egorov's theorem, for any $\lambda > 0$, there exists a set $C_\lambda \subset E_R^\zeta$ with $|E_R^\zeta \setminus C_\lambda | < \lambda$ on which $\rho^\delta$ uniformly converges to $\rho$. This further implies $\rho^\delta > \zeta/2$ in $C_\lambda$ for $\delta >0$ small enough. Thus we obtain
\[
u_\delta^\delta \rho^\delta_\varphi = \frac{\rho^\delta u^\delta}{\rho^\delta + \delta} \to m = u\rho_\varphi \quad \mbox{in} \quad C_\lambda.
\]
On the other hand, since $\lambda > 0$, $R > 0$, and $\zeta > 0$ are arbitrary, this further yields
\[
m = u\rho_\varphi \quad \mbox{on} \quad \{ \rho > 0\}.
\]
Hence we have
\[
\int_0^T \int_{\om \times \R^3} f^\delta u_\delta^\delta \psi\,dx\xi dt \to \int_0^T \int_\om u\rho_\varphi \phi\,dxdt = \int_0^T \int_{\om \times \R^3} f u \psi\,dx\xi dt
\]
for all test functions of the form $\psi(x,\xi,t) = \phi(x,t)\varphi(\xi)$. This completes the proof.

\subsection{Proof of Theorem \ref{T0.1}} Theorem \ref{T0.1} is a direct consequence of Theorem \ref{T2.1} since the specular reflection can be formulated through the reflection operator $\mathcal{B}$. More specifically, let us set the scattering kernel $B$ as
\[
B(t,x,\xi,\xi') = \left\{\begin{array}{lc} \displaystyle \frac{\delta_{\xi' - \mathcal{R}_x(\xi)}}{|\xi'\cdot r(x)|} & \mbox{ if }\ \xi' \cdot r(x) \neq 0\\[4mm]
0 & \mbox{ if } \ \xi'\cdot r(x) = 0\end{array}\right.,
\]
where $\delta_\cdot$ denotes the Dirac measure. Note that the reflection operator $\mathcal{R}_x$ satisfies $|\mathcal{R}_x(\xi)| = |\xi|$ and $|\mathcal{R}_x(\xi) \cdot r(x)| = |\xi \cdot r(x)|$. This yields that the kernel $B$ defined as above satisfies all the conditions (i)-(iv) for the reflection operator appeared in the beginning of Section \ref{sec:3}, and furthermore we can readily check
\[
\gamma_- f(x,\xi,t) = \mathcal{B}(\gamma_+ f)(x,\xi,t) = \gamma_+ f(x,\mathcal{R}_x(\xi),t).
\]
This concludes the proof of Theorem \ref{T0.1}.

%
%
%
%
%

\section{Global well-posedness of the two-phase fluid system}\label{sec:4}
\setcounter{equation}{0}
In this section, we prove the global well-posedness of the system \eqref{A-3}. As mentioned before, for the rigorous hydrodynamic limit, it suffices to show the existence and uniqueness of strong solutions to the system \eqref{A-3} at least locally in time. We first obtain the local well-posedness theory for the system \eqref{A-3}, and then extend it to the global existence theory by means of the continuity argument.

For the existence theory, we use the structure of symmetric hyperbolic system for the compressible Navier-Stokes equations in \eqref{E-3}. We rewrite the system \eqref{E-3} as
\begin{align}\label{E-3-1}
\begin{aligned}
&\partial_t g + u \cdot \nabla g + \nabla \cdot u = 0, \quad (x,t) \in \om \times \R_+,\\
&\partial_t u + u \cdot \nabla u + \nabla g = v-u,\\
& A^0 (\eta) \partial_t \eta + \sum_{j=1}^3 A^j(\eta) \partial_j \eta = A^0(\eta) E_1(h,v) + A^0 (\eta) E_2(g,u,\eta),
\end{aligned}
\end{align}
where $\eta := (h,v)^T$,
\begin{align*}\begin{aligned}
&A^0(\eta) := \left(\begin{array}{cc} \gamma(1 +h)^{\gamma-2} & 0 \\ 0 & (1 +h)\mathbb{I}_{3 \times 3} \end{array}\right),\\[2mm]
&A^j (\eta) := A^0 (\eta) \left( \begin{array}{cccc} v_j & (1 +h)\delta_{1j} & (1 +h)\delta_{2j} & (1+h)\delta_{3j} \\
\gamma(1+h)^{\gamma-2}\delta_{1j} & v_j & 0 & 0 \\
\gamma (1+h)^{\gamma-2}\delta_{2j} & 0 & v_j & 0 \\
\gamma (1 +h)^{\gamma-2}\delta_{3j} & 0 & 0 & v_j \end{array} \right),\\[2mm]
&E_{1}(h,v) := \frac{1}{1 +h} \left( \begin{array}{c} 0 \\ \Delta v_1 \\ \Delta v_2 \\ \Delta v_3 \end{array} \right), \quad \mbox{and} \quad E_{2}(g,u,\eta) := \frac{e^g}{M(1+h)}\left( \begin{array}{c} 0 \\ u_1-v_1 \\ u_2-v_2 \\ u_3-v_3 \end{array} \right),
\end{aligned}\end{align*}
where $\delta_{ij}$ denotes the Kronecker delta function, i.e., $\delta_{ij} =0$ if $i \neq j$ and $\delta_{ij} =1$ if $i = j$. To make all of the estimates simpler, without loss of generality, we may assume that the constant $M = |\om| = 1$ in the rest of this section.

\subsection{Local-in-time existence theory} 
Let us define the solution space:
\[
\ms^s_T(\om) := \lt\{ (g,u,h,v) : (g,u,h,v) \in \mathfrak{X}^s(T,\om) \times \mathfrak{X}^s(T,\om) \times \mathfrak{X}^s(T,\om) \times \mathfrak{X}^s(T,\om) \rt\}.
\]

\begin{theorem}\label{thm_local}
There exist small constants $\e_0>0$ and $T^*>0$ such that if
\[
\|g_0\|_{H^s} + \|u_0\|_{H^s} + \|\eta_0\|_{H^s} <\e_0, 
\]
then a unique strong solution $(g,u,\eta) \in \mathcal{S}^s_{T^*}(\om)$ of system \eqref{E-3} in the sense of Definition \ref{D2.2} corresponding to initial data $(g_0, u_0, \eta_0)$ exists up to time $t\le T^*$ and 
\[
\sup_{0 \leq t \leq T^*}\left(\|g(t)\|_{\mathfrak{X}^s} + \|u(t)\|_{\mathfrak{X}^s} + \|\eta(t)\|_{\mathfrak{X}^s}\right) < \e_0^{1/2}.
\]
\end{theorem}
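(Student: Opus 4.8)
The plan is to construct the solution by a standard iteration (Picard-type) scheme adapted to the mixed hyperbolic--parabolic structure of \eqref{E-3-1}, and to close a uniform energy estimate on a short time interval. First I would linearize the system: given an iterate $(g^{(k)},u^{(k)},\eta^{(k)})$ with $\eta^{(k)}=(h^{(k)},v^{(k)})$, define the next iterate by solving, for the Euler part, the linear transport-type system
\[
\partial_t g^{(k+1)} + u^{(k)}\cdot\nabla g^{(k+1)} + \nabla\cdot u^{(k+1)} = 0, \qquad
\partial_t u^{(k+1)} + u^{(k)}\cdot\nabla u^{(k+1)} + \nabla g^{(k+1)} = v^{(k)} - u^{(k+1)},
\]
and, for the Navier--Stokes part, the linear symmetric-hyperbolic system with frozen coefficients
\[
A^0(\eta^{(k)})\partial_t\eta^{(k+1)} + \sum_{j=1}^{3} A^j(\eta^{(k)})\partial_j\eta^{(k+1)} = A^0(\eta^{(k)})E_1(h^{(k)},v^{(k+1)}) + A^0(\eta^{(k)})E_2(g^{(k)},u^{(k)},\eta^{(k)}),
\]
together with the boundary conditions $u^{(k+1)}\cdot r = 0$ and $v^{(k+1)} = 0$ on $\partial\Omega$. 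The well-posedness of each linearized problem in $\mathfrak{X}^s(T,\Omega)$ is classical: the Euler block is a symmetrizable linear hyperbolic system for $(g,u)$ (the $\nabla g$/$\nabla\cdot u$ coupling is skew-symmetric, and the $-u^{(k+1)}$ damping is benign), while the $\eta$-block is a linear symmetric hyperbolic system with the Laplacian appearing only through the source $E_1$ built from the \emph{previous} iterate, so it reduces to a transport estimate plus a lower-order forcing.

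Next I would establish a priori bounds uniform in $k$. Apply $\partial^\alpha$ for $|\alpha|\le s$ to each equation, multiply by the appropriate symmetrizer ($\partial^\alpha g$, $\partial^\alpha u$, and $A^0(\eta^{(k)})\partial^\alpha\eta$ respectively), integrate over $\Omega$, and sum. The transport terms $u^{(k)}\cdot\nabla$ produce commutators controlled by the Moser-type calculus inequality $\|\partial^\alpha(fg) - f\partial^\alpha g\|_{L^2} \lesssim \|\nabla f\|_{L^\infty}\|g\|_{H^{s-1}} + \|f\|_{H^s}\|g\|_{L^\infty}$; the term $\tfrac12\int(\nabla\cdot u^{(k)})|\partial^\alpha\eta^{(k+1)}|^2$ coming from differentiating $A^0$ is absorbed. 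The boundary terms from integration by parts vanish: for the $\nabla g$/$\nabla\cdot u$ pair one gets $\int_{\partial\Omega}(\partial^\alpha g)(\partial^\alpha u\cdot r)$, which vanishes on $\partial\Omega$ only after using the compatibility conditions (iii) to guarantee $\partial_t^k u\cdot r|_{\partial\Omega}=0$ for $k\le s-1$ and hence, together with the equation, enough tangential/normal cancellation; similarly for $\eta$ with $v=0$. The source $E_1$ contributes $\int A^0 E_1\cdot\partial^\alpha\eta^{(k+1)}$; since $E_1$ carries $\Delta v^{(k+1)}$... wait, here I must be careful: in the linearized scheme $E_1 = E_1(h^{(k)},v^{(k+1)})$ involves $\Delta v^{(k+1)}$, so this is genuinely a parabolic contribution, not a lower-order one. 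The right way is to keep the viscous term implicit: treat the $v$-equation as $(1+h^{(k)})(\partial_t v^{(k+1)} + v^{(k)}\cdot\nabla v^{(k+1)}) - \Delta v^{(k+1)} + \gamma(1+h^{(k)})^{\gamma-1}\nabla h^{(k+1)} = -\tfrac{e^{g^{(k)}}}{M}(v^{(k)}-u^{(k)})$, so that the $H^s$ energy estimate for $v^{(k+1)}$ gains the dissipation $\int_0^t\|\nabla v^{(k+1)}\|_{H^s}^2$, and only the coupling $\gamma(1+h^{(k)})^{\gamma-1}\nabla h^{(k+1)}$ links it to the hyperbolic $h$-equation, which is handled by the symmetrizer as above. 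With this, the combined energy $\mathsf{E}^{(k+1)}(t) := \|g^{(k+1)}(t)\|_{H^s}^2 + \|u^{(k+1)}(t)\|_{H^s}^2 + \|\eta^{(k+1)}(t)\|_{H^s}^2$ satisfies a differential inequality of the form $\tfrac{d}{dt}\mathsf{E}^{(k+1)} \le C\,P(\mathsf{E}^{(k)})\big(1 + \mathsf{E}^{(k+1)}\big)$ for a polynomial $P$, from which one gets, choosing $\e_0$ small and $T^*$ small, the uniform bound $\sup_{[0,T^*]}\mathsf{E}^{(k)}(t) < \e_0$ for all $k$ by induction, together with the needed bounds on time derivatives $\partial_t^j$ (obtained by differentiating the equations in $t$ and using them to trade time for space derivatives), so that each iterate lies in $\mathcal{S}^s_{T^*}(\Omega)$ with the stated $\e_0^{1/2}$ bound.

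Then I would prove convergence of the iterates: estimate the differences $(\delta g,\delta u,\delta\eta) := (g^{(k+1)}-g^{(k)},\,u^{(k+1)}-u^{(k)},\,\eta^{(k+1)}-\eta^{(k)})$ in the lower norm $L^2$ (or $H^1$), where the nonlinear structure contracts: each difference equation is of the same linearized type with right-hand side quadratic in the previous differences and the uniformly bounded iterates, yielding $\sup_{[0,T^*]}\|(\delta g,\delta u,\delta\eta)^{(k+1)}\|_{L^2}^2 \le C T^* \sup_{[0,T^*]}\|(\delta g,\delta u,\delta\eta)^{(k)}\|_{L^2}^2$, hence a contraction for $T^*$ small. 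This gives a limit $(g,u,\eta)$ in $C([0,T^*];L^2)$, and the uniform $H^s$ bounds plus interpolation upgrade convergence to $C([0,T^*];H^{s'})$ for every $s'<s$ and weak-$*$ convergence in $L^\infty([0,T^*];H^s)$; standard arguments (Lions--Aubin for the time continuity, and the bound on $\partial_t$-derivatives) then show $(g,u,\eta)\in\mathcal{S}^s_{T^*}(\Omega)$ and that it solves \eqref{E-3} in the distributional sense. Uniqueness follows from the same $L^2$ difference estimate applied to two solutions, via Gr\"onwall.

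\textbf{Main obstacle.} The delicate point is the treatment of the boundary terms arising from integration by parts in the high-order energy estimates: because the Euler subsystem only has the \emph{kinematic} condition $u\cdot r=0$ (not all of $u$ vanishing), the tangential derivatives $\partial^\alpha$ with $\alpha$ involving normal directions do not commute nicely with the boundary condition, and one cannot directly conclude $\int_{\partial\Omega}(\partial^\alpha g)(\partial^\alpha u\cdot r)=0$. The resolution, flagged in the introduction of the paper, is to \emph{not} estimate $\|u\|_{H^s}$ directly but to control separately $\|u\|_{L^2}$, $\|\nabla\cdot u\|_{H^{s-1}}$, and $\|\nabla\times u\|_{H^{s-1}}$ (the divergence being governed by the $g$-equation and the curl satisfying a transport equation obtained by taking the curl of the $u$-equation, which has no boundary contribution since curl is tangential-friendly), and then invoke the elliptic-type estimate $\|u\|_{H^s} \lesssim \|u\|_{L^2} + \|\nabla\cdot u\|_{H^{s-1}} + \|\nabla\times u\|_{H^{s-1}}$ valid for vector fields with $u\cdot r = 0$ on $\partial\Omega$. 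Making this div--curl decomposition interact correctly with the coupled Navier--Stokes block (whose $v$ does vanish on the boundary, so standard $H^s$ estimates apply there) and the source terms $v-u$, $\tfrac{e^g}{M(1+h)}(v-u)$ is where the real work lies; the compatibility conditions (iii) are exactly what is needed to make the time-derivative energy estimates close and to ensure the traces of $\partial_t^k u\cdot r$ and $\partial_t^k v$ vanish at $t=0$.
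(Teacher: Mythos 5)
Your proposal is correct and follows essentially the same route as the paper: the same linearized iteration scheme, the same resolution of the boundary difficulty via estimating only $L^2$-norms of time derivatives and the vorticity and then recovering full $H^s$ regularity from the div–curl estimate for fields with $u\cdot r=0$, the same implicit treatment of the viscous term to gain $\int_0^t\|\nabla v\|_{H^s}^2$ dissipation with the symmetrizer $A^0(\eta^m)$, and the same $L^2$ Cauchy/contraction argument for convergence and uniqueness. No substantive differences to report.
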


\begin{remark}In the above theorem, we need to have some smallness assumption on the initial data, and this implies that no matter how the small initial data are, the life-span of solutions is finite. This is due to the fact that we cannot use the integration by parts properly because of the kinematic boundary condition for $u$, and it seems that condition cannot be removed, see Remark \ref{rmk_local} for more detailed discussion.
\end{remark}

\subsubsection{Approximate solutions} In this subsection, we linearize the system \eqref{E-3-1} and provide uniform estimates and prove their convergence toward a strong solution to \eqref{E-3-1}. To be specific, we consider the sequence of approximate solutions as
\begin{align}\label{E-5}
\begin{aligned}
&\partial_t g^{m+1} + u^m \cdot \nabla g^{m+1} + \nabla \cdot u^{m+1} = 0, \quad (x,t) \in \om \times \R_+,\\
&\partial_t u^{m+1} + u^m \cdot \nabla u^{m+1} + \nabla g^{m+1} = v^{m+1}-u^{m+1},\\
&\displaystyle A^0 (\eta^m) \partial_t \eta^{m+1} + \sum_{j=1}^3 A^j(\eta^m) \partial_j \eta^{m+1} \\
&\quad= A^0(\eta^m) E_1(h^m,v^{m+1}) + A^0 (\eta^m) E_2(g^m,u^m,\eta^m),
\end{aligned}
\end{align}
subject to initial data, compatibility conditions, and boundary conditions:
\begin{align*}
\begin{aligned}
&(g^{m+1}(x,0), u^{m+1}(x,0), \eta^{m+1}(x,0)) = (g_0(x), u_0(x), \eta_0(x)), \quad x \in \Omega, \\
&\pa_t^k u^{m+1}(x,t) \cdot r(x) |_{t=0} = 0  \quad \mbox{and} \quad \pa_t^k v^{m+1}(x,t) |_{t=0} = 0 \quad (x,t)\in  \partial\Omega \times \R_+,  \quad k=0,1,\dots,s-1, \quad \mbox{and}\cr
& u^{m+1}(x,t) \cdot r(x) = 0  \quad \mbox{and} \quad v^{m+1}(x,t) = 0 \quad (x,t) \in  \partial\Omega \times \R_+,  
\end{aligned}
\end{align*}
for all $m \in \N$ and the first iteration step:
\[
(g^0(x,t),u^0(x,t),\eta^0(x,t)) = (g_0(x), u_0(x), \eta_0(x)), \quad (x,t) \in \om \times \R_+.
\]
For notational simplicity, we set
$$\begin{aligned}
&W^m(t) := \|g^m(t)\|_{\mathfrak{X}^s}^2 + \|u^m(t)\|_{\mathfrak{X}^s}^2 = \sum_{\ell=0}^s \|\partial_t^\ell g^m\|_{H^{s-\ell}}^2 + \|\partial_t^\ell u^m\|_{H^{s-\ell}}^2,\\
&T^m(t) := \sum_{\ell=0}^s \|\partial_t^\ell g^m\|_{L^2}^2 + \|\partial_t^\ell u^m\|_{L^2}^2, \quad \mbox{and}\\
&V^m(t) := \|\omega^m(t)\|_{\mathfrak{X}^{s-1}}^2 = \sum_{\ell=0}^{s-1}\|\partial_t^\ell \omega^m(t)\|_{H^{s-1-\ell}}^2,
\end{aligned}$$
where $\omega^m := \nabla \times u^m$. Here, the local existence of solutions to the Euler equations of the linearized system \eqref{E-5} can be handled by arguments from the previous literature \cite{Ag, Eb79, Sc86,Zh10}. Moreover, since the solvability of the Navier-Stokes system in \eqref{E-5} is quite classical, we only provide the upper bound estimates of approximate solutions and their convergence. 

\subsubsection{Uniform bound estimates}
In this part, we present uniform-in-$m$ bound estimates of the approximate solutions.

\begin{proposition}\label{P5.1} Let $s\ge 4$ and $T>0$. Let $(g^m,u^m,\eta^m) \in \ms^s_T(\om)$ be a sequence of strong solutions to \eqref{E-5}. Then, one has
\[
W^{m+1}(t) \le C\lt(T^{m+1}(t)+V^{m+1}(t)+W^m(t) W^{m+1}(t) + \|v^{m+1}(t)\|_{\mathfrak{X}^s}^2\rt), 
\]
where $C > 0$ depends on $s$ and $\Omega$, but independent of $m$.
\end{proposition}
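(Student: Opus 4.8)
The plan is to trade every spatial derivative in $W^{m+1}$ for a time derivative, isolating the curl (which lands in $V^{m+1}$), the purely temporal $L^2$-norms (which land in $T^{m+1}$) and the Navier--Stokes datum $\|v^{m+1}\|_{\mathfrak{X}^s}$, while the bilinear remainders are absorbed into $W^m(t)W^{m+1}(t)$. Three structural inputs are used. First, for $\ell=0,\dots,s-1$ the field $\partial_t^\ell u^{m+1}$ satisfies $(\partial_t^\ell u^{m+1})\cdot r=0$ on $\partial\Omega$ (the compatibility conditions together with $u^{m+1}\cdot r=0$ in \eqref{E-5} give this, since $r$ is time-independent), so the div--curl elliptic inequality for the kinematic boundary condition applies:
\[
\|\partial_t^\ell u^{m+1}\|_{H^{s-\ell}}\le C\Big(\|\partial_t^\ell u^{m+1}\|_{L^2}+\|\nabla\cdot\partial_t^\ell u^{m+1}\|_{H^{s-\ell-1}}+\|\partial_t^\ell\omega^{m+1}\|_{H^{s-\ell-1}}\Big),\quad 0\le\ell\le s-1,
\]
with $C=C(s,\Omega)$; here the first term is a slot of $T^{m+1}$ and the last a slot of $V^{m+1}$ since $H^{s-\ell-1}=H^{(s-1)-\ell}$. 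Second, the continuity equation of \eqref{E-5} lets us rewrite $\nabla\cdot\partial_t^\ell u^{m+1}=-\partial_t^{\ell+1}g^{m+1}-\partial_t^\ell(u^m\cdot\nabla g^{m+1})$. Third, eliminating $\nabla\cdot u^{m+1}$ between the divergence of the momentum equation and the continuity equation gives the wave equation for the Euler block,
\[
\partial_t^2 g^{m+1}-\Delta g^{m+1}=-\partial_t g^{m+1}-\nabla\cdot v^{m+1}+\mathcal{Q}^{m+1},
\]
with $\mathcal{Q}^{m+1}:=-u^m\cdot\nabla g^{m+1}-\partial_t(u^m\cdot\nabla g^{m+1})+\nabla\cdot(u^m\cdot\nabla u^{m+1})$ bilinear in $u^m$ and $(g^{m+1},u^{m+1})$; moreover, restricting $\partial_t^\ell$ of the momentum equation to $\partial\Omega$ and using $v^{m+1}=0$ and $(\partial_t^k u^{m+1})\cdot r=0$ there yields the inhomogeneous Neumann relation
\[
\partial_n\partial_t^\ell g^{m+1}=-\big(\partial_t^\ell(u^m\cdot\nabla u^{m+1})\big)\cdot r\qquad\text{on }\partial\Omega,
\]
whose right side is bilinear and lossless in derivatives, because each $\partial_t^{\ell_1}u^m$ is tangential on $\partial\Omega$, so $\partial_t^{\ell_1}u^m\cdot\nabla$ acts tangentially on $\partial_t^{\ell_2}u^{m+1}\cdot r\equiv 0$, leaving only the term $\partial_t^{\ell_2}u^{m+1}\cdot(\partial_t^{\ell_1}u^m\cdot\nabla r)$.

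With these in hand I would run a finite induction on the number $\ell$ of time derivatives, descending from $\ell=s$ down to $\ell=0$. The base case $\ell=s$ is immediate: $\|\partial_t^s g^{m+1}\|_{L^2}$ and $\|\partial_t^s u^{m+1}\|_{L^2}$ are slots of $T^{m+1}$. For a $u$-slot at level $\ell\le s-1$ I apply the div--curl inequality above; the divergence term becomes, via the continuity equation, $\|\partial_t^{\ell+1}g^{m+1}\|_{H^{s-(\ell+1)}}$ --- a $g$-slot one level higher, already controlled --- plus a bilinear remainder. For a $g$-slot at level $\ell$ I use elliptic regularity for the Neumann Laplacian, $\|\partial_t^\ell g^{m+1}\|_{H^{s-\ell}}\lesssim\|\Delta\partial_t^\ell g^{m+1}\|_{H^{s-\ell-2}}+\|\partial_t^\ell g^{m+1}\|_{L^2}+\|\partial_n\partial_t^\ell g^{m+1}\|_{H^{s-\ell-3/2}(\partial\Omega)}$ (for $\ell=s-1$ the weak $H^1$ version, with $H^{-1}$ on the right); then $\Delta\partial_t^\ell g^{m+1}=\partial_t^{\ell+2}g^{m+1}-\partial_t^\ell F^{m+1}$, where $\partial_t^{\ell+2}g^{m+1}$ is a $g$-slot two levels higher --- and at $\ell=s-1$ the spurious $\partial_t^{s+1}g^{m+1}$ is rewritten by the continuity equation as $-\nabla\cdot\partial_t^s u^{m+1}-\cdots$, whose $H^{-1}$-norm is bounded by $\|\partial_t^s u^{m+1}\|_{L^2}\le (T^{m+1})^{1/2}$ --- while $\partial_t^\ell F^{m+1}$ contributes $\|\partial_t^{\ell+1}g^{m+1}\|_{H^{s-(\ell+1)}}$, $\|v^{m+1}\|_{\mathfrak{X}^s}$ and further bilinear remainders. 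Thus every term produced refers only to higher time-derivative levels or to $T^{m+1}$, $V^{m+1}$, $\|v^{m+1}\|_{\mathfrak{X}^s}$, and the induction closes.

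The bilinear remainders $u^m\cdot\nabla(\cdot)$, the Leibniz commutators $[\partial_t^\ell,u^m\cdot\nabla](\cdot)$, the quadratic piece $\mathcal{Q}^{m+1}$ and its time derivatives, and the boundary pairings are all controlled by the Moser and Kato--Ponce product and commutator inequalities in $\mathfrak{X}^s$ together with the trace theorem; since $s\ge 4$, $H^{s-1}(\Omega)$ is a multiplicative algebra, and each remainder is bounded by $C(s,\Omega)\,\|u^m\|_{\mathfrak{X}^s}\big(\|g^{m+1}\|_{\mathfrak{X}^s}+\|u^{m+1}\|_{\mathfrak{X}^s}\big)$; squaring and summing over the finitely many slots produces the $W^m(t)W^{m+1}(t)$ term. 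The coupling $v^{m+1}-u^{m+1}$ in the momentum equation contributes $\|v^{m+1}(t)\|_{\mathfrak{X}^s}^2$ and strictly-lower-order copies of $u$-slots already bounded. All constants that enter (the div--curl constant, the Neumann elliptic constants, the algebra/commutator/trace constants) depend only on $s$ and $\Omega$, so the final bound is uniform in $m$.

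I expect the genuine obstacle to be the kinematic boundary condition on the Euler velocity. It rules out the naive $H^s$ energy estimate and forces the div--curl decomposition; more subtly, it makes the recursion delicate, because the continuity equation couples $\nabla\cdot\partial_t^\ell u^{m+1}$ and $\partial_t^{\ell+1}g^{m+1}$ with an $O(1)$ constant \emph{in both directions}, so estimating a $g$-slot through the continuity equation would merely send one back to a $u$-slot and conversely. The way out is precisely to route the $g$-slots through the wave equation and the Neumann boundary relation (which jump two and one time-derivative levels upward, respectively) instead of through the continuity equation, so that every recursive reference strictly increases the time-derivative index and the induction is anchored at $\ell=s$. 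The remaining work is careful bookkeeping of the strictly-lower-order terms generated along the way, and checking that the Neumann datum $\partial_n\partial_t^\ell g^{m+1}$ is genuinely bilinear, which rests on the tangentiality of $u^m$ on $\partial\Omega$.
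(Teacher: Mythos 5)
Your argument is correct and reaches the stated bound, but the $g$-slots are handled in a genuinely heavier way than the paper's. For the $u$-slots you and the paper do the same thing: iterate the Bourguignon--Brezis/Zhao div--curl inequality under the kinematic boundary condition, and control the divergence through the continuity equation, which sends $\nabla\cdot\partial_t^\ell u^{m+1}$ to the $g$-slot at temporal level $\ell+1$. For the $g$-slots, the paper simply reads $\nabla g^{m+1}$ off the momentum equation,
\[
\nabla\partial_t^\ell g^{m+1}=-\partial_t^{\ell+1}u^{m+1}-\partial_t^\ell(u^m\cdot\nabla u^{m+1})-\partial_t^\ell u^{m+1}+\partial_t^\ell v^{m+1},
\]
which trades one spatial derivative of $g$ for one time derivative of $u$; the recursion then strictly increases the time-derivative index and closes, with the remaining terms landing in $T^{m+1}$, $W^m W^{m+1}$ and $\|v^{m+1}\|_{\mathfrak{X}^s}^2$. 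You discarded that route because you worried that the interchange between $\nabla\cdot u^{m+1}$ and $\partial_t g^{m+1}$ via the \emph{continuity} equation is circular in both directions --- that is true, but the momentum equation is not the continuity equation, and it breaks the cycle with no extra structure required. Instead you derived the wave equation $\partial_t^2 g^{m+1}-\Delta g^{m+1}=\cdots$, produced the inhomogeneous Neumann datum $\nabla\partial_t^\ell g^{m+1}\cdot r=-\partial_t^\ell(u^m\cdot\nabla u^{m+1})\cdot r$, observed that tangentiality of $u^m$ on $\partial\Omega$ makes this datum of zeroth order in $u^{m+1}$ (a correct and not entirely obvious point), and pushed through Neumann elliptic regularity including the fractional trace space and the marginal $\ell=s-1$ case where $\partial_t^{s+1}g^{m+1}$ must be re-expanded in $H^{-1}$. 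This works, and has the incidental feature of jumping two temporal levels per spatial step, but the extra machinery and boundary bookkeeping are not needed: for the $g$-slots the momentum equation already gives a one-line shortcut.
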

\begin{proof} It follows from the equations for $u^{m+1}$ in \eqref{E-5} that
\begin{align*}\begin{aligned}
\|\nabla g^{m+1}\|_{L^2}^2 &= \|\partial_t u^{m+1} +u^m \cdot \nabla u^{m+1} + u^{m+1}-v^{m+1} \|_{L^2}^2\\
&\le C\lt(\|\partial_t u^{m+1}\|_{L^2}^2 +W^m(t) W^{m+1}(t) + \|u^{m+1}\|_{L^2}^2 + \|v^{m+1}\|_{L^2}^2\rt),
\end{aligned}\end{align*}
where we used Sobolev embedding $H^2(\om) \hookrightarrow \mc^0(\bar \om)$ and $C > 0$ is independent of $m$. Similarly, we also estimate from the equations for $g^{m+1}$ in \eqref{E-5} that 
\[
\|\nabla \cdot u^{m+1}\|_{L^2}^2 \le C(\|\partial_t g^{m+1}\|_{L^2}^2 + \|u^m \cdot \nabla g^{m+1}\|_{L^2}^2) \le C\lt(\|\partial_t g^{m+1}\|_{L^2}^2 + W^m(t) W^{m+1}(t)\rt),
\]
where $C > 0 $ is independent of $m$. Now, we recall from \cite[Lemma 2.2]{Zh10}, see also \cite[Lemma 5]{B-B} that for ${\bf u} \in H^s(\Omega)$ with $ {\bf u} \cdot r \equiv 0$ on $\partial\Omega$, $u$ satisfies
\[
\|{\bf u}\|_{H^s} \le C(\|\nabla \times {\bf u}\|_{H^{s-1}} + \|{\bf u}\|_{H^{s-1}} +\|\nabla \cdot {\bf u}\|_{H^{s-1}}).
\]
This yields
\begin{align*}\begin{aligned}
\|u^{m+1}\|_{H^1}^2 &\le C(\|\omega^{m+1}\|_{L^2}^2 + \|u^{m+1}\|_{L^2}^2 + \|\nabla \cdot u^{m+1}\|_{L^2}^2)\\
&\le C(\|\omega^{m+1}\|_{L^2}^2 + \|u^{m+1}\|_{L^2}^2 +\|\partial_t g^{m+1}\|_{L^2}^2 + W^m(t) W^{m+1}(t)),
\end{aligned}\end{align*}
where $C > 0$ depends on $s$ and $\Omega$, but independent of $m$. From now, we can inductively repeat the process to conclude the desired result.
\end{proof}

\begin{proposition}\label{P5.2} Let $s\ge 4$ and $T>0$. Let $(g^m,u^m,\eta^m) \in \ms^s_T(\om)$ be a sequence of strong solutions to \eqref{E-5}. Then we have
\[
\frac{d}{dt}T^{m+1}(t) \le C\lt((W^m)^{1/2}(t) W^{m+1}(t) + \|v^{m+1}(t)\|_{\mathfrak{X}^s}^2\rt),  
\]
where $C > 0$ depends on $s$ and $\Omega$, but independent of $m$.
\end{proposition}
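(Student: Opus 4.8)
The goal is to bound the time derivative of $T^{m+1}(t) = \sum_{\ell=0}^s (\|\partial_t^\ell g^{m+1}\|_{L^2}^2 + \|\partial_t^\ell u^{m+1}\|_{L^2}^2)$, i.e.\ the $L^2$-in-space part of the energy at all time-derivative levels $\ell = 0, \dots, s$. The plan is to differentiate the first two equations of \eqref{E-5} in time $\ell$ times, test the resulting equations against $\partial_t^\ell g^{m+1}$ and $\partial_t^\ell u^{m+1}$ respectively, integrate over $\om$, and sum over $\ell$. The crucial structural point is that for the transport-type pair governing $(g^{m+1},u^{m+1})$, the principal symmetric part $u^m\cdot\nabla$ plus the skew-symmetric pair $\nabla\cdot u^{m+1}$ / $\nabla g^{m+1}$ produces, upon testing, only commutator terms and a divergence term. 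Specifically, $\int_\om (u^m\cdot\nabla \partial_t^\ell g^{m+1})\,\partial_t^\ell g^{m+1}\,dx = -\tfrac12\int_\om (\nabla\cdot u^m)|\partial_t^\ell g^{m+1}|^2\,dx$ after integration by parts (no boundary term since this is a scalar and the boundary contribution involves $u^m\cdot r$, which vanishes), and the cross terms $\int_\om \nabla\cdot u^{m+1}\,\partial_t^\ell g^{m+1} + \nabla g^{m+1}\cdot\partial_t^\ell u^{m+1}$ cancel after one integration by parts, again using $u^{m+1}\cdot r = 0$ on $\partial\om$ (so no boundary term appears). This is exactly where the kinematic boundary condition is used in an essential way.

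The concrete steps I would carry out: (1) For each $\ell \in \{0,\dots,s\}$, apply $\partial_t^\ell$ to the $g$- and $u$-equations in \eqref{E-5}, using the Leibniz rule to expand $\partial_t^\ell(u^m\cdot\nabla g^{m+1})$ and $\partial_t^\ell(u^m\cdot\nabla u^{m+1})$ into the "diagonal" term $u^m\cdot\nabla\partial_t^\ell(\cdot)$ plus commutators $\sum_{k=1}^\ell \binom{\ell}{k}\partial_t^k u^m\cdot\nabla\partial_t^{\ell-k}(\cdot)$. (2) Test against $\partial_t^\ell g^{m+1}$, $\partial_t^\ell u^{m+1}$, integrate, and observe the cancellation of principal terms just described; the zeroth-order damping term $u^{m+1}-v^{m+1}$ on the right of the $u$-equation contributes $-\|\partial_t^\ell u^{m+1}\|_{L^2}^2 + \int \partial_t^\ell v^{m+1}\cdot\partial_t^\ell u^{m+1}$, the first of which is good (or absorbable) and the second bounded by $\|v^{m+1}\|_{\mathfrak{X}^s}^2 + T^{m+1}$. (3) Estimate the commutator terms: a typical term is $\int_\om (\partial_t^k u^m\cdot\nabla\partial_t^{\ell-k}g^{m+1})\,\partial_t^\ell g^{m+1}\,dx$, which by Hölder and the Moser-type product estimates in $H^s$ (using $s\ge 4$ so that $H^{s-1}(\om)\hookrightarrow L^\infty(\om)$ and the algebra property) is bounded by $C\|u^m\|_{\mathfrak{X}^s}\|g^{m+1}\|_{\mathfrak{X}^s}\|g^{m+1}\|_{\mathfrak{X}^s}$, hence by $C(W^m)^{1/2}W^{m+1}$; one has to be a little careful that at least one factor in each commutator carries at most $s-1$ time-derivatives so it can be placed in $L^\infty$-controlling norms, but since $k\ge 1$ this is automatic. (4) Sum over $\ell$ and collect: $\tfrac{d}{dt}T^{m+1} \le C\big((W^m)^{1/2}W^{m+1} + \|v^{m+1}\|_{\mathfrak{X}^s}^2 + T^{m+1}\big)$, and $T^{m+1}\le W^{m+1}\le (W^m)^{-1/2}\cdot(W^m)^{1/2}W^{m+1}$ is not quite what we want, so instead I would simply absorb $T^{m+1}\le W^{m+1}$ and note that, in the regime where $W^m$ is small (which is the setting in which this proposition is used, cf.\ the smallness in Theorem \ref{thm_local}), $W^{m+1} \le C(W^m)^{1/2}W^{m+1}$ is false — rather, the cleaner route is to keep $T^{m+1}$ on the right and rely on Grönwall later, OR to observe that the statement as written already permits a term $CW^{m+1}$ hidden in $C(W^m)^{1/2}W^{m+1}$ only once $W^m$ is bounded below; to be safe I would present the bound with the understanding (consistent with the paper's setup) that all $W^m$ are bounded by a fixed constant, so that $T^{m+1}\le W^{m+1} = (W^m)^{1/2}W^{m+1}\cdot (W^m)^{-1/2}$ — cleanest is simply to write $\tfrac{d}{dt}T^{m+1}\le C((W^m)^{1/2}W^{m+1}+\|v^{m+1}\|_{\mathfrak{X}^s}^2)$ after absorbing the benign $-\|\partial_t^\ell u^{m+1}\|_{L^2}^2$ terms and bounding the remaining lower-order pieces by the commutator-type bound, which is legitimate because every term genuinely carries a factor of some $\partial_t^k$ or $\nabla$ of $(g^m,u^m)$.

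The main obstacle is the bookkeeping of the commutator estimates at the top level $\ell = s$: there the factor $\partial_t^{\ell-k}g^{m+1}$ or $\partial_t^{\ell-k}u^{m+1}$ can carry $s-1$ time derivatives and only $0$ or $1$ spatial derivative, so it lives in $H^1$ in space but $\mathfrak{X}^s$ overall, and one must invoke the Sobolev embedding $H^2(\om)\hookrightarrow L^\infty(\om)$ (valid since $\dim\om = 3$) on the $u^m$-factor $\partial_t^k u^m$ with $k\le s-1$, which sits in $H^{s-k}\supset H^1$; this forces the use of $s\ge 4$ so that when $k$ is large the remaining spatial regularity $s-k\ge 1$ still combines with the $H^{s-1-(\ell-k)}$-type norm on the other factor to close via the product rule $\|fg\|_{H^r}\le C(\|f\|_{H^r}\|g\|_{L^\infty}+\|f\|_{L^\infty}\|g\|_{H^r})$ and interpolation. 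Because $\om$ is merely a bounded smooth domain, I would quote Moser's inequalities on bounded domains (extending via Theorem \ref{thm_exten}) rather than Fourier-based versions. The boundary terms, by contrast, are not an obstacle here: they all vanish identically thanks to $u^m\cdot r = u^{m+1}\cdot r = 0$ and the homogeneous Dirichlet condition on $v^{m+1}$, which is precisely why the $L^2$-level estimate closes without the complications that appear at higher spatial-derivative levels.
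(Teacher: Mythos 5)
Your proposal is correct and follows essentially the same route as the paper: apply $\partial_t^\ell$ to the Euler part of \eqref{E-5}, test with $\partial_t^\ell g^{m+1}$ and $\partial_t^\ell u^{m+1}$, cancel the cross terms $\int_\Omega \nabla\cdot(\partial_t^\ell u^{m+1})\,\partial_t^\ell g^{m+1}+\int_\Omega\nabla(\partial_t^\ell g^{m+1})\cdot\partial_t^\ell u^{m+1}$ via the kinematic boundary condition, and control the Leibniz/commutator terms by H\"older, Sobolev embedding, and Young to get $C(W^m)^{1/2}W^{m+1}+\tfrac12\|\partial_t^\ell v^{m+1}\|_{L^2}^2$. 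Your hesitation about a residual $T^{m+1}$ on the right is resolved exactly as in the paper (and as you ultimately indicate): the Young by-product $\tfrac12\|\partial_t^\ell u^{m+1}\|_{L^2}^2$ is absorbed by the damping term $-\|\partial_t^\ell u^{m+1}\|_{L^2}^2$ coming from $\partial_t^\ell(v^{m+1}-u^{m+1})$, so no extra term survives.
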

\begin{proof}
For zeroth order, we obtain
\begin{align*}\begin{aligned}
&\frac{1}{2}\frac{d}{dt}\|g^{m+1}\|_{L^2}^2 = -\int_\Omega g^{m+1} \nabla g^{m+1} \cdot u^m \,dx - \int_\Omega (\nabla \cdot u^{m+1}) g^{m+1} \,dx,\\
&\frac{1}{2}\frac{d}{dt}\|u^{m+1}\|_{L^2}^2 = - \int_\Omega (u^m \cdot u^{m+1})\cdot u^{m+1} \,dx - \int_\Omega \nabla g^{m+1} \cdot u^{m+1} \,dx \\
&\hspace{2.8cm}+ \int_\Omega (v^{m+1}-u^{m+1})\cdot u^{m+1}\, dx.
\end{aligned}\end{align*}
Thus, we get
\begin{align*}\begin{aligned}
&\frac{1}{2}\frac{d}{dt}\left(\|g^{m+1}\|_{L^2}^2 + \|u^{m+1}\|_{L^2}^2\right) \cr
&\quad = \frac{1}{2}\int_\Omega (\nabla \cdot u^m) \left(\|g^{m+1}\|_{L^2}^2 + \|u^{m+1}\|_{L^2}^2\right) dx + \int_\Omega (v^{m+1}-u^{m+1})\cdot u^{m+1}\, dx\\
&\quad \le C(W^m)^{1/2}(t) W^{m+1}(t) + \frac{1}{2}\|v^{m+1}\|_{L^2}^2,
\end{aligned}\end{align*}
where $C > 0$ depends on $s$ and $\Omega$. Here we used the boundary condition for $u$, Sobolev embedding $H^2(\om) \hookrightarrow \mc^0(\bar \om)$ and Young's inequality.

For the high-order estimates, for $1 \le \ell \le s$, we find
\begin{align*}\begin{aligned}
\frac{1}{2}\frac{d}{dt}\|\partial_t^\ell g^{m+1}\|_{L^2}^2 &= -\sum_{r=0}^\ell\binom{\ell}{r} \int_\Omega \nabla (\partial_t^r g^{m+1}) \cdot \partial_t^{\ell-r} u^m \partial_t^\ell g^{m+1} \,dx \cr
&\quad -\int_\Omega \nabla \cdot (\partial_t^\ell u^{m+1}) \partial_t^\ell g^{m+1} \,dx \quad \mbox{and}\\
\frac{1}{2}\frac{d}{dt}\|\partial_t^\ell u^{m+1}\|_{L^2}^2 &= \sum_{r=0}^\ell \binom{\ell}{r}\int_\Omega (\partial_t^r u^m \cdot \nabla) \partial_t^{\ell-r} u^{m+1} \cdot \partial_t^\ell u^{m+1} \,dx \cr
&\quad - \int_\Omega \nabla(\partial_t^\ell g^{m+1}) \cdot \partial_t^\ell u^{m+1} \,dx \\
&\quad + \int_\Omega \partial_t^\ell (v^{m+1}-u^{m+1}) \cdot \partial_t^\ell u^{m+1} \,dx.
\end{aligned}\end{align*}
This yields
\[
\frac{1}{2}\frac{d}{dt}\left(\|\partial_t^\ell g^{m+1}\|_{L^2}^2 + \|\partial_t^\ell u^{m+1}\|_{L^2}^2 \right) \le C(W^m)^{1/2}(t) W^{m+1}(t) + \frac{1}{2}\|\partial_t^\ell v^{m+1}\|_{L^2}^2,
\]
where $C > 0$ depends on $s$ and $\Omega$ and we used the boundary condition for $u$ ($\partial_t^\ell u^{m+1} \cdot r \equiv 0$ on $\partial \Omega$), H\"older inequality, Sobolev embedding, and Young's inequality. Hence we have the desired result.
\end{proof}

\begin{proposition}\label{P5.3}
Let $s\ge 4$ and $T>0$. Let $(g^m,u^m,\eta^m) \in \ms^s_T(\om)$ be a sequence of strong solutions to \eqref{E-5}. Then we have
\[ 
\frac{d}{dt}V^{m+1}(t) \le C\lt((W^m)^{1/2}(t) W^{m+1}(t) + \|v^{m+1}(t)\|_{\mathfrak{X}^s}^2\rt),
\]
where $C > 0$ depends on $s$ and $\Omega$, but independent of $m$.
\end{proposition}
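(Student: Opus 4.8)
The plan is to derive a transport equation for the vorticity $\omega^{m+1} = \nabla\times u^{m+1}$ and then run the same kind of $\mathfrak{X}^{s-1}$-energy estimate as in Propositions \ref{P5.1} and \ref{P5.2}, exploiting that the kinematic boundary condition $u^m\cdot r\equiv 0\equiv u^{m+1}\cdot r$ on $\partial\Omega$ is exactly what is needed to kill the boundary terms created by integration by parts against the transport operator $u^m\cdot\nabla$. Taking the curl of the momentum equation for $u^{m+1}$ in \eqref{E-5} and using $\nabla\times\nabla g^{m+1} = 0$ gives
\begin{align*}
\partial_t\omega^{m+1} + (u^m\cdot\nabla)\omega^{m+1} + \omega^{m+1} = \nabla\times v^{m+1} + \mathcal{R}^{m+1},
\end{align*}
where $\mathcal{R}^{m+1} := (u^m\cdot\nabla)\omega^{m+1} - \nabla\times\big((u^m\cdot\nabla)u^{m+1}\big)$; the point to check carefully is that $\mathcal{R}^{m+1}$ is bilinear in $(\nabla u^m,\nabla u^{m+1})$ and contains \emph{no} second-order derivatives of $u^{m+1}$.

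For each $\ell\in\{0,1,\dots,s-1\}$ I would apply $\partial_t^\ell$ and then a spatial derivative $\partial^\alpha$ with $|\alpha|\le s-1-\ell$ to the vorticity equation, pair with $\partial^\alpha\partial_t^\ell\omega^{m+1}$ in $L^2(\Omega)$, and integrate over $\Omega$. The top-order transport contribution $\int_\Omega (u^m\cdot\nabla)\partial^\alpha\partial_t^\ell\omega^{m+1}\cdot\partial^\alpha\partial_t^\ell\omega^{m+1}\,dx$ integrates by parts into $-\tfrac12\int_\Omega(\nabla\cdot u^m)|\partial^\alpha\partial_t^\ell\omega^{m+1}|^2\,dx + \tfrac12\int_{\partial\Omega}(u^m\cdot r)|\partial^\alpha\partial_t^\ell\omega^{m+1}|^2\,d\sigma(x)$, whose boundary integral vanishes since $u^m\cdot r\equiv 0$, and whose bulk term is bounded by $\|\nabla\cdot u^m\|_{L^\infty}\,\|\omega^{m+1}\|_{H^{s-1-\ell}}^2\le C(W^m)^{1/2}(t)\,W^{m+1}(t)$ using $H^{s-1}(\Omega)\hookrightarrow L^\infty(\Omega)$ for $s\ge 4$ and the elementary bound $V^{m+1}(t)\le C\,W^{m+1}(t)$. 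The remainder $\mathcal{R}^{m+1}$, the commutators $[\partial^\alpha\partial_t^\ell,u^m\cdot\nabla]\omega^{m+1}$, and the time-differentiated transport pieces $\binom{\ell}{r}(\partial_t^r u^m\cdot\nabla)\partial_t^{\ell-r}\omega^{m+1}$ with $r\ge 1$ are all controlled by Moser/Kato--Ponce product and commutator estimates in $\mathfrak{X}^{s-1}$: each term splits into a factor bounded by $\|u^m\|_{\mathfrak{X}^s}\le C(W^m)^{1/2}$ (or $\|\nabla u^m\|_{L^\infty}\le C(W^m)^{1/2}$) and factors bounded by $\|u^{m+1}\|_{\mathfrak{X}^s}$, $\|\nabla\omega^{m+1}\|_{L^\infty}$, and $\|\omega^{m+1}\|_{H^{s-1-\ell}}$, each $\le C(W^{m+1})^{1/2}$ (the embedding $H^{s-1}\hookrightarrow W^{1,\infty}$ used here again needs $s\ge4$), so their total is $\le C(W^m)^{1/2}(t)\,W^{m+1}(t)$. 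Finally, the damping term contributes $-\|\partial^\alpha\partial_t^\ell\omega^{m+1}\|_{L^2}^2$, which absorbs the remainder $\tfrac14\|\partial^\alpha\partial_t^\ell\omega^{m+1}\|_{L^2}^2$ produced when the forcing is estimated by Cauchy--Schwarz and Young's inequality as $\int_\Omega\partial^\alpha\partial_t^\ell(\nabla\times v^{m+1})\cdot\partial^\alpha\partial_t^\ell\omega^{m+1}\,dx\le \|v^{m+1}\|_{\mathfrak{X}^s}^2 + \tfrac14\|\partial^\alpha\partial_t^\ell\omega^{m+1}\|_{L^2}^2$, leaving only $C\|v^{m+1}(t)\|_{\mathfrak{X}^s}^2$ (this is the same cancellation that makes the statements of Propositions \ref{P5.1}--\ref{P5.2} sharp).

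Summing over $|\alpha|\le s-1-\ell$ and over $\ell\in\{0,\dots,s-1\}$ then yields $\tfrac{d}{dt}V^{m+1}(t)\le C\big((W^m)^{1/2}(t)\,W^{m+1}(t) + \|v^{m+1}(t)\|_{\mathfrak{X}^s}^2\big)$ with $C = C(s,\Omega)$ independent of $m$, as claimed. The step I expect to be the main obstacle is the bookkeeping of $\mathcal{R}^{m+1}$ and of the commutators: one must be sure that no term ever forces two derivatives onto $u^{m+1}$ that cannot be paired against an $L^\infty$-controlled low-order factor of $u^m$, which is precisely why one should work with the vorticity equation (rather than attempt a direct $H^s$-estimate of $u^{m+1}$, obstructed by the kinematic boundary condition) and why the $H^{s-1}$-scaling for $\omega^{m+1}$ is the natural one.
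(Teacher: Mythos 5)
Your proposal is correct and follows essentially the same route as the paper: both take the curl of the momentum equation to get a transport--damping equation for $\omega^{m+1}$ whose source is $\nabla\times v^{m+1}$ plus a term bilinear in $(\nabla u^m,\nabla u^{m+1})$, then perform the $\mathfrak{X}^{s-1}$-energy estimate with mixed space--time derivatives, using $u^m\cdot r\equiv 0$ to kill the boundary term from the top-order transport piece and Young's inequality on the $\nu^{m+1}$ forcing. The only cosmetic difference is that you explicitly retain the damping term to absorb the Young remainder, whereas the paper simply drops it after the same manipulation.
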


\begin{proof}
We apply the curl operator to the momentum equations of the Euler system in \eqref{E-5} to obtain the following vorticity equation:
\[
\partial_t \omega^{m+1} + \omega^m \cdot \nabla u^{m+1} + u^m \cdot \nabla \omega^{m+1} = (\nu^{m+1} - \omega^{m+1}),
\]
where $\nu^{m+1} := \nabla \times v^{m+1}$. For the zeroth order estimate, we readily get
\begin{align*}\begin{aligned}
\frac{1}{2}\frac{d}{dt}\|\omega^{m+1}\|_{L^2}^2 &= -\int_\Omega (\omega^m \cdot \nabla u^{m+1})\cdot \omega^{m+1} \,dx - \int_\Omega (u^m \cdot \nabla \omega^{m+1})\cdot \omega^{m+1} \,dx \\
&\quad + \int_\Omega (\nu^{m+1} - \omega^{m+1})\cdot \omega^{m+1} \,dx\\
&\le \|\nabla u^m\|_{L^\infty}\|\omega^{m+1}\|_{L^2}^2 + \frac{1}{2}\|\nu^{m+1}\|_{L^2}^2\\
&\le C(W^m)^{1/2}(t)W^{m+1}(t) + \frac{1}{2}\|\nu^{m+1}\|_{L^2}^2,
\end{aligned}\end{align*}
where $C > 0$ depends on $s$ and $\Omega$, and we used the Sobolev embedding $H^2(\om) \hookrightarrow \mc^0(\bar \om)$ and Young's inequality. For the higher order estimate, let $\nabla_{t,x}^\alpha$ be a mixed partial derivative with respect to time and space with multi-index $\alpha$ with $1 \le |\alpha| \le s-1$. Then, similarly as before, we obtain
\begin{align*}\begin{aligned}
\frac{1}{2}\frac{d}{dt}\|\nabla_{t,x}^\alpha \omega^{m+1}\|_{L^2}^2 &= - \sum_{\mu \le \alpha}\binom{\alpha}{\mu} \int_\Omega (\nabla_{t,x}^\mu \omega^m \cdot \nabla (\nabla_{t,x}^{\alpha-\mu} u^{m+1})) \cdot \nabla_{t,x}^\alpha \omega^{m+1} \,dx \\
&\quad - \sum_{\mu \le \alpha}\binom{\alpha}{\mu} \int_\Omega (\nabla_{t,x}^\mu u^m\cdot \nabla (\nabla_{t,x}^{\alpha-\mu} \omega^{m+1})) \cdot \nabla_{t,x}^\alpha \omega^{m+1} \,dx\\
& \quad + \int_\Omega \nabla_{t,x}^\alpha (\nu^{m+1} - \omega^{m+1}) \cdot \nabla_{t,x}^\alpha \omega^{m+1}\,dx\\
&\le C(W^m)^{1/2}(t) W^{m+1}(t)+ \frac{1}{2}\|\nabla_{t,x}^\alpha \nu^{m+1}\|_{L^2}^2,
\end{aligned}\end{align*} 
where $C > 0$ depends only on $s$ and $\Omega$. We then sum over all $\alpha$ with $1\le |\alpha|\le s-1$ and combine this with the zeroth order estimate to get the desired result. 
\end{proof}
Now, it remains to estimate the uniform bounds of solutions to the Navier-Stokes system in  \eqref{E-5}.
\begin{lemma}\label{P5.4}
Let $s\ge 4$ and $T>0$. Let $(g^m,u^m,\eta^m) \in \ms^s_T(\om)$ be a sequence of strong solutions to \eqref{E-5}. Suppose that 
\[
1 + \inf_{x \in \om} h_0(x) > \delta_0, 
\]
\[
\sup_{0 \leq t \leq T}\|\eta^m(t)\|_{\mathfrak{X}^s} \leq M_1, \quad \mbox{and} \quad \sup_{0 \leq t \leq T}\|(g^m(t),u^m(t))\|_{\mathfrak{X}^s} \leq M_1'
\]
for some $\delta_0 > 0$, $M_1$, and $M_1'$. Then there exists $T_0 > 0$, which is independent of $m$, such that 
\begin{itemize}
\item[(i)] 
\[
1 + \sup_{0 \leq t \leq T_0}\inf_{x \in \om} h^m(x,t) > \delta_0
\]
for all $m \in \N$,
\item[(ii)]
\begin{align*}\begin{aligned}
&\|\eta^{m+1}(t)\|_{\mathfrak{X}^s}^2 + \frac{1+\e_0}{2} \int_0^t \|\nabla v^{m+1}(\tau)\|_{\mathfrak{X}^s}^2 \,d\tau\\
&\quad \leq \lt(\gamma(1+\e_0)^2\|\eta_0\|_{H^s}^2 + e^{CM_1'}(M_1^2 + (M_1')^2 )T_0\rt)\exp\lt(C_{M_1,M_1'}T_0 \rt)
\end{aligned}\end{align*}
for $t \leq T_0$, where $C_{M_1,M_1'} > 0$ is 
\[
C_{M_1, M_1'} = C(M_1 + M_1^r + M_1^{2r} + e^{CM_1'} + (M_1')^2 + M_1^{2r} (M_1')^{2r'}).
\]
Here $C>0$ is independent of $m$, and $r,r'\geq1$ depend on $s$, but independent of $m$.
\end{itemize}
\end{lemma}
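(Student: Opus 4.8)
The plan is to establish the two assertions in order, (i) first since it supplies the uniform no-vacuum bound on which (ii) rests. For (i): the first component of the third equation in \eqref{E-5}, after dividing by $\gamma(1+h^m)^{\gamma-2}$, is the transport equation $\pa_t h^m+v^{m-1}\cdot\nabla h^m+(1+h^{m-1})\nabla\cdot v^m=0$, so that $\frac{D}{Dt}(1+h^m)=-(1+h^{m-1})\nabla\cdot v^m$ along the flow of $v^{m-1}$. Since $v^{m-1}$ vanishes on $\pa\om$, that flow is a diffeomorphism of $\bar\om$; integrating along characteristics and using $H^{s-1}(\om)\hookrightarrow L^\infty(\om)$ (valid because $s\ge4$) together with the standing bound $\sup_{[0,T]}\|\eta^k\|_{\mathfrak X^s}\le M_1$ for all $k$, one gets $1+h^m(x,t)\ge 1+\inf_{\om}h_0-Ct(1+M_1)M_1$ on $\bar\om\times[0,T]$. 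Choosing $T_0\le\min\{T,(1+\inf_{\om}h_0-\delta_0)/(C(1+M_1)M_1)\}$, which depends only on $M_1,\delta_0,\om,s$ and not on $m$, yields the pointwise lower bound $1+h^m>\delta_0$ on $\bar\om\times[0,T_0]$; in particular (i) holds, and $A^0(\eta^m)=\mathrm{diag}\big(\gamma(1+h^m)^{\gamma-2},(1+h^m)\mathbb{I}_{3\times3}\big)$ is symmetric, uniformly positive definite and bounded on $[0,T_0]$.

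For (ii) I would run the standard energy method for the symmetric hyperbolic--parabolic system in \eqref{E-5}: apply tangential and temporal derivatives $D^\alpha$ of total order $|\alpha|\le s$, pair the resulting identity with $D^\alpha\eta^{m+1}$ in $L^2(\om)$, and sum over $\alpha$. Since $A^0(\eta^m)E_1(h^m,v^{m+1})=(0,\Delta v^{m+1})^\top$ and $A^0(\eta^m)E_2(g^m,u^m,\eta^m)=(0,e^{g^m}(u^m-v^m))^\top$, the viscous term produces, after integration by parts, the dissipation $-\sum_\alpha\|\nabla D^\alpha v^{m+1}\|_{L^2}^2$, a positive fraction of which — tracking the $A^0$-weights and after absorbing the coupling term from the $h^{m+1}$-estimate below, this may be taken to equal $\frac{1+\e_0}{2}\|\nabla v^{m+1}\|_{\mathfrak X^s}^2$ — survives on the left. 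All boundary contributions drop out: the hyperbolic flux $\sum_jA^j(\eta^m)n_j$ has, on $\pa\om$, vanishing diagonal blocks (proportional to $v^m\cdot n=0$) and off-diagonal blocks proportional to $\gamma(1+h^m)^{\gamma-1}n$, so its boundary quadratic form is $2\gamma(1+h^m)^{\gamma-1}(D^\alpha h^{m+1})(n\cdot D^\alpha v^{m+1})$, which vanishes because $D^\alpha v^{m+1}=0$ on $\pa\om$ for tangential or temporal $D^\alpha$ (the condition $v^{m+1}|_{\pa\om}=0$ holding for all $t$, and the compatibility conditions handling the data); the viscous boundary term $\int_{\pa\om}(\pa_nD^\alpha v^{m+1})\cdot D^\alpha v^{m+1}\,d\sigma$ vanishes for the same reason. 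The remaining (normal spatial) derivatives are recovered afterwards: those of $v^{m+1}$ from elliptic regularity for $\Delta v^{m+1}=(1+h^m)(\pa_tv^{m+1}+v^m\cdot\nabla v^{m+1})+\gamma(1+h^m)^{\gamma-1}\nabla h^{m+1}-e^{g^m}(u^m-v^m)$ with $v^{m+1}|_{\pa\om}=0$, and those of $h^{m+1}$ directly from $\pa_th^{m+1}+v^m\cdot\nabla h^{m+1}+(1+h^m)\nabla\cdot v^{m+1}=0$, whose full $H^s$ energy estimate again has no boundary term (because $v^m\cdot n=0$ on $\pa\om$) and whose forcing $\nabla\cdot v^{m+1}$ is absorbed into the dissipation already present.

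The right-hand side of the summed identity I would then control by Moser-type/commutator calculus: since $A^0,A^j$ are smooth functions of $\eta^m$ with $1+h^m\ge\delta_0$, one has $\|A^0(\eta^m)\|_{\mathfrak X^s}+\|A^j(\eta^m)\|_{\mathfrak X^s}\le CM_1^r$ and $\|\pa_tA^0(\eta^m)\|_{L^\infty}+\|\nabla A^j(\eta^m)\|_{L^\infty}\le CM_1$, while a composition estimate for the exponential gives $\|e^{g^m}(u^m-v^m)\|_{\mathfrak X^s}\le e^{CM_1'}(M_1+M_1')$; pairing these against $D^\alpha\eta^{m+1}$ and using Young's inequality bounds the right-hand side by $C_{M_1,M_1'}\sum_{|\alpha|\le s}\int_{\om}A^0(\eta^m)|D^\alpha\eta^{m+1}|^2\,dx+e^{CM_1'}(M_1^2+(M_1')^2)$, with $C_{M_1,M_1'}$ of the stated shape, $C$ independent of $m$, and $r,r'\ge1$ depending only on $s$. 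For $\e_0$ small the weighted energy at $t=0$ is $\le\gamma(1+\e_0)^2\|\eta_0\|_{H^s}^2$ (using $\|h_0\|_{L^\infty}\le C\e_0$), and by (i) the weighted energy bounds $\|\eta^{m+1}(t)\|_{\mathfrak X^s}^2$ from above, so Gr\"onwall's inequality on $[0,T_0]$ — shrinking $T_0$ once more if needed, still independently of $m$ — gives (ii). I expect the genuine obstacle to be not any single step but the combination: every constant must remain $m$-independent \emph{and} the boundary terms must be eliminated, both of which hinge on the one structural fact that $v^{m-1},v^m$ vanish on $\pa\om$ (this makes the flow in (i) preserve $\bar\om$, kills the diagonal of the hyperbolic flux, and removes the boundary term in the $h^{m+1}$-transport estimate); once that is in place the remaining work is routine but lengthy.
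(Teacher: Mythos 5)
Your proposal is correct and rests on the same skeleton as the paper's proof --- characteristics of the transport equation for the lower bound in (i), and a weighted $L^2$ energy method for the symmetrized system with Moser/commutator estimates and Gr\"onwall for (ii) --- but the two arguments genuinely diverge in how the derivatives are counted in (ii). The paper applies every mixed space--time derivative $\nabla_{t,x}^\alpha$, $|\alpha|\le s$, directly to the system and pairs with $\nabla_{t,x}^\alpha\eta^{m+1}$; the resulting boundary integrals (the hyperbolic flux $\tfrac12\int_{\pa\om}\sum_jA^j n_j\,\nabla_{t,x}^\alpha\eta^{m+1}\cdot\nabla_{t,x}^\alpha\eta^{m+1}\,d\sigma$ and the viscous contribution $\int_{\pa\om}\pa_r\nabla_{t,x}^\alpha v^{m+1}\cdot\nabla_{t,x}^\alpha v^{m+1}\,d\sigma$ hidden in the identity $I_1=-\|\nabla\nabla_{t,x}^\alpha v^{m+1}\|_{L^2}^2$) are dropped without comment. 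These terms vanish only when $\alpha$ is purely temporal, since then $\partial_t^\ell v^{m+1}=0$ on $\pa\om$; for multi-indices containing normal spatial derivatives neither $\nabla_{t,x}^\alpha v^{m+1}$ nor its normal component need vanish on the boundary. Your bookkeeping --- restrict the energy identity to tangential and temporal derivatives, where the Dirichlet condition kills both boundary terms, then recover normal derivatives of $v^{m+1}$ by elliptic regularity for the Dirichlet Laplacian and normal derivatives of $h^{m+1}$ from a separate $H^s$ estimate on its transport equation, whose only boundary term carries the factor $v^m\cdot r=0$ --- is the more careful Matsumura--Nishida-type scheme; it costs an extra elliptic step and the care of absorbing $\|\nabla v^{m+1}\|_{H^s}$ into the dissipation, but it closes with the same $m$-independent constants. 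Similarly in (i) you integrate the linearized relation $\frac{D}{Dt}(1+h^m)=-(1+h^{m-1})\nabla\cdot v^m$ exactly, getting a linear-in-$t$ lower bound, whereas the paper invokes the exponential formula that strictly belongs to the nonlinear continuity equation; both yield the same uniform $T_0$, and your version is the one actually matching the iteration being used. The remaining ingredients (positivity and $(1+\e)$-type bounds for $A^0(\eta^m)$ and its inverse on $[0,T_0]$, the composition estimate for $e^{g^m}$, and the shape of $C_{M_1,M_1'}$) agree with the paper's.
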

\begin{proof} Since the proofs are rather lengthy, we leave it in Appendix \ref{app_b}.
\end{proof}

\begin{remark}\label{rmk_local0} The lower bound estimate in Lemma \ref{P5.4} (i) can be also obtained by choosing $M_1 > 0$ small enough instead of taking the small $T_0 > 0$. This further implies that Lemma \ref{P5.4} (ii) also holds up to any time $T>0$ if we take the value $M_1 > 0$ small enough.
\end{remark}

\begin{remark}\label{rmk_local}Lemma \ref{P5.4} (ii) implies that for any $\sqrt\gamma(1+\e_0)\e_0 < M_1$, 
there exist $T^* > 0$ such that if $\|\eta_0\|_{H^s} < \e_0$, then 
\[
\sup_{0 \leq t \leq T^*}\|\eta^{m+1}(t)\|_{\mathfrak{X}^s} < M_1.
\]
This is the standard way of having the uniform bound estimates for the approximated solutions using only the smallness assumption on the time $t$. However, as mentioned before, we require the smallness assumption on the initial data as well due to the kinematic boundary condition for $u^m$.  
\end{remark}
In the proposition below, we provide the uniform bound estimates in $m$ locally in time.
\begin{proposition}\label{L5.1}Let $s\ge 4$ and $T>0$. Let $(g^m,u^m,\eta^m) \in \ms^s_T(\om)$ be a sequence of strong solutions to \eqref{E-5}. There exist small constants $\e_0>0$ and $T^*>0$ such that if
\[
\|g_0\|_{H^s} + \|u_0\|_{H^s} + \|\eta_0\|_{H^s} <\e_0, 
\]
we have
\[
\sup_{0 \leq t \leq T^*}\lt(\|g^m(t)\|_{\mathfrak{X}^s} + \|u^m(t)\|_{\mathfrak{X}^s} + \|\eta^m(t)\|_{\mathfrak{X}^s}\rt) < \e_0^{1/2}
\]
for all $m\in\N$.
\end{proposition}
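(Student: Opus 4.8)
\textbf{Proof proposal for Proposition \ref{L5.1}.} The plan is to argue by induction on $m$, running a bootstrap that couples the hyperbolic estimates of Propositions \ref{P5.1}--\ref{P5.3} for the Euler component $(g^m,u^m)$ with the parabolic estimate of Lemma \ref{P5.4} for the Navier--Stokes component $\eta^m=(h^m,v^m)$. Fix $\delta_0=1/2$; since $\|h_0\|_{H^s}<\e_0$ and $s\ge4$ give $\|h_0\|_{L^\infty}\le C\e_0<1/2$ for $\e_0$ small, the hypothesis $1+\inf h_0>\delta_0$ of Lemma \ref{P5.4} is met. The base case $m=0$ is immediate: $(g^0,u^0,\eta^0)(\cdot,t)\equiv(g_0,u_0,\eta_0)$ is constant in time, so all its time derivatives vanish and $\|g^0(t)\|_{\mathfrak{X}^s}+\|u^0(t)\|_{\mathfrak{X}^s}+\|\eta^0(t)\|_{\mathfrak{X}^s}=\|g_0\|_{H^s}+\|u_0\|_{H^s}+\|\eta_0\|_{H^s}<3\e_0<\e_0^{1/2}$ once $\e_0<1/9$.

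For the inductive step, assume $\sup_{[0,T^*]}\big(\|g^m\|_{\mathfrak{X}^s}+\|u^m\|_{\mathfrak{X}^s}+\|\eta^m\|_{\mathfrak{X}^s}\big)<\e_0^{1/2}$, so that $\sup_{[0,T^*]}W^m\le\e_0$, $\sup_{[0,T^*]}\|\eta^m\|_{\mathfrak{X}^s}^2\le\e_0$ and $\sup_{[0,T^*]}\|(g^m,u^m)\|_{\mathfrak{X}^s}\le\e_0^{1/2}$. First I would feed $M_1=M_1'=\e_0^{1/2}$ into Lemma \ref{P5.4}, which yields a time $T_0>0$ independent of $m$ with $1+\inf h^m(\cdot,t)>\delta_0$ on $[0,T_0]$ and, using $\|\eta_0\|_{H^s}^2<\e_0^2$,
\[
\sup_{[0,T_0]}\|\eta^{m+1}(t)\|_{\mathfrak{X}^s}^2+\tfrac12\int_0^{T_0}\|\nabla v^{m+1}(\tau)\|_{\mathfrak{X}^s}^2\,d\tau\le\big(C\gamma\e_0^2+C\e_0T_0\big)\exp\!\big(C_{\e_0}T_0\big),
\]
where $C_{\e_0}$ stays bounded as $\e_0\to0$. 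Shrinking $\e_0$ and then $T_0$ makes the right-hand side $<\e_0/6$; moreover, by the Poincar\'e inequality applied to each $\partial_t^\ell v^{m+1}$ (which vanishes on $\pa\om$ by the compatibility conditions), $\int_0^{T_0}\|v^{m+1}\|_{\mathfrak{X}^s}^2\,d\tau\le C\int_0^{T_0}\|\nabla v^{m+1}\|_{\mathfrak{X}^s}^2\,d\tau<\e_0/6$ as well.

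Next I would close the Euler part. By Proposition \ref{P5.1} together with $W^m\le\e_0$, choosing $\e_0$ so small that the quadratic term $CW^m\le\tfrac12$ can be absorbed, one gets $W^{m+1}(t)\le 2C\big(T^{m+1}(t)+V^{m+1}(t)+\|v^{m+1}(t)\|_{\mathfrak{X}^s}^2\big)$. Inserting this into the sum of the differential inequalities of Propositions \ref{P5.2} and \ref{P5.3} and using $(W^m)^{1/2}\le\e_0^{1/2}$ gives
\[
\frac{d}{dt}\big(T^{m+1}+V^{m+1}\big)(t)\le C\e_0^{1/2}\big(T^{m+1}+V^{m+1}\big)(t)+C\|v^{m+1}(t)\|_{\mathfrak{X}^s}^2 .
\]
Since every iterate carries the same data $(g_0,u_0,\eta_0)$, evaluating \eqref{E-5} at $t=0$ expresses $\partial_t^\ell g^{m+1}(0),\partial_t^\ell u^{m+1}(0)$ as $m$-independent polynomials in $\nabla^\alpha(g_0,u_0,\eta_0)$, whence $(T^{m+1}+V^{m+1})(0)\le C\e_0^2$. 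Gr\"onwall's inequality then yields, for $t\le T_0$,
\[
\big(T^{m+1}+V^{m+1}\big)(t)\le e^{C\e_0^{1/2}T_0}\Big(C\e_0^2+C\!\int_0^{T_0}\|v^{m+1}\|_{\mathfrak{X}^s}^2\,d\tau\Big)<\tfrac{\e_0}{6}
\]
after one more shrinking of $\e_0$ and $T_0$. Combining with the bound on $W^{m+1}$ gives $\sup_{[0,T_0]}W^{m+1}<\e_0/3$, and together with $\sup_{[0,T_0]}\|\eta^{m+1}\|_{\mathfrak{X}^s}^2<\e_0/6$ we obtain $\big(\|g^{m+1}\|_{\mathfrak{X}^s}+\|u^{m+1}\|_{\mathfrak{X}^s}+\|\eta^{m+1}\|_{\mathfrak{X}^s}\big)^2\le 3\big(W^{m+1}+\|\eta^{m+1}\|_{\mathfrak{X}^s}^2\big)<\e_0$ on $[0,T_0]$, closing the induction with $T^*:=\min\{T,T_0\}$; all constants and $T_0$ are manifestly independent of $m$.

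The delicate point is that there is \emph{no} direct $H^s$ energy estimate for the Euler component, because the kinematic condition $u\cdot r=0$ obstructs integration by parts in the convective term at top order. This is exactly why one must route the hyperbolic estimate through the low-order quantity $T^{m+1}$, the vorticity quantity $V^{m+1}$, and the div--curl bound $\|{\bf u}\|_{H^s}\le C\big(\|\nabla\times{\bf u}\|_{H^{s-1}}+\|{\bf u}\|_{H^{s-1}}+\|\nabla\cdot{\bf u}\|_{H^{s-1}}\big)$ built into Proposition \ref{P5.1}, and it is also why smallness of the \emph{data} (and not merely of the time) is forced: the bootstrap only closes because the quadratic coupling terms $W^mW^{m+1}$ can be absorbed and the Gr\"onwall factor $e^{C\e_0^{1/2}T_0}$ stays near $1$. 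The only real bookkeeping subtlety is the order of quantification: one must fix $\delta_0$ and $\e_0$ first (depending only on $s$, $\om$, $\gamma$ and the absolute constants of Propositions \ref{P5.1}--\ref{P5.4}), and only then choose $T_0$ small in terms of $\e_0$, so that no constant secretly depends on $m$.
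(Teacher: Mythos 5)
Your argument follows the paper's proof closely: the same induction on $m$, the same use of Lemma \ref{P5.4} with $M_1=M_1'=\e_0^{1/2}$, the same combination of Propositions \ref{P5.1}--\ref{P5.3} with absorption of the $W^mW^{m+1}$ term and Gr\"onwall, and the same conclusion that data-smallness (not just small time) closes the bootstrap. You supply two details the paper leaves implicit — the base case and the $m$-independent bound $(T^{m+1}+V^{m+1})(0)\le C\e_0^2$ via evaluating the iteration scheme at $t=0$ — but note the final arithmetic is off ($3(\e_0/3+\e_0/6)=3\e_0/2>\e_0$); tighten the intermediate bounds, say to $\e_0/6$ each, to land strictly below $\e_0$.
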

\begin{proof} Since $\|h_0\|_{H^s} < \e_0$ and $\e_0$ is sufficiently small, we can make $\delta_0$ in Lemma \ref{P5.4} to have a value $1/2$ for an instance, i.e., Lemma \ref{P5.4} (i) holds with $\delta_0 = 1/2$. Suppose that 
\[
\sup_{0 \leq t \leq T_0} \lt(\|g^m(t)\|_{\mathfrak{X}^s} + \|u^m(t)\|_{\mathfrak{X}^s} + \|\eta^m(t)\|_{\mathfrak{X}^s}\rt) < \e_0^{1/2}.  
\]
Then, by choosing $M_1 = M_1' = \e_0^{1/2}$ in Lemma \ref{P5.4} (ii), we easily find
\[
\|\eta^{m+1}(t)\|_{\mathfrak{X}^s}^2 \leq \lt(\gamma(1+\e_0)^2\e_0^2 + e^{C\sqrt{\e_0}}\e_0 T_0  \rt)\exp\lt(C_{\sqrt{\e_0},\sqrt{\e_0}}T_0\rt).
\]
We now choose $\e_0 > 0$ and $(T_0 \geq )T^* >0 $ small enough to have
\[
\sup_{0 \leq t \leq T^*}\|\eta^{m+1}(t)\|_{\mathfrak{X}^s}^2 < \e_0.
\]
On the other hand, it follows from Propositions \ref{P5.2} and \ref{P5.3} that
\begin{align}\label{est_tv}
\begin{aligned}
T^{m+1}(t) + V^{m+1}(t) &\le T(0) + V(0) + C\e_0^{1/2}\int_0^t \left(W^{m+1}(\tau) + \|\eta^{m+1}(\tau)\|_{\mathfrak{X}^s}^2\right) d\tau\\
&< \e_0^2 +  C\e_0^{3/2} T_0 +  C\e_0^{1/2}\int_0^t W^{m+1}(\tau)\,d\tau
\end{aligned}
\end{align}
for $t \leq T^*$. From Proposition \ref{P5.1}, we also find
\begin{align*}\begin{aligned}
(1-C\e_0^{1/2})W^{m+1}(t) &\le C\lt(T^{m+1}(t) + V^{m+1}(t) + \|\eta^{m+1}(t)\|_{\mathfrak{X}^s}^2\rt)\cr
& \leq C\e_0^2 +  C\e_0^{3/2} T^* +  C\e_0^{1/2}\int_0^t W^{m+1}(\tau)\,d\tau.
\end{aligned}\end{align*}
This together with \eqref{est_tv} gives
\[
W^{m+1}(t) \leq C\lt(\e_0^2 +  \e_0^{3/2} T^*\rt)\exp\lt(C\e_0^{1/2}T^* \rt)
\]
for $t \leq T^*$, where $C>0$ is independent of $m$. We finally choose $\e_0 > 0$ small enough so that the right hand side of the above inequality is less than $\e_0$. By the induction argument, we conclude the desired result.
\end{proof}

\subsubsection{Cauchy estimates}
In this part, we show the approximate solutions $\{(g^{m}, u^m, \eta^m) \}_{m \in \N}$ to the system \eqref{E-5} are Cauchy in $L^2$-space. 
\begin{lemma}\label{L5.2}
For $\e_0$ and $T^*$ chosen as in Proposition \ref{L5.1}, if 
\[
\|g_0\|_{H^s} + \|u_0\|_{H^s} + \|\eta_0\|_{H^s} <\e_0, 
\]
then we have
\[
\sup_{0 \le t \le T^*}\left(\|(g^{m+1} - g^m)(t)\|_{L^2}^2 + \|(u^{m+1} - u^m)(t)\|_{L^2}^2 + \|(\eta^{m+1} - \eta^m)(t)\|_{L^2}^2\right) \to 0
\]
as $m \to \infty$.
\end{lemma}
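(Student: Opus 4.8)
The plan is to run a differenced energy estimate and iterate it into a Picard-type recursion. Write $\delta g^{m+1} := g^{m+1}-g^m$, $\delta u^{m+1} := u^{m+1}-u^m$, $\delta\eta^{m+1} := \eta^{m+1}-\eta^m$ (so $\delta h^{m+1},\delta v^{m+1}$ denote the components of $\delta\eta^{m+1}$), and set
\[
\Phi^{m+1}(t) := \|\delta g^{m+1}(t)\|_{L^2}^2 + \|\delta u^{m+1}(t)\|_{L^2}^2 + \|\delta\eta^{m+1}(t)\|_{L^2}^2 .
\]
Subtracting the equations in \eqref{E-5} at indices $m+1$ and $m$, and using the cancellations $A^0(\eta^m)E_1(h^m,v^{m+1}) = (0,\Delta v^{m+1})^T$ and $A^0(\eta^m)E_2(g^m,u^m,\eta^m) = (0,e^{g^m}(u^m-v^m))^T$ (the factors $1+h^m$ cancel, recall $M=1$), one obtains, for $m\ge1$,
\begin{align*}
&\partial_t\delta g^{m+1} + u^m\cdot\nabla\delta g^{m+1} + \nabla\cdot\delta u^{m+1} = -\,\delta u^m\cdot\nabla g^m,\\
&\partial_t\delta u^{m+1} + u^m\cdot\nabla\delta u^{m+1} + \nabla\delta g^{m+1} = \delta v^{m+1}-\delta u^{m+1}-\delta u^m\cdot\nabla u^m,\\
&A^0(\eta^m)\partial_t\delta\eta^{m+1} + \sum_{j=1}^3 A^j(\eta^m)\partial_j\delta\eta^{m+1} = (0,\Delta\delta v^{m+1})^T + \mathcal{R}^{m+1},
\end{align*}
where $\mathcal{R}^{m+1}$ gathers the drag-type difference $(0,\,e^{g^m}(\delta u^m-\delta v^m)+(e^{g^m}-e^{g^{m-1}})(u^{m-1}-v^{m-1}))^T$ together with the coefficient commutators $-[A^0(\eta^m)-A^0(\eta^{m-1})]\partial_t\eta^m$ and $-\sum_j[A^j(\eta^m)-A^j(\eta^{m-1})]\partial_j\eta^m$.

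Next I would test the first two equations against $\delta g^{m+1}$ and $\delta u^{m+1}$ and the third against $\delta\eta^{m+1}$, and add. The transport terms produce $\tfrac12\int(\nabla\cdot u^m)|\cdot|^2$; the acoustic coupling $\int(\nabla\cdot\delta u^{m+1})\delta g^{m+1}+\int\nabla\delta g^{m+1}\cdot\delta u^{m+1}$ integrates by parts to the boundary integral $\int_{\partial\Omega}(\delta u^{m+1}\cdot r)\,\delta g^{m+1}\,d\sigma$, which vanishes because $\delta u^{m+1}\cdot r = u^{m+1}\cdot r - u^m\cdot r = 0$ on $\partial\Omega$ by the kinematic condition; the symmetric-hyperbolic structure of $(A^0,A^j)$ turns the $\eta$-part into $\tfrac12\tfrac{d}{dt}\int A^0(\eta^m)\delta\eta^{m+1}\cdot\delta\eta^{m+1} - \tfrac12\int(\partial_t A^0(\eta^m)+\sum_j\partial_j A^j(\eta^m))\delta\eta^{m+1}\cdot\delta\eta^{m+1}$ plus a boundary term that vanishes since $\delta v^{m+1}=0$ and $(A^j(\eta^m))_{11}=\gamma(1+h^m)^{\gamma-2}v^m_j=0$ on $\partial\Omega$; and the viscous contribution $\int(0,\Delta\delta v^{m+1})^T\cdot\delta\eta^{m+1} = -\int|\nabla\delta v^{m+1}|^2$ is a favorable dissipative term. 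Invoking Lemma \ref{P5.4} (which gives $1+h^m>\delta_0$, so $A^0(\eta^m)$ is symmetric positive definite with $m$-independent bounds and $\int A^0(\eta^m)\delta\eta^{m+1}\cdot\delta\eta^{m+1}\simeq\|\delta\eta^{m+1}\|_{L^2}^2$), the uniform-in-$m$ bounds of Proposition \ref{L5.1} (which, via $H^{s-2}(\Omega)\hookrightarrow L^\infty(\Omega)$ for $s\ge4$, control $\|\nabla(g^m,u^m,\eta^m)\|_{L^\infty}$, $\|\partial_t\eta^m\|_{L^\infty}$, $\|\Delta v^m\|_{L^\infty}$ and $\|u^m-v^m\|_{L^\infty}$), the Lipschitz dependence of $A^0,A^j$ on $\eta$, and Young's inequality, every remaining term is bounded by $C(\Phi^{m+1}+\Phi^m)$, so that
\[
\frac{d}{dt}\Phi^{m+1}(t) + c\,\|\nabla\delta v^{m+1}(t)\|_{L^2}^2 \le C\,(\Phi^{m+1}(t)+\Phi^m(t)),
\]
with $c>0$ and $C>0$ independent of $m$.

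Finally, since all iterates share the initial datum $(g_0,u_0,\eta_0)$ — this holds for $m=0$ too, because $(g^0,u^0,\eta^0)$ is constant in time — we have $\Phi^{m+1}(0)=0$, hence Grönwall's inequality gives $\sup_{[0,t]}\Phi^{m+1}\le\widetilde C\int_0^t\sup_{[0,s]}\Phi^m\,ds$ for $m\ge1$, with $\widetilde C:=Ce^{CT^*}$. Since $B:=\sup_{[0,T^*]}\Phi^1<\infty$ by Proposition \ref{L5.1}, iteration yields $\sup_{[0,T^*]}\Phi^{m+1}\le B(\widetilde C T^*)^m/m!\to0$ as $m\to\infty$, which is the assertion. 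The step requiring the most care is the second one: verifying that after testing, the only top-order contributions are the dissipative term $-\int|\nabla\delta v^{m+1}|^2$ and the pairwise-cancelling acoustic terms, that all boundary integrals are annihilated by the kinematic condition for $u$ and the homogeneous Dirichlet condition for $v$ (exactly as in Propositions \ref{P5.1}--\ref{P5.3} and Lemma \ref{P5.4}), and that the commutators $[A^{\bullet}(\eta^m)-A^{\bullet}(\eta^{m-1})]\partial\eta^m$ are absorbed using the smallness $\|\partial\eta^m\|_{L^\infty}=\mathcal{O}(\e_0^{1/2})$ from Proposition \ref{L5.1}; the remaining lower-order terms are routine.
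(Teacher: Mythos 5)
Your proposal is correct and follows essentially the same route as the paper's proof: subtract consecutive iterates, perform an $L^2$ energy estimate in which the acoustic terms cancel via the kinematic condition $\delta u^{m+1}\cdot r=0$, exploit the symmetric-hyperbolic structure with the weight $A^0(\eta^m)$ (positive definite and comparable to the identity thanks to Lemma \ref{P5.4}) together with the viscous dissipation from $E_1$, bound the commutator/drag remainders using the uniform $H^s$ bounds of Proposition \ref{L5.1}, and conclude with a factorial iteration. The only cosmetic difference is organizational: the paper estimates the Euler block (Step~A) and the Navier--Stokes block (Step~B) separately, applying Gr\"onwall to each and then composing the two integral inequalities, whereas you sum all three tested equations into a single differential inequality for $\Phi^{m+1}$ and apply Gr\"onwall once; both yield the same recursion $\sup_{[0,t]}\Phi^{m+1}\le \widetilde C\int_0^t\sup_{[0,s]}\Phi^m\,ds$ and hence the same $\frac{(CT^*)^m}{m!}$ decay. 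Your explicit verification that the boundary flux $\int_{\partial\Omega}A^j(\eta^m)r_j\,\delta\eta^{m+1}\cdot\delta\eta^{m+1}\,d\sigma$ vanishes (because $\delta v^{m+1}=0$ and $(A^j)_{11}\propto v^m_j=0$ on $\partial\Omega$) is a worthwhile detail that the paper leaves implicit.
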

\begin{proof}The proof is divided into two steps:
\begin{itemize}
\item In (Step A), we provide the following Cauchy estimate for $(g^{m+1}, u^{m+1})$ which solves the Euler equations in \eqref{E-5}:
\begin{align}\label{cau_gu}
\begin{aligned}
&\|(g^{m+1} - g^m)(t)\|_{L^2}^2 + \|(u^{m+1} - u^m)(t)\|_{L^2}^2\\
&\quad \le C\int_0^t \|(v^{m+1} - v^m)(\tau)\|_{L^2}^2 +\|(u^m - u^{m-1})(\tau)\|_{L^2}^2\,d\tau,
\end{aligned}
\end{align}
where $C>0$ is independent of $m$.
\item In (Step B), we estimate the solutions $\eta^{m+1}$ to the Navier-Stokes equations in \eqref{E-5} and combine that with \eqref{cau_gu} to conclude the desired result. \newline
\end{itemize}

\noindent $\bullet$ (Step A) For notational simplicity, we set
\[
g^{m+1, m} = g^{m+1} - g^m, \quad u^{m+1, m} = u^{m+1} - u^m, \quad \mbox{and} \quad v^{m+1, m} = v^{m+1} - v^m
\]
for $m \in \N \cup \{0\}$. Then a straightforward computation yields 
\begin{align*}\begin{aligned}
\frac{1}{2}\frac{d}{dt}\|g^{m+1, m}\|_{L^2}^2 &= -\int_\Omega (u^m \cdot \nabla g^{m+1} - u^{m-1}\cdot \nabla g^m) (g^{m+1} - g^m)\,dx\\
&\quad - \int_\Omega \nabla \cdot (u^{m+1} - u^m) (g^{m+1} - g^m)\,dx\\
&= -\int_\Omega u^m \cdot \nabla(g^{m+1}-g^m)(g^{m+1} - g^m)\,dx\\
&\quad -\int_\Omega (u^m - u^{m-1})\cdot \nabla g^m (g^{m+1} - g^m)\,dx\\
&\quad - \int_\Omega \nabla \cdot (u^{m+1} - u^m) (g^{m+1} - g^m)\,dx\\
&\le C \left( \|g^{m+1, m}\|_{L^2}^2  + \|u^{m,m-1}\|_{L^2}\|g^{m+1, m}\|_{L^2}\right)\\
&\quad - \int_\Omega \nabla \cdot (u^{m+1} - u^m) (g^{m+1} - g^m)\,dx,
\end{aligned}\end{align*}
where $C>0$ is a constant independent of $m$ and we used Cauchy-Schwarz inequality. Similarly, we also find 
\begin{align*}\begin{aligned}
\frac{1}{2}\frac{d}{dt}\|u^{m+1,m}\|_{L^2}^2 &= -\int_\Omega (u^m \cdot u^{m+1} - u^{m-1} \cdot \nabla u^m) \cdot (u^{m+1}-u^m)\,dx\\
&\quad - \int_\Omega \nabla(g^{m+1} - g^m) \cdot (u^{m+1} - u^m)\,dx\\
&\quad + \int_\Omega (v^{m+1} - u^{m+1} -v^m + u^m)(u^{m+1} - u^m)\,dx\\
&\le C\|u^{m+1, m}\|_{L^2}^2 + C\|u^{m+1, m}\|_{L^2}\|u^{m,m-1}\|_{L^2}\\
&\qquad + C\|u^{m+1, m}\|_{L^2}\| v^{m+1, m}\|_{L^2} \\
&\quad - \int_\Omega \nabla(g^{m+1} - g^m) \cdot (u^{m+1} - u^m)\,dx,
\end{aligned}\end{align*}
where $C>0$ is a constant independent of $m$ and we used Cauchy-Schwarz inequality. Thus, we use Young's inequality and the kinematic boundary condition for $u^m$ to get
\begin{align*}\begin{aligned}
\frac{d}{dt}&\left(\|g^{m+1, m}\|_{L^2}^2 + \|g^{m+1, m}\|_{L^2}^2\right)\\
&\le C\left(\|g^{m+1, m}\|_{L^2}^2 + \|u^{m+1, m}\|_{L^2}^2 + \| v^{m+1, m}\|_{L^2}^2 +\|u^{m,m-1}\|_{L^2}^2\right).
\end{aligned}\end{align*}
We finally apply Gr\"onwall's lemma to the above together with using $g^{m+1, m}(0) = 0$ and $g^{m+1, m} = 0$ to complete the proof of Step A. \newline

\noindent $\bullet$ (Step B) Similarly as before, we first introduce simplified notations as follows:
\[
\eta^{m+1, m} := \eta^{m+1} - \eta^m, \quad A_m^0 := A^0(\eta^m), \quad A_m^j:=A^j(\eta^m), \quad E_1^m := E_1(h^m, v^{m+1}), 
\]
and
\[
E_2^m := E_2(g^m, u^m, \eta^m).
\]
Then, it follows from the Navier-Stokes equations in \eqref{E-5} that 
\begin{align*}\begin{aligned}
&A_m^0\partial_t \eta^{m+1,m} + \sum_{j=1}^3 A_m^j \partial_j\eta^{m+1, m}\\
&\quad = - (A_m^0- A_{m-1}^0)\partial_t \eta^m - \sum_{j=1}^3(A_m^j - A_{m-1}^j)\partial_j \eta^m + (A_m^0E_1^m - A_{m-1}^0 E_1^{m-1})\cr
&\qquad  + A_m^0 (E_2^m - E_2^{m-1}) +( A_m^0-A_{m-1}^0) E_2^{m-1}.
\end{aligned}\end{align*}
Thus,  we obtain
$$\begin{aligned}
&\frac{1}{2}\left( \partial_t \int_{\Omega} A_m^0 \eta^{m+1,m} \cdot \eta^{m+1,m} \,dx - \int_\Omega (\partial_t A^0_m) \eta^{m+1,m} \cdot \eta^{m+1,m} \,dx \right)\\
&\hspace{0.3cm}\qquad+ \frac{1}{2}\left\{ \sum_{j=1}^3 \int_{\Omega} \partial_j (A^j_m \eta^{m+1,m} \cdot \eta^{m+1,m})\, dx - \int_{\Omega} (\partial_j A^j_m) \eta^{m+1,m} \cdot \eta^{m+1,m} \,dx \right\}\\
&\hspace{0.3cm}\quad = - \int_{\Omega} (A^0_m - A^0_{m-1}) \partial_t \eta^m \cdot \eta^{m+1,m} \,dx - \sum_{j=1}^3 \int_{\Omega}(A^j_m - A^j_{m-1})\partial_j \eta^m \cdot \eta^{m+1,m} \,dx\\
&\hspace{0.3cm}\qquad + \int_{\Omega} (A^0_m E_1^m - A^0_{m-1}E_1^{m-1}) \eta^{m+1,m} \, dx + \int_{\Omega} A^0_m (E_2^m - E_2^{m-1}) \eta^{m+1,m}\,dx\\
&\hspace{0.3cm}\qquad + \int_{\Omega}(A^0_m - A^0_{m-1})E_2^{m-1} \eta^{m+1,m} \,dx.
\end{aligned}$$
Note that
\[
|A^0_m - A^0_{m-1}| + |A^j_m - A^j_{m-1}| \le C |\eta^{m,m-1}|
\]
and
\[
\int_{\Omega} (A^0_m E_1^m - A^0_{m-1}E_1^{m-1}) \eta^{m+1,m}\, dx = -\int_{\Omega} |\nabla v^{m+1,m}|^2 \,dx.
\]
We also estimate
\begin{align*}\begin{aligned}
\begin{aligned}
&\int_{\Omega} A^0_m (E_2^m - E_2^{m-1}) \eta^{m+1,m}\,dx \cr
& \quad \le C \|E_2^m - E_2^{m-1}\|_{L^2} \|\eta^{m+1,m}\|_{L^2} \\
&\quad \le C \left(\|g^{m,m-1}\|_{L^2} + \|u^{m,m-1}\|_{L^2} + \|\eta^{m,m-1}\|_{L^2} \right) \|\eta^{m+1,m}\|_{L^2}.
\end{aligned}
\end{aligned}\end{align*}
We combine all the estimates and use Young's inequality to have
\begin{align*}\begin{aligned}
\begin{aligned}
&\|\eta^{m+1,m}(t)\|_{L^2}^2 + \int_0^t \|\nabla v^{m+1,m}(\tau)\|_{L^2}^2\, d\tau\\
&\quad \le C \int_0^t \|\eta^{m+1,m}(\tau)\|_{L^2}^2 \,d\tau  \\
&\qquad +  C\int_0^t \Big(\|g^{m,m-1}(\tau)\|_{L^2} + \|u^{m,m-1}(\tau)\|_{L^2} + \|\eta^{m,m-1}(\tau)\|_{L^2} \Big) \|\eta^{m+1,m}(\tau)\|_{L^2} \,d \tau\\
& \quad \le C \int_0^t \|\eta^{m+1,m}(\tau)\|_{L^2}^2 \,d\tau +C \int_0^t \|\eta^{m,m-1}(\tau)\|_{L^2}^2 \,d\tau \\
&\qquad + C\int_0^t\|g^{m,m-1}(\tau)\|_{L^2}^2 \,d\tau + C\int_0^t \|u^{m,m-1}(\tau)\|_{L^2}^2 \,d\tau,
\end{aligned}
\end{aligned}\end{align*}
and subsequently, applying Gr\"onwall's lemma to the above yields
\begin{align*}\begin{aligned}
&\|\eta^{m+1,m}(t)\|_{L^2}^2 + \int_0^t \|\nabla v^{m+1,m}(\tau)\|_{L^2}^2 \,d\tau\cr
&\quad \leq C \int_0^t \|\eta^{m,m-1}(\tau)\|_{L^2}^2 \,d\tau + C\int_0^t\|g^{m,m-1}(\tau)\|_{L^2}^2 \,d\tau + C\int_0^t \|u^{m,m-1}(\tau)\|_{L^2}^2 \,d\tau,
\end{aligned}\end{align*}
where $C>0$ is independent of $m$. Now we combine this with \eqref{cau_gu} to have
\begin{align*}\begin{aligned}
&\|g^{m+1,m}(t)\|_{L^2}^2 + \|u^{m+1,m}(t)\|_{L^2}^2 + \|\eta^{m+1,m}(t)\|_{L^2}^2\\
&\quad \leq C \int_0^t \lt( \|g^{m,m-1}(\tau)\|_{L^2}^2 +  \|u^{m,m-1}(\tau)\|_{L^2}^2  + \|\eta^{m,m-1}(\tau)\|_{L^2}^2 \rt)d\tau.
\end{aligned}\end{align*}
This asserts
\[
\|g^{m+1,m}(t)\|_{L^2}^2 + \|u^{m+1,m}(t)\|_{L^2}^2 + \|\eta^{m+1,m}(t)\|_{L^2}^2 \leq \frac{(CT^*)^{m+1}}{(m+1)!},
\]
and this concludes the desired result.
\end{proof}

\subsubsection{Proof of Theorem \ref{thm_local}: existence and uniqueness of strong solutions} We now provide the details on the local-in-time existence result. 

By the Cauchy estimate in Lemma \ref{L5.2}, we can obtain limiting functions $(g,u,h,v)$ in $\mathcal{C}([0,T^*];L^2(\Omega))$ and due to this strong convergence, we easily check that $(g,u,h,v)$ satisfies \eqref{E-3} in the sense of distributions. For regularity of solutions and upper bound estimates, similarly as in \cite{CK16}, we deduce those results from the uniform bound in Proposition \ref{L5.1}, which implies the existence of weak limit $(\tilde{g}, \tilde{u}, \tilde h,\tilde v)$ in $\mathfrak{X}^s(T^*,\Omega)$. Thanks to the strong convergence, essentially we obtain $(g,u,h,v)=(\tilde{g}, \tilde{u}, \tilde h,\tilde v)$. For the uniqueness, if $(g,u,h,v)$ and $(\bar g,\bar u,\bar h,\bar v)$ are two strong solutions to the system \eqref{E-3}, then we directly use the Cauchy estimate in Lemma \ref{L5.2} to deduce
\[
\Delta(t) \le C\int_0^t \Delta(\tau)\,d\tau
\]
for $t \in [0,T^*]$, where 
\[
\Delta(t) := \|(g-\bar g)(t)\|_{L^2}^2 + \|(u-\bar u)(t)\|_{L^2}^2 + \|(h-\bar h)(t)\|_{L^2}^2+ \|(v-\bar v)(t)\|_{L^2}^2.
\]
Since $\Delta(0) = 0$, applying Gr\"onwall's lemma asserts $(g,u,h,v) \equiv (\bar g,\bar u,\bar h,\bar v)$ in $\mathcal{C}([0,T^*];L^2(\Omega))$. Moreover, by using the similar argument as in the proof of existence of solutions, we can show that they are the same in our desired solution space $\mathfrak{X}^s(T^*,\Omega)$.

\subsection{Global-in-time existence theory} In this part, we provide a priori estimates for strong solutions to \eqref{E-3}--\eqref{E-4} obtained in the previous subsection. Combined with the local-in-time existence result, this enables us to extend the life span of the strong solution to the system \eqref{E-3}--\eqref{E-4} up to any time $T > 0$. For this, we first set the following functionals similar to the previous subsection:
\begin{align*}\begin{aligned}
&W(t) := \|g(t)\|_{\mathfrak{X}^s}^2 + \|u(t)\|_{\mathfrak{X}^s}^2 = \sum_{\ell=0}^s \|\partial_t^\ell g\|_{H^{s-\ell}}^2 + \|\partial_t^\ell u\|_{H^{s-\ell}}^2,\\
&T(t) := \sum_{\ell=0}^s \|\partial_t^\ell g\|_{L^2}^2 + \|\partial_t^\ell u\|_{L^2}^2,\quad \mbox{and} \quad V(t) := \|\omega(t)\|_{\mathfrak{X}^{s-1}}^2 = \sum_{\ell=0}^{s-1}\|\partial_t^\ell \omega(t)\|_{H^{s-1-\ell}}^2,
\end{aligned}\end{align*}
where $\omega := \nabla \times u$. 

In the proposition below, we show the upper bound estimate for the functional $W$.
\begin{proposition}\label{P5.5}
Let $T>0$ and $(g,u,\eta)\in \ms^s_T(\om)$ be a strong solution to \eqref{E-3-1} in the sense of Definition \ref{D2.2}. Then we have
\[
W(t) \le C\lt(\|g_0\|_{H^s}^2 + \|u_0\|_{H^s}^2\rt) + C\lt(W^2(t) + \|v(t)\|_{\mathfrak{X}^s}^2\rt)  + C\int_0^t W^{3/2}(\tau) + \|v(\tau)\|_{\mathfrak{X}^s}^2\,d\tau 
\]
for $t \in (0,T)$, where $C>0$ only depends on $s$ and $\Omega$.
\end{proposition}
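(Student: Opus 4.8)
The plan is to mimic the $m$-dependent estimates (Propositions \ref{P5.1}--\ref{P5.3} together with Lemma \ref{P5.4}) but now for an actual strong solution $(g,u,\eta)$ rather than the linearized iterates, and to close the argument in the quantity $W(t)$. The starting point is the elliptic-type reconstruction of spatial derivatives from time derivatives and the curl: from the first two equations of \eqref{E-3-1} one writes $\nabla\cdot u=-\partial_t g-u\cdot\nabla g$ and $\nabla g = -\partial_t u - u\cdot\nabla u + v - u$, so that the $L^2$-norms of $\nabla g$ and $\nabla\cdot u$ are controlled by $T(t)$, $V(t)$, $\|v(t)\|_{\mathfrak X^s}$ and the nonlinear term $\|u\cdot\nabla u\|_{L^2}+\|u\cdot\nabla g\|_{L^2}$, which by Sobolev embedding $H^2(\Omega)\hookrightarrow\mathcal C^0(\bar\Omega)$ is bounded by $W(t)$. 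Then the Hodge-type inequality $\|{\bf u}\|_{H^s}\le C(\|\nabla\times{\bf u}\|_{H^{s-1}}+\|{\bf u}\|_{H^{s-1}}+\|\nabla\cdot{\bf u}\|_{H^{s-1}})$ for vector fields tangent to $\partial\Omega$ (cited from \cite{Zh10,B-B}) lets one bootstrap inductively on the order of derivatives, exactly as in the proof of Proposition \ref{P5.1}, to obtain $W(t)\le C(T(t)+V(t)+W^2(t)+\|v(t)\|_{\mathfrak X^s}^2)$. The only change from the linearized version is that $W^m W^{m+1}$ is replaced by $W^2$.

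**Next I would** estimate $T(t)$ and $V(t)$. For $T(t)$ one applies $\partial_t^\ell$ to the Euler part of \eqref{E-3-1}, tests against $\partial_t^\ell g$ and $\partial_t^\ell u$, integrates over $\Omega$, and uses the kinematic boundary condition $\partial_t^\ell u\cdot r\equiv0$ on $\partial\Omega$ to discard the boundary term coming from $\int_\Omega u\cdot\nabla(\partial_t^\ell u)\cdot\partial_t^\ell u$ after integration by parts (this is precisely the place the boundary condition is needed). Commutator estimates (Moser-type product estimates in $H^s$), Sobolev embedding and Young's inequality give $\frac{d}{dt}T(t)\le C(W^{3/2}(t)+\|v(t)\|_{\mathfrak X^s}^2)$, paralleling Proposition \ref{P5.2} with $(W^m)^{1/2}W^{m+1}$ replaced by $W^{3/2}$. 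For $V(t)$ one takes the curl of the momentum equation to get the vorticity equation $\partial_t\omega+\omega\cdot\nabla u+u\cdot\nabla\omega=\nu-\omega$ with $\nu=\nabla\times v$, differentiates with $\nabla_{t,x}^\alpha$ for $|\alpha|\le s-1$, tests against $\nabla_{t,x}^\alpha\omega$, and again uses commutator/Sobolev/Young estimates (no boundary term this time, since $u\cdot\nabla\omega$ integrates by parts against $\nabla\cdot u$ without a boundary contribution in the relevant way, or is simply absorbed) to obtain $\frac{d}{dt}V(t)\le C(W^{3/2}(t)+\|v(t)\|_{\mathfrak X^s}^2)$, as in Proposition \ref{P5.3}. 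Integrating these two differential inequalities in time from $0$ to $t$ yields $T(t)+V(t)\le T(0)+V(0)+C\int_0^t W^{3/2}(\tau)+\|v(\tau)\|_{\mathfrak X^s}^2\,d\tau$, and one notes $T(0)+V(0)\le C(\|g_0\|_{H^s}^2+\|u_0\|_{H^s}^2)$ since $\omega_0=\nabla\times u_0$ and the time derivatives at $t=0$ are determined through the equations by the initial data (here the compatibility conditions and the bound on $\|\eta_0\|$, i.e. on $h_0,v_0$, enter implicitly, but for the $g,u$ block one only needs $\|g_0\|_{H^s},\|u_0\|_{H^s}$ plus the already-controlled $\|v\|_{\mathfrak X^s}$).

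**Combining** the reconstruction bound with the integrated energy/vorticity bounds gives
$$
W(t)\le C\big(T(0)+V(0)\big)+C\big(W^2(t)+\|v(t)\|_{\mathfrak X^s}^2\big)+C\int_0^t W^{3/2}(\tau)+\|v(\tau)\|_{\mathfrak X^s}^2\,d\tau,
$$
and since $T(0)+V(0)\le C(\|g_0\|_{H^s}^2+\|u_0\|_{H^s}^2)$ this is exactly the claimed inequality. Note that $\|v\|_{\mathfrak X^s}$ appears as a source term, not something to be closed here; it will be controlled separately through the Navier-Stokes block (the analogue of Lemma \ref{P5.4}), and the full continuation argument will couple the two. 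The $W^2(t)$ and $W^{3/2}$ nonlinearities are what will eventually be absorbed by smallness.

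**The main obstacle** is handling the kinematic boundary condition correctly throughout the $T(t)$-estimate: because $u$ is only tangent to $\partial\Omega$ (not zero), one cannot integrate by parts freely, and one must verify at each order $\ell$ that the boundary integrals arising from the transport term $u^m\cdot\nabla$ (now $u\cdot\nabla$) and from $\nabla g$ genuinely vanish or cancel — this is why the Hodge inequality and the curl estimate $V(t)$ are introduced in the first place, rather than a direct $H^s$ energy estimate. A secondary technical point is that the time derivatives $\partial_t^\ell g,\partial_t^\ell u$ at $t=0$ must be expressed via the equations in terms of $(g_0,u_0,\eta_0)$ and their spatial derivatives, so that $T(0)$ really is bounded by initial Sobolev norms; this is routine but must be invoked. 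None of the commutator or product estimates present any difficulty beyond standard Moser-type inequalities in a bounded domain, which are available once the extension Theorem \ref{thm_exten} (or its standard $H^s$ analogue) is in hand.
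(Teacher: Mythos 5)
Your proposal is correct and follows essentially the same route as the paper: the paper's proof simply invokes the nonlinear analogues of Propositions \ref{P5.1}--\ref{P5.3} (with $W^mW^{m+1}$ replaced by $W^2$ and $(W^m)^{1/2}W^{m+1}$ by $W^{3/2}$), integrates $\frac{d}{dt}(T+V)$ in time, and combines with the reconstruction bound $W\le C(T+V+W^2+\|v\|_{\mathfrak X^s}^2)$. Your additional remarks on bounding $T(0)+V(0)$ via the equations and on the role of the kinematic boundary condition are consistent with (and slightly more explicit than) the paper's treatment.
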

\begin{proof}It directly follows from Propositions \ref{P5.2} and \ref{P5.3} that
\[
\frac{d}{dt}(T(t) + V(t)) \le C\lt(W^{3/2}(t) + \|v(t)\|_{\mathfrak{X}^s}^2\rt) 
\]
for $t \in (0,T)$, where $C>0$ only depends on $s$ and $\Omega$. This gives
\bq\label{est_tv2}
T(t) + V(t) \leq \|g_0\|_{H^s}^2 + \|u_0\|_{H^s}^2 + C\int_0^t W^{3/2}(\tau) + \|v(\tau)\|_{\mathfrak{X}^s}^2\,d\tau.
\eq
Similarly, we obtain from Proposition \ref{P5.1} that
\[
W(t) \le C\lt(T(t)+V(t)+W^2(t) + \|v(t)\|_{\mathfrak{X}^s}^2\rt). 
\]
Combining this with \eqref{est_tv2} asserts
\begin{align*}\begin{aligned}
W(t) &\le C\lt(\|g_0\|_{H^s}^2 + \|u_0\|_{H^s}^2\rt) + C W^2(t) + C\|v(t)\|_{\mathfrak{X}^s}^2 \cr
&\quad + C\int_0^t W^{3/2}(\tau) + \|v(\tau)\|_{\mathfrak{X}^s}^2\,d\tau
\end{aligned}\end{align*}
for $t \in (0,T)$, where $C>0$ only depends on $s$ and $\Omega$. This completes the proof.
\end{proof}

We now state the details on the proof of our main result
\begin{proof}[Proof of Theorem \ref{T2.3}]
First we define the life-span of solutions to the system \eqref{E-3-1}:
\[
\mathcal{S}:= \sup\lt\{ t\geq 0\,:\, \sup_{0 \leq \tau \leq t}  \lt(\|g(\tau)\|_{\mathfrak{X}^s} + \|u(\tau)\|_{\mathfrak{X}^s} + \|h(\tau)\|_{\mathfrak{X}^s} + \|v(\tau)\|_{\mathfrak{X}^s}\rt)< \sqrt\e \rt\}.
\]
Since $0 \in \ms \neq \emptyset$, we can define $\widetilde T = \sup \ms$. Suppose that $\widetilde T < T$, i.e., 
\[
\sup_{0 \leq \tau \leq \widetilde T}  \lt(\|g(\tau)\|_{\mathfrak{X}^s} + \|u(\tau)\|_{\mathfrak{X}^s} + \|h(\tau)\|_{\mathfrak{X}^s} + \|v(\tau)\|_{\mathfrak{X}^s}\rt) = \sqrt\e.
\]
Similarly as in the proof of Lemma \ref{P5.4}, see also Remark \ref{rmk_local0}, we find that for sufficiently small $\e>0$
\begin{align*}\begin{aligned}
&\|\eta(t)\|_{L^2}^2 + (1+\e)\int_0^t\|\nabla v(\tau)\|_{L^2}^2\,d\tau\cr
&\quad \leq \gamma(1 + \e)^2\|\eta_0\|_{L^2}^2 + C(1+\e)e^{C\sqrt\e}\int_0^t \|u(\tau)\|_{L^2}^2 + \|\eta(\tau)\|_{L^2}^2\,d\tau
\end{aligned}\end{align*}
and
\begin{align*}\begin{aligned}
&\|\nabla_{t,x}^\alpha \eta(t)\|_{L^2}^2  + \frac{1+\e}{2}\int_0^t \|\nabla \nabla_{t,x}^\alpha v(\tau)\|_{L^2}^2\, d\tau\\
&\quad \leq \gamma(1+\e)^2\|\nabla_{t,x}^\alpha\eta_0\|_{L^2}^2 + e^{C\sqrt\e}\int_0^t \lt(\|u(\tau)\|_{\mathfrak{X}^s}^2 + \|v(\tau)\|_{\mathfrak{X}^s}^2 \rt) d\tau + C_{\sqrt\e, \sqrt\e} \int_0^t\|\eta(\tau)\|_{\mathfrak{X}^s}^2\,d\tau
\end{aligned}\end{align*}
for $1 \leq |\alpha| \leq s$, where $C_{\sqrt\e, \sqrt\e} > 0$ is given similarly as in Lemma \ref{P5.4} (ii). Combining all the above estimates yields
\bq\label{est_eta}
\|\eta(t)\|_{\mathfrak{X}^s}^2 \leq \gamma(1 + \e)^2\|\eta_0\|_{H^s}^2 + e^{C\sqrt\e}\int_0^t \lt(\|u(\tau)\|_{\mathfrak{X}^s}^2 + \|\eta(\tau)\|_{\mathfrak{X}^s}^2 \rt) d\tau.
\eq
We next combine this with Proposition \ref{P5.5} to get
\begin{align}\label{est_W}
\begin{aligned}
W(t) &\le C\e^2 + C\e W(t) + C\e^{1/2}\int_0^t  W(\tau) \,d\tau + C \|v(t)\|_{\mathfrak{X}^s}^2 + C \int_0^t  \|v(\tau)\|_{\mathfrak{X}^s}^2\,d\tau\cr
&\leq C\e^2 + C\e W(t) + C\int_0^t  W(\tau) \,d\tau + C \int_0^t  \|\eta(\tau)\|_{\mathfrak{X}^s}^2\,d\tau
\end{aligned}
\end{align}
for $t \leq \widetilde T$. We then again combine \eqref{est_eta} and \eqref{est_W} to obtain
\begin{align*}\begin{aligned}
\|g(t)\|_{\mathfrak{X}^s}^2 + \|u(t)\|_{\mathfrak{X}^s}^2 +\|\eta(t)\|_{\mathfrak{X}^s}^2 &\leq C\e^2 + C\int_0^t \lt(\|g(\tau)\|_{\mathfrak{X}^s}^2 + \|u(\tau)\|_{\mathfrak{X}^s}^2 +\|\eta(\tau)\|_{\mathfrak{X}^s}^2\rt)d\tau.
\end{aligned}\end{align*}
Hence, by Gr\"onwall's lemma we have
\[
\|g(t)\|_{\mathfrak{X}^s}^2 + \|u(t)\|_{\mathfrak{X}^s}^2 +\|\eta(t)\|_{\mathfrak{X}^s}^2 \leq C\e^2 e^{C\tilde T},
\]
that is,
\[
\|g(t)\|_{\mathfrak{X}^s} + \|u(t)\|_{\mathfrak{X}^s} +\|h(t)\|_{\mathfrak{X}^s} + \|v(t)\|_{\mathfrak{X}^s} \leq C\e e^{C\tilde T}
\]
for $t \leq \widetilde T$. On the other hand, by choosing $\e > 0$ small enough we can make the right hand side of the above inequality less than $\sqrt\e/2$, and this leads to the following contradiction:
\[
\sqrt\e = \sup_{0 \leq t \leq \widetilde T}\lt(\|g(t)\|_{\mathfrak{X}^s} + \|u(t)\|_{\mathfrak{X}^s} +\|h(t)\|_{\mathfrak{X}^s} + \|v(t)\|_{\mathfrak{X}^s}\rt) \leq \frac{\sqrt\e}{2}.
\]
This concludes $\sup \ms = T$ and completes the proof.
\end{proof}

%
%
%
%
%
%

\section*{Acknowledgments}
The work of Y.-P. Choi is supported by NRF grant (No. 2017R1C1B2012918 and 2017R1A4A1014735), POSCO Science Fellowship of POSCO TJ Park Foundation,  and Yonsei University Research Fund of 2019-22-021. The work of J. Jung is supported by NRF grant (No. 2016K2A9A2A13003815 and 2019R1A6A1A10073437).

\section*{Conflict of interest}
The authors declare that they have no conflict of interest.

\appendix

\section{Global-in-time weak solutions of Dirichlet boundary value problem}
\setcounter{equation}{0}

In this appendix, we provide the details of Proof of Theorem \ref{T3.1}. More precisely, we discuss the global-in-time existence of weak solutions to the following system:
\begin{align}
\begin{aligned}\label{main_diri}
&\partial_t f + \xi \cdot \nabla f + \nabla_\xi \cdot ((v -\xi)f) =\Delta_\xi f - \nabla_\xi \cdot \left( \left(u_\e - \xi \right) f \right), \quad (x,\xi,t) \in \om \times \R^3 \times \R_+, \\
&\partial_t n + \nabla \cdot (n v)=0, \quad (x,t) \in \om \times \R_+,\\
&\partial_t (n v) + \nabla \cdot (n v \otimes v) + \nabla p  -\Delta v = -\rho(u-v)
\end{aligned}
\end{align}
with the nonhomogeneous/homogeneous Dirichlet boundary conditions for $f$ and $v$: 
\[
 \gamma_- f(x,\xi,t) = g(x,\xi,t) \quad \mbox{for} \quad  (x, \xi,t) \in \Sigma_- \times \R_+
\]
and
\[ 
v(x,t) = 0 \quad \mbox{for} \quad (x,t) \in \pa \om \times \R_+,
\]
respectively. Here $u_\e$ is defined by
\[
u_\e = \frac{\rho}{\rho + \e}u.
\]
Let $d_0 > 5$, and we temporarily assume that the initial data $f_0$ and $g$ satisfy
\begin{align}
&\label{init-2} \int_{\Omega \times \bbr^3} |\xi|^d f_0(x,\xi) \,dx d\xi <\infty \quad \mbox{and}\\ 
&\label{DC-2}  \int_0^T\int_{\Sigma_-} |\xi|^d g(x,\xi,s) |\xi \cdot r(x)| \,d\sigma(x) d\xi ds < \infty, 
\end{align}
respectively, for all $d \in [0,d_0]$. In order to regularize the system \eqref{main_diri}, we introduce the truncation function $\chi_\lambda$ and the standard mollifier $h_k$ given by
\[
\chi_\lambda (v) := v \mathds{1}_{\{|v|\le \lambda\}} \quad 
\mbox{and} \quad h_k(x) := \frac{1}{k^3}h\lt(\frac{x}{k}\rt), 
\]
respectively, where $h : \om \to \R$ satisfies
\[
0 \leq h \in \mc^\infty_c(\om) \quad \mbox{and} \quad \int_\om h(x)\,dx = 1.
\]
Using these functions, we regularize the convection term, drag and the local alignment forces in the system \eqref{main_diri} as 
\begin{align}
\begin{aligned}\label{App-1}
&\partial_t f + \xi \cdot \nabla f + \nabla_\xi \cdot ((\chi_\lambda(v) -\xi)f) =\Delta_\xi f - \nabla_\xi \cdot \left( \left(\chi_\lambda\left(u_\e\right) - \xi \right) f \right),\\
&\partial_t n + \nabla \cdot (n v)=0,\\
&\partial_t (n_k v) + \nabla\cdot ((n v)_k \otimes v) + \nabla p  -\Delta v = -\rho(u-v)\mathds{1}_{\{|v|\le \lambda\}}.
\end{aligned}
\end{align}
Here we denoted by the subscript $k$ in the convection term the convolution with the mollifier $h_k(x)$ with respect to $x$, i.e., 
\[
(n v)_k(x) = ((nv) \star h_k)(x) = \int_\om (nv)(x) h_k(x-y)\,dy.
\]
To show the existence of solutions to the regularized system \eqref{App-1}, we decouple the system as
\begin{align}
\begin{aligned}\label{App-2}
&\partial_t f + \xi \cdot \nabla f + \nabla_\xi \cdot ((\chi_\lambda(\tilde v) -\xi)f) =\Delta_\xi f - \nabla_\xi \cdot \left( \left( \chi_\lambda(\tilde{u}) - \xi \right) f \right),\\
&\partial_t n + \nabla \cdot (n v)=0,\\
&\partial_t (n_k v) + \nabla \cdot ((n v)_k \otimes v) + \nabla p  -\Delta v = -\rho(u-\tilde v)\mathds{1}_{\{|\tilde v|\le \lambda\}}, \quad x \in \Omega,
\end{aligned}
\end{align}
where $\tilde u$ and $\tilde v$ are given functions in $L^2(\om \times (0,T))$. We now consider the operator 
\[
\mathcal{T} : L^2(\om \times (0,T)) \times L^2(\om \times (0,T)) \to L^2(\om \times (0,T)) \times L^2(\om \times (0,T))
\]
defined by
\[ 
\mathcal{T} ( \tilde{u}, \tilde{v}) = (u_\e, v),
\]
For notational simplicity, set denote by 
\[
\mathcal{S}:= L^2(\om \times (0,T)) \times L^2(\om \times (0,T)).
\]
In order to show the existence of solutions to the system \eqref{App-1}, we use the Schauder's fixed point theorem to show the existence of a fixed point of $\mathcal{T}$.

\begin{theorem}\label{TA.1}
The operator $\mathcal{T} : \mathcal{S} \to \mathcal{S}$ is well-defined, continuous and compact. Thus the operator $\mathcal{T}$ has a fixed point, and subsequently this asserts that there exists at least one weak solution $(f,n,v)$ to the system \eqref{App-1}.
\end{theorem}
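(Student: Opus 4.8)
The plan is to verify the three asserted properties of $\mathcal{T}$ and then close the argument by Schauder's fixed point theorem applied to a fixed bounded ball.

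\emph{Step 1 (well-definedness).} Given $(\tilde u,\tilde v)\in\mathcal{S}$, I first solve the linear Vlasov--Fokker--Planck equation in \eqref{App-2} for $f$. Since $\chi_\lambda(\tilde v)$ and $\chi_\lambda(\tilde u)$ are measurable and bounded by $\lambda$, the transport and drift coefficients have at most linear growth in $\xi$, and together with the non-degenerate diffusion $\Delta_\xi f$ and the nonnegativity of $f_0$ and $g$ this gives, by standard linear parabolic theory, a unique nonnegative weak solution $f$ with $\gamma_-f=g$ on $\Sigma_-$. Testing successively against $1$, $f^{p-1}$, $\log f$, $|\xi|^2/2$ and $|\xi|^d$ for $d\le d_0$ produces: conservation of mass up to boundary fluxes, the $L^p$ and entropy bounds of \eqref{C-2}-type (with a harmless $e^{Ct}$ factor from the $+3f$ reaction term coming from $\nabla_\xi\cdot\xi$), and --- using \eqref{init-2}, \eqref{DC-2}, $\|\chi_\lambda\|_{L^\infty}\le\lambda$ and Grönwall's lemma --- propagation of the moments $\int_{\om\times\bbr^3}|\xi|^d f\,dxd\xi$. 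These bounds place $\rho=\int_{\bbr^3}f\,d\xi$ and $\rho u=\int_{\bbr^3}\xi f\,d\xi$ in $L^\infty(0,T;L^1\cap L^q(\om))$ for some $q>1$, hence $u_\e=\rho u/(\rho+\e)\in\mathcal{S}$ (indeed in a space better than $L^2$, to be used in Step 3). It then remains to solve the coupled subsystem for $(n,v)$ --- the continuity equation and the mollified momentum equation --- with $\rho$ and $u$ now known; the mollifications $n_k$ and $(nv)_k$ regularize the convective terms, so a Galerkin scheme for $v$ coupled with a transport solver for $n$ (advecting $n_0$ by $v\in L^2(0,T;H_0^1(\om))$) yields $(n,v)$ with $v\in L^\infty(0,T;L^2(\om))\cap L^2(0,T;H_0^1(\om))$, in particular $v\in\mathcal{S}$. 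All these estimates are uniform in $(\tilde u,\tilde v)$ because every place the input enters is truncated by $\lambda$ or multiplied by the already-controlled $\rho$; hence $\mathcal{T}$ maps $\mathcal{S}$ into a fixed closed bounded convex ball $\overline{B}_R\subset\mathcal{S}$.

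\emph{Step 2 (continuity).} If $(\tilde u_j,\tilde v_j)\to(\tilde u,\tilde v)$ in $\mathcal{S}$, then along a subsequence $\chi_\lambda(\tilde u_j)\to\chi_\lambda(\tilde u)$ and $\chi_\lambda(\tilde v_j)\to\chi_\lambda(\tilde v)$ a.e.\ and in $L^2$ while staying bounded by $\lambda$; the associated $f_j$ are uniformly bounded in the norms of Step 1, so by the regularizing effect of $\Delta_\xi$ and an Aubin--Lions / velocity-averaging argument $f_j\to f$ strongly enough that $\rho_j\to\rho$ and $\rho_ju_j\to\rho u$ in $L^p(\om\times(0,T))$, whence $u_{\e,j}\to u_\e$ in $\mathcal{S}$. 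Passing to the limit in the regularized continuity and momentum equations (using the uniform bounds and compactness from Step 1) gives $v_j\to v$ in $\mathcal{S}$. Since the limit does not depend on the subsequence, the full sequence $\mathcal{T}(\tilde u_j,\tilde v_j)$ converges to $\mathcal{T}(\tilde u,\tilde v)$ in $\mathcal{S}$.

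\emph{Step 3 (compactness and conclusion).} Differentiating in time the $\rho$-continuity equation and the $v$-momentum equation and invoking the Step 1 bounds, I control $\partial_t u_\e$ and $\partial_t v$ in suitable negative-order spaces (here the $\xi$-diffusion and the mollifications $n_k$, $(nv)_k$ are essential for the nonlinear terms). Combined with $u_\e$ bounded in a space compactly embedded in $\mathcal{S}$ and $v$ bounded in $L^2(0,T;H_0^1(\om))$, the Aubin--Lions lemma shows $\mathcal{T}(\overline{B}_R)$ is relatively compact in $\mathcal{S}$. Thus $\mathcal{T}:\overline{B}_R\to\overline{B}_R$ is continuous and compact on a closed bounded convex set, and Schauder's theorem yields a fixed point $(\tilde u,\tilde v)=\mathcal{T}(\tilde u,\tilde v)$; unwinding the definition, the triple $(f,n,v)$ produced is a weak solution of \eqref{App-1}. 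The main obstacle is the internal coupling between $n$ and $v$ inside $\mathcal{T}$: unlike the $f$-equation, the $(n,v)$-subsystem is genuinely nonlinear (compressible, with pressure $p(n)=n^\gamma$), so its solvability and --- more delicately --- the $\partial_t v$ bounds needed for compactness require careful exploitation of the mollifications and of the a priori estimates transferred from $f$; a secondary technical point is the propagation of high moments of $f$, which is exactly what the temporary assumptions \eqref{init-2}--\eqref{DC-2} are for and what guarantees $u_\e$ lands in a compact subset of $\mathcal{S}$.
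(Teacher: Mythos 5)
Your overall plan follows the paper closely: establish moment and $L^p$ bounds for $f$ (giving control of $\rho$, $\rho u$, hence $u_\e$), solve the mollified Navier--Stokes subsystem with the now-known source, then pass to compactness and invoke Schauder. Step~1 matches the paper's Propositions~\ref{PA.1}, \ref{PA.2}, \ref{PA.4}, and your Step~2 continuity argument via velocity averaging is essentially the paper's Lemma~\ref{CA.2}. There is, however, a genuine problem in your Step~3 treatment of the $u_\e$ component.

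You propose to verify compactness of the $u_\e$-output by bounding $\partial_t u_\e$ in a negative-order space and pairing it, via Aubin--Lions, with a bound on $u_\e$ in ``a space compactly embedded in $\mathcal{S}$.'' Neither ingredient is actually available. First, the spatial bounds from Step~1 place $\rho$, $\rho u$ (hence $u_\e=\rho u/(\rho+\e)$) only in $L^\infty(0,T;L^q(\Omega))$ for some $q>1$; these are integrability bounds, not smoothness bounds, and $L^q(\Omega)$ does not compactly embed into $L^2(\Omega)$. Second, $u_\e$ is a nonlinear quotient, so $\partial_t u_\e$ involves products such as $(\rho u)\,\partial_t\rho/(\rho+\e)^2$, i.e.\ products of $L^q$ functions with distributional time derivatives of $\rho$ and $\rho u$; these products are not controlled in any natural negative-order Sobolev space. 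The Aubin--Lions route is therefore blocked for $u_\e$ even though it works perfectly well for $v$, where parabolic regularity supplies $v\in L^2(0,T;H^1_0)$ and $\partial_t v\in L^2(0,T;H^{-1})$ once the mollifications $n_k$, $(nv)_k$ bound the convective terms.

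The correct mechanism is exactly the one you already used in your Step~2 and which the paper applies in its Lemma~\ref{CA.2}: the velocity-averaging Lemma~\ref{LA.2}. Given only a uniformly bounded sequence $(\tilde u^m,\tilde v^m)$ in $\mathcal{S}$, the corresponding $f^m$ solve a kinetic transport equation with uniformly bounded right-hand side in divergence form in $\xi$; the averaging lemma then gives, up to a subsequence, strong convergence of $\rho^m$ and $\rho^m u^m$ in $L^p(\Omega\times(0,T))$ and pointwise a.e.\ convergence. Since $\rho^m+\e\ge\e$ and $|u_\e^m|\le\e^{-1}|\rho^m u^m|$, dominated convergence (Vitali) gives $u_\e^m\to u_\e$ in $\mathcal{S}$. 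This argument requires only boundedness of the inputs, so it yields compactness, not just continuity; the two assertions do not need separate proofs. With this correction your proposal coincides with the paper's proof, the remaining difference being that you sketch a Galerkin construction for the mollified fluid subsystem where the paper simply cites \cite[Lemma 3.3]{M-V}.
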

In the following two subsections, we separately prove each property of $\mathcal{T}$.

\subsection{$\mathcal{T}$ is well-defined} We notice that
\[ 
\chi_\lambda(\tilde v) , \   \chi_\lambda (\tilde u) \in L^\infty(\Omega \times (0,T)). 
\]
Inspired by previous literature \cite{C, K-M-T, M-V}, we provide the following proposition.
\begin{proposition}\label{PA.1} Let $T>0$ and assume that the initial data $f_0$ and $g$ in the boundary condition satisfy
\[
f_0 \in (L_+^1\cap L^\infty)(\Omega \times \bbr^3) 
\]
and
\begin{align*}\begin{aligned}
\begin{aligned}
g \in (L_+^1\cap L^\infty)(\Sigma_- \times (0,T)), \quad \int_0^T \int_{\Sigma_-} |\xi|^2 g(x,\xi) |\xi \cdot r(x)|\, d\sigma(x) d\xi dt <\infty,
\end{aligned}
\end{aligned}\end{align*}
respectively. Then there exists a unique solution $f$ of the kinetic equation in \eqref{App-2} in the sense of distributions and satisfies the following integrability conditions:
\begin{align*}\begin{aligned}
&f \in L^\infty(0,T; (L_+^1\cap L^\infty)(\Omega \times \bbr^3)), \quad \nabla_\xi f \in L^\infty(0,T; L^2(\Omega \times \bbr^3)),\\
&|\xi|^2 f \in L^\infty(0,T; L^1(\Omega \times \bbr^3)), \quad \mbox{and} \quad \gamma_+ f \in L^2(\Sigma_+ \times (0,T)).
\end{aligned}\end{align*}
Moreover, the following estimates hold:
\begin{itemize}
\item[(i)]
For any $p \in [1,\infty]$, $L^p$-norm of $f$ and $\gamma_+ f$ can be uniformly bounded in $k$, $\lambda$ and $\e$:
\begin{align}
\begin{aligned}\label{App-3}
& \|f\|_{L^\infty(0,T;L^p(\Omega \times \bbr^3))} + \|\nabla_\xi f^{\frac{p}{2}}\|_{L^2(\Omega \times \bbr^3 \times (0,T))}^{\frac{2}{p}} \le e^{\frac{C}{p'}T}\hspace{-0.05cm}\lt( \|f_0\|_{L^p(\Omega \times \bbr^3)}  + \|g\|_{L^p(\Sigma_-  \times (0,T))} \rt) \quad \mbox{and}\\
&\qquad \|\gamma_+ f\|_{L^p(\Sigma_+ \times (0,T))} \le \|f_0\|_{L^p(\Omega \times \bbr^3)} + \|g\|_{L^p( \Sigma_- \times (0,T))}.
\end{aligned}
\end{align}
\item[(ii)]
If one has
\[
\|f\|_{L^\infty( \Omega \times \bbr^3 \times (0,T))}<M 
\]
and
\[
\int_{\Omega \times \bbr^3} |\xi|^m f(x,\xi,t)\,dxd\xi \le M, \quad \forall t \in (0,T), \quad \forall m \in [0,d_0],  
\]
then there exists a constant $C := C(M)$ such that for all $t \in [0,T]$,
\begin{align*}\begin{aligned}
&\|\rho(\cdot,t)\|_{L^p} \le C(M), \quad \forall p \in [1, (d_0+3)/3) \quad \mbox{and}\\
&\|(\rho u)(\cdot,t)\|_{L^p} \le C(M), \quad \forall p \in [1, (d_0+3)/4).
\end{aligned}\end{align*}
\end{itemize}
\end{proposition}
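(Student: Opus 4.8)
\emph{Overall strategy.} Moving the local-alignment term in the kinetic equation of \eqref{App-2} to the left, $f$ solves the linear kinetic Fokker--Planck equation
\[
\pa_t f + \xi\cdot\nabla f + \nabla_\xi\cdot\big((\chi_\lambda(\tilde v)+\chi_\lambda(\tilde u)-2\xi)f\big) = \Delta_\xi f
\]
with $f|_{t=0}=f_0$ and the inflow condition $\gamma_-f=g$ on $\Sigma_-\times(0,T)$, the drift $\chi_\lambda(\tilde v)+\chi_\lambda(\tilde u)$ belonging to $L^\infty(\om\times(0,T))$. Existence will be produced by the by-now-standard procedure of \cite{C,K-M-T,M-V}: mollify the merely bounded coefficients, add an artificial viscosity $\nu\Delta_x$ so that the problem becomes uniformly parabolic with inflow boundary data, solve the resulting linear initial-boundary value problem by classical theory (a Galerkin scheme), and pass to the limit using the $\nu$-uniform a priori bounds established below together with weak-$L^2$ compactness, the bound on $\pa_t f$ in a negative-order space, and the velocity-averaging lemma. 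Uniqueness is immediate from linearity: the difference of two solutions with the same data solves the homogeneous problem, and the $L^2$ energy identity --- legitimate after a DiPerna--Lions regularization in $x$, the transport field $\xi$ being a smooth parameter and $-\Delta_\xi$ supplying coercivity in $\xi$ --- forces it to vanish. Thus the heart of the matter is the family of a priori estimates.

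\emph{The $L^p$ and trace estimates (i).} Multiply the equation by $pf^{p-1}$ (recall $f\ge0$), integrate over $\om\times\R^3$, and apply Green's formula for the kinetic transport operator, which presupposes the trace theory for $\xi\cdot\nabla f$ giving $\gamma_\pm f$ in the weighted spaces $L^p(\Sigma_\pm;|\xi\cdot r|\,d\sigma d\xi)$. The transport term yields the boundary contribution $\int_{\Sigma_+}|\xi\cdot r|(\gamma_+f)^p-\int_{\Sigma_-}|\xi\cdot r|g^p$; the $\xi$-divergence term, after one further integration by parts in $\xi$, yields $-6(p-1)\int f^p$ because $\nabla_\xi\cdot(\chi_\lambda(\tilde v)+\chi_\lambda(\tilde u)-2\xi)\equiv-6$ --- a constant, independent of $k,\lambda,\e$; and the Laplacian gives $-\tfrac{4(p-1)}{p}\|\nabla_\xi f^{p/2}\|_{L^2}^2$. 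One obtains the differential inequality
\[
\frac{d}{dt}\|f\|_{L^p}^p + \tfrac{4(p-1)}{p}\|\nabla_\xi f^{p/2}\|_{L^2}^2 + \int_{\Sigma_+}|\xi\cdot r|(\gamma_+f)^p \;\le\; 6(p-1)\|f\|_{L^p}^p + \int_{\Sigma_-}|\xi\cdot r|g^p,
\]
and Gr\"onwall (for $\|f\|_{L^p}$), together with integration in time and the good sign of the $\Sigma_+$-term (for $\|\gamma_+f\|_{L^p(\Sigma_+\times(0,T))}$), yields \eqref{App-3} with constant $C=6$ for $p\in[1,\infty)$; the endpoint $p=\infty$ follows from the maximum principle for the non-divergence form $\pa_t f+\xi\cdot\nabla f+(\chi_\lambda(\tilde v)+\chi_\lambda(\tilde u)-2\xi)\cdot\nabla_\xi f-\Delta_\xi f=6f$, whose solution is bounded by $e^{6t}\max\{\|f_0\|_{L^\infty},\|g\|_{L^\infty}\}$. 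The remaining integrability assertions are of the same kind: testing against $|\xi|^m$ and integrating by parts --- the boundary term controlled by \eqref{DC-2}, and $\nabla_\xi(|\xi|^m)\cdot(\chi_\lambda(\tilde v)+\chi_\lambda(\tilde u)-2\xi)$ by \eqref{init-2} and the already-bounded lower moments --- gives $|\xi|^m f\in L^\infty(0,T;L^1)$ for $m\in[0,d_0]$ (in particular the $|\xi|^2f$ bound); the $L^\infty_tL^2$ bound on $\nabla_\xi f$ is one extra $\xi$-derivative estimate, most cleanly derived at the level of the smoother approximants used for existence.

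\emph{The bounds on $\rho$ and $\rho u$ (ii).} This is the classical moment-interpolation estimate. Writing $N(x):=\int_{\R^3}|\xi|^m f(x,\xi,t)\,d\xi$, splitting the $\xi$-integral at radius $R$ and using $f\ge0$ and $\|f\|_{L^\infty(\om\times\R^3\times(0,T))}\le M$ gives $\rho(x)\le \tfrac{4\pi}{3}MR^3+R^{-m}N(x)$ and $|\rho u(x)|\le\pi MR^4+R^{1-m}N(x)$; optimizing in $R$ yields $\rho(x)\le C M^{m/(m+3)}N(x)^{3/(m+3)}$ and $|\rho u(x)|\le C M^{(m-1)/(m+3)}N(x)^{4/(m+3)}$. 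Raising to the powers $(m+3)/3$ and $(m+3)/4$, integrating over $\om$, and using $\int_\om N\,dx=\int_{\om\times\R^3}|\xi|^m f\,dxd\xi\le M$ shows $\|\rho(\cdot,t)\|_{L^{(m+3)/3}}\le C(M)$ and $\|(\rho u)(\cdot,t)\|_{L^{(m+3)/4}}\le C(M)$ uniformly in $t\in[0,T]$; taking $m=d_0$ and invoking $|\om|<\infty$ to descend to smaller exponents gives the claimed ranges $p\in[1,(d_0+3)/3)$ and $p\in[1,(d_0+3)/4)$.

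\emph{Main obstacle.} The genuinely delicate points are not the Gr\"onwall arguments but: (a) making the boundary integrals rigorous --- establishing that $\gamma_\pm f$ exist in the weighted trace spaces and that Green's formula and the $L^p$ identities above remain valid for the merely distributional solution delivered by the limiting procedure, which requires the trace/renormalization machinery for kinetic transport equations with inflow data; and (b) keeping every constant in (i) independent of the regularization parameters $k,\lambda,\e$, which is precisely why one exploits the structural identity $\nabla_\xi\cdot(\chi_\lambda(\tilde v)+\chi_\lambda(\tilde u)-2\xi)\equiv-6$ and the uniform bound $\|\chi_\lambda\|_{L^\infty}\le\lambda$, never any quantitative control of $\tilde u,\tilde v$. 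The rest is routine.
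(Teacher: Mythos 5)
Your proposal follows essentially the same approach as the paper: the $L^p$ differential identity with the structural constant $3\beta(p-1)$ (here $\beta=2$) and the trace-term bookkeeping is exactly the one the paper imports from Carrillo's Vlasov--Poisson--Fokker--Planck theory (\cite{C}, Lemma~3.4), and your derivation of it from $\nabla_\xi\cdot(\chi_\lambda(\tilde v)+\chi_\lambda(\tilde u)-2\xi)\equiv -6$ is the same calculation, while part~(ii) is the identical moment interpolation, your split-at-radius-$R$ optimization and the paper's weighted H\"older estimate with weight $(1+|\xi|)^{r/p}$ yielding the same exponents. The regularization/trace scaffolding you sketch for existence and uniqueness is likewise what the paper defers to the standard Vlasov--Fokker--Planck literature.
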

\begin{proof}
(i) Consider a weak solution of the following Vlasov-Poisson-Fokker-Planck equation:
\begin{equation}\label{App-4}
\partial_t f + \xi \cdot \nabla f + \nabla_\xi \cdot ((E_0-\beta \xi) f) = \Delta_\xi f
\end{equation}
with $E_0 \in L^\infty(\om \times (0,T))$. Then, by \cite[Lemma 3.4]{C}, we obtain
\begin{align*}\begin{aligned}
\frac{d}{dt}& \int_{\Omega \times \bbr^3} f^p \,dxd\xi + \int_\Sigma (\xi \cdot r(x)) (\gamma f)^p \,d\sigma(x) d\xi \\
& \quad -3\beta (p-1) \int_{\Omega \times \bbr^3} f^p \,dxd\xi + \frac{4(p-1)}{p} \int_{\Omega \times \bbr^3}|\nabla_\xi f^{p/2}|^2 \,dxd\xi = 0. 
\end{aligned}\end{align*}
In our case, $\beta = 2$ and $E_0 = \chi_\lambda(\tilde{v}) + \chi_\lambda (\tilde{u})$, and this consideration together with Gr\"onwall's lemma yields the desired result.\\

\noindent (ii) Let $p \in (1,\infty)$ and $q$ be the H\"older conjugate of $p$, i.e., $p$ and $q$ satisfy $1/p + 1/q = 1$. Then, for $r$ satisfying $rq/p >3$, which is equivalent to $p < (r+3)/3$, we get
\begin{align*}\begin{aligned}
\rho(x,t) &= \int_{\bbr^3} (1+|\xi|)^{r/p} f^{1/p} \frac{f^{1/q}}{(1+|\xi|)^{r/p}}\,d\xi\\
&\le \left(\int_{\bbr^3} (1+|\xi|)^r f \,d\xi \right)^{1/p} \left( \int_{\bbr^3} \frac{f}{(1+|\xi|)^{rq/p}} \,d\xi \right)^{1/q}\\
&\le C \|f(\cdot,\cdot,t)\|_{L^\infty(\Omega \times\bbr^3)}^{1/q} \left(\int_{\bbr^3} (1+|\xi|)^r f \,d\xi \right)^{1/p}\cr
&\le C M^{1/q} \left(\int_{\bbr^3} (1+|\xi|)^r f \,d\xi \right)^{1/p},
\end{aligned}\end{align*}
which implies
\[
\|\rho(\cdot,t)\|_{L^p}^p \le C \int_{\Omega \times \bbr^3} (1+|\xi|^r) f\,dx d\xi.
\]
This gives the desired result for $\rho$. Similarly, we can also obtain the estimate for $\rho u$. 
\end{proof}

Next, we discuss the boundedness of the velocity moments of the kinetic density $f$.

\begin{proposition}\label{PA.2}
For a weak solution $f$ to the kinetic equation in \eqref{App-2} established in Proposition \ref{PA.1}, its velocity moments satisfy the following boundedness condition:
\[
\sup_{t \in (0,T)} \left( \int_{\Omega \times \bbr^3} |\xi|^d f \,dx d\xi + \int_0^t \int_{\Sigma_+} |\xi|^d |\xi \cdot r(x)| (\gamma_+ f) \,d\sigma(x)d\xi ds \right) \le C(\lambda, d, T), \quad \forall d \in [0, d_0]. 
\]
\end{proposition}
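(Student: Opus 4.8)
The plan is to derive a differential inequality for the quantity $\mathcal M_d(t):=\int_{\Omega\times\bbr^3}|\xi|^d f\,dxd\xi$ together with the boundary flux term, and then close it via Gr\"onwall's lemma together with the uniform $L^\infty$-bound on $f$ and the moment bound on the initial datum and the boundary datum coming from \eqref{init-2}, \eqref{DC-2}. Concretely, I would test the kinetic equation in \eqref{App-2} against $|\xi|^d$. Multiplying the equation by $|\xi|^d$ and integrating over $\Omega\times\bbr^3$, the transport term $\xi\cdot\nabla_x f$ produces only a boundary contribution $\int_{\Sigma}(\xi\cdot r(x))|\xi|^d\gamma f\,d\sigma d\xi$, which splits into the outgoing part $\int_{\Sigma_+}|\xi\cdot r||\xi|^d\gamma_+f$ (nonnegative, kept on the left) and the incoming part $-\int_{\Sigma_-}|\xi\cdot r||\xi|^d g$ (bounded by \eqref{DC-2}, moved to the right). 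The velocity-divergence terms are handled by integration by parts in $\xi$: for the drag/confinement part $\nabla_\xi\cdot((\chi_\lambda(\tilde v)-\xi)f)$ and $-\nabla_\xi\cdot((\chi_\lambda(\tilde u)-\xi)f)$ one gets, after moving the derivative onto $|\xi|^d$, terms of the form $\int (\chi_\lambda(\tilde v)-\xi)\cdot\nabla_\xi(|\xi|^d)\,f$ and similarly for $\tilde u$; since $|\nabla_\xi(|\xi|^d)|=d|\xi|^{d-1}$ and $\chi_\lambda(\tilde v),\chi_\lambda(\tilde u)\in L^\infty$ with bound $\lambda$, these are controlled by $Cd\int(|\xi|^{d-1}+|\xi|^d)f\le C(\lambda,d)(\mathcal M_d+\mathcal M_{d-1})$. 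The diffusion term $\Delta_\xi f$ contributes $\int\Delta_\xi(|\xi|^d)f=d(d+1)\int|\xi|^{d-2}f=d(d+1)\mathcal M_{d-2}$.

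Putting these together I get an inequality of the shape
\[
\frac{d}{dt}\mathcal M_d(t)+\int_{\Sigma_+}|\xi\cdot r(x)|\,|\xi|^d\,\gamma_+f\,d\sigma d\xi\;\le\;C(\lambda,d)\big(\mathcal M_d(t)+\mathcal M_{d-1}(t)+\mathcal M_{d-2}(t)\big)+\int_{\Sigma_-}|\xi\cdot r(x)|\,|\xi|^d g\,d\sigma d\xi .
\]
The lower-order moments $\mathcal M_{d-1},\mathcal M_{d-2}$ are then absorbed using the interpolation $\mathcal M_{d-1}\le \mathcal M_d^{(d-1)/d}\|f\|_{L^1}^{1/d}$ and similarly for $\mathcal M_{d-2}$, which by Proposition \ref{PA.1}(i) gives $\mathcal M_{d-1},\mathcal M_{d-2}\le C(\mathcal M_d+1)$ with $C$ uniform in the parameters (here $\|f\|_{L^1}$ is bounded by $\|f_0\|_{L^1}+\|g\|_{L^1}$ from \eqref{App-3}). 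Hence the right-hand side is bounded by $C(\lambda,d)(\mathcal M_d(t)+1)+G_d(t)$ with $G_d(t):=\int_{\Sigma_-}|\xi\cdot r||\xi|^d g\,d\sigma d\xi\in L^1(0,T)$ by \eqref{DC-2}. Integrating in time and applying Gr\"onwall's lemma yields
\[
\sup_{t\in(0,T)}\mathcal M_d(t)+\int_0^T\!\!\int_{\Sigma_+}|\xi\cdot r(x)|\,|\xi|^d\gamma_+f\,d\sigma d\xi\,ds\le e^{C(\lambda,d)T}\Big(\mathcal M_d(0)+\int_0^T G_d(s)\,ds+C(\lambda,d)T\Big),
\]
which is exactly the claimed bound $C(\lambda,d,T)$, finite thanks to \eqref{init-2} and \eqref{DC-2}.

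There is a small circularity worry to dispatch: to justify these manipulations rigorously one cannot just multiply by $|\xi|^d$ (which is not compactly supported), so I would first run the argument with the truncated weight $\psi_R(\xi):=|\xi|^d\varphi(\xi/R)$ for a fixed cutoff $\varphi\in\mathcal C_c^\infty$, obtain the estimate with constants independent of $R$ (using that $f\ge0$, the boundary flux term has a sign, and $|\nabla_\xi\psi_R|,|\Delta_\xi\psi_R|$ are dominated by $C(|\xi|^{d-1}+|\xi|^{d-2})$ uniformly in $R$ on $\mathrm{supp}\,\psi_R$), and then let $R\to\infty$ by monotone convergence. \textbf{The main obstacle} I anticipate is precisely this truncation/justification step — ensuring all the error terms generated by differentiating $\varphi(\xi/R)$ are genuinely lower order and vanish (or stay bounded) as $R\to\infty$, so that no hidden dependence on $R$ survives; once that bookkeeping is done the estimate itself is a routine weighted-energy computation plus Gr\"onwall. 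Note the bound is allowed to (and does) degenerate as $\lambda\to\infty$, which is harmless here since $\lambda$ is a fixed regularization parameter at this stage of the construction.
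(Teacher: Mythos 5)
Your computation and conclusion are correct, but the way you close the estimate is genuinely different from the paper's. The paper keeps the favorable sign of the damping term $-\beta d\,m_d$ produced by the $-\xi$ part of the drift (here $\beta=2$), and after Young's inequality on the $d\int(E_0\cdot\xi)|\xi|^{d-2}f$ term that damping absorbs the $m_d$ contribution entirely, leaving the already-closed inequality
\[
\frac{d}{dt}m_d(f)+\int_{\Sigma_+}|\xi|^d|\xi\cdot r(x)|\,\gamma_+f\,d\sigma d\xi
\le \int_{\Sigma_-}|\xi|^d|\xi\cdot r(x)|\,g\,d\sigma d\xi + d(d+1+2\lambda^2)\,m_{d-2}(f),
\]
from which one bootstraps in steps of two starting from $m_0$ (bounded by Proposition~\ref{PA.1}), with the interpolation \eqref{Ap-7} used once to seed the chain at the fractional level $d-2l\in(0,2)$ when $d$ is not an even integer. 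You instead bound everything crudely by $C(\lambda,d)(m_d+m_{d-1}+m_{d-2})$, discard the sign information, convert $m_{d-1},m_{d-2}$ back to $m_d$ via H\"older/Young interpolation with $m_0$, and run a single Gr\"onwall on $m_d$. Both routes are valid and give the same $C(\lambda,d,T)$; yours is a one-shot argument (no induction over moment orders), while the paper's is structurally cleaner because the differential inequality is already closed in $m_{d-2}$ without needing interpolation inside the Gr\"onwall step. Two small caveats about your variant: the formulas $m_{d-1}\le m_d^{(d-1)/d}\|f\|_{L^1}^{1/d}$ and $m_{d-2}\le m_d^{(d-2)/d}\|f\|_{L^1}^{2/d}$ only make sense as H\"older inequalities when $d>1$ (resp.\ $d>2$); for smaller $d$ the lower moments should instead be bounded by splitting $\{|\xi|<1\}$ (using $\|f\|_{L^\infty}$ and local integrability of $|\xi|^{d-1}$, $|\xi|^{d-2}$) from $\{|\xi|\ge1\}$ (where those powers are $\le 1$, giving $\le m_0$). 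Finally, your explicit $\psi_R$-truncation to justify testing against the unbounded weight $|\xi|^d$ is a careful point that the paper passes over with ``a direct computation gives''; it is the right way to make the moment identity rigorous.
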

\begin{proof} For $d \ge 2$, we set $d$-th moment of $f$ 
\[
m_d(f)(t) := \int_{\Omega \times \bbr^3} |\xi|^d f(x,\xi,t) \,dxd\xi.
\]
A direct computation gives that a weak solution $f$ to \eqref{App-4} satisfies
\begin{align*}\begin{aligned}
\frac{d}{dt} m_d(f)&= -\int_{\Sigma_+} |\xi|^d |\xi \cdot r(x)| (\gamma_+ f) \, d\sigma(x) d\xi + \int_{\Sigma_-} |\xi|^d |\xi \cdot r(x)| g \, d\sigma(x)d\xi\\
&\quad -\beta d m_d(f) + d(d+1)m_{d-2}(f) + d \int_{\Omega \times \R^3 \xi}(E_0 \cdot \xi)|\xi|^{d-2} f \,dx d\xi.
\end{aligned}\end{align*}
We then put $\beta = 2$ and $E_0 = \chi_\lambda(\tilde{v}) + \chi_\lambda (\tilde{u})$ to yield
\begin{align}
\begin{aligned}\label{Ap-6}
\frac{d}{dt}& m_d(f) + \int_{\Sigma_+} |\xi|^d |\xi \cdot r(x)| (\gamma_+ f)\,d\sigma(x) d\xi \\
&\le \int_{\Sigma_-} |\xi|^d |\xi \cdot r(x)| g \,d\sigma(x)d\xi + d(d+1 + 2\lambda^2) m_{d-2}(f).
\end{aligned}
\end{align}
By Proposition \ref{PA.1}, the zeroth moment of $f$, $m_0(f)$, is bounded, and this together with \eqref{Ap-6} and \eqref{DC-2} yields
\[
\sup_{t \in (0,T)} \left( \int_{\Omega \times \bbr^3} |\xi|^d f \,dx d\xi + \int_0^t \int_{\Sigma_+} |\xi|^d |\xi \cdot r(x)| (\gamma_+ f) \,d\sigma(x)d\xi ds \right) \le C(\lambda, d, T)
\]
for $d=0, 2, 4$.
Moreover, for $d \in [0,d_0] \setminus\{0,2,4\}$, we can find $l \in \bbn \cup \{ 0\}$ that satisfies $0 < d - 2l <2$ and thus we can estimate
\begin{align}\label{Ap-7}
\begin{aligned}
\int_{\Omega \times \bbr^3} |\xi|^{d-2l} f \,dxd\xi &\le \left(\int_{\Omega \times \bbr^3} |\xi|^2 f \,dxd\xi \right)^{\frac{d-2l}{2}} \left( \int_{\Omega \times \bbr^3} f \,dxd\xi \right)^{\frac{2+2l-d}{2}} \cr
&\le C(\lambda, d, T).
\end{aligned}
\end{align}
Thus, we repetitively use \eqref{Ap-6} and combine this with \eqref{Ap-7} to get the desired result.
\end{proof}

For later use, we consider the following identity.

\begin{proposition}\label{PA.3}
A weak solution $f$ to the kinetic equation in \eqref{App-2} satisfies the following identity:
\begin{align}
\begin{aligned}\label{Ap-8}
\frac{d}{dt}& \int_{\Omega \times \bbr^3} \left(\frac{|\xi|^2}{2} + \log f \right) f \,dxd\xi + \int_{\Omega\times\bbr^3} \frac{1}{f}|\nabla_\xi f - (\chi_\lambda(\tilde{u}) - \xi)f|^2 \,dxd\xi \\
&= -\int_{\Sigma}(\xi \cdot r(x)) \left( \frac{|\xi|^2}{2} + \log\gamma f + 1 \right) \gamma f \,d\sigma(x) d\xi + 3 \|f\|_{L^1(\Omega \times \bbr^3)}\\
& \quad + \int_{\Omega\times\bbr^3}\chi_\lambda(\tilde{u}) \Big(\chi_\lambda(\tilde{u}) - \xi \Big) f \,dxd\xi  + \int_{\Omega \times \bbr^3} (\chi_\lambda(\tilde{v})-\xi)\cdot \xi f \,dx d\xi.
\end{aligned}
\end{align}
\end{proposition}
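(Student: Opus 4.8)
The plan is to test the kinetic equation in \eqref{App-2} against the weight $\psi:=\tfrac{|\xi|^2}{2}+\log f+1$. The point of departure is the pointwise identity $\partial_t(f\log f)=(\log f+1)\,\partial_t f$, which gives
\[
\frac{d}{dt}\int_{\om\times\bbr^3}\lt(\frac{|\xi|^2}{2}+\log f\rt)f\,dxd\xi=\int_{\om\times\bbr^3}\psi\,\partial_t f\,dxd\xi,
\]
so that it remains to substitute $\partial_t f=-\xi\cdot\nabla f-\nabla_\xi\cdot((\chi_\lambda(\tilde v)-\xi)f)+\Delta_\xi f-\nabla_\xi\cdot((\chi_\lambda(\tilde u)-\xi)f)$ and treat the four resulting integrals one at a time.

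First I would handle the free transport term $-\int\psi\,\xi\cdot\nabla f$: integrating by parts in $x$, using that $\tfrac{|\xi|^2}{2}+1$ is $x$-independent while $\nabla_x\log f=\nabla_x f/f$ (so that $\log f\,\nabla_x f=\nabla_x(f\log f-f)$), turns this term into a pure boundary integral over $\Sigma$, namely the one on the right-hand side of \eqref{Ap-8}. Next, the diffusion term $\Delta_\xi f$ and the correction term $-\nabla_\xi\cdot((\chi_\lambda(\tilde u)-\xi)f)$ should be grouped as $\int\psi\,\nabla_\xi\cdot\big(\nabla_\xi f-(\chi_\lambda(\tilde u)-\xi)f\big)$; integrating by parts in $\xi$ (no $\xi$-boundary term, since $f$, $|\xi|^2 f$ and $\nabla_\xi f$ are integrable) and writing $\nabla_\xi\psi=\xi+\nabla_\xi f/f=\chi_\lambda(\tilde u)+\tfrac1f\big(\nabla_\xi f-(\chi_\lambda(\tilde u)-\xi)f\big)$ yields, after completing the square, the dissipation term $-\int\tfrac1f|\nabla_\xi f-(\chi_\lambda(\tilde u)-\xi)f|^2$ together with $\int\chi_\lambda(\tilde u)\cdot(\chi_\lambda(\tilde u)-\xi)f$, where one uses $\int_{\bbr^3}\chi_\lambda(\tilde u)\cdot\nabla_\xi f\,d\xi=0$. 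Finally, for the force term $-\int\psi\,\nabla_\xi\cdot((\chi_\lambda(\tilde v)-\xi)f)$, integrating by parts in $\xi$ and again using $\nabla_\xi\psi=\xi+\nabla_\xi f/f$ produces $\int(\chi_\lambda(\tilde v)-\xi)\cdot\xi f$ and $\int(\chi_\lambda(\tilde v)-\xi)\cdot\nabla_\xi f=-\int f\,\nabla_\xi\cdot(\chi_\lambda(\tilde v)-\xi)\,dxd\xi=3\|f\|_{L^1(\om\times\bbr^3)}$. Collecting the four pieces and moving the dissipation term to the left gives \eqref{Ap-8}.

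The main obstacle is not the algebra but the rigorous justification for the weak solution $f$ built in Proposition \ref{PA.1}: $\log f$ is meaningful only where $f>0$, and the entropy $\int f\log f$, the dissipation $\int\tfrac1f|\nabla_\xi f-(\chi_\lambda(\tilde u)-\xi)f|^2$, and the trace quantities $\gamma f\log\gamma f$ and $|\xi|^2\gamma f$ on $\Sigma$ must be given meaning and controlled. Since $f$ solves a \emph{linear} Fokker--Planck equation with bounded drift ($\chi_\lambda(\tilde u),\chi_\lambda(\tilde v)\in L^\infty$) and satisfies the bounds of Propositions \ref{PA.1}--\ref{PA.2} (namely $f\in L^\infty_t(L^1\cap L^\infty)$, $\nabla_\xi f\in L^\infty_t L^2$, finite velocity moments up to order $d_0$, and $\gamma_+ f\in L^2(\Sigma_+\times(0,T))$), I would first run the computation above on the smooth approximations used in the construction of Proposition \ref{PA.1} — equivalently, replace $\log f$ by $\log(f+\delta)$ and the denominators by $f+\delta$ — bound every term uniformly in the regularization parameter using these estimates together with \eqref{init-2}--\eqref{DC-2} and Lemma \ref{LA.4} (and its $\Sigma$-analogue) to control the negative part of the entropy and of the boundary entropy flux, and then pass to the limit: the dissipation term is recovered by Fatou/lower semicontinuity and the boundary integrals by the $L^2(\Sigma_+)$ trace bound together with the moment estimates of Proposition \ref{PA.2}. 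The assumption \eqref{DC-2} on $g$ and the moment bound of Proposition \ref{PA.2} then ensure that every term in \eqref{Ap-8} is finite, so the identity is meaningful.
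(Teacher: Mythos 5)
Your proposal is correct and is essentially the paper's argument: the paper simply splits the computation into the kinetic-energy identity \eqref{Ap-9} and the entropy identity \eqref{Ap-10} for the general linear equation \eqref{App-4} with $\beta=2$, $E_0=\chi_\lambda(\tilde v)+\chi_\lambda(\tilde u)$, and then adds them, which is the same algebra as your single test against $\tfrac{|\xi|^2}{2}+\log f+1$ (completing the square in the diffusion--alignment block and integrating the drag term by parts in $\xi$). Your closing paragraph on justifying the computation for the weak solution of Proposition \ref{PA.1} via the $\log(f+\delta)$ regularization and the bounds of Propositions \ref{PA.1}--\ref{PA.2} is more careful than the paper's ``we can easily check,'' and is consistent with how the identity is actually used afterwards.
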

\begin{proof}
We can easily check that a weak solution $f$ to the equation \eqref{App-4} satisfies
\begin{align}
\begin{aligned}\label{Ap-9}
&\frac{d}{dt} \int_{\Omega \times \bbr^3} \frac{|\xi|^2}{2} f \,dxd\xi + \int_{\Sigma}(\xi \cdot r(x)) \frac{|\xi|^2}{2} \gamma f \,d\sigma(x) d\xi + 2\beta \int_{\Omega \times \bbr^3} \frac{|\xi|^2}{2} f \,dxd\xi\cr
&\quad =  3\|f\|_{L^1(\Omega\times\bbr^3)} + \int_{\Omega \times \bbr^3} (E_0 \cdot \xi) f \,dx d\xi
\end{aligned}
\end{align}
and
\begin{align}
\begin{aligned}\label{Ap-10}
&\frac{d}{dt}\int_{\Omega \times \bbr^3} f\log f \,dxd\xi +\int_{\Sigma}(\xi \cdot r(x)) (\log\gamma f +1) \gamma f \,d\sigma(x) d\xi \cr
&\quad = 3\beta \|f\|_{L^1(\Omega\times\bbr^3)}-4 \int_{\Omega} |\nabla_\xi \sqrt{f}|^2 \,dxd\xi.
\end{aligned}
\end{align}
Putting $\beta = 2$, $E_0 = \chi_\lambda(\tilde{v}) + \chi_\lambda (\tilde{u})$ into \eqref{Ap-9} and \eqref{Ap-10}, we conclude the desired result.
\end{proof}
Next, we take into account the fluid equations in \eqref{App-2}. Since the Navier-Stokes system in \eqref{App-2} is exactly the same with that of \cite{M-V}, we can directly employ the results in \cite[Lemma 3.3]{M-V} to deduce the proposition below.
\begin{proposition}\label{PA.4}
Assume that the initial data $(n_0, v_0)$ satisfy \eqref{init-1}. Then there exists a unique weak solution to the fluid equations in \eqref{App-2} satisfying
\begin{equation}\label{Ap-11}
\frac{d}{dt} \lt(\int_\Omega \frac{1}{2}n_k |v|^2\,dx  + \frac{1}{\gamma-1} \int_\om n^\gamma \,dx\rt) + \int_\Omega |\nabla v|^2 dx = \int_\Omega \rho(u-\tilde{v})\mathds{1}_{\{|\tilde v|\le \lambda\}} v \,dx.
\end{equation}
Here we remind the reader the notation $n_k = n \star h_k$, where $h_k$ is appeared in the beginning of this section.
\end{proposition}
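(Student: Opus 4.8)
The subsystem for $(n,v)$ in \eqref{App-2} is a regularized compressible Navier--Stokes system whose source term $-\rho(u-\tilde v)\mathds{1}_{\{|\tilde v|\le\lambda\}}$ is \emph{fixed} data: $\rho$ and $u$ are the moments of the kinetic density $f$ already produced in Proposition \ref{PA.1}, and $\tilde v\in L^2(\Omega\times(0,T))$ is prescribed. Since this system coincides with the one appearing in \cite{M-V}, the plan is to reproduce the argument of \cite[Lemma 3.3]{M-V}. First I would decouple the two equations: for a given velocity field $v$ I solve the continuity equation for $n=n[v]$; inserting this into the momentum equation, I solve the resulting parabolic problem for $v$; then I glue the two steps together by Schauder's fixed point theorem on a ball of $L^2(\Omega\times(0,T))$, the a priori bounds being furnished by the energy identity. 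The mollifications $n_k$ and $(nv)_k$ built into \eqref{App-2} are precisely what make the nonlinear momentum terms compatible with the weak integrability $n\in L^\infty(0,T;L^\gamma(\Omega))$, so this is where they earn their keep.

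\textbf{Step 1: the continuity equation.} For $v$ ranging over a bounded subset of $L^2(0,T;H_0^1(\Omega))$, I invoke the DiPerna--Lions theory of renormalized solutions to obtain the unique nonnegative weak solution $n$ of $\partial_t n+\nabla\cdot(nv)=0$ with $n(0)=n_0$. This solution conserves mass, $\int_\Omega n(\cdot,t)\,dx=\int_\Omega n_0\,dx$, and since assumption \eqref{init-1} gives $n_0\in L^\gamma(\Omega)$, renormalizing with $\beta(s)=s^\gamma$ yields $\frac{d}{dt}\int_\Omega n^\gamma\,dx=-(\gamma-1)\int_\Omega n^\gamma\,\nabla\cdot v\,dx$, hence a Gr\"onwall bound on $\|n(t)\|_{L^\gamma}$ controlled by $\|\nabla v\|_{L^2_{t,x}}$. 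One then checks continuity and compactness of the map $v\mapsto n$ in the relevant topologies (stability of renormalized solutions).

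\textbf{Step 2: the momentum equation.} Given $n$ from Step 1, the mollified densities $n_k=n\star h_k$ and $(nv)_k$ are smooth and bounded in $x$, uniformly in $t$, because $h_k\in\mathcal C_c^\infty(\Omega)$. I then solve
\[
\partial_t(n_k v)+\nabla\cdot((nv)_k\otimes v)+\nabla p(n)-\Delta v=-\rho(u-\tilde v)\mathds{1}_{\{|\tilde v|\le\lambda\}},\qquad v|_{\partial\Omega}=0,
\]
by a Galerkin scheme in the eigenbasis of the Dirichlet Laplacian, deriving the a priori estimates from the energy identity obtained by testing with $v$ (see Step 3), and passing to the limit in the Galerkin parameter by Aubin--Lions compactness. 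Here $\rho$ and $\rho u$ enter as fixed functions with the integrability guaranteed by Proposition \ref{PA.1}(ii), and the $\tilde v$–contribution is controlled by $\lambda$ on the set $\{|\tilde v|\le\lambda\}$.

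\textbf{Step 3: energy identity, fixed point, uniqueness, and the main difficulty.} Testing the momentum equation with $v$, one gets $\frac{d}{dt}\int_\Omega\tfrac12 n_k|v|^2+\tfrac12\int_\Omega(\partial_t n_k)|v|^2+\int_\Omega\nabla p(n)\cdot v+\int_\Omega|\nabla v|^2=\int_\Omega\rho(u-\tilde v)\mathds{1}_{\{|\tilde v|\le\lambda\}}\cdot v$; integrating by parts in $\nabla\cdot((nv)_k\otimes v)$ and using $\partial_t n_k=-(\nabla\cdot(nv))\star h_k$ produces exactly $-\tfrac12\int_\Omega(\partial_t n_k)|v|^2$, so the two mollified terms cancel, and combining $\int_\Omega\nabla p(n)\cdot v$ with the renormalized continuity equation of Step 1 converts it into $\frac{d}{dt}\tfrac1{\gamma-1}\int_\Omega n^\gamma$, which is \eqref{Ap-11}. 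The resulting uniform bounds show that the map $v\mapsto v[n[v]]$ sends a suitable ball of $L^2(\Omega\times(0,T))$ into itself and is compact, so Schauder's theorem gives a fixed point, i.e. a weak solution $(n,v)$ of \eqref{App-2}; uniqueness follows by subtracting two solutions, repeating the energy computation on the difference (the mollifiers making the nonlinear terms $L^2$-Lipschitz in $v$) and applying Gr\"onwall. The genuinely delicate point is handling the low regularity of $n$ — only $L^\infty(0,T;L^\gamma(\Omega))$ with $\gamma>3/2$ — inside the momentum equation, namely justifying the renormalized continuity equation, the exact cancellation of the $\partial_t n_k|v|^2$ terms, the pressure identity, and enough compactness to pass to the limit in $(nv)_k\otimes v$; this is exactly the reason the mollifiers $h_k$ were inserted in \eqref{App-2}, and with them in place the proof of \cite[Lemma 3.3]{M-V} applies directly.
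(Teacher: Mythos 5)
Your proposal is correct and follows essentially the same route as the paper: the paper's own proof of Proposition \ref{PA.4} is simply a citation to \cite[Lemma 3.3]{M-V}, and your decouple-then-Schauder reconstruction (DiPerna--Lions renormalized solutions for the continuity equation, Galerkin with energy estimates for the momentum equation, fixed point, and $L^2$-Gr\"onwall uniqueness) is precisely that lemma's argument. Your derivation of the energy identity \eqref{Ap-11} — in particular the cancellation of the $\tfrac12\int_\Omega(\partial_t n_k)|v|^2$ contributions from $\partial_t(n_k v)$ and $\nabla\cdot((nv)_k\otimes v)$ via the mollified continuity equation, and the conversion of $\int_\Omega \nabla p(n)\cdot v\,dx$ into $\frac{1}{\gamma-1}\frac{d}{dt}\int_\Omega n^\gamma\,dx$ through the renormalization $\beta(s)=s^\gamma$ — is also correct.
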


Now, we are ready to prove that $\mathcal{T}$ is well-defined.

\begin{lemma}\label{CA.1}
There exists a constant $C := C(\lambda, d_0, k,T,\e)$ such that
\[
\|\mathcal{T} (\tilde{u}, \tilde{v})\|_{\mathcal{S}} \le C(\lambda, d_0, k, T, \e)
\]
for $(\tilde{u}, \tilde{v}) \in \mathcal{S}$. 
\end{lemma}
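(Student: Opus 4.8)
Recall that $\mathcal{T}(\tilde{u},\tilde{v}) = (u_\e,v)$, where $(f,n,v)$ is the (unique) solution of the decoupled system \eqref{App-2} furnished by Propositions \ref{PA.1} and \ref{PA.4}, $\rho=\int_{\R^3}f\,d\xi$, $\rho u=\int_{\R^3}\xi f\,d\xi$, and $u_\e=\rho u/(\rho+\e)$. The goal is to bound $\|u_\e\|_{L^2(\Omega\times(0,T))}$ and $\|v\|_{L^2(\Omega\times(0,T))}$ by a constant depending only on $\lambda,d_0,k,T,\e$ — crucially \emph{not} on $(\tilde u,\tilde v)$. The point that makes this possible is that the truncations $\chi_\lambda$ enter the kinetic equation in \eqref{App-2} only through the coefficient $\chi_\lambda(\tilde v)+\chi_\lambda(\tilde u)\in L^\infty(\Omega\times(0,T))$ with norm $\le 2\lambda$, so all the estimates of Section \ref{sec:3}'s appendix apply with constants that see only $\lambda$, not the inputs; and the indicator $\mathds{1}_{\{|\tilde v|\le\lambda\}}$ keeps the Navier--Stokes forcing at most linear in $v$.

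\textbf{Step 1: bound on $u_\e$.} Apply Proposition \ref{PA.1}(i) with $p=\infty$ to get $\|f\|_{L^\infty(\Omega\times\R^3\times(0,T))}\le e^{CT}(\|f_0\|_{L^\infty}+\|g\|_{L^\infty})=:M_1$. Next, Proposition \ref{PA.2} gives $\sup_{t\in(0,T)}\int_{\Omega\times\R^3}|\xi|^d f\,dxd\xi\le C(\lambda,d,T)$ for all $d\in[0,d_0]$; since $d\mapsto C(\lambda,d,T)$ is bounded on the compact interval $[0,d_0]$, set $M:=\max\{M_1,\sup_{d\in[0,d_0]}C(\lambda,d,T)\}<\infty$. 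Then Proposition \ref{PA.1}(ii) applies: because $d_0>5$ we have $2<(d_0+3)/4<(d_0+3)/3$, hence $\|\rho(\cdot,t)\|_{L^2(\Omega)}\le C(M)$ and $\|(\rho u)(\cdot,t)\|_{L^2(\Omega)}\le C(M)$ for a.e.\ $t\in(0,T)$. Finally, from $|u_\e|=|\rho u|/(\rho+\e)\le \e^{-1}|\rho u|$ we obtain $\|u_\e(\cdot,t)\|_{L^2(\Omega)}\le \e^{-1}C(M)$, and integrating in time, $\|u_\e\|_{L^2(\Omega\times(0,T))}\le \e^{-1}\sqrt{T}\,C(M)=:C(\lambda,d_0,T,\e)$.

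\textbf{Step 2: bound on $v$.} Integrate the energy identity \eqref{Ap-11} of Proposition \ref{PA.4} over $[0,t]$. Keeping $\int_0^t\|\nabla v\|_{L^2}^2\,ds$ on the left and discarding the nonnegative terms $\int_\Omega \tfrac12 n_k|v|^2\,dx$ and $\tfrac1{\gamma-1}\int_\Omega n^\gamma\,dx$, we are left with a fixed finite initial energy $C_0=\tfrac12\int_\Omega (n_0)_k|v_0|^2\,dx+\tfrac1{\gamma-1}\int_\Omega n_0^\gamma\,dx$ (finite, with the dependence on $k$ entering here since $h_k$ is a fixed smooth compactly supported mollifier). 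For the forcing, on $\{|\tilde v|\le\lambda\}$ we have $|\rho(u-\tilde v)\mathds{1}_{\{|\tilde v|\le\lambda\}}|\le |\rho u|+\lambda\rho$, so by Cauchy--Schwarz and Step 1,
\[
\left|\int_\Omega \rho(u-\tilde v)\mathds{1}_{\{|\tilde v|\le\lambda\}}\,v\,dx\right|\le \big(\|\rho u\|_{L^2}+\lambda\|\rho\|_{L^2}\big)\|v\|_{L^2}\le C(M,\lambda)\,\|v\|_{L^2}.
\]
Using the Poincar\'e inequality $\|v\|_{L^2}\le C_P\|\nabla v\|_{L^2}$ (homogeneous Dirichlet), this is $\le \tfrac12\|\nabla v\|_{L^2}^2+\tfrac12 C_P^2 C(M,\lambda)^2$. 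Absorbing and integrating gives $\int_0^T\|\nabla v\|_{L^2}^2\,ds\le 2C_0+T C_P^2 C(M,\lambda)^2=:C(\lambda,d_0,k,T)$, and Poincar\'e once more yields $\|v\|_{L^2(\Omega\times(0,T))}^2\le C_P^2\int_0^T\|\nabla v\|_{L^2}^2\,ds\le C(\lambda,d_0,k,T)$. Combining the two steps, $\|\mathcal{T}(\tilde u,\tilde v)\|_{\mathcal S}\le C(\lambda,d_0,k,T,\e)$.

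\textbf{Expected main obstacle.} There is no deep difficulty here: the whole argument is bookkeeping on the already-established a priori estimates. The one point requiring care is to verify that every constant is genuinely independent of $(\tilde u,\tilde v)\in\mathcal S$ — this works because the kinetic solve sees $(\tilde u,\tilde v)$ only through the $L^\infty$-bounded, truncated coefficient (hence Propositions \ref{PA.1}--\ref{PA.2} give $\lambda$-dependent but input-independent constants), and the Navier--Stokes forcing, after the indicator truncation, is controlled by $\|\rho u\|_{L^2}+\lambda\|\rho\|_{L^2}$ times $\|v\|_{L^2}$, which the viscous dissipation plus Poincar\'e absorbs. A minor technical check is the finiteness of the mollified initial energy $\int_\Omega(n_0)_k|v_0|^2\,dx$, which is where the $k$-dependence of the final constant originates.
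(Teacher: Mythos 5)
Your Step 1 (the bound on $u_\e$) is exactly the paper's: invoke Proposition \ref{PA.1}(i)--(ii) together with Proposition \ref{PA.2} to get $\|\rho u\|_{L^\infty(0,T;L^2)}\le C(\lambda,d_0,T)$, then use $|u_\e|\le\e^{-1}|\rho u|$. For Step 2 (the bound on $v$), however, you take a genuinely different route from the paper, and I think yours is the cleaner one. The paper estimates the forcing by $\|v\|_{L^2}^2+\big(\|\rho u\|_{L^2}^2+\lambda^2\|\rho\|_{L^2}^2\big)$, then invokes a pointwise lower bound $\inf_{x\in\Omega}n_k(x,t)\ge c_k>0$ so that $\|v\|_{L^2}^2\le C(k)\int_\Omega n_k|v|^2\,dx$, and closes with Gr\"onwall against the mollified kinetic energy. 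You instead estimate the forcing by $\big(\|\rho u\|_{L^2}+\lambda\|\rho\|_{L^2}\big)\|v\|_{L^2}$, use Poincar\'e (legitimate, since $v\in H^1_0$) to replace $\|v\|_{L^2}$ by $C_P\|\nabla v\|_{L^2}$, and absorb into the viscous dissipation via Young's inequality, so the only time-integration needed is trivial. Your route avoids the mollified-density lower bound entirely --- a good thing, because as stated the claim ``$\Omega$ bounded and $\int_\Omega n\,dx>0$ implies $\inf_\Omega n_k\ge c_k>0$'' is actually delicate (a compactly-supported mollifier need not produce a strictly positive convolution everywhere), whereas Poincar\'e-plus-absorption sidesteps it. The one place where $k$ enters your constant is through the mollified initial kinetic energy $\int_\Omega (n_0)_k|v_0|^2\,dx$, which you correctly flag; within the approximation scheme used to prove Theorem \ref{T2.1} the initial data $v_0$ is taken in $L^\infty$, so this term is bounded by $\|v_0\|_{L^\infty}^2\|n_0\|_{L^1}$ and is in fact $k$-uniform, though recording a $k$-dependence does no harm. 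Both arguments reach the same conclusion; yours buys a slightly more robust proof at no extra cost, while the paper's version foregrounds the density lower bound that is also used later (in the compactness argument, to control $\partial_t v$).
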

\begin{proof} By combining Proposition \ref{PA.1} (ii) with Proposition \ref{PA.2}, we find
\[
\|(\rho u)(\cdot,t)\|_{L^p(\Omega)} \le C(\lambda, d_0,T), \quad \forall p \in \left[1,\frac{d_0+3}{4}\right). 
\]
Since $d_0>5$, $(d_0 + 3)/4 > 2$, and this enables us to take $p=2$ in the above inequality. Thus we can estimate
\[
\|u_\e(\cdot,t)\|_{L^2(\om)} \le \frac{1}{\e} \|(\rho u)(\cdot,t)\|_{L^2(\Omega)} \le C(\lambda, d_0, T,\e). 
\]
We next estimate $\|v\|_{L^2}$. For this, we first obtain
\begin{align*}\begin{aligned}
\left|\int_{\Omega} \rho(u-\tilde{v})\mathds{1}_{\{|\tilde{v}| \le \lambda\}} v \,dx \right|&\le \|(\rho u)(t)\|_{L^2(\Omega)} \|v\|_{L^2(\Omega)} + \lambda \|\rho\|_{L^2(\Omega)}\|v\|_{L^2(\Omega)}\\
& \le \|v\|_{L^2(\Omega)}^2 +\left( \|\rho u\|_{L^2(\Omega)}^2 + \lambda^2 \|\rho\|_{L^2(\Omega)}^2 \right).
\end{aligned}\end{align*}
This together with the bounded estimate of $\|\rho\|_{L^2(\Omega)}$ by Propositions \ref{PA.1} and \ref{PA.2} yields
\[
\frac{d}{dt} \lt(\int_\Omega \frac{1}{2}n_k |v|^2\,dx  + \frac{1}{\gamma-1} \int_\om n^\gamma \,dx\rt) +  \int_\Omega |\nabla v|^2 dx \le \|v\|_{L^2(\Omega)}^2 + C(\lambda, d_0, T,\e). 
\]
Since $\Omega$ is bounded and 
\[
\int_\Omega n \,dx = \int_\Omega n_0 \,dx >0,
\] 
we obtain $\inf_{x \in \Omega}n_k(x,t) \ge c_k>0$. Thus, we get
\[ 
\|v(\cdot,t)\|_{L^2(\Omega)}^2 \le C(k) \int_\Omega n_k |v|^2 \,dx, 
\]
and this gives
\[
\frac{d}{dt} \lt(\int_\Omega \frac{1}{2}n_k |v|^2\,dx  + \frac{1}{\gamma-1} \int_\om n^\gamma \,dx\rt) + \int_\Omega |\nabla v|^2 \,dx \le C(k) \int_\Omega n_k |v|^2 \,dx + C(\lambda, d_0, T,\e). 
\]
Finally, we integrate the above inequality with respect to $t$, use the fact $\inf_{x \in \Omega}n_k(x,t) \ge c_k>0$ and apply Gr\"onwall's lemma to conclude
\[
\|v(\cdot,t)\|_{L^2(\Omega)}\le C(\lambda, d_0, k , T, \e).
\]
This completes the proof.
\end{proof}

\subsection{$\mathcal{T}$ is compact.} To show the compactness of the operator $\mathcal{T}$, we need to provide the convergence of the sequence $\mathcal{T}(\tilde{u}^m, \tilde{v}^m) = (u_\e^m, v^m)$ up to a subsequence, where $(\tilde{u}^m, \tilde{v}^m)$ is uniformly bounded in $\mathcal{S}$. Then, it follows from the proof of \cite[Lemma 3.4]{M-V} that $v^m$ converges strongly in $L^2((0,T)\times\Omega)$ up to subsequences. Thus, it suffices to show the strong convergence of $u_\e^m$ up to subsequences. For this, we need the velocity averaging lemma, see \cite{P-S} for an instance.

\begin{lemma}\label{LA.1}
Let $T>0$, $\{f^m\}$ be bounded in $L_{loc}^p(\R^3 \times \R^3 \times (0,T))$ with $1 < p < \infty$ and $\{G^m\}$ be bounded in $L_{loc}^p(\R^3 \times \R^3 \times (0,T))$. If $f^m$ and $G^m$ satisfy
\[
\pa_t f^m + \xi \cdot \nabla f^m = \nabla_\xi^\alpha G^m, \quad f^m|_{t=0} = f_0 \in L^p(\R^3 \times \R^3), 
\]
for some multi-index $\alpha$ and $\varphi \in \mathcal{C}_c^{|\alpha|}(\R^3 \times \R^3)$, then
\[ 
\left\{ \int_{\bbr^3} f^m \varphi \,d\xi \right\} 
\]
is relatively compact in $L_{loc}^p(\bbr^3 \times (0,T))$.
\end{lemma}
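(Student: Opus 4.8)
The plan is to follow the classical velocity‑averaging argument (see \cite{P-S}); I indicate the main steps. Since the statement is local in $(x,t)$ and $\varphi$ is compactly supported, one may localize: fix $\chi\in\mathcal{C}_c^\infty((0,T))$ and $\zeta\in\mathcal{C}_c^\infty(\R^3)$ equal to $1$ on a neighbourhood of the $x$‑support of $\varphi$, and set $\bar f^m:=\chi(t)\zeta(x)f^m$. Using the equation, $\bar f^m$ solves, on all of $\R_t\times\R^3_x\times\R^3_\xi$ after extension by zero (legitimate because $\chi$ vanishes near $t=0$, so the initial datum disappears),
\[
\pa_t\bar f^m+\xi\cdot\nabla_x\bar f^m=\nabla_\xi^\alpha\bigl(\chi\zeta\,G^m\bigr)+H^m,\qquad H^m:=\bigl(\chi'\zeta+\chi\,\xi\cdot\nabla\zeta\bigr)f^m,
\]
where $\chi\zeta\,G^m$ is globally bounded in $L^p$ and $H^m$ is bounded in $L^p$ on the range $\xi\in\operatorname{supp}\varphi$ (the factor $\xi\cdot\nabla\zeta$ is harmless there). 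Hence it suffices to show that $u^m:=\int_{\R^3}\bar f^m\varphi\,d\xi$ is bounded in $W^{s,p}_{\mathrm{loc}}(\R_t\times\R^3_x)$ for some $s=s(p,|\alpha|)>0$; combined with the uniform $L^p$ bound and the compact embedding $W^{s,p}_{\mathrm{loc}}\hookrightarrow L^p_{\mathrm{loc}}$, this gives the asserted relative compactness.

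To produce the gain of regularity one passes to the Fourier transform in $(t,x)\mapsto(\tau,k)$, where the free transport operator becomes multiplication by $i(\tau+\xi\cdot k)$; thus $\widehat{\bar f^m}$ equals $(i(\tau+\xi\cdot k))^{-1}$ times $\nabla_\xi^\alpha\widehat{\chi\zeta G^m}+\widehat{H^m}$ off the set where the symbol vanishes (if $\varphi$ also depends on $x$, the only change is a harmless convolution against $\widehat{\varphi}(\cdot,\xi)$). Choosing a threshold $\delta=\delta(\tau,k)\in(0,1]$, one splits $\widehat{u^m}(\tau,k)=\int\widehat{\bar f^m}\varphi\,d\xi$ according to whether $\xi$ lies in the degenerate region $A_\delta:=\{\xi:|\tau+\xi\cdot k|\le\delta(|\tau|+|k|)\}$ or not. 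On $A_\delta^c$ one inverts the symbol and integrates by parts $|\alpha|$ times in $\xi$, each derivative producing a factor $|k|/|\tau+\xi\cdot k|\lesssim\delta^{-1}$, so this piece is bounded by $\delta^{-(1+|\alpha|)}$ times a negative power of $|\tau|+|k|$. On $A_\delta$ one uses $|A_\delta\cap\operatorname{supp}\varphi|\lesssim\delta$ at frequencies $|(\tau,k)|\gtrsim1$ (low frequencies being controlled directly from the $L^p$ bounds) together with H\"older in $\xi$. Optimizing $\delta$ in $(\tau,k)$ gives, when $p=2$, by Plancherel, a uniform bound $\|u^m\|_{H^s}\lesssim1$ with $s=\tfrac{1}{2(2+|\alpha|)}$ (the precise value is immaterial).

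For $p\ne2$ the Hilbertian argument is unavailable, and this is the main technical obstacle: one replaces Plancherel by a Littlewood--Paley decomposition in $(\tau,k)$ together with a dyadic decomposition in $|\tau+\xi\cdot k|/(|\tau|+|k|)$, and checks that the cut‑off multipliers associated with this splitting satisfy Mikhlin--H\"ormander bounds uniformly in the dyadic parameters; summing the pieces with a geometric weight yields a uniform bound in $W^{s',p}_{\mathrm{loc}}$ for every $s'<s$, which still leaves $s'>0$ (equivalently one may interpolate the $L^2$ gain against trivial bounds, as in \cite{P-S}). Once a uniform $W^{s',p}_{\mathrm{loc}}$ bound with $s'>0$ is secured, Rellich--Kondrachov yields that $\{u^m\}$ is relatively compact in $L^p_{\mathrm{loc}}(\R^3\times(0,T))$, which is the claim. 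The delicate points are precisely the $L^p$‑boundedness of these localization multipliers uniformly in $\delta$, and absorbing the order‑$|\alpha|$ loss from $\nabla_\xi^\alpha G^m$ while keeping the regularity index strictly positive.
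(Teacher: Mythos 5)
Your proposal is a correct sketch of the classical velocity-averaging argument (localization to kill the initial datum, Fourier transform in $(t,x)$, splitting the $\xi$-integral according to the size of $\tau+\xi\cdot k$, integration by parts to absorb $\nabla_\xi^\alpha$, optimization of the threshold $\delta$, Littlewood--Paley/interpolation to pass from $p=2$ to general $p$, and Rellich--Kondrachov for compactness). The paper does not give its own proof of Lemma~\ref{LA.1} but simply cites \cite{P-S}, and your argument is essentially the one used in that and the related velocity-averaging literature, so the two approaches coincide.
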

By using the previous lemma, the following lemma can be proved similarly to \cite[Lemma 2.7]{K-M-T}.
\begin{lemma}\label{LA.2}
Let $T>0$, $\{f^m\}$  and $\{G^m\}$ be as in Lemma \ref{LA.1}. Assume that for $r \ge 2$,
\[
\sup_{m \in \bbn} \|f^m\|_{L^\infty(\R^3 \times \R^3 \times (0,T))} + \sup_{m \in \bbn}\| (|\xi|^{r} + |x|^{r}) f^m\|_{L^\infty(0,T;L^1(\R^3 \times \R^3))}  <\infty.
\]
Then, for any $\varphi(\xi)$ satisfying $|\varphi(\xi)| \le c|\xi|$ for $|\xi|$ large enough, the sequence
\[ 
\left\{ \int_{\R^3} f^m \varphi \,d\xi \right\} 
\]
is relatively compact in $L^q(\R^3 \times (0,T))$ for any $q \in [1, (3+r)/4)$.
\end{lemma}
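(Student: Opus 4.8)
The plan is to run the classical velocity-averaging argument: write the averaged quantity $\rho_\varphi^m := \int_{\R^3} f^m\varphi\,d\xi$ as a velocity-truncated piece, to which Lemma~\ref{LA.1} applies, plus a velocity tail controlled by the $|\xi|^r$-moment bound, and then use the $|x|^r$-moment bound to prevent loss of compactness at spatial infinity. First I would record the relevant uniform integrability bound. Since $|\varphi(\xi)|\le c(1+|\xi|)$, the same H\"older interpolation used in the proof of Proposition~\ref{PA.1}(ii) for $\rho u$ yields, for every $p\in[1,(3+r)/4)$,
\[
\sup_{m}\ \|\rho_\varphi^m\|_{L^\infty(0,T;L^p(\R^3))}\le C\Big(\sup_m\|f^m\|_{L^\infty},\ \sup_m\|(1+|\xi|^r)f^m\|_{L^\infty(0,T;L^1)}\Big)<\infty,
\]
so that $\{\rho_\varphi^m\}$ is bounded in $L^p(\R^3\times(0,T))$ for all such $p$; the same estimate holds with $\varphi$ replaced by $\varphi\chi_R$ or $\varphi(1-\chi_R)$, where $\chi_R\in\mathcal{C}_c^\infty(\R^3)$ equals $1$ on $\{|\xi|\le R\}$ and is supported in $\{|\xi|\le 2R\}$. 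This is where the exponent $(3+r)/4$ in the statement comes from.

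Fix a target $q\in[1,(3+r)/4)$ and pick $p$ with $q<p<(3+r)/4$. For \emph{local compactness}, split $\rho_\varphi^m=A_R^m+B_R^m$ with $A_R^m:=\int_{\R^3}f^m\varphi\chi_R\,d\xi$ and $B_R^m:=\int_{\R^3}f^m\varphi(1-\chi_R)\,d\xi$. On $\{|\xi|>R\}$ one has $|\varphi(1-\chi_R)|\le c|\xi|\le cR^{-(r-1)}|\xi|^r$, hence $\|B_R^m\|_{L^1(B_K\times(0,T))}\le cR^{-(r-1)}T\sup_m\||\xi|^rf^m\|_{L^\infty(0,T;L^1)}\to0$ as $R\to\infty$, uniformly in $m$; combining this with the uniform $L^p$ bound and the interpolation inequality $\|\cdot\|_{L^q}\le\|\cdot\|_{L^1}^{\theta}\|\cdot\|_{L^p}^{1-\theta}$ on the finite-measure set $B_K\times(0,T)$ gives $\sup_m\|B_R^m\|_{L^q(B_K\times(0,T))}\to0$ as $R\to\infty$. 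For fixed $R$ and $\psi\in\mathcal{C}_c^\infty(\R^3)$, the product $\psi(x)\varphi(\xi)\chi_R(\xi)$ belongs to $\mathcal{C}_c^{|\alpha|}(\R^3\times\R^3)$ (after a harmless mollification of $\varphi$ in $\xi$ if $\varphi$ is only continuous), so Lemma~\ref{LA.1} applied to $\partial_t f^m+\xi\cdot\nabla f^m=\nabla_\xi^\alpha G^m$ shows $\{\psi A_R^m\}$ is relatively compact in $L^p_{loc}(\R^3\times(0,T))$; taking $\psi\equiv1$ on $B_K$ and using $q<p$ on a finite-measure set, $\{A_R^m\}$ is relatively compact in $L^q(B_K\times(0,T))$ for each $R$. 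A standard total-boundedness argument ($\varepsilon/3$-nets for $A_R^m$, $R$ chosen so $B_R^m$ is $\varepsilon/3$-small) then shows $\{\rho_\varphi^m\}$ is relatively compact in $L^q(B_K\times(0,T))$ for every $K$.

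It remains to control spatial infinity. For \emph{tightness}, use $|\varphi(\xi)|\le c(1+|\xi|)$ and Young's inequality $|\xi|\le\delta|\xi|^r+C_\delta$ (valid for $r\ge1$) to estimate, for $K\ge1$,
\[
\int_0^T\!\!\int_{\{|x|>K\}}\rho_\varphi^m\,dx\,dt\le c\,\delta\,T\,\sup_m\||\xi|^rf^m\|_{L^\infty(0,T;L^1)}+c\,(1+C_\delta)\,T\,K^{-r}\,\sup_m\||x|^rf^m\|_{L^\infty(0,T;L^1)},
\]
which is made smaller than any prescribed $\varepsilon>0$ by first choosing $\delta$ small and then $K$ large, uniformly in $m$; interpolating once more between $L^1$ and $L^p$ gives $\sup_m\|\rho_\varphi^m\|_{L^q(\{|x|>K\}\times(0,T))}\to0$ as $K\to\infty$. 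Finally, a diagonal extraction over the exhaustion $\{B_K\}$ together with this tightness yields a subsequence that is Cauchy in $L^q(\R^3\times(0,T))$, proving the claim. \emph{Main obstacle.} The substantive point is that Lemma~\ref{LA.1} delivers compactness only locally in $x$; the $|x|^r$-moment hypothesis is exactly what supplies tightness at spatial infinity, and the two-step Young-plus-interpolation estimate is what upgrades the $L^1$ tail control to an $L^q$ tail control. The velocity tail and the exponent bookkeeping are comparatively routine.
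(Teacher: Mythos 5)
Your proof is correct, and it follows the approach the paper intends: the authors do not write out a proof of Lemma~\ref{LA.2} but simply remark that it ``can be proved similarly to [K-M-T, Lemma 2.7],'' which is exactly the velocity-truncation plus Lemma~\ref{LA.1} plus tightness argument you give. The decomposition $\rho_\varphi^m = A_R^m + B_R^m$, the $R^{-(r-1)}$-small $L^1$ control of the tail from the $|\xi|^r$-moment, local compactness of $A_R^m$ from Lemma~\ref{LA.1} applied to $\psi(x)\varphi(\xi)\chi_R(\xi)$, $L^1$ tightness from the $|x|^r$-moment with the Young-inequality splitting, and the upgrade to $L^q$ via interpolation against the uniform $L^p$ bound with $q<p<(3+r)/4$ are all the right ingredients; the exponent bookkeeping matches Proposition~\ref{PA.1}(ii). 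The one point worth being slightly more precise about is the ``harmless mollification'' of $\varphi$: one should observe that $\|\int f^m(\varphi-\varphi_\varepsilon)\chi_R\,d\xi\|_{L^p}\le\|\varphi-\varphi_\varepsilon\|_{L^\infty(\{|\xi|\le 2R\})}\sup_m\|\rho^m\|_{L^\infty(0,T;L^p)}\to0$ uniformly in $m$, which justifies replacing $\varphi$ by a smooth $\varphi_\varepsilon$ before invoking Lemma~\ref{LA.1}; this is implicit in your wording but is the step that makes the reduction rigorous.
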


Now, we are ready to prove the compactness of $\mathcal{T}$.

\begin{lemma}\label{CA.2}
For a uniformly bounded sequence $(\tilde{u}^m, \tilde{v}^m)$ in $\mathcal{S}$, the sequence $\mathcal{T}(\tilde{u}^m, \tilde{v}^m) = (u_\e^m, v^m)$ converges strongly in $\mathcal{S}$, up to a subsequence.
\end{lemma}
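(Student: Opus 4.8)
The plan is to reduce the statement to strong $L^2$-compactness of $u_\e^m$ only, since by \cite[Lemma 3.4]{M-V} --- whose proof applies verbatim because the Navier--Stokes block of \eqref{App-2} is exactly that of \cite{M-V} --- the component $v^m$ of $\mathcal{T}(\tilde u^m,\tilde v^m)$ already converges strongly in $L^2(\om\times(0,T))$ along a subsequence. Recall $u_\e^m = \rho^m u^m/(\rho^m+\e)$ with $\rho^m=\int_{\R^3}f^m\,d\xi$ and $\rho^m u^m=\int_{\R^3}\xi f^m\,d\xi$, where $f^m$ solves the linearized kinetic equation in \eqref{App-2} with data $\chi_\lambda(\tilde u^m)$, $\chi_\lambda(\tilde v^m)$.

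First I would collect the uniform-in-$m$ bounds. Since $\|\chi_\lambda(\tilde u^m)\|_{L^\infty}$, $\|\chi_\lambda(\tilde v^m)\|_{L^\infty}\le\lambda$ for all $m$, Proposition \ref{PA.1}(i) gives a uniform bound for $f^m$ in $L^\infty(0,T;(L^1\cap L^\infty)(\om\times\R^3))$ and for $\nabla_\xi f^m$ in $L^2(\om\times\R^3\times(0,T))$ (take $p=2$ in \eqref{App-3}), Proposition \ref{PA.2} gives a uniform bound for $\int_{\om\times\R^3}|\xi|^d f^m\,dxd\xi$ for $d\in[0,d_0]$, and Proposition \ref{PA.1}(ii) then bounds $\rho^m$ in $L^\infty(0,T;L^{q_0}(\om))$ and $\rho^m u^m$ in $L^\infty(0,T;L^{p_0}(\om))$ for some $q_0,p_0>2$ (using $d_0>5$). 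Extending $f^m$ by zero in $x$ outside $\om$ and rewriting the kinetic equation as
\[
\partial_t f^m+\xi\cdot\nabla_x f^m=\nabla_\xi\cdot\big(\nabla_\xi f^m-(\chi_\lambda(\tilde u^m)+\chi_\lambda(\tilde v^m)-2\xi)f^m\big)=:\nabla_\xi\cdot G^m,
\]
the bounds above show $G^m$ is bounded in $L^2_{loc}(\R^3\times\R^3\times(0,T))$. Since $\om$ is bounded, $|x|^r f^m$ is controlled by $f^m$, so fixing $r\in(5,d_0]$ the hypotheses of Lemma \ref{LA.2} are met; applying it with $\varphi\equiv1$ and with $\varphi(\xi)=\xi_i$ (admissible because $(3+r)/4>2$) yields relative compactness of $\{\rho^m\}$ and $\{\rho^m u^m\}$ in $L^q_{loc}(\R^3\times(0,T))$ for $q\in[1,(3+r)/4)$, in particular $q=2$.

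Next I would upgrade this to compactness in $L^2(\om\times(0,T))$: for an exhaustion $K_j\uparrow\om$ by compact sets, the local compactness handles $K_j\times(0,T)$, while on $(\om\setminus K_j)\times(0,T)$ the uniform bound in $L^{q_0}$ (resp.\ $L^{p_0}$) with $q_0,p_0>2$ makes the $L^2$-norm uniformly small as $j\to\infty$, so a diagonal argument gives, along a subsequence, $\rho^m\to\rho$ and $\rho^m u^m\to j$ strongly in $L^2(\om\times(0,T))$ and a.e. Finally, since $\rho^m\ge0$ and hence $\rho^m+\e\ge\e>0$, the a.e. limits give $u_\e^m\to j/(\rho+\e)$ a.e.; combined with $\|u_\e^m\|_{L^\infty(0,T;L^{p_0})}\le\e^{-1}\|\rho^m u^m\|_{L^\infty(0,T;L^{p_0})}\le C$ with $p_0>2$, Vitali's theorem gives $u_\e^m\to j/(\rho+\e)$ strongly in $L^2(\om\times(0,T))$. (Equivalently one may write $u_\e^m-\frac{j}{\rho+\e}=\frac{\rho^m u^m-j}{\rho^m+\e}+\frac{j(\rho-\rho^m)}{(\rho^m+\e)(\rho+\e)}$ and bound the two terms by $\e^{-1}\|\rho^m u^m-j\|_{L^2}$ and $\e^{-2}\||j|(\rho-\rho^m)\|_{L^2}$, the latter vanishing by Hölder and interpolation between the $L^2$-convergence and the uniform $L^{q_0}$-bound of $\rho-\rho^m$.) Together with the convergence of $v^m$, this proves $\mathcal{T}(\tilde u^m,\tilde v^m)$ converges strongly in $\mathcal{S}$.

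The step I expect to be most delicate is organizational rather than conceptual: making sure the averaging lemma is legitimately applied after the zero-extension in $x$, and that the resulting $L^q_{loc}$-compactness is correctly promoted to genuine $L^2(\om\times(0,T))$-compactness via the uniform higher integrability near $\partial\om$; throughout one must track that every constant, while allowed to depend on $\lambda$, $k$, $\e$ and $T$, is independent of $m$.
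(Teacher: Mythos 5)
Your argument is correct and follows essentially the same route as the paper: relative compactness of $\rho^m$ and $\rho^m u^m$ via the velocity-averaging Lemma~\ref{LA.2} applied (after zero extension in $x$) with $\varphi=1$ and $\varphi=\xi_i$, and relative compactness of $v^m$ via the argument of \cite{M-V}, i.e.\ the uniform $L^2(0,T;H^{-1})$ bound on $\partial_t v^m$ combined with Aubin--Lions. Two small remarks. First, Lemma~\ref{LA.2} as stated already gives compactness in $L^q(\R^3\times(0,T))$ rather than merely in $L^q_{loc}$; the hypothesis $\|(|\xi|^r+|x|^r)f^m\|_{L^\infty(0,T;L^1)}<\infty$, which is trivially satisfied after zero extension since $\Omega$ is bounded, is exactly what controls the far-field contribution, so your exhaustion/uniform-higher-integrability upgrade is harmless but superfluous. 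Second, your Vitali argument converting $L^2$-compactness of $(\rho^m,\rho^m u^m)$ plus a.e.\ convergence and the uniform $L^{p_0}$-bound ($p_0>2$) of $\rho^m u^m$ into $L^2$-compactness of $u_\e^m=\rho^m u^m/(\rho^m+\e)$ is a genuine completion of a step the paper merely declares; the parenthetical ``explicit decomposition'' route is more fragile, since the H\"older exponents needed to control $j(\rho-\rho^m)$ via interpolation between the $L^2$-convergence and the $L^{q_0}$-bound of $\rho-\rho^m$ fall outside the admissible range unless $d_0$ is taken suitably large, whereas the Vitali argument works for any $d_0>5$.
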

\begin{proof}
For the convergence of $\{u_\e^m\}$, we consider the following setting in Lemma \ref{LA.2}:
\[ 
r = d_0, \quad f^m = f^m \mathds{1}_{\Omega \times \R^3}, \quad\mbox{and}\quad G^m =[ \nabla_\xi f^m - (\chi_\lambda(\tilde{v}^m) + \chi_\lambda(\tilde{u}^m) -2\xi)f^m ] \mathds{1}_{\Omega \times \R^3}.
\]
Then, we can obtain the following strong convergence by taking $\varphi(\xi) =1$ and $\varphi(\xi) = \xi$ in Lemma \ref{LA.2}, respectively, up to a subsequence:
\begin{align*}\begin{aligned}
&\rho^m \to \rho \quad \quad \mbox{in} \quad L^2((0,T)\times \Omega) \quad \mbox{and a.e.},\\
&\rho^m u^m \to \rho u \quad \mbox{in} \quad L^2((0,T)\times \Omega).
\end{aligned}\end{align*}
This asserts the convergence of $\{u_\e^m\}$ up to a subsequence. For the fluid part, we again use the same argument as in \cite{M-V}, however we provide the proof for readers' convenience. We regularize the continuity equation in \eqref{App-2} to get
\[
\partial_t n_k + \nabla \cdot (nv)_k = 0.
\]
This gives
\[
n_k \partial_t v + (nv)_k \cdot \nabla v + \nabla p - \Delta v = -\rho(u-\tilde{v})\mathds{1}_{\{|\tilde{v}|\le \lambda\}}. 
\]
Then, the following can be obtained by exploiting $n_k \ge c_k >0$:
\[
\|\partial_t v^m\|_{L^2(0,T;H^{-1}(\Omega))} \le C(\lambda, d_0, k, T, \e)
\] 
uniformly in $m$. Here, we use Aubin-Lions lemma to obtain the convergence of $\{v^m\}$ up to a subsequence.
\end{proof}
\begin{proof}[ Proof of Theorem \ref{TA.1}]  We can deduce from Lemmas \ref{CA.1}--\ref{CA.2} that the operator $\mathcal{T}$ can be proven to be well-defined, continuous and compact. Thus, we prepared all the materials to use Schauder's fixed point theorem, and hence we have the existence of a fixed point, which becomes a weak solution to \eqref{App-1}. 
\end{proof}
Here, we provide the entropy inequality that is satisfied by a weak solution $(f,n,v)$ to \eqref{App-1}. For this, we define a functional $\mathcal{F}_k(f,n,v)$ and corresponding dissipation functional $D_{\lambda,\e}$ as
\[ 
\mathcal{F}_k (f, n, v) := \int_{\Omega \times \bbr^3 } f \left( \log f  +\frac{|\xi|^2}{2} \right) dxd\xi + \frac12\int_{\Omega} n_k |v|^2 \,dx + \frac{1}{\gamma-1} \int_{\Omega} n^\gamma \,dx
\]
and 
\[
\md_{\lambda,\e}(f,v) := \int_{\Omega \times \bbr^3} \frac{1}{f}|(\chi_\lambda(u_\e) - \xi))f -\nabla_\xi f|^2 + | (\chi_\lambda(v)-\xi)f|^2f \,dxd\xi, 
\]
respectively. Then we provide the relation between $\mathcal{F}_k(f,n,v)$ and $D_{\lambda,\e}(f,v)$ in the lemma below.
\begin{lemma}\label{LA.3}
Let $\gamma > 3/2$ and $T \in (0,\infty)$. Assume that a triplet $(f,n,v)$ is a weak solution to \eqref{App-1} corresponding to initial data $(f_0, n_0, v_0)$ and boundary condition $g$, which is established in Propositions \ref{PA.1} and \ref{PA.2}. Then we have
\begin{align*}\begin{aligned}
\begin{aligned}
\mathcal{F}_k &(f,n,v)(t) + \int_0^t \md_{\lambda,\e} (f,v)(s)\,ds + \int_0^t \int_\Omega |\nabla v|^2 \,dxds\\
&\le \mathcal{F}_k(f_0,n_0,v_0) - \int_0^t \int_\Sigma (\xi \cdot r(x)) \left(\frac{|\xi|^2}{2} + \log \gamma f + 1\right) \gamma f \,d\sigma(x) d\xi ds\\
&\quad + 3\int_0^t \|f(\cdot,\cdot,s)\|_{L^1(\Omega \times \bbr^3)} \,ds.
\end{aligned}
\end{aligned}\end{align*}
\end{lemma}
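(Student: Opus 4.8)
The plan is to obtain the entropy inequality as the sum of the two exact balance laws already at hand: the kinetic identity \eqref{Ap-8} from Proposition \ref{PA.3} and the fluid energy identity \eqref{Ap-11} from Proposition \ref{PA.4}, and then to check that every ``cross'' term that survives is either nonpositive or cancels exactly. First I would use that a weak solution $(f,n,v)$ to \eqref{App-1} is a fixed point of $\mathcal{T}$, so that $\tilde u = u_\e$ and $\tilde v = v$; hence $\chi_\lambda(\tilde u) = \chi_\lambda(u_\e)$, $\chi_\lambda(\tilde v) = \chi_\lambda(v)$ and $\mathds{1}_{\{|\tilde v|\le\lambda\}} = \mathds{1}_{\{|v|\le\lambda\}}$ in \eqref{Ap-8}--\eqref{Ap-11}. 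Adding the two identities, the left-hand side collects $\tfrac{d}{dt}\mathcal{F}_k(f,n,v)$, the term $\int_{\Omega\times\R^3}\tfrac1f|\nabla_\xi f-(\chi_\lambda(u_\e)-\xi)f|^2$, and $\int_\Omega|\nabla v|^2$, while on the right-hand side there remain the boundary term $-\int_\Sigma(\xi\cdot r)(\tfrac{|\xi|^2}{2}+\log\gamma f+1)\gamma f$, the term $3\|f\|_{L^1(\Omega\times\R^3)}$, and the three remainders
\begin{align*}
T_a &:= \int_{\Omega\times\R^3}\chi_\lambda(u_\e)\cdot(\chi_\lambda(u_\e)-\xi)f\,dxd\xi, \qquad T_b := \int_{\Omega\times\R^3}(\chi_\lambda(v)-\xi)\cdot\xi\,f\,dxd\xi,\\
T_c &:= \int_\Omega\rho(u-v)\cdot v\,\mathds{1}_{\{|v|\le\lambda\}}\,dx.
\end{align*}

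Next I would massage these three remainders. For $T_b$ I use $(\chi_\lambda(v)-\xi)\cdot\xi = -|\chi_\lambda(v)-\xi|^2 + (\chi_\lambda(v)-\xi)\cdot\chi_\lambda(v)$ and move $-\int_{\Omega\times\R^3}|\chi_\lambda(v)-\xi|^2 f$ to the left-hand side, where it combines with $\int\tfrac1f|(\chi_\lambda(u_\e)-\xi)f-\nabla_\xi f|^2$ to form exactly $\md_{\lambda,\e}(f,v)$. Since $\chi_\lambda(v)$ and $\chi_\lambda(u_\e)$ depend only on $(x,t)$, integration in $\xi$ using $\rho = \int f\,d\xi$ and $\rho u = \int\xi f\,d\xi$ turns the leftover of $T_b$ into $\int_\Omega\rho\,\chi_\lambda(v)\cdot(\chi_\lambda(v)-u)\,dx$ and turns $T_a$ into $\int_\Omega\rho\,\chi_\lambda(u_\e)\cdot(\chi_\lambda(u_\e)-u)\,dx$. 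I would then verify the two sign facts that close the argument. (i) $T_a\le0$: on $\{|u_\e|\le\lambda\}$ one has $\chi_\lambda(u_\e)=u_\e=\tfrac{\rho}{\rho+\e}u$, so the integrand equals $-\tfrac{\rho^2\e}{(\rho+\e)^2}|u|^2\le0$, and on $\{|u_\e|>\lambda\}$ it vanishes. (ii) The leftover of $T_b$ cancels $T_c$: on $\{|v|\le\lambda\}$, $\chi_\lambda(v)=v$ and $\mathds{1}_{\{|v|\le\lambda\}}=1$, so the two integrands sum to $\rho v\cdot(v-u)+\rho(u-v)\cdot v = 0$, while on $\{|v|>\lambda\}$ both vanish.

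Collecting these, the sum of \eqref{Ap-8} and \eqref{Ap-11} yields the differential inequality
\begin{align*}
&\frac{d}{dt}\mathcal{F}_k(f,n,v) + \md_{\lambda,\e}(f,v) + \int_\Omega|\nabla v|^2\,dx\\
&\qquad\le -\int_\Sigma(\xi\cdot r(x))\Big(\tfrac{|\xi|^2}{2}+\log\gamma f+1\Big)\gamma f\,d\sigma(x)\,d\xi + 3\|f\|_{L^1(\Omega\times\R^3)},
\end{align*}
and integrating over $[0,t]$ gives the stated estimate. Each time-integral is well defined: Proposition \ref{PA.1} bounds $\|f(t)\|_{L^p}$ and $\|\gamma_+f\|_{L^p(\Sigma_+\times(0,T))}$, which together with Proposition \ref{PA.2} and Lemma \ref{LA.4} controls the entropy $\mathcal{F}_k(f,n,v)(t)$ and the boundary integrand, while $\md_{\lambda,\e}(f,v)$ and $\int_\Omega|\nabla v|^2$ are nonnegative. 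I expect the only genuinely delicate point to be the bookkeeping in the second paragraph: one must be careful about the truncation sets $\{|u_\e|\le\lambda\}$ and $\{|v|\le\lambda\}$ so that the cross-terms cancel exactly and the $u_\e$-regularization contributes with the correct sign; the rest is a direct consequence of the already-established identities \eqref{Ap-8} and \eqref{Ap-11} together with the integrability furnished by Propositions \ref{PA.1}--\ref{PA.2}.
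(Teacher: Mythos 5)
Your proposal is correct and follows essentially the same route as the paper: add the kinetic identity \eqref{Ap-8} (with $(\tilde u,\tilde v)=(u_\e,v)$) to the fluid energy identity \eqref{Ap-11}, absorb the $-\int|\chi_\lambda(v)-\xi|^2f$ piece into $\md_{\lambda,\e}$ after cancelling the drag cross-terms, and observe that the regularized alignment term $\int\chi_\lambda(u_\e)\cdot(\chi_\lambda(u_\e)-\xi)f\,dxd\xi=-\int_\Omega \e|\rho u|^2/(\rho+\e)^2\,\mathds{1}_{\{|u_\e|\le\lambda\}}\,dx\le 0$. The sign computation and the cancellation on the truncation sets match the paper's argument exactly.
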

\begin{proof}
Since
\[  
\int_\Omega \rho(u-v) \mathds{1}_{\{|v|\le \lambda\}} v \,dx = -\int_{\Omega\times\bbr^3}\chi_\lambda(v) \lt(\chi_\lambda(v) - \xi \rt) f \,dxd\xi,  
\]
we combine \eqref{Ap-8} and \eqref{Ap-11} with $(u_\e, v)$ instead of $(\tilde{u}, \tilde{v})$ with the above relation to yield
\begin{align*}\begin{aligned}
\mathcal{F}_k &(f,n,v)(t) + \int_0^t \md_{\lambda,\e} (f,v)(s)\,ds + \int_0^t \int_\Omega |\nabla v|^2 \,dxds\\
&= \mathcal{F}_k(f_0,n_0,v_0) - \int_0^t \int_\Sigma (\xi \cdot r(x)) \left(\frac{|\xi|^2}{2} + \log \gamma f + 1\right) \gamma f \,d\sigma(x) d\xi ds\\
& \quad + \int_0^t  \int_{\Omega \times \bbr^3} \chi_\lambda (u_\e) \cdot (\chi_\lambda(u_\e)-\xi) f \,dx d\xi ds\\
&\quad + 3\int_0^t \|f(\cdot,\cdot,s)\|_{L^1(\Omega \times \bbr^3)} \,ds.
\end{aligned}\end{align*}
On the other hand, we find
\begin{align*}\begin{aligned}
 \int_{\Omega \times \bbr^3} \chi_\lambda (u_\e) \cdot (\chi_\lambda(u_\e)-\xi) f \,dx d\xi &= \int_\Omega \left( \rho \left|\frac{\rho u}{\rho + \e}\right|^2 - \frac{|\rho u|^2}{\rho+\e}  \right)\mathds{1}_{\{|u_\e|\le\lambda | \}}\,dx\\
& = \int_\Omega \frac{|\rho u|^2}{\rho + \e} \left(  \frac{\rho}{\rho + \e} -1 \right) \mathds{1}_{\{|u_\e|\le\lambda | \}}\,dx \cr
&\le 0,
\end{aligned}\end{align*}
and this asserts the desired result.
\end{proof}

\subsection{Proof of Theorem \ref{T3.1}}\label{app_3} Motivated from \cite{M-V}, we proceed to the proof for the existence of a weak solution to the system \eqref{A-1}. We assume that initial data $(f_0, n_0, v_0)$ satisfy \eqref{init-1} and $g$ satisfies \eqref{DC}. Then, we consider the sequences $(f_0^m, n_0^m, v_0^m)$ and $(g^m)$ approximating initial data and boundary data, respectively:
\begin{align*}\begin{aligned}
f_0^m \rightarrow f_0 \quad & \mbox{in } \ (L^1\cap L^\infty)(\Omega\times\bbr^3),\cr
n_0^m \rightarrow n_0 \quad & \mbox{in } \ L^\infty(\Omega),\cr
v_0^m \rightarrow v_0 \quad & \mbox{in } \ (L^\infty \cap H^1_0)(\om), \quad \mbox{and}\cr
g^m \rightarrow g \quad & \mbox{in } \ L^\infty(\Sigma \times (0,T)).
\end{aligned}\end{align*}
We will assume that they satisfy \eqref{init-1} and \eqref{DC} uniformly in $m$, and for each $m$, \eqref{init-2}--\eqref{DC-2} hold. For every $\lambda$, $k$, $\e$ and $m$, we denote $(f^m, n^m, v^m)$ by the solutions to \eqref{App-1} with initial data
\[(f^m(0), n^m(0), v^m(0)) =: (f_0^m, n_0^m, v_0^m). \]
Note that existence of $(f^m, n^m, v^m)$ is guaranteed by the previous estimates. \newline

\noindent $\bullet$ (Step A : Uniform boundedness) First, the estimates in \eqref{App-3} imply the existence of a constant $C$, which is independent of $k$, $\lambda$, $\e$ and $m$ and satisfies
\begin{equation}\label{Ap-13}
\|f^m\|_{L^\infty(0,T;L^p(\Omega\times\bbr^3))} + \|\gamma f^m \|_{L^p((0,T) \times\Sigma )} \le C
\end{equation}
for all $p \in [1,\infty]$. Here, we use Lemma \ref{LA.4} to get
\begin{align*}\begin{aligned}
-\int_0^t& \int_\Sigma (\xi \cdot r(x)) \left( \frac{|\xi|^2}{2} + \log \gamma f^m+1 \right) \gamma f^m \,d\sigma (x) d\xi ds\\
&\le -\frac{1}{2} \int_0^t \int_{\Sigma_+}|\xi \cdot r(x)| \left( \frac{|\xi|^2}{2} + \log^+ \gamma_+ f^m +1 \right) \gamma_+ f^m \,d\sigma (x) d\xi ds\\
& \quad + \int_0^t \int_{\Sigma_-}|\xi \cdot r(x)| \left( \frac{|\xi|^2}{2} + \log g^m +1 \right) g^m \,d\sigma (x) d\xi ds +C.
\end{aligned}\end{align*}
We also have
\begin{align*}\begin{aligned}
&\left|\int_{\Omega \times \bbr^3}\chi_\lambda (u_\e^m )\cdot \xi f^m \,dx d\xi \right| \\
&\quad \le \int_{\Omega \times \bbr^3}|u_\e^m \cdot \xi f^m| \,dx d\xi\\
&\quad \le \frac{1}{2} \int_{\Omega \times \bbr^3} |u_\e^m|^2 f^m \,dx d\xi + \frac{1}{2}\int_{\Omega \times \bbr^3} |\xi|^2 f^m \,dx d\xi\\
&\quad \le  \frac{1}{2} \int_{\Omega} \rho^m |u_\e^m|^2 \,dx + \frac{1}{2}\int_{\Omega \times \bbr^3} |\xi|^2 f^m \,dx d\xi\\
&\quad \le \frac{1}{2} \int_{\Omega} \rho^m \left|\frac{\rho^m u^m}{\rho^m + \e}\right|^2 dx+ \frac{1}{2}\int_{\Omega \times \bbr^3} |\xi|^2 f^m \,dx d\xi\\
&\quad \le \int_{\Omega \times \bbr^3} |\xi|^2 f^m \,dx d\xi,
\end{aligned}\end{align*}
where we used Young's inequality. Then the above estimates together with Gr\"onwall's lemma, Lemma \ref{LA.4}, and \eqref{App-3} with $p=1$ yield
\begin{align*}\begin{aligned}
&\int_{\Omega \times \bbr^3} \left( \frac{|\xi|^2}{4} + |\log f^m| \right) f^m \,dx d\xi + \int_\Omega n_k^m \frac{|v^m|^2}{2}  + \frac{1}{\gamma-1}(n^m)^\gamma \,dx\\
&\qquad +\frac{1}{2} \int_0^t \int_{\Sigma_+} |\xi \cdot r(x)| \left( \frac{|\xi|^2}{2} + \log^+(\gamma_+f^m) +1 \right) \gamma_+f^m \,d\sigma(x)d\xi ds\\
& \qquad + \int_0^t \md_{\lambda,\e} (f^m,v^m)(s)\,ds + \int_0^t \int_\Omega |\nabla v^m|^2 \,dxds\\
& \quad \le C \bigg( \mathcal{F}_k(f_0^m, n_0^m, v_0^m)  +\|f_0^m\|_{L^1(\Omega \times \bbr^3)} + \|g^m\|_{L^1((0,T)\times \Sigma_-)}\\
& \hspace{2cm} + \int_0^t \int_{\Sigma_-} |\xi \cdot r(x)| \left(\frac{|\xi|^2}{2} + \log g^m +1 \right) g^m \,d\sigma(x)d\xi ds +1  \bigg).
\end{aligned}\end{align*}
Thus, we can find a constant $C>0$ independent of $\lambda$, $k$, $m$ and $\e$ such that
\begin{align*}\begin{aligned}
&\int_{\Omega\times^3} ( 1+ |\xi|^2) f^m \,dxd\xi \le C, \quad \forall t \in [0,T],\\
& \int_0^T \int_{\Sigma_+} ( 1+|\xi|^2) \gamma_+ f^m |\xi \cdot r(x)| \,d\sigma(x) d\xi ds \le C, \quad \mbox{and}\\
&\| n^m\|_{L^\infty(0,T; L^1\cap L^\gamma(\Omega))} + \|\sqrt{n^m} v^m\|_{L^\infty(0,T; L^2(\Omega))} + \|\nabla v^m \|_{L^2((0,T)\times \Omega)} \le C.
\end{aligned}\end{align*}
Hence, Proposition \ref{PA.1} (ii) guarantees that there exists a constant $K>0$, independent of $\lambda$, $k$, $m$, and $\e$, such that
\[
\sup_{t \in [0,T]} \|\rho^m(t)\|_{L^p} + \sup_{t \in [0,T]}\| \rho^m u^m\|_{L^q} \le K
\]
for any $p \in [1,5/3)$ and $q \in [1,5/4)$. With these uniform bounds, we prove the existence of limit function $(f, n, v)$ and that they become a weak solution to \eqref{A-1}. \newline

\noindent $\bullet$ (Step B : Convergence toward weak limits) Here, we show that the sequence $(f^m, n^m, v^m)$ converges to a weak limit $(f,n,v)$. We first let $\lambda = m$ and tend $m$ to infinity to get the weak solution for the following system:
\begin{align}
\begin{aligned}\label{Ap-15}
&\partial_t f + \xi \cdot \nabla f + \nabla_\xi \cdot (v -\xi)f) =\Delta_\xi f - \nabla_\xi \cdot \left( \left(u_\e - \xi \right) f \right),\\
&\partial_t n + \nabla \cdot (n v)=0,\\
&\partial_t (n_k v) + \nabla\cdot ((n v)_k \otimes v) + \nabla p  -\Delta v = -\rho(u-v).
\end{aligned}
\end{align}
For the fluid part, we notice that $\Omega$ is bounded and homogeneous Dirichlet boundary condition is imposed on $v$. This enables us to use Poincar\'e inequality to get
\[ 
\| v\|_{L^6(\Omega)} \le C \|\nabla u\|_{L^2(\om)}.
\]
Thus, $v^m$ is bounded in $L^2(0,T; L^6(\Omega))$, and this implies
\[
\|n^m v^m\|_{L^2(0,T;L^{6/5}(\Omega))} \le \|v^m\|_{L^2(0,T; L^6(\Omega))} \|n^m\|_{L^2(0,T;L^{3/2}(\Omega))}
\]
so that $n^m v^m$ belong to $L^2(0,T;L^{6/5}(\Omega))$. Note that we also used $\gamma >3/2$ here. From now on, we can use classical results from \cite{F,L} to get the following convergences:
\begin{align*}\begin{aligned}
n^m \rightarrow n \quad & \mbox{in } \ L^1(\Omega \times (0,T))\cap\mathcal{C}([0,T];L_w^\gamma (\Omega)),\cr
v^m \rightharpoonup v \quad & \mbox{in } \  L^2((0,T);H_0^1(\Omega)), \quad \mbox{and}\cr
n^m v^m \rightarrow nv & \mbox{in } \ \mathcal{C}([0,T];L_w^{2\gamma/(\gamma+1)}(\Omega)).
\end{aligned}\end{align*}
Furthermore, we have
\[n_k^m v^m \rightharpoonup n_k v, \quad \mbox{and} \quad (n^m)^\gamma \rightharpoonup n^\gamma. \]
Next, we discuss the kinetic part. Uniform boundedness \eqref{Ap-13} gives a limit function $f$:
\[f^m \rightharpoonup f, \quad L^\infty(0,T;L^p(\Omega\times\bbr^3)). \]
We may also obtain the weak convergence of $\rho^m$ and $\rho^m u^m$ from the same argument as Proposition \ref{PA.1} (ii):
\begin{align*}\begin{aligned}
&\rho^m \rightharpoonup \rho \quad \quad \mbox{in} \quad L^\infty(0,T;L^p(\Omega)) \quad \forall p \in (1,5/3) \quad \mbox{and}\\
&\rho^m u^m \rightharpoonup \rho u \quad \mbox{in} \quad L^\infty(0,T;L^p( \Omega)) \quad \forall p \in (1,5/4).
\end{aligned}\end{align*}
In addition, we can apply $r=2$, $d=3$, $f^m := f^m \mathds{1}_{\Omega \times \R^3}$, and 
\[
G^m :=[ \nabla_\xi f^m - (\chi_\lambda(v^m) + \chi_\lambda(u_\e^m) -2\xi)f^m ] \mathds{1}_{\Omega \times \R^3}
\] 
to Lemma \ref{LA.2} to get the strong convergence up to a subsequence:
\begin{align*}\begin{aligned}
&\rho^m \to \rho \quad \quad \mbox{in} \quad L^p(\Omega \times (0,T)) \quad \forall p \in (1,5/4) \quad \mbox{and} \quad \mbox{a.e.},\\
&\rho^m u^m \to \rho u \quad \mbox{in} \quad L^p(\Omega \times (0,T)) \quad \forall p \in (1,5/4).\\
\end{aligned}\end{align*}
Next procedure is to show that a triplet of limit functions $(f,n,v)$ satisfies \eqref{Ap-15} in distributional sense. For this, we only need to show the convergence of the coupling terms and self-alignment term. For coupling terms, we use the strong convergence in $L^p(\Omega \times (0,T))$ and boundedness of $\rho^m$, $\rho^m u^m$ in $L^2(0,T;L^p(\Omega))$, $p \in [1,5/4)$ and boundedness of $v^m$ in $L^2(0,T;L^6(\Omega))$ to get
\[\rho^m(u^m-v^m) \rightharpoonup \rho(u-v) \quad \mbox{in }\ L^2(0,T;L^p(\Omega)), \quad p \in [1,5/4). \]
Here, we can find out that $\rho^m (u^m - v^m)\mathds{1}_{\{|v^m|>\lambda\}}$ goes to zero as $\lambda$ tends to infinity:
\begin{align*}\begin{aligned}
\|&\rho^m( u^m - v^m)\mathds{1}_{\{|v^m|>\lambda\}}\|_{L^1(\Omega \times (0,T))}\\
& \le \frac{1}{\lambda} \left( \|\rho^m u^m \cdot v^m \|_{L^1(\Omega\times(0,T))} + \|\rho^m |v^m|^2\|_{L^1(\Omega\times(0,T))} \right)\\
& \le  \frac{2}{\lambda} \left(\|\rho^m |u^m|^2\|_{L^1(\Omega\times(0,T))} + \|\rho^m |v^m|^2\|_{L^1(\Omega\times(0,T))} \right)\\
& \le  \frac{2}{\lambda} \left(\||\xi|^2 f^m \|_{L^1(\Omega\times\bbr^3 \times (0,T))} + \|\rho^m |v^m|^2\|_{L^1(\Omega\times(0,T))} \right) \\
& \le\frac{2}{\lambda}  \left(\||\xi|^2 f^m \|_{L^1(\Omega\times\bbr^3 \times (0,T))} + \|\rho^m\|_{L^2(0,T;L^{6/5}(\Omega))} \|v^m\|_{L^2(0,T; L^6(\Omega))} \right) \le \frac{C}{\lambda} \to 0.
\end{aligned}\end{align*}
Thus, we can obtain
\[
\rho^m(u^m-v^m)\mathds{1}_{\{|v^m|\le \lambda\}} \rightharpoonup \rho(u-v) \quad \mbox{in }\ L^2(0,T;L^p(\Omega)), \quad p \in [1,5/4).
\]
Similarly, we can also get
\[
(v^m-\xi)f^m \mathds{1}_{\{|v^m|>\lambda \}} \to 0 \quad \mbox{as } \ \lambda \to \infty. 
\]
Moreover, for $p \in [1,5/4)$, we have
\begin{align*}\begin{aligned}
\|&v^m f^m\|_{L^2(0,T;L^p(\Omega\times\bbr^3))}\\
&\le \|f^m\|_{L^\infty((0,T)\times\Omega\times\bbr^3)}^{\frac{p-1}{p}} \|\rho^m\|_{L^\infty(0,T;L^{\frac{5}{5-p}}(\Omega))}\|v^m\|_{L^2(0,T;L^5(\Omega))}\\
& \le C \|v^m\|_{L^2(0,T;H_0^1(\Omega))} \le C.
\end{aligned}\end{align*}
This implies the following weak convergence:
\[
(\chi_\lambda(v^m)-\xi)f^m \rightharpoonup (v-\xi)f \quad \mbox{in }\ L^2(0,T;L^p(\Omega)), \quad p \in [1,5/4). 
\]

\noindent Finally, we show the convergence of $(\chi_\lambda(u_\e^m) - \xi)f^m$ toward $(u_\e-\xi)f$ in distributional sense. For this, we notice that
\begin{align*}\begin{aligned}
\int_0^T\int_{\Omega \times \bbr^3} | u_\e^m f^m  \mathds{1}_{\{|u_\e^m|>\lambda \}}| \,dx d\xi ds &\le \frac{1}{\lambda}\int_0^T \int_{\Omega \times \bbr^3} | u_\e^m|^2 f^m\,dx d\xi ds\\
&\le \frac{1}{\lambda} \int_{\Omega\times\bbr^3}|\xi|^2 f^m \,dxd\xi ds \  \to 0
\end{aligned}\end{align*}
as $\lambda \to \infty$. This implies that the weak limit $(f,n,v)$ satisfies \eqref{Ap-15} in distributional sense and hence a weak solution to the system \eqref{Ap-15} for eack $k$. For the limit $k \to 0$, since all the uniform estimates still hold, it is similar to the previous limit $\lambda \to \infty$ and hence, the sequence of weak solutions $(f^k, n^k, v^k)$ to the system \eqref{Ap-15} converges to weak limit $(f, n,v)$ which becomes a weak solution to \eqref{C-1}. \newline

\noindent $\bullet$ (Step C : Entropy inequality) Now, it remains to show that weak solution $(f,n,v)$ exists up to a subsequence obtained in the previous subsection satisfies the entropy inequality \eqref{T3-1}. Since we have proved that $(f^{\lambda, k, m}, n^{\lambda,k,m}, v^{\lambda, k, m})$ converges to a weak solution $(f,n,v)$ as $\lambda$ and $m$ tends to infinity and $k$ tends to 0, we take the limit in \eqref{App-4} and use weak convergences from previous section, convexity of entropy and strong convergence of $(\rho^{\lambda, k,m})$ to yield
\begin{align*}\begin{aligned}
\mathcal{F}&(f,n,v)(t) + \int_0^t \md_\e(f,v)(s)\,ds + \int_0^t \int_\Omega |\nabla v|^2 \,dxds\\
& \quad + \int_0^t \int_{\partial\Omega \times \bbr^3} (\xi \cdot r(x)) \left( \frac{|\xi|^2}{2} + \log \gamma f + 1 \right) \gamma f \,d\sigma(x) d\xi ds\\
&\le \mathcal{F}(f_0, n_0, v_0) + 3 \int_0^t \|f(\cdot,\cdot,s)\|_{L^1(\Omega \times \bbr^3)} \,ds.
\end{aligned}\end{align*}

%
%
%
%
%

\section{Proof of Lemma \ref{P5.4}}\label{app_b}
In this appendix, we provide the details of proof of Lemma \ref{P5.4}.

\subsection{Uniform lower bound estimate of $h^m$} Consider a forward characteristic $X^{m+1}$ which solves the following ordinary differential equations:
\[
\frac{d X^m(t,x)}{dt} = v^{m-1}(X^m(t,x),x),
\] 
with the initial data:
\[
X^m(0,x) = x
\]
for all $m \in \N$. Since $v^{m-1} \in \mathfrak{X}(T,\om)$, the above characteristic equation is well-defined on the interval $[0,T]$, and this together with the continuity equation for $h^m$ in \eqref{E-5} yields
\bq\label{est_hm}
1 + h^m(X^m(t,x),x) = (1 + h_0(x)) \exp\lt(-\int_0^t (\nabla \cdot v^{m-1})(X^m(s,x),x)\,ds\rt).
\eq
We now use the assumption $\|\eta^m\|_{\mathfrak{X}^s(T,\om)} \leq M_1$ to get
\[
1 + \inf_{x \in \om} h^m(x,t) \geq \lt(1 +  \inf_{x \in \om} h_0(x)\rt)\exp\lt( -C_0M_1 T\rt),
\]
where $C_0>0$ is independent of $m$. This yields there exists $T_1 > 0$ independent of $m$ such that
\[
1 + \sup_{0 \leq t \leq T_1}\inf_{x \in \om} h^m(x,t) > \delta_0.
\]
\subsection{Zeroth order estimate} In the rest of this appendix, let us denote $A \leq B$ for $d \times d$ matrices $A = A_{ij}$ and $B = B_{ij}$ with $d \in \N$ when $A_{ij} \leq B_{ij}$ for $1\leq i,j \leq d$. 

We first easily obtain from \eqref{est_hm} that
\[
1 + \inf_{x \in \om} h^m(x,t) \leq \lt(1 +  \sup_{x \in \om} h_0(x)\rt)\exp\lt( C_0M_1 T\rt).
\]
This gives
\[
0 \leq A^0(\eta^m)(x,t) \leq \max\lt\{\gamma(1+h_0(x))e^{C_0M_1|2-\gamma| t}, (1+h_0(x))e^{C_0M_1t} \rt\}\mathbb{I}_{4 \times 4}.
\]
Similarly, we use the fact that the inverse of matrix $A^0(\eta^m)$ is given by
\[
(A^0(\eta^m)(x,t))^{-1} = \left(\begin{array}{cc} (1/\gamma)(1 +h^m(x,t))^{2-\gamma} & 0 \\ 0 & (1/(1+h^m(x,t)))\mathbb{I}_{3 \times 3} \end{array}\right)
\]
to have
\[
0 \leq (A^0(\eta^m)(x,t))^{-1} \leq \max\lt\{(1/\gamma)(1+h_0(x))e^{C_0M_1|2-\gamma| t}, (1+h_0(x))e^{C_0M_1t} \rt\}\mathbb{I}_{4 \times 4}.
\]
Since $\gamma \geq 1$ and $\|h_0\|_{H^s} < \e_0$, we can choose $(T_1 \geq)\, T_2 > 0$, which is independent of $m$, such that
\[
\sup_{0 \leq t \leq T_2}\|A^0(\eta^m)(\cdot,t)\|_{L^\infty} \leq \gamma (1+ \e_0)\mathbb{I}_{4 \times 4}
\]
and
\[
\sup_{0 \leq t \leq T_2} \|(A^0(\eta^m)(\cdot,t))^{-1}\|_{L^\infty} \leq (1+ \e_0)\mathbb{I}_{4 \times 4}.
\] 
We also find
\[
\partial_t A^0(\eta^m) \le C\|\partial_t h^m\|_{L^\infty} \mathbb{I}_{4\times4} \leq CM_1\mathbb{I}_{4\times4}
\]
and
\[
\partial_j A^j(\eta^m)\le C\|\nabla \eta^m\|_{L^\infty}\mathbb{I}_{4\times4}\leq CM_1\mathbb{I}_{4\times4}.
\]
We now estimate $\|\eta^m\|_{L^2}$. It follows from the Navier-Stokes equations in \eqref{E-3-1} that
\begin{align*}\begin{aligned}
&\frac{1}{2}\left\{\partial_t \int_\Omega A^0(\eta^m)\eta^{m+1} \cdot \eta^{m+1}\, dx - \int_\Omega \partial_t A^0(\eta^m)\eta^{m+1} \cdot \eta^{m+1} \,dx \right\}\\
&\quad + \frac{1}{2}\left\{ \sum_{j=1}^3 \int_\Omega \partial_j (A^j(\eta^m)\eta^{m+1} \cdot \eta^{m+1}) \,dx - \int_\Omega \partial_j A^i (\eta^m)\eta^{m+1} \cdot \eta^{m+1} \,dx \right\}\\
&\qquad = \int_\Omega A^0(\eta^m)(E_1(h^m,v^{m+1}) + E_2(g^m,u^m,\eta^m))\cdot \eta^{m+1}\, dx.
\end{aligned}\end{align*}
We then use the above observations to estimate
\begin{align*}\begin{aligned}
\int_\Omega A^0(\eta^m) E_1(h^m,v^{m+1})\eta^{m+1} \,dx &= \int_\Omega v^{m+1}  \cdot \Delta v^{m+1} \,dx = -\|\nabla v^{m+1} \|_{L^2}^2,\\
\int_\Omega A^0(\eta^m)E_2 (g^m,u^m,\eta^m) \eta^{m+1} \,dx &= \frac{1}{M}\int_\Omega e^{g^m} (u^m-v^m)\cdot v^{m+1} \,dx \\
&\le e^{\|g^m\|_{L^\infty}}(\|u^m\|_{L^2} +\|v^m\|_{L^2})\|v^{m+1}\|_{L^2},\\
&\leq C(M_1 + M_1') e^{CM_1'}\|v^{m+1}\|_{L^2},\\
\int_\Omega \partial_t A^0(\eta^m)\eta^{m+1} \cdot \eta^{m+1} \,dx &\le C\|\partial_t h^m\|_{L^\infty}\|\eta^{m+1}\|_{L^2}^2 \leq CM_1\|\eta^{m+1}\|_{L^2}^2, \quad \mbox{and}\\
\int_\Omega \partial_j A^j(\eta^m)\eta^{m+1} \cdot \eta^{m+1} \,dx &\le C\|\nabla \eta^m\|_{L^\infty}\|\eta^{m+1}\|_{L^2}^2 \leq CM_1\|\eta^{m+1}\|_{L^2}^2,
\end{aligned}\end{align*}
where $C>0$ is independent of $m$. This yields 
\begin{align*}\begin{aligned}
\frac{1}{1+\e_0}\|\eta^{m+1}(\cdot,t)\|_{L^2}^2 & \le \int_\Omega A^0(\eta^m)\eta^{m+1} \cdot \eta^{m+1} \,dx\\
&\le \int_\Omega A^0(\eta_0)\eta_0 \cdot \eta_0 \,dx + C(M_1 + M_1') e^{CM_1'}\int_0^t \|v^{m+1}(\cdot,\tau)\|_{L^2} \,d\tau \cr
&\quad + CM_1\int_0^t \|\eta^{m+1}(\cdot,\tau)\|_{L^2}^2 \,d\tau - \int_0^t\|\nabla v^{m+1}(\cdot,\tau)\|_{L^2}^2\,d\tau\\
&\leq \gamma(1 + \e_0)\|\eta_0\|_{L^2}^2 + C(M_1^2 + (M_1')^2) e^{CM_1'}T_2 \cr
&\quad + C(e^{M_1'} + M_1)\int_0^t \|\eta^{m+1}(\cdot,\tau)\|_{L^2}^2 \,d\tau - \int_0^t\|\nabla v^{m+1}(\cdot,\tau)\|_{L^2}^2\,d\tau
\end{aligned}\end{align*}
for $t \leq T_2$, where $C>0$ is independent of $m$. Hence we have
\begin{align}\label{est_0}
\begin{aligned}
&\|\eta^{m+1}(\cdot,t)\|_{L^2}^2 + (1+\e_0) \int_0^t\|\nabla v^{m+1}(\cdot,\tau)\|_{L^2}^2\,d\tau \cr
&\quad \leq \gamma(1+\e_0)^2\|\eta_0\|_{L^2}^2 + C(M_1^2 + (M_1')^2) e^{CM_1'}T_2 \cr
&\qquad + C(e^{M_1'} + M_1)\int_0^t \|\eta^{m+1}(\cdot,\tau)\|_{L^2}^2 \,d\tau
\end{aligned}
\end{align}
for $t \leq T_2$, where $C>0$ is independent of $m$.

\subsection{Higher order estimates} For higher order estimates, we choose a multi-index $\alpha$ with $1 \le |\alpha| \le s$ to get
\begin{align*}\begin{aligned}
&A^0 (\eta^m) \partial_t \nabla_{t,x}^\alpha \eta^{m+1} + \sum_{j=1}^3 A^j(\eta^m) \partial_j  \nabla_{t,x}^\alpha\eta^{m+1} \\
&\quad = \mathcal{R}_\alpha (\eta^m, \eta^{m+1}) + A^0(\eta^m)  \nabla_{t,x}^\alpha E_1(h^m,v^{m+1}) + A^0 (\eta^m)  \nabla_{t,x}^\alpha E_2(g^m,u^m,\eta^m),
\end{aligned}\end{align*}
where $R_\alpha(f_1, f_2)$ is defined as
\[
R_\alpha(f_1, f_2) := -A^0(f_1) \left( \left[  \nabla_{t,x}^\alpha, A^0(f_1)^{-1} \sum_{j=1}^3 A^j(f_1)\partial_j \right] f_2\right)
\]
and $[A,B] := AB-BA$ is the commutator operator. Then, we find
\begin{align*}\begin{aligned}
&\frac{1}{2}\left\{\partial_t \int_\Omega A^0(\eta^m)\nabla_{t,x}^\alpha\eta^{m+1} \cdot \nabla_{t,x}^\alpha\eta^{m+1} \,dx - \int_\Omega \partial_t A^0(\eta^m)\nabla_{t,x}^\alpha\eta^{m+1} \cdot \nabla_{t,x}^\alpha\eta^{m+1} \,dx \right\}\\
&\quad + \frac{1}{2}\left\{ \sum_{j=1}^3 \int_\Omega \partial_j (A^j(\eta)\nabla_{t,x}^\alpha\eta^{m+1} \cdot \nabla_{t,x}^\alpha\eta^{m+1})\, dx - \int_\Omega \partial_j A^j (\eta^m)\nabla_{t,x}^\alpha\eta^{m+1} \cdot \nabla_{t,x}^\alpha\eta^{m+1} \,dx \right\}\\
&\qquad = \int_\Omega R_\alpha (\eta^m, \eta^{m+1}) \nabla_{t,x}^\alpha \eta^{m+1} \,dx \\
&\qquad \quad + \int_\Omega A^0(\eta^m)\lt(\nabla_{t,x}^\alpha E_1(h^m,v^{m+1}) + \nabla_{t,x}^\alpha E_2(g^m,u^m,\eta^m)\rt)\cdot \nabla_{t,x}^\alpha\eta^{m+1} \,dx.
\end{aligned}\end{align*}
For the first term on the right hand side of the above inequality, we obtain
\begin{align*}\begin{aligned}
&\int_\Omega \mathcal{R}_\alpha(\eta^m,\eta^{m+1})\nabla_{t,x}^\alpha \eta^{m+1} \,dx \\
&\quad \le \|A^0(\eta^m)\|_{L^\infty} \sum_{j=1}^3 \left\| \left[ \nabla_{t,x}^\alpha, A^0(\eta^m)^{-1} A^j(\eta^m)\partial_j \right] \eta^{m+1} \right\|_{L^2}\|\nabla_{t,x}^\alpha \eta^{m+1}\|_{L^2}\\
&\quad \le \gamma(1+\e_0)\sum_{j=1}^3 \sum_{\substack{\mu\le\alpha,\  \mu \neq 0}}\left\| \nabla_{t,x}^\mu(A^0(\eta)^{-1} A^j(\eta)) \nabla_{t,x}^{\alpha-\mu} (\partial_j \eta) \right\|_{L^2}\|\nabla_{t,x}^\alpha \eta^{m+1}\|_{L^2}\\
&\quad \le \gamma(1+\e_0)\sum_{j=1}^3 \sum_{\substack{\mu\le\alpha, \ \mu \neq 0 \\ |\mu| \le |\alpha|-2}}\left\| \nabla_{t,x}^\mu(A^0(\eta^m)^{-1} A^j(\eta^m))\right\|_{L^\infty} \left\|\nabla_{t,x}^{\alpha-\mu} (\partial_j \eta^{m+1}) \right\|_{L^2}\|\nabla_{t,x}^\alpha \eta^{m+1}\|_{L^2}\\
&\qquad + \gamma(1+\e_0)\sum_{j=1}^3 \sum_{\substack{\mu\le\alpha ,\  \mu \neq 0 \\ |\mu|\ge |\alpha|-1}}\left\| \nabla_{t,x}^\mu(A^0(\eta^m)^{-1} A^j(\eta^m))\right\|_{L^2} \left\|\nabla_{t,x}^{\alpha-\mu} (\partial_j \eta^{m+1}) \right\|_{L^\infty}\|\nabla_{t,x}^\alpha \eta^{m+1}\|_{L^2}\\
&\quad \le CM_1^r\|\eta^{m+1}\|_{\mathfrak{X}^s}\|\nabla_{t,x}^\alpha \eta^{m+1}\|_{L^2},
\end{aligned}\end{align*}
where $C>0$ is independent of $m$ and $r\geq 1$ depends on $|\alpha|$, but independent of $m$. Next, we estimate
\begin{align*}\begin{aligned}
&\int_\Omega A^0(\eta^m) \nabla_{t,x}^\alpha E_1(h^m,v^{m+1})\nabla_{t,x}^\alpha \eta^{m+1} \,dx\\
&\quad = \int_\Omega (1+h^m) \nabla_{t,x}^\alpha \left(\frac{\Delta v^{m+1}}{1+h^m}\right) \nabla_{t,x}^\alpha v^{m+1} \,dx\\
&\quad = \int_\Omega \Delta(\nabla_{t,x}^\alpha v^{m+1}) \nabla_{t,x}^\alpha v^{m+1} dx + \int_\Omega (1+h^m)\nabla_{t,x}^\alpha\left(\frac{1}{1+h^m}\right)\Delta v^{m+1} \cdot \nabla_{t,x}^\alpha v^{m+1} \,dx\\
&\qquad + \sum_{\mu \le \alpha, \ \mu \neq 0} \binom{\alpha}{\mu} \int_\Omega (1+h^m)\nabla_{t,x}^\mu \left(\frac{1}{1+h^m}\right) \Delta (\nabla_{t,x}^{\alpha-\mu} v^{m+1})\cdot \nabla_{t,x}^\alpha v^{m+1} \,dx\\
&\quad =: I_1 + I_2 + I_3.
\end{aligned}\end{align*}
For $I_1$, we directly get
\[
I_1 = -\|\nabla (\nabla_{t,x}^\alpha v^{m+1})\|_{L^2}^2.
\]
For the estimate of $I_2$, we use the Poincar\'e inequality and Sobolev embedding to get
\begin{align*}\begin{aligned}
I_2 &\le (1+\|h^m\|_{L^\infty}) \left\| \nabla_{t,x}^\alpha \left(\frac{1}{1+h^m}\right)\right\|_{L^2} \|\nabla^2 v^{m+1} \|_{L^4} \|\nabla_{t,x}^\alpha v^{m+1} \|_{L^4}\\
&\le C M_1^r \|v^{m+1}\|_{\mathfrak{X}^s} \|\nabla(\nabla_{t,x}^\alpha v^{m+1})\|_{L^2},
\end{aligned}\end{align*}
where $C>0$ is independent of $m$ and $r\geq 1$ depends on $|\alpha|$, but independent of $m$. For $I_3$, we find
\begin{align*}\begin{aligned}
I_3 &\le \sum_{\mu \le \alpha, \ \mu \neq 0} \binom{\alpha}{\mu} (1+\|h^m\|_{L^\infty}) \|\nabla_{t,x}^\alpha v^{m+1}\|_{L^4} \|\Delta (\nabla_{t,x}^{\alpha-\mu} v^{m+1})\|_{L^2} \left\|\nabla_{t,x}^\mu \left(\frac{1}{1+h^m}\right)\right\|\\
&\le C M_1^r \|\nabla (\nabla_{t,x}^\alpha v^{m+1})\|_{L^2} \|\Delta v^{m+1}\|_{\mathfrak{X}^{|\alpha|-1}}.
\end{aligned}\end{align*}
Here we again used the Poincar\'e inequality and Sobolev embeddin, and $C>0$ is independent of $m$ and $r\geq 1$ depends on $|\alpha|$, but independent of $m$. Combining all of the above estimates, we obtain
\begin{align*}\begin{aligned}
&\int_\Omega A^0(\eta^m) \nabla_{t,x}^\alpha E_1(h^m,v^{m+1})\nabla_{t,x}^\alpha \eta^{m+1} \,dx\\
&\quad \le - \|\nabla(\nabla_{t,x}^\alpha v^{m+1})\|_{L^2}^2 + C M_1^r \|v^{m+1}\|_{\mathfrak{X}^s} \|\nabla(\nabla_{t,x}^\alpha v^{m+1})\|_{L^2}\cr
&\quad \leq -\frac12\|\nabla(\nabla_{t,x}^\alpha v^{m+1})\|_{L^2}^2  + CM_1^{2r}\|\eta^{m+1}\|_{\mathfrak{X}^s}^2,
\end{aligned}\end{align*}
where $C>0$ is independent of $m$ and $r\geq 1$ depends on $|\alpha|$, but independent of $m$. Moreover, we estimate
\begin{align*}\begin{aligned}
&\int_\Omega A^0(\eta^m) \nabla_{t,x}^\alpha E_2(g^m,u^m,\eta^m) \cdot \nabla_{t,x}^\alpha \eta^{m+1} \, dx\\
&\quad = \int_\Omega (1+h^m) \nabla_{t,x}^\alpha \left(\frac{e^{g^m}}{1+h^m} (u^m-v^m)\right) \nabla_{t,x}^\alpha v^{m+1} \,dx\\
&\quad = \int_\Omega e^{g^m} \nabla_{t,x}^\alpha (u^m-v^m)\cdot \nabla_{t,x}^\alpha v^{m+1} \,dx\\
&\qquad + \int_\Omega (1+h^m) \nabla_{t,x}^\alpha\left(\frac{e^{g^m}}{1+h^m}\right)(u^m-v^m)\cdot\nabla_{t,x}^\alpha v^{m+1} \,dx\\
&\qquad +\sum_{\substack{\mu \le \alpha \\ \mu \neq 0, \alpha}} \binom{\alpha}{\mu}\int_\Omega (1+h^m) \nabla_{t,x}^\mu\left(\frac{e^{g^m}}{1+h^m}\right) \nabla_{t,x}^{\alpha-\mu} (u^m-v^m) \nabla_{t,x}^\alpha v^{m+1} \,dx\\
&\quad \le e^{\|g^m\|_{L^\infty}}\left(\|\nabla_{t,x}^\alpha u^m\|_{L^2}^2 + \|\nabla_{t,x}^\alpha v^m\|_{L^2}^2 + \|\nabla_{t,x}^\alpha v^{m+1}\|_{L^2}^2\right)\\
&\qquad + (1+\|h^m\|_{L^\infty})\left\|\nabla_{t,x}^\alpha\left(\frac{e^{g^m}}{1+h^m}\right)\right\|_{L^2}(\|u^m\|_{L^\infty} +\|v^m\|_{L^\infty})\|\nabla_{t,x}^\alpha v^{m+1} \|_{L^2}\\
&\qquad +\sum_{\substack{\mu \le \alpha \\ \mu \neq 0, \alpha}} \binom{\alpha}{\mu}(1+\|h^m\|_{L^\infty}) \left\|\nabla_{t,x}^\mu\left(\frac{e^{g^m}}{1+h^m}\right)\right\|_{L^4} \|\nabla_{t,x}^{\alpha-\mu} (u^m-v^m)\|_{L^4} \|\nabla_{t,x}^\alpha v^{m+1}\|_{L^2}\\
&\quad \le e^{CM_1'}\left(M_1^2 + (M_1')^2 + \|\nabla_{t,x}^\alpha v^{m+1}\|_{L^2}^2\right) + e^{CM_1'}(M_1 + M_1')(M_1' + M_1^r (M_1')^{r'})\|\nabla_{t,x}^\alpha v^{m+1} \|_{L^2}.
\end{aligned}\end{align*}
Here we used Poincar\'e inequality, H\"older inequality and Sobolev embedding, and $C>0$ is independent of $m$, and $r,r'\geq1$ depend on $|\alpha|$, but independent of $m$. Next, we notice that
\[
\int_\Omega \partial_t A^0(\eta^m)\nabla_{t,x}^\alpha\eta^{m+1} \cdot \nabla_{t,x}^\alpha\eta^{m+1} \,dx \le CM_1\|\nabla_{t,x}^\alpha\eta^{m+1}\|_{L^2}^2
\]
and
\[
\int_\Omega \partial_j A^j(\eta^m)\nabla_{t,x}^\alpha\eta^{m+1} \cdot \nabla_{t,x}^\alpha\eta^{m+1}\,dx \le CM_1\|\nabla_{t,x}^\alpha\eta^{m+1}\|_{L^2}^2.
\]
Thus we have
\begin{align*}\begin{aligned}
&\frac{1}{1 + \e_0}\|\nabla_{t,x}^\alpha \eta^{m+1}\|_{L^2}^2 \\
&\quad \le \int_\Omega A^0(\eta^m) \nabla_{t,x}^\alpha \eta^{m+1} \cdot \nabla_{t,x}^\alpha \eta^{m+1} \,dx\\
&\quad \le \gamma(1+\e_0)\|\nabla_{t,x}^\alpha\eta_0\|_{L^2}^2 + CM_1(1+M_1^{r-1})\int_0^t \|\eta^{m+1}(\tau)\|_{\mathfrak{X}^s}^2\,d\tau +CM_1^{2r} \int_0^t\|\eta^{m+1}(\tau)\|_{\mathfrak{X}^s}^2\,d\tau\cr
&\qquad + \int_0^t e^{CM_1'}\left(M_1^2 + (M_1')^2 + \|\nabla_{t,x}^\alpha v^{m+1}(\tau)\|_{L^2}^2\right) \, d\tau \\
&\qquad + \int_0^t e^{CM_1'}(M_1 + M_1')(M_1' + M_1^r (M_1')^{r'})\|\nabla_{t,x}^\alpha v^{m+1} (\tau)\|_{L^2} \,d\tau\\
&\qquad- \frac12\int_0^t \|\nabla (\nabla_{t,x}^\alpha v^{m+1})(\tau)\|_{L^2}^2\, d\tau
\end{aligned}\end{align*}
for $t \leq T_2$, where $C>0$ is independent of $m$ and $r,r'\geq1$ depend on $|\alpha|$, but independent of $m$. By using the fact $\eta^{m+1} = (h^{m+1},v^{m+1})$ and Young's inequality, we further obtain
\begin{align*}\begin{aligned}
&\|\nabla_{t,x}^\alpha \eta^{m+1}\|_{L^2}^2  + \frac{1+\e_0}{2}\int_0^t \|\nabla (\nabla_{t,x}^\alpha v^{m+1})(\tau)\|_{L^2}^2\, d\tau\\
&\quad \leq \gamma(1+\e_0)^2\|\nabla_{t,x}^\alpha\eta_0\|_{L^2}^2 + e^{CM_1'}(M_1^2 + (M_1')^2 )T_2 + C_{M_1, M_1'} \int_0^t\|\eta^{m+1}(\tau)\|_{\mathfrak{X}^s}^2\,d\tau,
\end{aligned}\end{align*}
where $C_{M_1, M_1'} > 0$ is given by
\bq\label{cmm}
C_{M_1, M_1'} = C(M_1 + M_1^r + M_1^{2r} + e^{CM_1'} + (M_1')^2 + M_1^{2r} (M_1')^{2r'}).
\eq
We sum the above inequality over $1 \le |\alpha|\le s$ and combine the resulting one with the zeroth-order estimate \eqref{est_0} to yield
\begin{align*}\begin{aligned}
&\|\eta^{m+1}\|_{\mathfrak{X}^s}^2 + \frac{1+\e_0}{2} \int_0^t \|\nabla v^{m+1}(\tau)\|_{\mathfrak{X}^s}^2 \,d\tau\\
&\quad \leq \gamma(1+\e_0)^2\|\eta_0\|_{H^s}^2 + e^{CM_1'}(M_1^2 + (M_1')^2 )T_2 + C_{M_1, M_1'} \int_0^t\|\eta^{m+1}(\tau)\|_{\mathfrak{X}^s}^2\,d\tau.
\end{aligned}\end{align*}
We finally apply Gr\"onwall's lemma to the above to conclude 
\begin{align*}\begin{aligned}
&\|\eta^{m+1}\|_{\mathfrak{X}^s}^2 + \frac{1+\e_0}{2} \int_0^t \|\nabla v^{m+1}(\tau)\|_{\mathfrak{X}^s}^2 \,d\tau\\
&\quad \leq \lt(\gamma(1+\e_0)^2\|\eta_0\|_{H^s}^2 + e^{CM_1'}(M_1^2 + (M_1')^2 )T_2\rt)\exp\lt(C_{M_1,M_1'}T_2 \rt)
\end{aligned}\end{align*}
for $t \leq T_2$, where $C>0$ is independent of $m$ and $C_{M_1,M_1'} > 0$ is appeared in \eqref{cmm}.

%
%
%
%
%

\end{document}